
\documentclass[11pt]{article}
\usepackage{latexsym,amsmath,amsthm,amssymb,graphicx,mathrsfs,amsfonts,yfonts,calrsfs,eufrak,bm,stackrel}
\input epsf
\usepackage{tikz}
\usepackage{relsize}
\usepackage{mathtools}
\usepackage{titlesec}
\setcounter{secnumdepth}{4}

\usepackage{tikz}
\usetikzlibrary{calc,shapes.geometric,patterns,positioning,decorations.markings}
\usepackage{verbatim}
\usetikzlibrary{backgrounds}
\usetikzlibrary{arrows, shapes, shapes.geometric}

\tikzset{
  circlenode/.style={circle,draw=black!100,very thick,inner sep=1pt, minimum size=3mm},
  squarenode/.style={draw=black!100, very thick,inner sep=1.5pt, minimum size=6mm},
  trianglenode/.style={        draw,
        shape border rotate=00,
        regular polygon,
        regular polygon sides=3,
        draw=black!100,very thick,inner sep=1.0pt, minimum size=4mm},
 bigblue/.style={circle, draw=blue!80,fill=blue!40,thick, inner sep=1.5pt, minimum size=5mm},
  bigred/.style={circle, draw=red!80,fill=red!40,thick, inner sep=1.5pt, minimum size=5mm},
  bluevertex/.style={circle, draw=blue!100,fill=blue!100,thick, inner sep=0pt, minimum size=2mm},
  redvertex/.style={circle, draw=red!100,fill=red!100,thick, inner sep=0pt, minimum size=2mm},
  blackvertex/.style={circle, draw=black!100,fill=black!100,thick, inner sep=0pt, minimum size=1mm},
    state/.style={circle, draw=black!100,fill=white!100,thick, inner sep=1pt, minimum size=5mm},  
  haltstate/.style={circle, draw=black!100,fill=white!100,double,thick, inner sep=1pt, minimum size=5mm},  
}

\titleformat{\paragraph}
{\normalfont\normalsize\bfseries}{\theparagraph}{1em}{}
\titlespacing*{\paragraph}
{0pt}{3.25ex plus 1ex minus .2ex}{1.5ex plus .2ex}
\DeclareFontEncoding{LS1}{}{}
\DeclareFontSubstitution{LS1}{stix}{m}{n}
\DeclareSymbolFont{symbols3}{LS1}{stixbb}{m}{n}
\DeclareMathSymbol{\bigslopedvee}{\mathbin}{symbols3}{"A7}

\setcounter{section}{0}
\setcounter{subsection}{1}
\setcounter{subsubsection}{1}

        {\qquad\hspace*{\fill}$\square$\end{trivlist}}

        {\qquad\hspace*{\fill}\end{trivlist}}

\newcounter{subtheor}[section]
\newcounter{sublem}[section]
\newcounter{subcor}[section]

\newcounter{subprop}[section]
\newcounter{subconj}[section]

\newcounter{fig}

\newcounter{clai}
\newcounter{exa}
\newcounter{theorem}[section]
\renewcommand{\thetheorem}{\arabic{section}.\arabic{theorem}}
\newcounter{nona}[theorem]
\newcounter{nonanona}[theorem]
\newcounter{rem}[section]

\renewcommand{\thenona}{\Alph{nona}}
\renewcommand{\thenonanona}{\alph{nonanona}}
\renewcommand{\thenonanona}{\arabic{section}.\arabic{nonanona}}

        
\newenvironment{nonamenoname}{\begin{trivlist}\item[]\refstepcounter{nonanona}%
        {\bf (\thenonanona)\ \ \ }\nobreak\noindent\sl\ignorespaces}{%
        \ifvmode\smallskip\fi\end{trivlist}}

\newenvironment{theorem}{\begin{trivlist}\item[]\refstepcounter{theorem}%
        {\bf\thetheorem\ Theorem}\par\nobreak\noindent\sl\ignorespaces}{%
        \ifvmode\smallskip\fi\end{trivlist}}

\newenvironment{noname}{\begin{trivlist}\item[]\refstepcounter{nona}%
        {\bf (\thenona)\ \ \ }\nobreak\noindent\sl\ignorespaces}{%
        \ifvmode\smallskip\fi\end{trivlist}}

\newenvironment{theoremplus}[1]{\begin{trivlist}\item[]%
        \refstepcounter{theorem}{\bf\thetheorem\ Theorem} {\rm (\,#1\,)}%
        \par\nobreak\noindent\sl\ignorespaces}{%
        \ifvmode\smallskip\fi\end{trivlist}}

\newenvironment{lemma}{\begin{trivlist}\item[]\refstepcounter{theorem}%
        {\bf\thetheorem\ Lemma}\par\nobreak\noindent\sl\ignorespaces}{%
        \ifvmode\smallskip\fi\end{trivlist}}

\newenvironment{corollary}{\begin{trivlist}\item[]\refstepcounter{theorem}%
        {\bf\thetheorem\ Corollary}\par\nobreak\noindent\sl\ignorespaces}{%
        \ifvmode\smallskip\fi\end{trivlist}}

\newenvironment{conjecture}{\begin{trivlist}\item[]%
        \refstepcounter{theorem}{\bf\thetheorem\ Conjecture}\par%
        \nobreak\noindent\sl\ignorespaces}{%
        \ifvmode\smallskip\fi\end{trivlist}}
\newenvironment{conjectureplus}[1]{\begin{trivlist}\item[]%
        \refstepcounter{theorem}{\bf\thetheorem\ Conjecture} %
        {\rm(\,#1\,)}\par\nobreak\noindent\sl\ignorespaces}{%
        \ifvmode\smallskip\fi\end{trivlist}}

\newenvironment{observation}{\begin{trivlist}\item[]\refstepcounter{theorem}%
        {\bf\thetheorem\ Observation}\par\nobreak\noindent\ignorespaces}{%
        \ifvmode\smallskip\fi\end{trivlist}}

        {\endlist\unskip}

        {\endtrivlist}
\newcommand{\wideitem}[1]{\leavevmode\hangindent\mathindent\noindent%
        \hbox to \mathindent{#1\hfil}\ignorespaces}

\renewcommand{\emptyset}{\mathchar"001F}
\newcommand{\eop}{\rule{1.4ex}{1.4ex}}

\newcommand{\C}{{\cal C}}






\newcommand{\sms}{\vspace*{.2in}}
\newcommand{\ssms}{\vspace*{.1in}}

\title{\bf Short Rainbow Circuits in Regular Matroids}

\author{Sean McGuinness \\ Dept. of Mathematics\\ Thompson Rivers University\\
McGill Road, Kamloops BC\\ V2C5N3 Canada\\ email: smcguinness@tru.ca}

\date{}


\begin{document}

\maketitle

\begin{abstract}
DeVos et al conjectured that if $M$ is a simple, regular matroid and $c$ is a colouring of the elements of $M$ with $r(M)+1$ colours, where each colour class has at least two elements, then $M$ contains a rainbow circuit of size at most 
$\lceil \frac {r(M)+1}2 \rceil.$  We prove this conjecture by showing that for all such regular matroids there are four rainbow circuits $C_i,\ i = 1,2,3,4$ for which $\sum_i |C_i| \le 2r(M) +4$ and for which no element of $M$ belongs to more than two of the circuits.

\bigskip

\noindent
{\sl AMS Subject Classifications (2012)}\,: 05D99,05B35.
\end{abstract}

\section{Introduction}

For any positive integer $k$, $[k]$ shall denote the set $\{ 1, \dots ,k \}$ and for any integers $a < b,$ $[a,b]$ will denote the set $\{ a, a+1, \dots ,b \}.$  For any graph $G$, we let $\nu (G)$ and $\varepsilon (G)$ denote the number of vertices and edges, respectively in $G.$

A {\bf colouring} for a graph $G$ is defined to be a function $c:E(G) \rightarrow S$ for some set $S$.  We refer to the pair $(G,c)$ as a {\bf coloured graph}. If $|S| =k,$ then we refer to $c$ as a $\mathbf{k}${\bf - colouring}.  For a colouring $c$ of a graph $G$ and a subset $A \subseteq E(G)$, we define $c(A) := \{ c(e) \ \big| \ e\in A \}$ and we define $c(G) := c(E(G));$ that is, the set of distinct colours appearing in $G.$   For a colouring $c: E(G) \rightarrow S$ and for all $s\in S$ we let $c^{-1}(s) = \{ e\in E(G) \ \big| \ c(e) = s \}.$  We refer to $c^{-1}(s)$ as the {\bf colour class} for colour $s$.  For a colouring $c$ of $G$, we say that a subset $A \subseteq E(G)$ is a {\bf rainbow-subset} if $c$ restricted to $A$ is injective.  A subgraph $H$ of $G$ is a {\bf rainbow subgraph} if $E(H)$ is a rainbow subset of $E(G).$  A vertex $v$ is said to be a {\bf rainbow vertex} if the set of edges incident with $v$ form a rainbow subset. 

The problem dealt with in this paper has its origins in the 
well-known Caccetta-H\"{a}ggkvist conjecture \cite{CacHag}:

\begin{conjectureplus}{Cacetta, H\"{a}ggkvist}
Every simple directed graph on $n$ vertices with minimum out-degree $r$ contains a directed cycle of length
at most $\lceil \frac nr \rceil$.
\end{conjectureplus}

The authors in \cite{AhaDevHol} observed that if for the directed graph $G$ in the above conjecture,  one assigns $n$ colours to the edges whereby the out-directed edges at each vertex receive the same colour, then the conjecture asserts that there
exists a rainbow cycle of length at most  $\lceil \frac nr \rceil$.  As such, they posed the following more general conjecture:

\begin{conjectureplus}{Aharoni, Devos, Holzman}
Let $G$ be a graph and let $c$ be a $\nu(G)$-colouring of $G$ where each colour class has size at least $r$.  Then the coloured graph $(G,c)$ contains a rainbow cycle of length at most $\lceil \frac {\nu(G)}r \rceil.$
\label{conj1}
\end{conjectureplus}

In \cite{DevDreFunMazGuoHuyMohMon}, the authors verified the above conjecture in the case where $r=2.$  In \cite{HomSpi}, it was shown that Conjecture \ref{conj1} holds if each colour class has $\Omega (r)$ edges and in \cite{HomHuy} the authors show that Conjecture \ref{conj1} holds up to an additive constant.  That is, for all $r \ge 2$  there is a constant $\alpha_r$ such Conjecture \ref{conj1} is true if one replaces $\lceil \frac {\nu(G)}2 \rceil$ with $\lceil \frac {\nu(G)}2 \rceil + \alpha_r.$

In \cite{DevDreFunMazGuoHuyMohMon}, the authors considered an extension of rainbow colourings to matroids.   A {\bf colouring} of a matroid $M$ is defined to be a function $c:E(M) \rightarrow S$ for some set $S$.  We call the pair $(M,c)$ a {\bf coloured matroid}.  If $S = [k]$, then we refer to $c$ as a $\mathbf{k}${\bf - colouring} of $M.$  As with graphs, for each $s \in S,$ we define $c^{-1}(s) = \{ e \in E(M) \ \big| \ c(e) =s \}$ to be the colour class for the colour $s.$  For a subset $A\subset E(M),$ we define $c(A) := \{ c(e)\ \big| \ e\in A \}$ and we define $c(M) := c(E(M)),$ the set of distinct colours appearing in $M.$  We say that an element $e$ in a coloured matroid is {\bf colour-singular} if $e$ is the only element in the colour class containing it.
A colouring of matroid (or graph) is said to be $\mathbf{k}${\bf -bounded} (resp. $\mathbf{k}${\bf -uniform}) if each colour class has at most (resp. exactly) $k$ elements.  If a colour class $X$ contains two elements and $e \in X$, we shall denote by $e'$ the other element in $X.$

For a coloured matroid $(M,c)$, a subset $A \subseteq E(M)$ is said to be a {\bf rainbow subset} if $c$ restricted to $A$ is injective.   In particular, a {\bf rainbow circuit} is a circuit whose elements all have different colours.  We say that a rainbow circuit $C$ in $(M,c)$ is {\bf short} if $|C| \le \left\lfloor \frac {r(M)+2}2 \right\rfloor$.
A coloured matroid $(M,c)$ is said to be {\bf circuit - achromatic} if it contains no rainbow circuits.  A subset $A \subseteq E(M)$ is said to be {\bf monochromatic} if all the elements in $A$ have the same colour.

  We shall make frequent use of the following observation whose easy proof is left to the reader.

\begin{observation}
Suppose $(M,c)$ is a simple, circuit-achromatic coloured matroid where $\varepsilon(M) = 2r(M) -1$ and $c$ is $2$-bounded.  Then $(M,c)$ has exactly one colour-singular element.
\label{obs-onesingular}
\end{observation}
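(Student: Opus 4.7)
The plan is a straightforward transversal-and-counting argument. Let $s$ denote the number of colour-singular elements and $d$ the number of colour classes of size exactly two, so that by $2$-boundedness
\[
s + 2d \;=\; \varepsilon(M) \;=\; 2r(M)-1.
\]
The hypothesis that $(M,c)$ is circuit-achromatic will be used to bound $s+d$, i.e.\ the total number of colour classes, from above by $r(M)$.

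To get that bound, I would form a transversal $T \subseteq E(M)$ by picking exactly one element from each colour class; then $|T| = s + d$. If $T$ contained a circuit $C$, the elements of $C$ would carry pairwise distinct colours (since $T$ meets each colour class in only one element), so $C$ would be a rainbow circuit, contradicting the circuit-achromatic assumption. Hence $T$ is independent in $M$ and therefore $s + d \le r(M)$.

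Combining this with $s + 2d = 2r(M)-1$ by subtracting gives $d \ge r(M)-1$, and substituting back yields $s \le 1$. Since $2r(M)-1$ is odd, $s$ must itself be odd and in particular positive, so $s = 1$. I do not anticipate a real obstacle here; the only point that deserves care is the justification that a circuit inside the transversal $T$ really is rainbow, which follows at once from the choice of $T$.
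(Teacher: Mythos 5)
Your proof is correct and is exactly the intended argument: the paper leaves this observation to the reader, but the identical transversal/counting step (a rainbow transversal of the colour classes must be independent, so $|c(M)|\le r(M)$, combined with the parity of $\varepsilon(M)$) is the one the author uses explicitly in the analogous situation of Lemma \ref{lem-helplemma}. No gaps.
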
 

Extending Conjecture \ref{conj1}  for the case $r=2$ to matroids, the authors in \cite{DevDreFunMazGuoHuyMohMon} posed the following conjecture:
\begin{conjecture}
Let $M$ be a simple, regular matroid and let $c$ be a $(r(M)+1)$-colouring of $M$.  If $(M,c)$ has no colour-singular elements, then $M$ has a short rainbow circuit.
\label{conj2}
\end{conjecture}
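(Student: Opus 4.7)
The plan is to prove Conjecture \ref{conj2} by contradiction along the route flagged in the abstract. Assume $(M,c)$ is a counterexample, so every rainbow circuit has length strictly greater than $\lceil(r(M)+1)/2\rceil$. The goal is then to exhibit four rainbow circuits $C_1,C_2,C_3,C_4$ with $\sum_{i=1}^{4}|C_i|\le 2r(M)+4$ such that no element of $M$ belongs to more than two of them. Since the $|C_i|$ are integers, the average length is at most $(r(M)+2)/2$, so one of them has length at most $\lfloor(r(M)+2)/2\rfloor = \lceil(r(M)+1)/2\rceil$, contradicting the assumption that $M$ has no short rainbow circuit.

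Before constructing the four circuits I would reduce to the tightest setting. Deleting surplus elements from any colour class of size greater than two cannot create a short rainbow circuit and preserves the ``no colour-singular element'' hypothesis, so we may take $c$ to be $2$-uniform; then $|E(M)|=2(r(M)+1)$. Working componentwise, we may assume $M$ is connected. In this regime Observation \ref{obs-onesingular} becomes the principal local tool: whenever a $2$-bounded circuit-achromatic restriction has exactly $2r-1$ elements it contains a unique colour-singular element, which pins down where the structure can be perturbed.

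The heart of the proof is the construction itself. A natural template is to choose a basis $B$ aligned with the colour classes so that two small fundamental rainbow circuits $C_1=C(x_1,B)$ and $C_2=C(x_2,B)$ present themselves, and then to construct $C_3$ and $C_4$ from $C_1\cup C_2$ by signed circuit elimination against a carefully chosen common cocircuit element. Regularity is indispensable here: a totally unimodular representation lets signed $GF(2)$-symmetric differences split into rainbow circuits with controlled support, and the multiplicity-two condition is built in because each element appears in at most one of the seeds and at most one of the derived circuits. The length bound $2r(M)+4$ comes out of the fact that in a $2$-uniform colouring with $2r(M)+2$ elements each of the two seeds plus each derived circuit is forced to sit inside the union of just a few colour classes.

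The main obstacle is that the two requirements $\sum|C_i|\le 2r(M)+4$ and ``every element lies in at most two of the $C_i$'' pull against one another: shrinking the seeds tends to inflate overlap in the derived circuits, while spreading them out tends to inflate the total length. I expect the resolution to take the form of a minimal-counterexample analysis in which Observation \ref{obs-onesingular} dictates the choice of basis, seeds, and cocircuit, supplemented by direct verification of the smallest ranks. Regularity, in the form of totally unimodular sign-tracking, seems essential, and the argument is unlikely to extend to general binary matroids.
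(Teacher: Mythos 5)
Your opening reduction is sound and matches the paper: pass to a $2$-uniform colouring, observe that an SRC $4$-tuple has average circuit length at most $(r(M)+2)/2$, and conclude a short rainbow circuit exists. But everything after that is a template rather than a proof, and the template as stated does not work. Two fundamental circuits $C(x_1,B)$ and $C(x_2,B)$ of a basis $B$ need not be rainbow for any choice of $B$ (indeed, the hard case is precisely when large restrictions of $M$ are circuit-achromatic, so that \emph{no} circuit inside them is rainbow), and each fundamental circuit can have up to $r(M)+1$ elements, so two such ``seeds'' can already exhaust the entire budget $2r(M)+4$ before $C_3$ and $C_4$ are counted. Nothing in your sketch controls either the rainbow property or the lengths, and the assertion that the circuits are ``forced to sit inside the union of just a few colour classes'' has no mechanism behind it. Observation \ref{obs-onesingular} alone cannot ``dictate the choice of basis, seeds, and cocircuit''; it is a counting remark about restrictions with $2r-1$ elements, not a construction tool.

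The missing ideas are the ones the paper is actually built on: Seymour's decomposition of regular matroids into graphic, cographic, and $R_{10}$ pieces glued by $1$-, $2$-, and $3$-sums; the Bérczi--Schwarcz stratification theorem (Theorem \ref{the1}) characterizing circuit-achromatic colourings, which is what makes the circuit-achromatic parts tractable; separate base-case arguments for graphic matroids (rainbow vertices, theta subgraphs, Eulerian orientations) and cographic matroids (rainbow cocycles via vertex degrees); and the machinery of $T$-SRCTs, $T$-SRCPs and $x$-semi-SRCPs (Theorem \ref{the-NoRainbowCircuit}) needed to transport rainbow circuit pairs across the $3$-sum boundary when one side has no rainbow circuits at all. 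Your appeal to total unimodularity and ``signed circuit elimination'' is a genuinely different use of regularity, but it is not strong enough on its own: the symmetric difference of two circuits in a binary matroid is only a disjoint union of circuits, with no control over how the colours distribute among the pieces, so the elimination step cannot guarantee that $C_3$ and $C_4$ are rainbow or that the multiplicity-two condition holds. As written, the proposal identifies the right target but does not contain a proof of it.
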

In \cite{DevDreFunMazGuoHuyMohMon}, the conjecture is shown to be true for graphic and cographic matroids.  In this paper, we prove the above conjecture by proving the existence of four rainbow circuits the sum of whose cardinalities is at most $2r(M) +4.$

A {\bf cycle} in a matroid is a disjoint union of circuits. 
For a coloured matroid $(M,c)$, a pair of circuits (resp. cycles) $\{ C_1, C_2 \}$ is said to be a  {\bf rainbow circuit pair  (RCP)} (resp. {\bf rainbow cycle pair (RCyP)}) if $C_1$ and $C_2$ are disjoint rainbow circuits (resp. cycles).  
Generally, a disjoint pair of rainbow subsets $\{ X_1, X_2\}$ is said to be a {\bf rainbow pair}.
A RCP (resp. RCyP) $\{ C_1, C_2 \}$ is said to be {\bf short} if $|C_1| + |C_2| \le r(M) +2.$  In shorthand, we write SRCP (resp. SRCyP) for a short circuit (resp, cycle) pair.
 
A family of circuits $\{ C_1, C_2, C_3, C_4 \}$ is said to be a {\bf short rainbow circuit  $4$-tuple} ({\bf SRC} $\mathbf{4}${\bf-tuple})  if the circuits $C_i,\ i \in [4]$ are rainbow circuits for which $\sum_i|C_i| \le 2r(M) + 4$ and for which no element of $M$ belongs to more than two of the circuits.  We can define the same for cycles where a SRCy $4$-tuple denotes a short rainbow cycle $4$-tuple.
Note that if $\{ C_1, C_2 \}$ is a SRCP (resp. SRCyP), then $\{ C_1, C_1, C_2, C_2 \}$ is a SRC $4$-tuple (resp. SRCy $4$-tuple). 
%
By averaging, one sees that at least one of the circuits in an SRC $4$-tuple is short.  We prove Conjecture \ref{conj2} by proving the following:

\begin{theorem}
Let $M$ be a simple regular matroid and let $c$ be a $2$-uniform $(r(M)+1)$-colouring of $M$.  Then $M$ has a SRC $4$-tuple.
\label{the-main}
\end{theorem}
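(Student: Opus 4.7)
The plan is to establish Theorem \ref{the-main} by induction on $r(M)$, leveraging Seymour's decomposition theorem for regular matroids together with systematic exchange operations on rainbow circuits. The key structural idea is that, under the $2$-uniform hypothesis, each element $e$ has a unique colour-class partner $e'$, so a rainbow circuit $C\ni e$ can often be perturbed into a nearby rainbow circuit by replacing $e$ with $e'$; the extensive exchange notation introduced in the preamble suggests that the proof catalogues such swap operations and uses them to engineer the required quadruple from a smaller starting configuration.

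First I would handle the base cases. Small-rank matroids can be checked by direct inspection. Note that $|E(M)|=2r(M)+2=12$ forces any rank-$5$ instance to have $12$ elements, so the exceptional regular matroid $R_{10}$ (with only $10$ elements) cannot itself arise as $M$, though it may appear as a component in a $k$-sum. For the graphic and cographic cases, I would start from the proof of Conjecture \ref{conj2} given in \cite{DevDreFunMazGuoHuyMohMon}, and aim to strengthen it to produce a short rainbow circuit pair (SRCP) rather than a single short circuit: such an SRCP $\{C_{1},C_{2}\}$ immediately yields the SRC $4$-tuple $\{C_{1},C_{1},C_{2},C_{2}\}$, as noted in the excerpt.

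Second, for the inductive step, I would use Seymour's theorem: if $M$ is neither graphic nor cographic and has sufficiently large rank, then $M$ decomposes as a $1$-, $2$-, or $3$-sum of two smaller regular matroids $M_{1}$ and $M_{2}$. I would transfer the colouring of $M$ to each summand (possibly extending it by one colour class at the interface element(s), so that the $2$-uniform $(r+1)$-colouring hypothesis passes to the pieces), apply induction to each summand to obtain SRC $4$-tuples there, and then recombine the circuits across the sum using the swap operations. The delicate point here is to maintain $\sum_{i}|C_{i}|\le 2r(M)+4$ after recombination and to ensure that no element of $M$ is used by more than two of the final four circuits, which typically requires replacing offending circuits by colour-partner variants.

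The main obstacle will be the $3$-sum case. The interface is a three-element triangle, and colour classes may straddle the two summands, so the inductive hypothesis has to be invoked with a carefully modified colouring whose interaction with the triangle must be tracked. A secondary difficulty is that producing four distinct rainbow circuits (rather than merely an SRCP) appears to be genuinely necessary in some configurations: when no SRCP can be extracted, one must instead find four pairwise-overlapping but carefully coordinated rainbow circuits, and it is precisely the relaxation from ``disjoint pair'' to ``each element in at most two circuits'' that buys the flexibility needed to push the argument through in these obstinate cases.
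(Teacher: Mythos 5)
Your outline matches the paper's top-level skeleton (induction on rank via Seymour's decomposition, graphic/cographic base cases, recombination across $2$- and $3$-sums), but it misses the central difficulty, and the step ``apply induction to each summand to obtain SRC $4$-tuples there'' would fail. By Lemma \ref{lem-shortcircuitin23sum}, the only hard case is when one summand, say $M_1' = M_1\backslash E(M_2)$, is \emph{circuit-achromatic}: it contains no rainbow circuits at all, so the inductive hypothesis produces nothing on that side, and no amount of colour-partner swapping manufactures a rainbow circuit where none exists. The paper's resolution is Theorem \ref{the-NoRainbowCircuit}, a separate structural result (occupying Sections \ref{sec-BigTheoremi)andii)Graphic}--\ref{sec-ProofNoRainbowCircuitRegulariii)} and most of the paper) which uses the stratification theorem for circuit-achromatic binary matroids (Theorem \ref{the1}) to extract, from the achromatic summand, pairwise-disjoint rainbow circuits each meeting the interface set $S$ in exactly one element ($S$-SRCPs and $S$-SRCTs). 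These are then symmetric-differenced with the circuits of the inductively obtained $4$-tuple on the other summand to close them into rainbow cycles of $M$. Without an analogue of this theorem your recombination step has nothing to recombine with on the achromatic side.

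A second, smaller gap: for the graphic base case you propose to strengthen the argument of \cite{DevDreFunMazGuoHuyMohMon} to always yield an SRCP. The paper's Lemma \ref{lem-shortpairsgraphic} shows this is not always possible; in some configurations one only obtains a rainbow theta-subset together with a disjoint rainbow circuit, and the three circuits of the theta pairwise share elements. This is exactly where the relaxation from a disjoint pair to a $4$-tuple with each element in at most two circuits is first forced -- in the base case itself, not only in the recombination step as you suggest. (Incidentally, the swap macros in the preamble are never invoked in the paper's arguments; the recombination is done by symmetric differences of circuits across the sum, not by single-element colour-partner exchanges.)
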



\subsection{Notation}

For definitions and notation pertaining to matroids, we shall follow \cite{Oxl}. 
For a set $X$ and elements $x_1, \dots ,x_k$ we shall often write $X + x_1 + \cdots + x_k$ (resp. $X - x_1 - \cdots - x-k$) in place of $X \cup \{ x_1, \dots ,x_k \}$ (resp. $X \backslash \{ x_1, \dots ,x_k \}$).
For a graph $G$, $M(G)$ will denote the {\bf cycle matroid} for $G$, and $M^*(G)$ will denote the {\bf cocycle matroid}.  The sets $E(G)$ and $V(G)$ will denote the sets of edges and vertices of $G$, respectively.
For a set $X$, either $X \subseteq V(G)$ or $X \subseteq E(G),$ $G[X]$ will denote the subgraph of $G$ induced by $X$.

For a graph $G$, let $\kappa(G)$ denote the number of components of $G.$ 
For a vertex $v \in V(G)$, $d_G(v)$ will denote the degree of the vertex $v.$  For a subset $U \subseteq V(G),$ we let $d_G(U) = \sum_{v \in U}d_G(v).$  The set $N_G(v)$ will denote the set of neighbours of $v$ and $E_G(v)$ will denote the set of edges incident with $v.$
For an edge $e \in E(G)$ we shall write $e = uv$ to mean that $u$ and $v$ are the endvertices of $e$.  For an edge $e\in E(G),$ we let $G\backslash e$ (resp. $G/e$) denote the graph obtained from $G$ by deleting (resp. contracting) $e.$
For a subgraph $H$ of $G$, we often write $|H|$ in place of $|E(H)|$ or $\varepsilon(H)$ which we refer to as the {\bf size} of $H.$  We denote the subgraph obtained from $H$ by adding $v$ and all edges between $v$ and vertices in $H$ by $H +v.$  For an edge $e \not\in E(H),$ we let $H + e$ denote the subgraph obtained by adding $e$ to $H$, including the endvertices of $e.$ 

Suppose $(G,c)$ is a coloured graph.  For vertices $u,v \in V(G),$ we let $dist_G^c(u,v)$ be the length of a shortest rainbow path between $u$ and $v$ in $G$; if no such path exists, we define $dist_G^c(u,v) := \infty.$  
For a coloured graph $(G,c)$ (resp. coloured matroid $(M,c)$) and a minor $H$ of $G$ (resp. $N$ of $M$), we let $c\big| H$ (resp. $c\big| N$) denote the colouring of $H$ (resp. $N$) which is the restriction of $c$ to $E(H)$ (resp. $E(N)$).
%
 
 Let $P = v_1e_2v_2 e_2 \cdots e_k v_{k+1}$ be a path where $v_1, \dots ,v_{k+1}$ are vertices and $e_1, \dots ,e_k$ are edges.  If we extend $P$ to a path $P' =  v_1e_2v_2 e_2 \cdots e_k v_{k+1}e v$ or  $P' = v e  v_1e_2v_2 e_2 \cdots e_k v_{k+1}$ by adding an edge $e$ and a vertex $v$, then we shall simply write $P' = P + e + v.$ 
 
  For a matroid $M$ and a set $S$ where $E(M) \cap S = \emptyset$ we say that a matroid $N$ is a {\bf regular} (resp. {\bf graphic, cographic}) {\bf extension} of $M$ by $S$ if $M$ is regular (resp. graphic, cographic), $E(N) = E(M) + S$ and $N\backslash S = M.$  Normally, we shall denote such an extension by $M +S.$
  
  Let $N = M+e$ be a regular extension of $M$ by an element $e$.  A circuit $C$ of $N$ is said to be an $\mathbf{e}${\bf-rainbow} circuit for $(M,c)$ if $e\in C$ and $C-e$ is a rainbow subset of $E(M).$  We shall often make use of the following elementary observation whose proof we omit.

\begin{observation}
Let $(M,c)$ be a simple, regular, circuit-achromatic coloured matroid where $\varepsilon(M) \ge r(M)+1.$  Then for any regular extension $N = M+e$ by an element $e$, the there is an $e$-rainbow circuit $C$ for $(M,c)$ where $|C| \le r(M).$ \label{obs-erainbowcircuit}
\end{observation}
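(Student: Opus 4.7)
My plan is to split on the number of colours $|c(M)|$ and to exploit the symmetric-difference property of circuits in binary (hence regular) matroids. The key preliminary I would record first is that in any circuit-achromatic $(M,c)$, every rainbow subset of $E(M)$ must be independent, since any dependent rainbow set contains a rainbow circuit. Selecting one element per colour class therefore yields a rainbow independent set of size $|c(M)|$, and hence $|c(M)|\le r(M)$.

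If $|c(M)|\le r(M)-1$, the bound is immediate: any $e$-rainbow circuit $C$ has $|C-e|\le|c(M)|\le r(M)-1$, whence $|C|\le r(M)$. The substantive case is $|c(M)|=r(M)$. Here I would first construct a rainbow base $B$ of $M$ greedily: extend a rainbow independent set $R_{i-1}$ using colours $c_1,\dots,c_{i-1}$ by an element of the next colour $c_i$ that is independent of $R_{i-1}$. This is always possible, for if every element of colour $c_i$ lay in $\mathrm{span}_M(R_{i-1})$, then $R_{i-1}+f$ for any such $f$ would be a rainbow dependent set, and would therefore contain a rainbow circuit---contradicting circuit-achromaticity.

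With the rainbow base $B$ in hand, form $C:=C_N(e,B)$. Then $C-e\subseteq B$ is rainbow and $|C|\le r(M)+1$, so we are done unless $C=B+e$. In that extremal case, use $\varepsilon(M)\ge r(M)+1$ to pick $f\in E(M)\setminus B$; since $c(B)=c(M)$, the element $f$ shares its colour with some element of $B$. The fundamental circuit $C_M(f,B)$ is not rainbow (by circuit-achromaticity), yet $C_M(f,B)-f\subseteq B$ has all-distinct colours, so the colour-repetition must involve $f$: there is $g\in C_M(f,B)\cap B$ with $c(g)=c(f)$. Then $B':=B-g+f$ is a base of $M$ (since $g\in C_M(f,B)$), and it remains rainbow because only two same-coloured elements were exchanged.

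Finally, form $C':=C_N(e,B')$. If $|C'|\le r(M)$ we are done; otherwise $C'=B'+e$, and so $C\triangle C'=\{g,f\}$. Since $N$ is regular and thus binary, the symmetric difference of any two circuits is a disjoint union of circuits, forcing $\{g,f\}$ to be a circuit of $N$. Hence $g$ and $f$ are parallel in $N$, and because $N\setminus e=M$, they remain parallel in $M$---contradicting simplicity. The main difficulty I anticipate lies in this borderline case $|c(M)|=r(M)$: one must jointly invoke circuit-achromaticity (to force $g$ into $C_M(f,B)$), the binary symmetric-difference property of $N$ (to turn the two-element residue into a circuit), and the simplicity of $M$ (to extract the final contradiction).
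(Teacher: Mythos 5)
Your second case ($|c(M)|=r(M)$) is handled correctly and completely: the greedy construction of a rainbow basis $B$ (circuit-achromaticity forces each colour class to meet $E(M)\setminus\mathrm{cl}_M(R_{i-1})$), the fundamental circuit $C_N(e,B)$, the same-colour exchange $B'=B-g+f$ extracted from the non-rainbow circuit $C_M(f,B)$, and the final step in which $C\triangle C'=\{f,g\}$ must, by the binary symmetric-difference property, be a $2$-circuit contradicting simplicity of $M$ --- all of this is sound. The paper explicitly omits its own proof of this observation, so there is nothing to compare against; judged on its own, this part of your argument is a clean, self-contained proof.

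The gap is in your first case. When $|c(M)|\le r(M)-1$ you only bound the size of an $e$-rainbow circuit \emph{assuming one exists}; you never produce one, and in fact none need exist. Take $M=M(C_4)$ with all four edges given the same colour and let $N$ be obtained by adding a chord $e$: then $(M,c)$ is simple, regular and circuit-achromatic with $\varepsilon(M)=r(M)+1$, yet every circuit of $N$ through $e$ is a triangle whose two remaining edges share a colour, so no $e$-rainbow circuit exists at all. So the observation as literally stated is false in this regime and your ``immediate'' case cannot be repaired; the statement is only ever invoked in the paper in situations where the colouring is $2$-bounded with $\varepsilon(M)\ge 2r(M)-1$, which forces $|c(M)|=r(M)$, i.e.\ exactly the case you do prove. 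The correct fix is to make $|c(M)|=r(M)$ an explicit hypothesis (or derive it from the context of use) and let your second case be the entire proof. A minor further point: forming $C_N(e,B)$ presupposes that $e$ is not a coloop of $N$; this is implicit in the statement (a coloop lies in no circuit) but deserves a sentence.
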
   
  
For a subset $X \subseteq E(M)$ of a matroid $M$, we  say that an element $e$ is a {\bf chord} of $X$ if $e\in \mathrm{cl}(X) - X.$  For all chords $e$ of $X$, we refer to the circuits of $X+e$ containing $e$ as $\mathbf{X,e}${\bf -chordal circuits}. Note that for a chord $e$ of a circuit $C$ in a binary matroid $M$, there are exactly two $C,e$ - chordal circuits.
  
%


\section{A theorem for circuit-achromatic coloured matroids}\label{sec-roleachromatic}
 
 Understanding the structure of circuit-achromatic coloured matroids is a necessary aspect of the proof of the main theorem.  The primary goal of this section is to provide a theorem for circuit-achromatic regular matroids which shows that such matroids are convenient in the sense that one can find certain pairs of disjoint rainbow circuits in certain regular extensions of these matroids.
 To begin with, we shall describe why understanding circuit-achromatic matroids is important.
 
In the proof of Theorem \ref{the-main},  we use a fairly standard approach for regular matroids which exploits 
the well-known decomposition theorem for regular matroids of Seymour \cite{Sey}.  That is, for every regular matroid $M$, $M$ is either graphic, cographic, isomorphic to $R_{10}$ or $M$ is a $1$-, $2$-, or $3$-sum of two regular matroids.  This results in a decomposition of a regular matroid $M$ into $1$-, $2$- and $3$-sums of matroids which are either graphic, cographic or isomorphic to $R_{10}.$

Suppose $(M,c)$ is a coloured matroid as described in Theorem \ref{the-main}.  The next lemma shows that if $M = M_1 \oplus_2 M_2$ or $M = M_1 \oplus_3 M_2$, then to find a SRC $4$-tuple for $(M,c)$ we may assume that for some $i\in [2],$ $M_i \backslash E(M_{3-i})$ is circuit-achromatic.  

\begin{lemma}
Let $(M,c)$ be a simple coloured binary matroid.  Suppose there are matroids $M_1$ and $M_2$ such that for $i=1,2,$ $M_i \backslash E(M_{3-i})$ contains a rainbow circuit and either
\begin{itemize}
\item[i)]  $M = M_1 \oplus_2 M_2$ and $\varepsilon(M) > r(M)+3$ or
\item[ii)] $M = M_1 \oplus_3 M_2$ and for $i = 1,2,$ $\varepsilon (M \big| E(M_i)) > r(M_i) +1. $ 
\end{itemize}
Then $M$ has a SRCP.
\label{lem-shortcircuitin23sum}
\end{lemma}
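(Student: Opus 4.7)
My plan is to pick, for $i \in [2]$, a shortest rainbow circuit $D_i$ in $N_i := M \big| (E(M_i) \cap E(M))$; this equals $M_i \setminus p$ in case~(i), with $p$ the $2$-sum element, and $M_i \setminus T$ in case~(ii), with $T$ the common triangle. Since $N_i$ is a restriction of $M$ with ground set disjoint from that of $N_{3-i}$, the circuits $D_1, D_2$ are automatically disjoint rainbow circuits of $M$. If $|D_1|+|D_2|\le r(M)+2$ we have the SRCP, so suppose $|D_1|+|D_2|>r(M)+2$. Using $|D_i|\le r(N_i)+1\le r(M_i)+1$ together with $r(M)=r(M_1)+r(M_2)-1$ in case~(i) and $r(M)=r(M_1)+r(M_2)-2$ in case~(ii), this strict inequality forces: in case~(i), both $|D_i|=r(M_i)+1$, and since $p$ is neither a loop nor a coloop of $M_i$ we have $r(N_i)=r(M_i)$, so each $D_i$ \emph{spans} $N_i$; in case~(ii), at least one index, which I relabel to be $1$, satisfies $|D_1|=r(M_1)+1$ and $r(N_1)=r(M_1)$, so $D_1$ spans $N_1$.

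The size hypothesis now supplies a chord of $D_1$ in $N_1$. In case~(i), $\varepsilon(M)>r(M)+3=|D_1|+|D_2|$ gives $x\in E(M)-(D_1\cup D_2)$, and by relabelling if necessary I take $x \in E(N_1)-D_1$; in case~(ii), $\varepsilon(M\big|E(M_1))>r(M_1)+1=|D_1|$ provides $x\in E(N_1)-D_1$ directly. Because $D_1$ spans $N_1$ and $N_1$ is binary, $x$ is a chord, and the two $D_1,x$-chordal circuits $C^{(1)}, C^{(2)}$ in $N_1$ satisfy $|C^{(1)}|+|C^{(2)}|=|D_1|+2$. I split on $c(x)$: if $c(x)\notin c(D_1)$, both $C^{(j)}$ are rainbow, but the shorter has size at most $\lfloor(|D_1|+2)/2\rfloor<|D_1|$ (using $|D_1|\ge 3$, from simplicity of $N_1$), contradicting the minimality of $D_1$. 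Hence $c(x)=c(y)$ for the unique $y\in D_1$ of that colour; of the two chordal circuits, the one not containing $y$, call it $C_1^*$, is rainbow, and since $|C^{(1)}|\ge 3$ one has $|C_1^*|\le|D_1|-1\le r(M_1)$.

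To finish: in case~(i) the pair $\{C_1^*, D_2\}$ gives $|C_1^*|+|D_2|\le r(M_1)+(r(M_2)+1)=r(M)+2$, an SRCP. In case~(ii), if $|D_2|\le r(M_2)$ then already $\{C_1^*,D_2\}$ works with $|C_1^*|+|D_2|\le r(M_1)+r(M_2)=r(M)+2$; otherwise $|D_2|=r(M_2)+1$, $D_2$ spans $N_2$, and applying the same chord trick symmetrically (using $\varepsilon(M\big|E(M_2))>r(M_2)+1$ to supply a chord) yields a rainbow $C_2^*$ with $|C_2^*|\le r(M_2)$, whence $\{C_1^*,C_2^*\}$ is the SRCP. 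I expect the main obstacle to be the sub-case analysis in~(ii): the possible rank drop $r(N_i)<r(M_i)$ in the $3$-sum means only one side may admit the chord trick, and the two size hypotheses $\varepsilon(M\big|E(M_i))>r(M_i)+1$ are calibrated precisely so that the spanning side always does.
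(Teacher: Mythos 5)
Your proposal is correct and follows essentially the same route as the paper's proof: take a rainbow circuit in each part, dispose of the case where their sizes already sum to at most $r(M)+2$, and otherwise use the forced spanning property together with the size hypothesis to produce a chord and replace a spanning rainbow circuit by a strictly shorter rainbow chordal circuit. The only difference is cosmetic — you choose shortest rainbow circuits and make explicit the colour case-split showing that one of the two chordal circuits is rainbow of size at most $|D_1|-1$, a step the paper asserts without detail.
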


\begin{proof}
For $i = 1,2,$ let $M_i' = M_i \backslash E(M_{3-i})$ and let $C_i$ be a rainbow circuit in $M_i'.$
Suppose that i) holds.   Then $r(M_1) + r(M_2) = r(M) +1.$  If for some $i$, $|C_i| \le r(M_i)$, then $|C_1| + |C_2| \le r(M_1) + r(M_2) +1 = r(M) +2$ and $\{ C_1,C_2 \}$ is a SRCP for $(M,c),$ implying that $\{ C_1, C_1, C_2, C_2 \}$ is SRC $4$-tuple.  Thus we may assume that for $i=1,2$, $|C_i| = r(M_i) +1.$  Then for $i = 1,2,$ $C_i$ is a spanning circuit for $M_i$.  Since $\varepsilon(M) > r(M) +3$, there exists $i\in [2]$ such that $\varepsilon (M_i) > r(M_i) +1$ and we may assume that $\varepsilon (M_1) > r(M_1) +1.$  Then there exists $e\in E(M_1') - C_1$  and a $C_1,e$-chordal circuit $C_1'$ where $C_1'$ is a rainbow circuit and $|C_1'| \le r(M_1).$  Now $\{ C_1', C_2\}$ is seen to be a SRCP (and hence $(M,c)$ has a SRC $4$-tuple).

Suppose that ii) holds.  Then $r(M_1) + r(M_2) = r(M) +2$ and for $i=1,2,$ $\varepsilon(M_i') > r(M_i) +1.$   We claim that for $i=1,2,$ $M_i'$ has a rainbow circuit $C_i'$ where $|C_i'|\le r(M_i).$  If $|C_i| \le r(M_i)$, then we define $C_i' := C_i.$
Suppose instead that $|C_i| > r(M_i).$ Then $|C_i| = r(M_i) +1$ and  
$C_i$ spans $M_i'.$  Given that $\varepsilon(M_i') > r(M_i) +1,$ there exists $e \in E(M_i') - C_i$ and a $C_i,e$-chordal rainbow circuit $D$ for which $|D| \le r(M_i).$  In this case, we define $C_i':= D.$  This proves our claim.
We have that $|C_1'| + |C_2'| \le r(M_1) + r(M_2) = r(M) +2$ and $\{ C_1',C_2' \}$ is a SRCP (and hence $(M,c)$ has a SRC $4$-tuple).
\end{proof}

The main obstacle to proof of Theorem \ref{the-main} is the case where $M$ is a $3$-sum $M = M_1 \oplus_3 M_2$ where $M_1' = M_1 \backslash E(M_2)$ has no rainbow circuits. 
The general strategy is to modify $M_2' = M_2 \backslash E(M_1)$ by adding a subset $S' \subseteq E(M_1) \cap E(M_2)$ to $M_2'$ and colouring these elements so as to obtain a coloured matroid $(M_2'', c_2'')$ where $c_2''$ is a $2$-uniform colouring.  We will argue inductively, assuming that when $M_2''$ is simple,  $(M_2'', c_2'')$ contains a SRC $4$-tuple.  We will then extend this to a SRC $4$-tuple of $(M,c).$  To do this, we need to find certain pairs of disjoint rainbow circuits in $M_1.$  The fact that $M_1'$ has no rainbow circuits enables one to do this.  

Let $(M,c)$ be a simple, regular coloured matroid and let
$N=M+T$ be a regular extension of $M$ by $T.$ 
A pairwise-disjoint collection of circuits $\C$ (resp. cycles) from $N$ is said to be a $\mathbf{T}${\bf-short rainbow circuit collection} (resp. $\mathbf{T}${\bf-short rainbow cycle collection}) for $(M,c)$ if for all $\{ C, C' \} \subset {\binom {\C}2}$, $|C| + |C'| \le r(M)+2$ and for all $C\in \C,$ $|C\cap T| =1$ and $C-T$ is a rainbow subset of $E(M).$
In the case where $\C$ is a pair/triple of circuits (resp. cycles), we simply write $\C$ is a $\mathbf{T}${\bf - SRCP} / $\mathbf{T}${\bf - SRCT} (resp. $\mathbf{T}${\bf - SRCyP} / $\mathbf{T}${\bf - SRCyT}) for $(M,c).$  Note that the existence of a $T$-SRCyP (resp. $T$-SRCyT) implies the existence of a $T$-SRCyP (resp. $T$-SRCyT) for $(M,c).$ 

A disjoint pair of circuits $\{ C_1, C_2 \}$ in $N$ is said to be a {\bf near-}$\mathbf{T}${\bf-short rainbow circuit pair} ({\bf near-}$\mathbf{T}${\bf-SRCP}) for $(M,c)$ if for $i= 1,2$, $|C_i\cap T| =1,$ $C_i - T$ is a rainbow subset and $|C_1| + |C_2| \le r(M) +3.$
We have the corresponding definition for cycles in place of circuits, where {\bf near-}$\mathbf{T}${\bf-SRCyP} denotes a near-$T$-short rainbow cycle pair.   The existence of a near-$T$-SRCyP implies the existence of a near $T$-SRCP.
 
Suppose $T = \{ x \}$ and $\C = \{ C_1, C_2 \}$ is a pair of disjoint circuits (resp. cycles)  of $N$ such that $C_1 \cap C_2 = \{ x \},$ $|C_1| + |C_2| \le r(M)+3$ and for $i=1,2,$ $C_i - x$ is a rainbow subset of $E(M).$  In this case, we say that $\C$ is $\mathbf{x}${\bf-semi-short rainbow circuit pair} ($\mathbf{x}${\bf-semi-SRCP}) for $(M,c).$  We have the corresponding definition for cycles where $\mathbf{x}${\bf-semi-SRCyP} denotes an $x$-near-semi-short rainbow cycle pair. 

The next theorem will play a central role in the proof of the main theorem and a large part of this paper is devoted to its proof.    

\begin{theorem}
Let $M$ be a rank-$n,$ simple regular matroid where $2n-2 \le \varepsilon(M) \le 2n-1$ and let $c$ be a $2$-bounded $n$-colouring of $M$ where $(M,c)$ is circuit-achromatic
Let $N = M + T$ be a regular extension of $M$ where $T$ is a co-independent $3$-circuit in $N.$  
If $\varepsilon(M) = 2n-1,$ then let $e$ be the unique colour-singular element in $(M,c).$
\begin{itemize}
\item[i)]  Suppose $\varepsilon(M) = 2n-1.$  Then either 
\begin{itemize}
\item[i.1)] there exists a $T$-SRCT for $(M,c)$ 
or
\item[i.2)] for all $x\in T,$ there exists a $T$-SRCP $\{ C_1, C_2 \}$ for $(M-e,c)$ where $x\not\in C_1 \cup C_2.$
\end{itemize}
\item[ii)] Suppose $\varepsilon(M) = 2n-1$ and $M+x$ is a regular extension of $M$ by an element $x.$  If $x$ and $e$ are non-parallel, then there exists an $x$-semi-SRCP for $(M,c).$
\item[iii)] Suppose $\varepsilon(M) = 2n-2.$ Then there exists a $T$-SRCP for $(M,c).$
\end{itemize}  
\label{the-NoRainbowCircuit}
\end{theorem}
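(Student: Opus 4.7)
The plan is to construct all the required structures from minimum $t$-rainbow circuits in $M+t$, furnished by Observation \ref{obs-erainbowcircuit}, and to transform them using the binary cycle-space identity that comes from $T$ being a $3$-circuit of $N$. For each $t \in T$, let $D_t \subseteq E(M)+t$ be a minimum $t$-rainbow circuit for $(M,c)$; by Observation \ref{obs-erainbowcircuit}, $|D_t| \le n$. The key tool is the following: when $D_{t_i}$ and $D_{t_j}$ are disjoint, the set $D_{t_i} \triangle D_{t_j} \triangle T$ is a cycle of $N$ whose intersection with $T$ is exactly the remaining element $t_k$ and whose trace on $E(M)$ is $(D_{t_i}-t_i)\cup(D_{t_j}-t_j)$; it therefore contains a $t_k$-circuit whose non-$T$ part is automatically a rainbow subset of $E(M)$, of size strictly less than $|D_{t_i}|+|D_{t_j}|-1$.

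For part (iii), with $\varepsilon(M) = 2n-2$, a counting identical to the one underlying Observation \ref{obs-onesingular} gives exactly two colour-singular elements. I would first check whether some disjoint pair $\{D_{t_i}, D_{t_j}\}$ already satisfies $|D_{t_i}|+|D_{t_j}| \le n+2$, which directly furnishes a $T$-SRCP. Otherwise every disjoint pair is ``too large'' and every ``short'' pair overlaps; in this regime I would repeatedly apply the cycle-space trick above, using the circuit-achromatic hypothesis on $(M,c)$ to rule out unwanted short rainbow circuits arising in the residues, until the $D_t$'s can be replaced by shorter variants that form a disjoint short pair.

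For part (i), the same framework is deployed inside $N$ itself, now seeking a $T$-SRCT consisting of three pairwise disjoint $t$-rainbow circuits whose pairwise sums are at most $n+2$. If no such triple exists, each attempted construction must be obstructed, and one expects the obstruction to be concentrated on the unique colour-singular element $e$; deleting $e$ then drops us into the setting of part (iii) applied to $M-e$, which has exactly $2n-2$ elements and rank $n$ (or $n-1$ if $e$ is a coloop, a degenerate case handled separately via the cocircuits through $e$). The SRCP provided by (iii) uses only two elements of $T$, and by the inherent symmetry of the construction among $t_1, t_2, t_3$ one can arrange the leftover element to be any prescribed $x \in T$, yielding (i.2).

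For part (ii), I would first apply Observation \ref{obs-erainbowcircuit} to $M+x$ to obtain an $x$-rainbow circuit $C_1$ with $|C_1| \le n$. Since $x$ and $e$ are non-parallel, $(M-e)+x$ is still a rank-$n$ regular extension of the rank-$n$ matroid $M-e$, and Observation \ref{obs-erainbowcircuit} applied there gives a second $x$-rainbow circuit $C_2$ with $|C_2| \le n$ that automatically avoids $e$. If $C_1 \cap C_2$ properly contains $\{x\}$, then $C_1 \triangle C_2$ is a cycle of $M$, hence a disjoint union of circuits which must all be non-rainbow by the circuit-achromatic hypothesis; this combinatorial information is then used to split off shorter $x$-circuits intersecting only at $x$, bringing the size bound into the range $n+3$. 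The main obstacle throughout will be the bookkeeping in the extremal configurations where multiple $t$-rainbow circuits are simultaneously of the maximum size $n$ and share the bulk of their elements; rigidly constraining these overlap patterns from the circuit-achromatic hypothesis is where the technical bulk of the argument lies.
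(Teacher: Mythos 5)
Your proposal has a genuine gap, and it is the central one: you have no mechanism for achieving the bound $|C_1|+|C_2|\le r(M)+2$. Observation \ref{obs-erainbowcircuit} only gives $t$-rainbow circuits of size up to $r(M)=n$ each, so a pair of them sums to roughly $2n$, nearly double what is required; the symmetric-difference identity $D_{t_i}\triangle D_{t_j}\triangle T$ is a valid bookkeeping device but it cannot, by itself, shrink circuits to the required size, and the step ``repeatedly apply the cycle-space trick\dots until the $D_t$'s can be replaced by shorter variants that form a disjoint short pair'' is an aspiration, not an argument --- there is no measure that decreases and no reason the process lands below $n+2$. The paper obtains these tight bounds from an entirely different source: Theorem \ref{the1} (circuit-achromatic implies stratified) pins down the exact inductive structure of $(M,c)$ in the graphic and cographic cases, which is what powers Lemmas \ref{lem1}, \ref{lem2} and \ref{newlem3} (two edge-disjoint rainbow paths of \emph{total} length at most $n$, etc.); your proposal never invokes the stratification at all. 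Equally fatal, you never invoke Seymour's decomposition theorem, yet the statement is about general regular matroids: without decomposing $N$ into $2$- and $3$-sums of graphic, cographic and $R_{10}$ pieces, your argument has no way to reach matroids that are neither graphic nor cographic, and the bulk of the paper's proof (Sections 9--11) consists precisely of transferring circuits across these sums.

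There are also more local problems. In part (iii) the number of colour-singular elements is $0$ or $2$, not ``exactly two'' (the paper merges the two singletons into one class to reduce to the $2$-uniform case). In part (i), the claim that any obstruction to a $T$-SRCT ``must be concentrated on $e$'' is asserted without justification, and even granting the reduction to (iii) on $M-e$, that yields a single $T$-SRCP avoiding one particular element of $T$, whereas i.2) demands such a pair avoiding \emph{every} prescribed $x\in T$; ``the inherent symmetry of the construction'' does not supply the other two pairs. In part (ii), the two circuits you produce each have size up to $n$ and need not intersect only in $x$, and ``split off shorter $x$-circuits'' from their symmetric difference again has no quantitative content. The approach as described cannot be completed into a proof without importing the structural machinery (stratification plus Seymour decomposition) that the paper is built on.
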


An important tool used in the proof of the above theorem is a structure theorem for circuit-achromatic binary matroids which will be given in the next section.


\section{Colourings of Binary Matroids}\label{sec-colourings}

As outlined in the previous section, the proof of Theorem \ref{the-main} depends largely on Theorem \ref{the-NoRainbowCircuit}.   In this section, we shall describe a structure theorem for circuit-achromatic binary matroids which will be instrumental in the proof of this theorem. 

Let $M$ be a rank-$n$ binary matroid.  Let $c: E(M) \rightarrow [k]$ be a $k$-colouring of $M$ having colour classes $X_1, \dots ,X_k$.  The colouring $c$ is said to be {\bf stratified} if there exists a permutation $i_1, \dots ,i_k$ of $[k]$ such that
for $j= 1, \dots ,k,$ $S_j = X_{i_1} \cup \cdots \cup X_{i_j}$ is closed (i.e. $S_j = \mathrm{cl}_M(S_j)$) and $1 \le r(S_1) < r(S_2) < \cdots < r(S_k) =n.$  We call $X_{i_1}, X_{i_2}, \dots ,X_{i_k}$ a {\bf stratification} of $c.$   We claim that if $c$ is a stratified colouring, then $(M,c)$ is circuit-achromatic.  For suppose $X_{i_1}, X_{i_2}, \dots ,X_{i_k}$ is a stratification of $c.$  Let $C$ be a circuit of $M$ and let $j^*$ be the greatest integer $j$ for which $C\cap X_{i_j} \ne \emptyset.$  Then clearly $|C \cap X_{i_{j^*}}| \ge 2$ and hence $C$ is not a rainbow circuit.
In \cite{BerSch} it was shown that the converse holds if $c$ is an $r(M)$ - colouring of a binary matroid $M$ for which $(M,c)$ is circuit-achromatic.

\begin{theoremplus}{\cite{BerSch}}
Let $M$ be a binary matroid and let $c$ be an $r(M)$-colouring of $M$.   Then $(M,c)$ is circuit-achromatic if and only if $c$ is stratified.
\label{the1}
\end{theoremplus}

The above theorem has the following useful corollaries.

\begin{corollary}
Let be a rank-$n$ binary matroid $M$ having an $n$-colouring $c$ with colour classes $X_1. \dots ,X_n$.  If $(M,c)$ is circuit - achromatic, then 
\begin{itemize}
\item[i)] for some $i$, the elements of $X_i$ form a parallel class of $M$, and
\item[ii)] for some $i$, $X_i$ is a cocircuit of $M.$
\end{itemize}
\label{cor-cor1}
\end{corollary}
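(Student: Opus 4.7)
The plan is to obtain both conclusions as essentially immediate consequences of the stratification guaranteed by Theorem~\ref{the1}: part (i) from the lowest stratum and part (ii) from the highest.

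First I would observe that $M$ must be loopless, for otherwise any loop would be a circuit of size $1$, and a singleton is trivially a rainbow subset, contradicting the hypothesis that $(M,c)$ is circuit-achromatic. I would then apply Theorem~\ref{the1} to obtain a stratification $X_{i_1}, X_{i_2}, \ldots, X_{i_n}$ of $c$: a permutation of the colour classes such that $S_j := X_{i_1} \cup \cdots \cup X_{i_j}$ is closed for every $j$ and $1 \le r(S_1) < r(S_2) < \cdots < r(S_n) = n$. Since the $n$ values $r(S_j)$ form a strictly increasing sequence of positive integers ending at $n$, they must be $1, 2, \ldots, n$ in order; in particular $r(S_1) = 1$ and $r(S_{n-1}) = n-1$.

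For part (i), $S_1 = X_{i_1}$ is a closed rank-$1$ set in the loopless matroid $M$, so it is exactly a parallel class of $M$. For part (ii), $S_{n-1}$ is a closed rank-$(n-1)$ set, i.e.\ a hyperplane of $M$, and the complement of a hyperplane is always a cocircuit; hence $X_{i_n} = E(M) \setminus S_{n-1}$ is a cocircuit.

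The argument has no real obstacle: the only small subtlety is the loopless reduction, which is what lets one identify the rank-$1$ flat $X_{i_1}$ with a parallel class rather than with a parallel class together with loops. Everything else is a direct reading off of the two extreme strata of the stratification.
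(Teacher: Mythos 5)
Your proof is correct and follows essentially the same route as the paper: apply Theorem~\ref{the1} to get a stratification, deduce $r(S_j)=j$ for all $j$, and read off part (i) from the rank-$1$ flat $S_1$ and part (ii) from the hyperplane $S_{n-1}$. Your explicit looplessness remark (a loop would be a rainbow circuit) is a small point the paper leaves implicit, but it does not change the argument.
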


\begin{proof}
By Theorem \ref{the-main}, $c$ is stratified and hence there exists a permutation $i_1, \dots ,i_n$ of $[n]$ such that
for $j= 1, \dots ,n,$ $S_j = X_{i_1} \cup \cdots \cup X_{i_j}$ is closed  and $1 \le r(S_1) < r(S_2) < \cdots < r(S_n) =n.$ It follows that for all $j\in [n],$ $r(S_j) = j$. In particular,
$r(X_{i_1}) = r(S_1) =1$ hence $X_{i_1}$ is a parallel class.  Furthermore, since $r(S_{n-1}) = n-1$ and $S_{n-1}$ is closed, it follows that
$X_{i_n}$ is a cocircuit. 
\end{proof}

The next corollary is an immediate consequence of the above.

\begin{corollary}
If $M$ is a simple matroid with an $r(M)$-colouring $c$ for which $(M,c)$ has no colour-singular elements, then $(M,c)$ contains a rainbow circuit.
\label{cor-cor2}
\end{corollary}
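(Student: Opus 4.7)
My plan is to derive this as an immediate contrapositive consequence of part (i) of the preceding Corollary \ref{cor-cor1}. So I would assume for contradiction that $(M,c)$ is circuit-achromatic. Since $c$ is an $r(M)$-colouring of (an implicitly binary) simple matroid $M$, Corollary \ref{cor-cor1}(i) applies and produces an index $i$ for which the colour class $X_i$ forms a parallel class of $M$.

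From that point the contradiction is immediate. Because $M$ is simple, every parallel class is a singleton, so $|X_i|\le 1$. On the other hand, the hypothesis that $(M,c)$ has no colour-singular element translates word-for-word into the statement that every colour class contains at least two elements; in particular $|X_i|\ge 2$. These two bounds conflict, and the contradiction shows that $(M,c)$ must contain a rainbow circuit.

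All of the structural work has already been absorbed into Theorem \ref{the1} and Corollary \ref{cor-cor1}, so there is essentially no obstacle to overcome here; the argument is a one-line deduction. The only care I would take is to confirm the reading of ``colour-singular'': once this is clear, a parallel class of size at least two inside a simple matroid is an immediate impossibility, and the corollary falls out.
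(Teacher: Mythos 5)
Your proposal is correct and matches the paper's intent exactly: the paper states that this corollary is an immediate consequence of Corollary \ref{cor-cor1}, and the deduction you give (a parallel class in a simple matroid is a singleton, contradicting the absence of colour-singular elements) is precisely that one-line argument. Your aside that $M$ is implicitly binary is the right reading, since Corollary \ref{cor-cor1} and Theorem \ref{the1} are stated for binary matroids.
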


 
 \section{Proof for graphic and cographic matroids}\label{sec-proofGraphicCographic}
Our first task is to prove Theorem \ref{the-main} for graphic and cograph matroids.  In this section, we shall prove the following theorem:

\begin{theorem}
Let $(M,c)$ be simple coloured matroid where $c$ is a $2$-uniform $(r(M)+1)$-colouring
and $M$ is either graphic or cographic.  Then $M$ has a SRC $4$-tuple.
\label{the-GraphicCographicMain}
\end{theorem}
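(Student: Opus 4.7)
The plan is to reduce to producing a \emph{short rainbow circuit pair} (SRCP): two disjoint rainbow circuits $C_1,C_2$ with $|C_1|+|C_2|\le r(M)+2$. Once an SRCP is in hand, the $4$-tuple $\{C_1,C_1,C_2,C_2\}$ is an SRC $4$-tuple, since $|C_1|+|C_1|+|C_2|+|C_2|\le 2r(M)+4$ and each element of $M$ lies in at most two of them. Moreover, because the paper defines a rainbow circuit $C$ to be \emph{short} when $|C|\le\lfloor (r(M)+2)/2\rfloor$, any two disjoint short rainbow circuits automatically form an SRCP. Thus the task reduces to finding two disjoint short rainbow circuits for graphic and cographic matroids with a $2$-uniform $(r(M)+1)$-colouring.

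For the graphic case, write $M=M(G)$ and (after restricting to a component) assume $G$ is connected and loopless, so $r(M)=\nu(G)-1$ and $\varepsilon(G)=2(r(M)+1)=2\nu(G)$. The $2$-uniform colouring $c$ pairs the edges of $G$ into $\nu(G)$ colour pairs. The result of \cite{DevDreFunMazGuoHuyMohMon} already verifies Conjecture~\ref{conj2} for $M(G)$, delivering a single short rainbow cycle $C_1$ of length at most $\lfloor (\nu(G)+1)/2\rfloor$. I would then argue by induction on $\nu(G)$: delete the edges of $C_1$ from $G$, contract a carefully chosen edge in each colour class that was hit by $C_1$ (to restore $2$-uniformity), verify that the resulting minor $G'$ is still simple and satisfies the hypotheses of the theorem for a strictly smaller parameter, and invoke the inductive hypothesis to obtain a short rainbow cycle $C_2'$ in $G'$ that can be lifted back to a short rainbow circuit $C_2$ in $G$ disjoint from $C_1$. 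Small base cases would be checked directly. An alternative, more structural route uses the Nash-Williams theorem: the density $\varepsilon(G)=2\nu(G)$ is exactly at the threshold giving two edge-disjoint spanning subgraphs (two spanning trees plus two extra edges), which yields two disjoint fundamental rainbow cycles whose total length can be bounded.

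For the cographic case, write $M=M^*(G)$; rainbow circuits of $M$ correspond to rainbow bonds of $G$. The cographic half of \cite{DevDreFunMazGuoHuyMohMon} supplies a single short rainbow bond $B_1$ of $G$. I would then apply the graphic argument to the matroid dual, or equivalently contract the edges of $B_1$ in $G$ and work inductively on the resulting graph to obtain a second disjoint short rainbow bond $B_2$ with $|B_1|+|B_2|\le r(M)+2$. The same lifting and recolouring bookkeeping as in the graphic case is required.

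The main obstacle will be the \emph{disjointness} requirement together with the preservation of the $2$-uniform colouring. Any single short rainbow circuit has length at most $\lfloor r(M)/2\rfloor+1$, so two disjoint ones comfortably fit under $r(M)+2$; the difficulty is that after removing the edges of the first short rainbow circuit the residual colour classes may drop to one element (creating colour-singular elements in the smaller instance) or even to zero, so the inductive hypothesis does not apply directly. Overcoming this will require either a careful initial choice of $C_1$ that avoids destroying too many colour pairs, or an application of the structural result Theorem~\ref{the1} (and Corollaries~\ref{cor-cor1}, \ref{cor-cor2}) to the residual coloured matroid in order to produce $C_2$ even when the residual colouring is only $2$-bounded rather than $2$-uniform.
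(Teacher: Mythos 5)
Your opening reduction is where the argument breaks. You assert that the task ``reduces to finding two disjoint short rainbow circuits,'' i.e.\ an SRCP, in both cases. For cographic matroids the paper does indeed prove an SRCP always exists (via rainbow vertices of $G$, where $M=M^*(G)$), so there your high-level claim matches the target even though your route through the dual/contraction is left vague. But for graphic matroids this reduction is not justified and is precisely what the paper cannot do: Lemma \ref{lem-shortpairsgraphic} has three outcomes, and in outcomes ii) and iii) no SRCP is produced. Instead one obtains a rainbow \emph{theta-subset} $\Theta$ together with a disjoint rainbow circuit $C$, and the SRC $4$-tuple is $\{C_1,C_2,C_3,C\}$ where $C_1,C_2,C_3$ are the three circuits of $\Theta$ (so $\sum_i|C_i|=2|\Theta|$ and each element of $\Theta$ lies in exactly two of the three circuits). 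The entire reason the paper introduces SRC $4$-tuples rather than working only with SRCPs is that this theta configuration genuinely arises; your proposal provides no argument that it can be avoided.

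The inductive scheme you sketch for producing the second circuit also fails quantitatively. After deleting a short rainbow cycle $C_1$ of length $\ell$ (which can be as large as $\lfloor(\nu(G)+1)/2\rfloor$) and contracting one partner edge from each colour class that $C_1$ met, the minor has about $\nu(G)-\ell$ vertices; a short rainbow cycle found there, once lifted back to $G$, may pick up as many as $\ell$ of the contracted edges, so its length in $G$ can be roughly $\lfloor(\nu(G)-\ell+1)/2\rfloor+\ell$. For $\ell$ near $\nu(G)/2$ the sum $|C_1|+|C_2|$ then exceeds $r(M)+2$ by order $\nu(G)/4$, so the SRCP bound is lost. You correctly flag the colour-singular/2-uniformity issue, but the proposed fixes (a careful choice of $C_1$, or invoking Theorem \ref{the1} and its corollaries on the residual matroid) do not address either this length blow-up or the possibility that the residual coloured matroid is circuit-achromatic. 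The paper avoids all of this by a different induction: it contracts/deletes monochromatic pairs at low-degree vertices, reduces to a $4$-regular graph, orients it so monochromatic pairs point away from their common vertex, and extracts either two edge-disjoint directed cycles (an SRCP) or the theta-plus-circuit configuration. A correct proof needs some substitute for that structural dichotomy; as written, your plan would not close.
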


The proof of this theorem builds on some of the ideas in the proof of \cite[Theorem 5]{DevDreFunMazGuoHuyMohMon}.
%
 
 Let $M$ be a matroid.   We define a {\bf theta-subset} in $M$ to be a subset $\Theta \subseteq E(M)$ for which $\Theta$ contains exactly three circuits $C_1, C_2, C_3$ where for all permutations $i,j,k$ of $[3]$ $C_i \triangle C_j = C_k.$  A disjoint pair $( \Theta, C )$ where $\Theta$ is a rainbow theta-subset and $C$ is a rainbow circuit is said to be a {\bf rainbow theta-circuit pair (RThCP)}. 
  
For a RThCP $(\Theta, C)$) we define $\psi(\Theta,C):= \left \lceil \frac {|C|}2 \right\rceil + |\Theta |$.  
A {\bf short rainbow theta-circuit pair (SRThCP)} is a RThCP $(\Theta,C)$ where $\psi(\Theta,C) \le r(M) +3.$

\subsection{A lemma for graphic matroids}\label{sec-proofTheGraphicCographic}

We shall first prove the following lemma:

\begin{lemma}
Let $M$ be a simple graphic matroid and let $c$ be a $2$-uniform $(r(M)+1)$-colouring for $M.$
Then either 
\begin{itemize}
\item[i)] $(M,c)$ has a SRCP or 
\item[ii)] $(M,c)$ has a RThCP $(\Theta,C)$ where $\psi(\Theta,C) \le r(M)+2$, or
\item[iii)] $(M,c)$ has a SRThCP $(\Theta, C)$ where $\Theta$ has at most two chords and for all chords $e$ of $\Theta$ and all $\Theta,e$-chordal circuits $D_e$, $\Theta - D_e$ is independent. Furthermore, either
\begin{itemize}
\item[iii.1)] $C$ contains one chord of $\Theta,$ $|C|$ is odd, and $2|\Theta| + |C| \le 2r(M) +5,$ or 
\item[iii.2)] $C$ contains two chords of $\Theta,$  $|C|$ is even and $2|\Theta| + |C| \le 2r(M) + 6.$
\end{itemize}
\end{itemize}
\label{lem-shortpairsgraphic}
\end{lemma}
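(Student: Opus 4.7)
The plan is to proceed via a minimum counterexample argument with respect to $r(M)$ (equivalently $|V(G)|$, taking $M = M(G)$ with $G$ connected). Since $c$ is $2$-uniform on $r(M)+1$ colours, we have $|E(G)| = 2r(M)+2 = 2|V(G)|$, so $G$ has cyclomatic number $r(M)+2$ and average degree $4$, giving ample cycle structure. The starting move is to fix a shortest rainbow circuit $C_1$ in $(M,c)$; such a circuit exists because the elements outnumber the rank and we have $r(M)+1 \ge r(M)$ colour classes, each of size $2$, which forces a rainbow dependent set by Corollary~\ref{cor-cor2} (applied to an appropriate $r(M)$-colour restriction). I would first clear out easy reductions: if $G$ has a vertex of degree $\le 2$ or a bridge, or if there is a parallel-like pair, one can prune or split and invoke induction.

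With $C_1$ fixed and $I_1 := c(C_1)$, I would pass to the contracted matroid $M/C_1$ and delete, for each $i \in I_1$, the colour-partner $e_i'$ of $e_i \in C_1$. The resulting matroid $M'$ has rank $r(M) - |C_1| + 1$ and carries the $2$-uniform colouring $c\mid M'$ on the $r(M)+1 - |C_1|$ untouched colour classes. By Observation~\ref{obs-erainbowcircuit} (or the inductive hypothesis applied to $M'$), one extracts a rainbow circuit $C_2$ of $M'$ with $|C_2| \le r(M') + 1 = r(M)-|C_1|+2$. Lifting $C_2$ back through the contraction yields a rainbow subset of $M$ disjoint from $C_1$ and from the partners $\{e_i'\}_{i\in I_1}$; this lift is either a circuit disjoint from $C_1$ (giving conclusion (i) as soon as $|C_1|+|C_2| \le r(M)+2$), or a circuit together with a nonempty chord-structure toward $C_1$, producing a theta-subgraph.

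In the theta case, let $\Theta$ be the resulting rainbow theta-subset. Since $C_2$ was chosen disjoint from both $C_1$ and the partner set, the three circuits comprising $\Theta$ are all rainbow. Then one needs a further disjoint rainbow circuit $C$ to pair with $\Theta$: this is produced by repeating the reduction argument on the matroid obtained by contracting $\Theta$ and deleting the partners of colours used in $\Theta$. Tracking $|C|$ versus $|\Theta|$ will decide whether $\psi(\Theta,C) \le r(M)+2$ (case (ii)) or equals $r(M)+3$ (case (iii)). To control the chord-structure in case (iii), I would show that if $\Theta$ admits three chords or if some chord $e$ yields a $\Theta,e$-chordal circuit $D_e$ with $\Theta - D_e$ dependent, then one can extract a strictly shorter rainbow circuit or a disjoint rainbow circuit pair, contradicting minimality of $C_1$ or returning to case (i)/(ii). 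The parity conditions in (iii.1), (iii.2) emerge from the fact that the number of chords of $\Theta$ inside $C$ controls the parity of $|C|$ via a symmetric-difference/cycle-space argument, combined with the tight bound $\psi(\Theta,C) = r(M)+3$.

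The hard part, and the main obstacle, is the tight bookkeeping required when both (i) and (ii) fail: one must verify that the failure pins down the structure around $C_1$ and $\Theta$ precisely enough to fall into exactly one of (iii.1), (iii.2), including the chord-count restriction and the parity of $|C|$. Much of the technical work will consist of swapping partner elements across colour classes that straddle $C_1$, $\Theta$, and $C$ to ensure disjointness and the rainbow property are preserved simultaneously; this is where the $2$-uniform hypothesis is used essentially, since each swap is between exactly two edges of a common colour.
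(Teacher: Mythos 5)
Your proposal diverges substantially from the paper's proof, and it has gaps that I do not see how to close along the route you describe. The paper's argument splits on whether $G$ has a rainbow vertex $v$. If it does, one deletes $v$, finds a component $K$ of $G-v$ with $|c(K)| \ge \nu(K)+1$, takes a rainbow spanning tree of $K$ plus two edges of fresh colours, and obtains either two disjoint short rainbow cycles or a rainbow theta-subgraph $\overline{\Theta}$ with the crucial size bound $|\Theta| \le \nu(K)+1$ and $\nu(\overline{\Theta}) = |\Theta|-1$. Your $\Theta$, built as $C_1 \cup D$ where $D$ is the lift of a circuit $C_2$ of the contracted matroid, has $|\Theta| = |C_1|+|C_2|$, which carries no bound of the form needed: if $|C_1|$ is, say, about $r(M)/2$, then $|\Theta|$ alone can reach $r(M)+2$, and $\psi(\Theta,C) = |\Theta| + \lceil |C|/2 \rceil \le r(M)+3$ already fails for any disjoint $C$. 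Nothing in your scheme controls $|C_1|$ from above (a shortest rainbow circuit can be long), so the SRThCP bound in iii) is not attainable this way. There is also a smaller issue that $C_1 \cup D$ is a theta-subset only when $D \cap C_1$ is a single path; in general it contains many circuits and you must extract a rainbow theta, which is where the paper's minimality-of-$\Theta$ argument (not minimality of $C_1$) is used to get the chord conditions in iii).

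Two further points. First, the parity statements in iii.1) and iii.2) do not come from a cycle-space or symmetric-difference parity argument as you suggest; they come from a vertex count in the graph: every edge of $\overline{C}$ that is not a chord of $\Theta$ has an endpoint outside $V(\overline{\Theta})$, so $|V(\overline{C}) \cap (V(G)-V(\overline{\Theta}))| \ge \lceil (|C|-k)/2 \rceil$ where $k$ is the number of chords of $\Theta$ lying on $C$, and the distinction between $|C|$ odd and even is exactly whether $\lceil (|C|-1)/2 \rceil$ loses a half relative to $\lceil |C|/2 \rceil$. Without this counting the specific bounds $2|\Theta|+|C| \le 2r(M)+5$ and $\le 2r(M)+6$ do not materialize. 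Second, your reductions do not address the terminal case in which $G$ has no rainbow vertex and minimum degree at least $4$: then $G$ is $4$-regular, each vertex has exactly one monochromatic pair of incident edges, and the paper orients those two edges away from the vertex to get a $2$-in $2$-out digraph whose directed cycles are rainbow, then runs an Eulerian-decomposition argument (following Alon, McDiarmid and Molloy) to produce a SRCP. Your contraction-and-lift scheme gives no substitute for this step, and the minor $M' = (M/C_1)\setminus\{e' : e \in C_1\}$ it produces has $2r(M')$ elements on $r(M')$ colours and may fail to be simple (chords of $C_1$ become loops), so neither the induction hypothesis nor Corollary \ref{cor-cor2} applies to it without further repair.
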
 

\begin{proof}
Let $M = M(G).$  The proof is by induction on $\nu (G)$.  It suffices to prove the lemma when $G$ is connected. Since $M$ is simple, ${\binom {\nu(G)}2} \ge \varepsilon(G) = \varepsilon(M) = 2(r(M) +1) = 2\nu(G)$ and thus $\nu(G) \ge 5.$
 When $\nu(G) =5$, we have $G \simeq K_5$ and it can be shown that $(M,c)$ has a SRCP.  Suppose that $n>5$ and the lemma holds for graphs with fewer than $n$ vertices.
Let $(G,c)$ be the coloured graph associated with $(M,c)$ where $\nu(G) = n.$
 Suppose first that $G$ contains a rainbow vertex $v.$  Let $G' = G-v$.  Then $|c(G')| = n= \nu(G') +1.$  
 Thus there is a component $K$ of $G'$ for which $|c(K)| \ge \nu(K) +1.$  Let $T$ be a rainbow spanning tree of $K$ (which has $\nu(K) -1$ edges) and let $\{ f_1,f_2 \} \subset E(K) - E(T)$ where $c \left( \{ f_1, f_2 \} \right) \cap c(T) = \emptyset$; such edges exist since $|c(K) - c(T)| \ge 2.$    For $i = 1,2,$ there is a unique (rainbow) cycle $C_i$ in $T+f_i.$
If $|V(C_1) \cap V(C_2)| \le 1,$ then $C_1,C_2$ are edge-disjoint rainbow cycles for which $|C_1| + |C_2| \le n+1$ and hence $\{ C_1,C_2 \}$ is a SRCP and i) holds.  If $|V(C_1) \cap V(C_2)| \ge 2,$ then the subgraph of $T + \{ f_1 , f_2 \}$ induced by $E(C_1) \cup E(C_2)$ contains a rainbow theta subgraph.  In this case, $G$ contains a rainbow subgraph $\overline{\Theta}$ and we may assume that among such subgraphs, $\varepsilon(\overline{\Theta})$ is minimum.  Let $\Theta = E(\overline{\Theta})$ and let $D_i,\ i = 1,2,3$ be the circuits of $\Theta.$ Given that $|c(G - \Theta)| = n,$ it follows that $G- \Theta$ contains a rainbow cycle, say $\overline{C}$.  Let $C = E(\overline{C}).$  
Suppose $e$ is a chord of $\Theta$ and $D_e$ is a $\Theta,e$-chordal circuit. Suppose that for some $i$, $D_i \subseteq \Theta - D_e.$   If $D_e$ is not a rainbow circuit, then $\Theta' = \Theta \triangle D_e$ is seen to be a rainbow theta-subset of $M$ where $\varepsilon(\Theta') < \varepsilon(\Theta),$ contradicting the minimality of $\Theta.$  Thus $D_e$ is a rainbow circuit and
$\{ D_e,D_i \}$ is seen to be a SRCP and ii) holds.  Because of this,
 we may assume that for all chords $e$ of $\Theta$ and for all
$\Theta,e$-chordal circuits $D_e$, $\Theta - D_e$ is independent.
By this assumption and the minimality of $\Theta$, it can be shown that at most two elements of $M$ are chords of $\Theta.$  In particular, at most two elements of $C$ are chords of $\Theta.$ 
If no elements of $C$ are chords of $\Theta,$  then $|V(\overline{C}) \cap (V(G) - V(\overline{\Theta})| \ge \left\lceil \frac {|C|}2 \right\rceil$ and hence $\nu(\overline{\Theta}) + \left\lceil \frac {|C|}2 \right\rceil \le n$ implying that
$|\Theta| + \left\lceil \frac {|C|}2 \right\rceil \le n+1= r(M) +2$ and $\psi(\Theta,C) \le r(M) +2.$  In this case, ii) holds. Suppose one element of $C$ is a chord of $\Theta.$  Then $|V(\overline{C}) \cap (V(G) - V(\Theta))| \ge \left\lceil \frac {|C|-1}2 \right\rceil$ and hence
$|\Theta| +  \left\lceil \frac {|C|-1}2 \right\rceil \le n+1 = r(M) + 2.$  If $|C|$ is even then $\left\lceil \frac {|C|-1}2 \right\rceil =  \left\lceil \frac {|C|}2 \right\rceil$ in which case $|\Theta| +  \left\lceil \frac {|C|}2 \right\rceil \le r(M) +2$ and $\psi(\Theta,C) \le r(M) +2$ and ii) holds again.
If $|C|$ is odd, then $\left\lceil \frac {|C|-1}2 \right\rceil =  \frac {|C| -1}2$ and $2|\Theta| + |C| \le 2(n + 1) + 1 = 2r(M) + 5.$ In this case, iii.1) holds.  If two elements of $C$ are chords of $\Theta,$ then $|V(\overline{C}) \cap (V(G) - V(\Theta)| \ge \left\lceil \frac {|C| -2}2 \right\rceil$ and hence
$|\Theta| + \left\lceil \frac {|C|}2 \right\rceil \le n+2 = r(M) + 3.$  As such, we have that $2|\Theta| + |C| \le 2(n+2) = 2r(M) + 6$ and iii.2) holds. 

From the above, we may assume that $G$ has no rainbow vertices.  In particular, this implies that for all $v$, $d_G(v) \ge 2.$  Suppose that for some vertex $v,$ $d_G(v) = 2.$  Let $E_G(v) = \{ e, f \}$.  Since $v$ is not a rainbow vertex, $c(e) = c(f).$  Let $G' = G/e\backslash f$ and $c' = c\big| E(G').$
Since $G$ is simple, $G'$ is seen to be simple and $|c(G')| = 2\varepsilon (G').$  By assumption, the lemma holds for $M' = M(G')$ and using this, one can show that the lemma holds for $M$ as well. 
Thus we may assume that for all $v\in V(G),$ $d_G(v) \ge 3.$  Suppose there is a vertex $v$ for which $d_G(v) = 3.$  Let $E_G(v) = \{ e,f,g \}.$  Since $v$ is not a rainbow vertex,  we may assume that $c(e) = c(f).$  Let $G' = G/e\backslash f$ and $c' = c\big| E(G').$  Then $G'$ is simple and $|c'(G')| = 2\varepsilon(G')$.  By assumption, the theorem holds for $M' = M(G').$  Suppose that i) holds for $M'.$  Then $G'$ contains a SRCP say $\{ C_1', C_2'\}.$   If neither of these cycles contains $g$, then $\{C_1',C_2'\}$ is a SRCP for $(G,c).$   Suppose that one the cycles contains $g,$ say $C_1'.$  Then the cycle $C_1$ in $G$ where $E(C_1) = E(C_1') + e$ is a rainbow cycle.  Letting $C_2 = C_2',$ we see that $\{ C_1, C_2 \}$ is a SRCP for $(G,c).$   Suppose that ii) holds $M'.$ Then $(M',c')$ has RThCP $(\Theta',C')$ where $\psi(\Theta,C') \le r(M') + 2 = r(M) +1.$ 
We may assume that $g\in C'\cup \Theta'$, for otherwise, $(\Theta', C')$ is a RThCP for $M$, and ii) holds for $M.$  Suppose $g\in C'.$  Then there is a rainbow cycle $C$ in $G$ where $E(C) = E(C') +e.$  Then $(\Theta', C)$ is seen to be a RThCP in $M$
where  $\psi(\Theta',C) \le \psi(\Theta',C') + 1 \le r(M) +2$ and ii) holds for $M.$ On the other hand, suppose $g\in \Theta'.$  Then there is a rainbow theta-subgraph $\Theta$ of $G$ where $E(\Theta ) = E(\Theta') + e.$  Then $(\Theta,C')$ is seen to be a RThCP in $M$
where $\psi(\Theta,C') = \psi(\Theta',C') + 1 \le r(M) +2$ and ii) holds for $M.$  Lastly, it is straightforward to show that if iii) holds for $M'$, then either i), ii), or iii) holds for $M.$

From the above, we may assume that for all vertices $v$, $d_G(v) \ge 4.$  Given that $2\varepsilon (G) = 4n,$ its average degree is $4$ and thus $G$ must be $4$-regular.  Furthermore, since $G$ is simple and has no rainbow vertices, each vertex $v$ has exactly two incident edges of the same colour.  We shall direct the edges of $G$ as follows:  for all vertices $v$, if $e,f \in E_G(v)$ and $c(e) = c(f),$ then both $e$ and $f$ will be directed away from $v.$  The resulting directed graph $\overrightarrow{G}$ is $2$-regular; that is, the in-degree and out-degree of each vertex equals two.  Using some of the ideas from \cite{AloMcdMol}, we shall show that i) holds.  We first observe that every directed cycle of $\overrightarrow{G}$ corresponds to a rainbow cycle in $G.$  Suppose $\overrightarrow{G}$ contains four edge-disjoint directed cycles, say $C_i, i\in [4]$.  Given that $\sum_i \varepsilon(C_i) \le \varepsilon = 2n,$ either $\varepsilon(C_1) + \varepsilon(C_2) \le n,$ or  $\varepsilon(C_3) + \varepsilon(C_4) \le n,$ implying that either $\{ C_1, C_2 \}$ or $\{ C_3,C_4 \}$ is a SRCP.  Thus we may assume that $\overrightarrow{G}$ has at most $3$ edge-disjoint directed cycles.

For each vertex, one can find two edge-disjoint directed cycles which contain the vertex.  Among all vertices and such pairs of directed cycles, let $v$ be a vertex and $C_1, C_2$ be edge-disjoint directed cycles such that $|C_1| + |C_2|$ is minimum.   Let $\overrightarrow{H}$ be the directed subgraph induced by $C_1 \cup C_2.$  We first observe that for all vertices $u\in V(\overrightarrow{H})$ where $d_{H}(u) =4,$ $\overrightarrow{H}-u$ contains no directed cycle;  if $\overrightarrow{H}-u$ did contain a directed cycle $\overrightarrow{C}$ then $\overrightarrow{H}' = \overrightarrow{H} - E(\overrightarrow{C})$ is Eulerian (in-degree = out-degree for all vertices) and given that $u$ has degree $4$ in $\overrightarrow{H}'$, it follows that $\overrightarrow{H}'$ has two edge-disjoint directed cycles $C_1', C_2'$ containing $u$, contradicting the minimality of $C_1, C_2.$   
Since $\overrightarrow{G}$ has at most three edge-disjoint directed cycles, it follow that $V(\overrightarrow{H}) = V(\overrightarrow{G}).$  If $V(C_1) \cap V(C_2) = \{ v \},$ then $|C_1| + |C_2| \le n+1 = r(M) +2$ and $\{ C_1,C_2 \}$ is a SRCP. Thus we may assume that there exists $u\in V(C_1) \cap V(C_2) - v.$   We claim that we may assume that there is no edge $e \in E(G) - E(H)$ which is a chord of $C_1$ or $C_2$.  For suppose $e$ is a chord of $C_1.$  Let $C_{1j},\ j = 1,2$ be the $C_1,e$-chordal cycles. 
Suppose first that $u \in V(C_{11})$ and $v\in V(C_{12}).$  Seeing as one of $C_{11}$ or $C_{12}$ is a directed cycle in $\overrightarrow{G},$ either $u$ together with $C_{11}, C_2$ or $v$ together with $C_{12}, C_2$ contradicts the minimality of $v$ and $C_1, C_2.$  On the other hand, suppose that $\{ u,v \} \subset V(C_{11}).$
If $C_{11}$ is a directed cycle in $\overrightarrow{G},$ then $v$ together with $C_{11}, C_{2}$ contradicts the minimality of $v$ and $C_1, C_2.$  Thus $C_{12}$ is a directed circuit.  If $V(C_{12}) \cap V(C_2) = \emptyset,$ then $|C_{12}| + |C_2| \le n$ and $\{ C_{12}, C_2 \}$ is is a SRCP.  If there exists
$v' \in V(C_{12}) \cap V(C_2),$ then $v'$ together with $C_{12}, C_2$ contradicts the minimality of $v, C_2, C_2.$  Thus we may assume that no edge of $E(G) - E(H)$ is a chord of $C_1$ or $C_2.$   

Given that $\overrightarrow{H} -v$ has no directed cycles, we can describe $\overrightarrow{H}$ as follows:  For $i=1,2$, $C_i$ decomposes into directed paths $P_{i1}, P_{i2}. \dots ,P_{ik}$ where for $j=1, \dots ,k$ $P_{ij}$ is directed from $u_i$ to $u_{i+1},$ where $u_1 = u_{k+1} = v$ and $V(C_1) \cap V(C_2) = \{ u_1, \dots ,u_k \}.$ Since $G$ is simple, for all $i$, $V(P_{i1} \cup P_{i2}) - \{ u_i, u_{i+1} \} \ne \emptyset.$  Since $\overrightarrow{G}$ has at most $3$ edge-disjoint directed cycles, all the vertices of $V(G) - \{ u_1, \dots ,u_k \}$ belong to one directed cycle $\overrightarrow{C}.$  Thus for some $i_1 < i_2,$ and 
$j_1, j_2 \in \{ 1, 2 \}$ there is an edge $e\in E(\overrightarrow{C})$ directed from $v_{i_1}$ in $P_{i_1j_1} - u_{i_1} - u_{i_1+1}$ to $v_{i_2}$ in $P_{i_2j_2} - u_{i_2} - u_{i_2+1}.$  By assumption, we have that $j_1 \ne j_2.$  We may change $\overrightarrow{C}_1$ and $\overrightarrow{C}_2$ into directed cycles $\overrightarrow{C}_1'$ and $\overrightarrow{C}_2'$ by swapping the paths $P_{i_11}$ and $P_{i_12}.$  It now follows that $e$ is a chord of $\overrightarrow{C}_1'$ or $\overrightarrow{C}_2'$.  Using $C_i',\ i =1,2$ in place of $C_1, C_2$, we can argue as before.  This completes the proof.
\end{proof}

\subsubsection{The proof of Theorem \ref{the-GraphicCographicMain} for graphic matroids}\label{sec-mainthegraphic}

Let $(M,c)$ be a simple coloured matroid where $c$ is a $2$-uniform $(r(M)+1)$-colouring
 and $M$ is graphic.  Let $M = M(G)$ where $\nu(G) = n.$  Suppose $(M,c)$ has a SRCP $\{ C_1,C_2\}$.  Then $|C_1| + |C_2| \le r(M) +2$ and $\{ C_1,C_1,C_2,C_2\}$ is seen to be a SRC $4$-tuple.  Thus we may assume that $(M,c)$ has no SRCP.   Suppose $(M,c)$ has a RThCP $(\Theta, C)$ where $\psi(\Theta,C) \le r(M) +2.$  Then
$2|\Theta| + |C| \le 2r(M) + 4.$  Let $C_i, \ i = 1,2,3$ be the circuits of $\Theta.$  Then $\sum_i |C_i| = 2|\Theta|$ and hence $\{ C_1,C_2,C_3,C \}$ is seen to be a SRC $4$-tuple.  Thus we may assume that no such RThCP exists.   
Then Lemma \ref{lem-shortpairsgraphic} iii) holds, and there exists a SRThCP $(\Theta,C)$ where $\Theta$ has at most two chords and for all chords $e$ of $\Theta$ and all $\Theta,e$-chordal circuits $D_e,$ $\Theta - D_e$ is independent. 

 Let $C_i,\ i = 1,2,3$ be the circuits of $\Theta.$  Suppose that  Lemma \ref{lem-shortpairsgraphic} iii.1) holds.
 Let $e_1$ the edge of $C$ which is a chord of $\Theta.$   Then for some $i$, $e$ is a chord of $C_i$ and we may assume this is true for $i=1.$  Let $C_{1j},\ j = 1,2$ be the $C_1,e$-chordal cycles.
  At least one of these is a rainbow circuit, say $C_{11}.$  Now $C_{11}, C_2,C_3,C$ are rainbow circuits for which no element of $M$ belongs to more than two of the circuits.  Since $|C_{11}| + |C_{12}| = |C_1| + 2,$ it follows that
\begin{align*}|C_{11}| + |C_2| + |C_3| + |C| &= |C_1| +2 - |C_{12}| + |C_2| + |C_3| + |C|\\ &\le 2|\Theta| + |C| + (2-|C_{12}|)\\ &\le 2r(M) + 5 -1 = 2r(M) + 4. \end{align*} Thus $\{ C_{11}, C_2, C_3, C\}$ is a SRC $4$-tuple.  
Suppose instead that Lemma \ref{lem-shortpairsgraphic} iii.2) holds.  Let $e_i,\ i = 1,2$ be the elements of $C$ which are chords of $\Theta.$  Suppose first that there exists a $2$-subset $\{ i_1, i_2 \} \subset [3]$ such that for $j= 1,2,$ $e_j$ is a chord of $C_{i_j}.$
For convenience, we may assume $i_j = j,\ j = 1,2.$ For $j=1,2$, there is an $C_j,e_j$-chordal circuit $D_j$ which is a rainbow circuit.  We see that
\begin{align*}
|D_1| + |D_2| + |C_3| + |C| &\le |C_1|-1 + |C_2|-1 + |C_3| + |C|\\ &= 2|\Theta| + |C| - 2\le 2r(M) +4.
\end{align*}
Thus $\{ D_1,D_2, C_3, C\}$ is seen to be a SRC $4$-tuple.  Suppose instead that for some $i\in [3],$ $e_1$ and $e_2$ are chords of $C_i.$  We may assume that $e_i,\ i = 1,2$ are chords of $C_1.$ 
For $i = 1,2$, there is an $C_1,e_i$-chordal circuit $D_i$ which is a rainbow circuit.  Since for $i= 1,2$, $\Theta - D_i$ contains no circuits, it can be shown that at least one of $\{ D_1, C_2,C_3,C\},$ $\{D_2, C_2,C_3,C\}$ or $\{ D_1 \triangle D_2, C_2, C_3,C\}$ is a SRC $4$-tuple.
This completes the proof.
%
%

\subsection{The proof of Theorem \ref{the-GraphicCographicMain} when $M$ is cographic}\label{sec-theproofcographic}

Let $(M,c)$ be a simple coloured matroid where $c$ is a $2$-uniform $(r(M)+1)$-colouring
 and $M$ is cographic.
Let $M = M^*(G)$ where $G$ is a connected graph with $n$ vertices and having no 2-cocycles.  
Then $\varepsilon(M) = 2(r(M)+1) = 2(\varepsilon(G) -n +2).$  Since $\varepsilon(G) = \varepsilon(M),$ it follows that $\varepsilon(M) = \varepsilon(G) = 2(n-2)$ and $r(M) = n-3.$
Let $(G,c)$ be the coloured graph inherited from $(M,c).$   Then $c$ is a $2$-uniform $(n-2)$-colouring. 
It will suffice show that $(M,c)$ has a SRCP; that is, $(G,c)$ has two edge-disjoint rainbow cocycles $C_1$ and $C_2$ such that $|C_1| + |C_2| \le n-1.$  We shall proceed by induction on $n.$   The smallest graph $G$ in question has $6$ vertices and the theorem is seen to be true in this case.
Assume that for graphs with fewer than $n$ vertices there is a SRCP.
Suppose $G$ contains a monochromatic digon $\{ e,f \}$.  Let $G' = G/e\backslash f$ and $c' = c\big| E(G').$  Then $G'$ is connected and has no $2$-cocycles (since a cocycle of $G'$ is seen to be a cocycle of $G$).  By assumption, $G'$ has disjoint rainbow cocycles $C_1, C_2$ where $|C_1| + |C_2| \le n-2$ and these are also seen to be cocycles in $G.$  Therefore we may assume that $G'$ has no monochromatic digons.  Since $|c(G)| = n-2,$ there are two rainbow vertices $u_1,u_2$ in $G$.  Suppose that $u_1,u_2$ are non-adjacent.
Let $C_j = E_G(u_j),\ j = 1,2.$ If $|C_1| + |C_2| \le n-1,$ then $\{ C_1, C_2 \}$ is a SRCP.  Suppose $|C_1| + |C_2| \ge n.$  Then $\sum_{v \in V(G) - \{ u_1, u_2 \} }d_G(v) \le 4(n-2) - n = 3(n-2) - 2.$  Thus there exists a vertex $u_3 \in V(G) - \{ u_1, u_2 \}$ where $d_G(u_3) \le 2.$  Given that $G$ has no vertices of degree $2$, it follows that $d_G(u_3) = 1.$  Let $e = u_3v$ and let $\{ e, e' \}$ be the colour class containing $e.$  Let $G' = G/e\backslash e'$ and let $c' = c\big| E(G')$. By assumption, $(G',c')$ contains two edge-disjoint rainbow cocycles $D_1$ and $D_2$ where $|D_1| + |D_2| \le n-2.$  Now either $D_1$ or $D_1+e'$ is a rainbow cocycle of $G$ and we let $D_1'$ be one which is.  Furthermore, $D_2' = E_G(u_3) = \{ e \}$ is a rainbow cocycle of $G$ and 
$|D_1'| + |D_2'| \le |D_1| + 2 \le n-1.$   Thus $\{D_1', D_2' \}$ is a SRCP for $(M,c).$

From the above, we may assume that any two rainbow vertices are adjacent. Let $u_i, \ i\in [k]$ be all the rainbow vertices in $G$.  By the above, we may assume that each pair of vertices is adjacent.  However, this implies that for each edge $e_{ij} = u_i u_j$, no two edges with colour $c(e_{ij})$ are incident.  Thus there are at least $n - ((n-2) - \binom k2)  > k$ rainbow vertices.  This yields a contradiction.

\section{Circuit-achromatic graphic matroids $M$ with\\ $2r(M)-1$ elements}\label{sec-BigTheoremi)andii)Graphic}

Towards proving Theorem \ref{the-NoRainbowCircuit}, we shall first examine the case where $M = M(G)$ is a simple graphic matroid and $\varepsilon(M) = 2r(M) -1.$ 
For the remainder of this section, we let $M = M(G)$ be a simple graphic matroid where $G$ is connected, $n = \nu(G) \ge 3,$ and $\varepsilon(G) = 2(n-1)-1 = 2n-3.$  Let $c: E(G) \rightarrow [n-1]$ be an $(n-1)$-colouring of $M$ where $|c^{-1}(1)| =1$ and $|c^{-1}(i)| = 2, \ i =2, \dots ,n-1.$ 
For $i = 1, \dots ,n-1$ let $X_i = c^{-1}(i)$ where $X_1 = \{ e_1 \}$ and for all $i\ge 2,$ $X_i = \{ e_{i1}, e_{i2} \}.$
Assume that $(M,c)$ is circuit - achromatic. It follows by Theorem \ref{the1} that the colouring $c$ is stratified and we may assume that the graph $G$ is constructed as follows:
Let  $G_1$ be the graph consisting of a single edge $e_1 = v_{11}v_{12}.$  Next let $G_2$ be the graph obtained from $G_1$ by adding the vertex $v_2$ and edges $e_{21} =v_2 v_{21}$ and $e_{22} = v_2v_{22}.$  Continuing, assume that we have constructed $G_{i-1}.$  We construct $G_{i}$ by adding to $G_{i-1}$ the vertex $v_i$ and edges $e_{i1} = v_iv_{i1}$ and $e_{i2} = v_iv_{i2}$ where  $\{ v_{i1}, v_{i2} \} \subseteq V(G_{i-1}).$  The construction finishes with $G = G_{n-1}.$  Note that by our construction, for all $2\le i \le n-1,$ the minimum vertex degree in $G_i$ is $2$ and $\nu(G_{i}) = i+1.$

\begin{lemma}
For all pairs of distinct vertices $\{ u,v \} \subseteq G$, $dist_G^c(u,v) \le \lfloor \frac n2 \rfloor.$  Furthermore, when $n$ is even, there is at most one pair of vertices $u,v$ for which $dist_G^c(u,v) = \lfloor \frac n2 \rfloor = \frac {n}2.$ \label{lem1}
\end{lemma}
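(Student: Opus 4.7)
The plan is to induct on $n = \nu(G)$, exploiting the layered construction $G_1 \subset G_2 \subset \cdots \subset G_{n-1} = G$. The base case $n = 2$ is the single-edge graph $G_1$, whose unique pair of vertices is at distance $1 = \lfloor 2/2 \rfloor$. For the inductive step the key structural fact is that the two edges incident to $v_{n-1}$ both carry colour $n-1$, so any rainbow path using $v_{n-1}$ must have it as an endpoint, and no rainbow path between two vertices of $V(G_{n-2})$ can visit $v_{n-1}$ at all. Writing $a, b \in V(G_{n-2})$ for the two neighbours of $v_{n-1}$, this yields the recursion
\[
dist_G^c(v_{n-1}, u) \;=\; 1 + \min\bigl(dist_{G_{n-2}}^c(a, u),\, dist_{G_{n-2}}^c(b, u)\bigr)
\]
for every $u \in V(G_{n-2})$, together with $dist_G^c(u, w) = dist_{G_{n-2}}^c(u, w)$ for all $u, w \in V(G_{n-2})$.

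The lemma as stated is too weak to propagate by induction on its own, so I plan to augment it with a third claim (c), relevant only when $n$ is odd: for any two distinct $x, y \in V(G)$, there is at most one $w \in V(G) \setminus \{x, y\}$ with $dist_G^c(x, w) = dist_G^c(y, w) = \lfloor n/2 \rfloor$. The three claims (a), (b), (c) would be established simultaneously by induction. The distance bound (a) then follows from a parity analysis of the recursion: when $n$ is even, the inductive (a) for $G_{n-2}$ (which has the odd number $n-1$ of vertices) supplies the minimum $\le (n-2)/2$ immediately, giving $dist_G^c(v_{n-1}, u) \le n/2$; when $n$ is odd, the inductive (b) for $G_{n-2}$ (even) ensures that for $u \ne a, b$ at most one of the pairs $\{a, u\}, \{b, u\}$ attains the max distance $(n-1)/2$, forcing the minimum to be at most $(n-3)/2$ and hence $dist_G^c(v_{n-1}, u) \le (n-1)/2$.

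The heart of the argument is the two-way coupling between (b) and (c). For (b) on $G$ with $n$ even, every pair within $V(G_{n-2})$ has distance at most $(n-2)/2 < n/2$, so any pair at the maximum distance $n/2$ has the form $\{v_{n-1}, u\}$ with $dist_{G_{n-2}}^c(a, u) = dist_{G_{n-2}}^c(b, u) = (n-2)/2$, and the inductive (c) for $G_{n-2}$ applied with $(x, y) = (a, b)$ caps the number of such $u$ by one. To verify (c) on $G$ with $n$ odd, I case-split on whether $v_{n-1}$ appears among $x, y, w$: if $x, y, w \in V(G_{n-2})$ then the two distinct pairs $\{x, w\}, \{y, w\}$ cannot both achieve the max in $G_{n-2}$ (even) by (b); if $x = v_{n-1}$ (or symmetrically $y = v_{n-1}$) then the condition $dist_{G_{n-2}}^c(y, w) = (n-1)/2$ pins $\{y, w\}$ as the unique max pair in $G_{n-2}$, once more by (b); and the case $w = v_{n-1}$ trivially contributes at most one choice. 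The main obstacle I anticipate is identifying (c) as the correct propagating invariant and arranging the inductive step so that (b) feeds (c) across the even-to-odd transition while (c) feeds (b) across the odd-to-even transition; once this framework is in place, the verification is essentially mechanical.
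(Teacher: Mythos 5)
Your proposal is correct, and its skeleton is the same as the paper's: induct along the layered construction $G_1\subset G_2\subset\cdots\subset G_{n-1}=G$, observe that the two edges at the newly added vertex share a colour (so rainbow paths between old vertices never pass through it, and the distance from an old vertex to the new one satisfies exactly your displayed recursion), and run the parity analysis you describe for the bound in (a), which matches the paper's argument step for step. The one place you genuinely depart from the paper is the propagation of the uniqueness claim. You assert that the two-part statement is too weak to carry itself through the induction and therefore introduce the auxiliary invariant (c) at odd levels, coupling (b) and (c) across consecutive layers. The paper keeps only the stated invariant but reaches \emph{two} levels back: if $u$ and $u'$ were both at the extremal distance from the new vertex $v_k$, then all four cross-pairs among $\{u,u',v_{k1},v_{k2}\}$ would be at distance $\ell-1$ in $G_{k-1}$; since $G_{k-1}$ and $G_{k-2}$ differ by a single vertex, at least two of these distinct pairs lie entirely in $V(G_{k-2})$, where $\ell-1=\lfloor\nu(G_{k-2})/2\rfloor$ and the inductive hypothesis allows at most one extremal pair --- a contradiction. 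So your premise that the lemma cannot propagate on its own is not quite accurate (strong induction reaching back to $G_{k-2}$ suffices), but your invariant (c) is a valid alternative that buys a cleaner one-level-at-a-time induction at the cost of an extra clause to verify; the verification you sketch for (c), including the base case at the first odd level, goes through. Either route yields a complete proof.
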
 

\begin{proof}
We shall prove the assertion is true for $G_1, G_2, \dots ,G_{n-1} = G$ by induction.  That is, we shall show that for all $i$, for all pairs of distinct vertices $\{ u,v \} \subseteq G_i$, $dist_{G_i}^c(u,v) \le \lfloor \frac {\nu(G_i)}2 \rfloor = \lfloor \frac {i+1}2 \rfloor ,$ and when $i$ is odd, there is only one pair of vertices for which equality holds.
The assertion is clearly true for $G_1$.  Suppose that for $i = 1, \dots, k-1$, the lemma is true for $G_i$. We shall prove that the assertion is true for $G_k.$
We have $e_{k1} = v_{k}v_{k1}$ and $e_{k2} = v_{k}v_{k2}$. Let $u,v$ be a distinct pair of vertices in $V(G_k).$  If $\{ u,v \} \subseteq V(G_{k-1}),$ then by assumption, $dist_{G_{k-1}}^c(u,v) \le \lfloor \frac {k}2 \rfloor \le \lfloor \frac {k+1}2 \rfloor.$ 
Thus we may assume that $v = v_k.$

Suppose $k$ is odd.
Let $k=2\ell -1.$   By assumption, for all $j\in \{ 1,2 \}$ and for all $u \in V(G_{k-1}) - v_{kj},$ $dist_{G_{k-1}}^c(v_{kj}, u) \le \ell -1.$
Suppose there exists a distinct pair of vertices $\{ u, u' \} \subseteq V(G_{k-1}) - \{ v_{k1}, v_{k2} \}$ such that for $j=1,2$, $dist_{G_{k-1}}^c(u,v_{kj}) = d_{G_{k-1}}^c(u',v_{kj}) = \ell -1.$  We observe that at least three of the vertices in $\{ u, u', v_{k1}, v_{k2} \}$, say $x,y,z,$  belong to $V(G_{k-2}).$  Since $k-2$ is odd, it follows by assumption that for at most one pair of vertices in $\{ x,y,z \}$, the rainbow distance between these two vertices is at most $\ell -1.$  However, this is contradicted by the assumption that for $j=1,2$, $dist_{G_{k-1}}^c(u,v_{kj}) = dist_{G_{k-1}}^c(u',v_{kj}) = \ell -1.$  Thus there is at most one vertex $u \in V(G_{k-1}) - \{ v_{k1}, v_{k2} \}$ for which $dist_{G_{k-1}}^c(u, v_{k1}) = dist_{G_{k-1}}^c(u, v_{k2}) = \ell-1.$  It now follows that for all $u \in V(G_{k}) - v$, $dist_{G_k}^c(u,v) \le \ell,$ and there is at most one vertex $u$ for which equality holds. 

Suppose $k$ is even.
Let $k = 2\ell.$  If $u \in \{ v_{k1}, v_{k2} \}$, then $dist_{G_k}^c(u,v) =1 \le \ell.$  Thus we may assume that $u \not\in \{ v_{k1}, v_{k2} \}.$
Given that $\{ u, v_{k1}, v_{k2} \} \subseteq V(G_{k-1})$ and $\nu(G_{k-1}) =k = 2\ell$, which is even, it follows that for $j=1,2,$ $dist_{G_{k-1}}^c(u,v_{kj}) \le \ell$ and the inequality is strict for at least one $j\in \{ 1, 2 \}.$ We may assume that $dist_{G_{k-1}}^c(u,v_{k1}) < \ell.$  
Then $dist_{G_k}^c(u,v) \le dist_{G_{k-1}}^c(u, v_{k1}) + 1 \le \ell.$
This completes the proof.
\end{proof}

 \begin{lemma}
Let $u_1,u_2,u_1',u_2'$ be vertices in $G$ where the vertices are distinct except for possibly $u_1 = u_2.$  If $u= u_1 = u_2,$  then there are edge-disjoint rainbow paths $P_i,\ i = 1,2$ in $G$ from $u$ to $u_i'$ where $|P_i| + |P_2| \le n-1.$ If $u_1 \ne u_2,$ then there are edge-disjoint rainbow paths $P_i,\ i = 1,2$ in $G$ originating at distinct vertices in $\{ u_1, u_2 \}$ and terminating at distinct vertices in $\{ u_1', u_2' \}$ where $|P_i| + |P_2| \le n-2.$ 
\label{lem2}
\end{lemma}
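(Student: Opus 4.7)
The plan is to prove the lemma by induction on $n=\nu(G)$, exploiting the recursive construction of $G=G_{n-1}$: it is obtained from $G_{n-2}$ by attaching a degree-$2$ vertex $v:=v_{n-1}$ via two edges $e:=e_{(n-1)1}$ and $f:=e_{(n-1)2}$, both of colour $n-1$, with respective endpoints $\bar e,\bar f\in V(G_{n-2})$ satisfying $\bar e\ne\bar f$ by simpleness of $G$. The crucial observation is that because $e$ and $f$ share a colour, any rainbow path in $G$ uses at most one of them; hence any rainbow path that visits $v$ must have $v$ as an endpoint. The base case is $n=3$, where $G\cong K_3$ and only the sub-case $u_1=u_2=u$ is non-vacuous: the two edges $uu_1'$ and $uu_2'$ form the required pair of edge-disjoint rainbow paths with total length $2=n-1$.

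For the inductive step, set $S:=\{u_1,u_2,u_1',u_2'\}$. If $v\notin S$ then no rainbow path between vertices of $S$ can use $v$, so the inductive hypothesis applied to $G_{n-2}$ (whose restricted colouring still satisfies the hypotheses with $n-1$ in place of $n$, and is circuit-achromatic because a rainbow circuit in $G_{n-2}$ would be one in $G$) yields the desired paths of total length at most $n-2$ (first assertion) or $n-3$ (second assertion), with room to spare. If $v\in S$, then a path $P_i$ having $v$ as an endpoint decomposes as a single edge of $\{e,f\}$ together with a rainbow path in $G_{n-2}$ starting at $\bar e$ or $\bar f$, while a path $P_i$ with $v$ not an endpoint lies entirely in $G_{n-2}$. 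Thus the problem reduces to an instance of the lemma for $G_{n-2}$, with each occurrence of $v$ in $S$ replaced by $\bar e$ or $\bar f$; after the reduction one adds $1$ or $2$ back for edges used at $v$. The three sub-cases $u_1=u_2=v$ (case~1), $u_1'=v$ (case~1), and $u_1=v$ (case~2) are handled in turn, each yielding the target total in the generic situation where the four reduced endpoints in $G_{n-2}$ are distinct: specifically $(n-1)-2+2=n-1$, $(n-1)-1+1=n-1$, and $(n-1)-2+1=n-2$ respectively.

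Endpoint coincidences in the reduced problem are resolved by two devices. First, if $\bar e$ coincides with the target endpoint (e.g.\ $\bar e=u_j'$ for some $j$), the corresponding sub-path collapses to the single edge $e$, and the remaining sub-path is a single rainbow path in $G_{n-2}$ whose length is bounded by Lemma~\ref{lem1}, namely $\lfloor(n-1)/2\rfloor$; a direct numerical check verifies that this suffices in every sub-case for $n\ge 3$. Second, if $\bar e$ coincides with another starting vertex (the problematic case being $\bar e=u_2$ in case~2 of the lemma, which would reduce the $G_{n-2}$-sub-problem to a case-1 instance with an insufficient bound), we switch to $f$; since $\bar e\ne\bar f$, at most one of them causes such a collision, so a valid choice always exists. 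Rainbowness and edge-disjointness of the reconstructed pair are automatic: sub-paths in $G_{n-2}$ use colours from $\{1,\dots,n-2\}$, disjoint from the colour $n-1$ of $e$ and $f$, and the two paths use distinct elements of $\{e,f\}$ whenever both have $v$ as an endpoint.

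The main obstacle is preserving the sharper case-$2$ bound $|P_1|+|P_2|\le n-2$ in the sub-case $v\in S$ with $u_1\ne u_2$: a careless assignment of $e$ versus $f$ to $P_1$ may reduce the sub-problem in $G_{n-2}$ to a case-$1$ instance, yielding only $n-1$ total---one more than allowed. The switch $e\leftrightarrow f$, made possible by simpleness of $G$, is precisely what restores the case-$2$ sub-instance and hence the desired bound.
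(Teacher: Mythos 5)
Your proof is correct and follows essentially the same route as the paper's: induction along the recursive construction $G_2,\dots,G_{n-1}$, using that the two same-coloured edges at the new degree-$2$ vertex force any rainbow path through it to end there, reducing to the lemma in $G_{n-2}$ with $v$ replaced by a neighbour, invoking Lemma \ref{lem1} for endpoint coincidences, and switching between the two attachment edges to avoid collisions (the paper's ``either $u_2\ne v_{k1}$ or $u_2\ne v_{k2}$'' step). The only cosmetic difference is that the paper also verifies $G_3$ directly as a base case rather than deriving it from $G_2$.
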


\begin{proof}
  We shall prove that the lemma is true for $G_i,\ i = 2,3, \dots ,n-1$ using induction on $i.$ One can easily check that the lemma is true when $i\in \{ 2,3 \}.$  Suppose that for all $i < k$ the lemma is true for $G_i$ where $k\ge 4.$  We will show that it is true for $G_k.$  Let $u_1, u_2, u_1', u_2' \in V(G_k)$ where the vertices are distinct except for possibly $u_1 = u_2.$  Suppose first that $u = u_1 = u_2.$  If $\{ u, u_1', u_2' \} \subseteq V(G_{k-1}),$ then by assumption, there are edge-disjoint rainbow paths $P_1,P_2$ in $G_{k-1}$ where for $j=1,2,$ $P_j$ has terminal vertices $u$ to $u_j'$ and $|P_1| + |P_2| \le \nu(G_{k-1})-1 =  k-1.$ Thus we may assume that $v_k \in \{ u, u_1', u_2' \}.$

Suppose $u = v_k.$  Recall that $e_{kj} = v_k v_{kj}, \ j = 1,2.$  Assume first that $\{ v_{k1}, v_{k2} \} \cap \{ u_1', u_2' \} = \emptyset.$  By assumption, there are edge-disjoint rainbow paths $P_i',\ i = 1,2$ in $G_{k-1}$ originating at  
distinct vertices in $\{ v_{k1}, v_{k2} \}$ and terminating at distinct vertices in $\{ u_1', u_2' \}$ where $|P_1'| + |P_2'| \le \nu(G_{k-1}) -2 = k-2.$   We may assume that $v_{kj} \in V(P_j'),\ j = 1,2.$  For $j =1,2,$ let $P_j = P_j' + e_{kj} + u.$  
Then $P_j,\ j = 1,2$ are edge-disjoint rainbow paths from $u$ to vertices in $\{ u_1', u_2' \}$ where $|P_1| + |P_2| \le k = \nu(G_k) -1.$
 Suppose now that $\{ v_{k1}, v_{k2} \} \cap \{ u_1', u_2' \} \ne \emptyset.$  We may assume that
 $v_{k1} = u_1'.$  Suppose $v_{k2} \ne u_2'.$
 By Lemma \ref{lem1}, there is a rainbow path $P_2'$ from $v_{k2}$ to $u_2'$ in $G_{k-1}$ having length at most $\lfloor \frac k2 \rfloor .$  Let $P_1 = uu_1'$ and $P_2 = P_2' + e_{k2} + u.$  Then $|P_1| + |P_2| \le  1+ \lfloor \frac k2 \rfloor  \le k-1$ (since $k \ge 4$). 
  Suppose that $v_{k2}  =  u_2'.$ Then the paths $P_i = uu_i',\ i = 1,2$ will suffice.  
  
  From the above, we may assume that $u \ne v_k$ and thus either $v_k = u_1'$ or $v_k= u_2'.$  Without loss of generality, we may assume that $v_k = u_1'.$
Furthermore, we may assume that $v_{k1} \ne u_2'.$  By assumption there are two edge-disjoint rainbow paths $P_i',\ i = 1,2$ in $G_{k-1}$from $u$ to the vertices $v_{k1}$ and $u_2'$ where $|P_1'| + |P_2'| \le \nu(G_{k-1}) -1 = k-1.$  Let $P_1 = P_1' + e_{k1} + u_1'$ and let $P_2 = P_2'.$  Then we have that $|P_1| + |P_2| \le k = \nu(G_k) -1.$  This proves the lemma in the case where $u_1 = u_2.$

From the above, we may assume that $u_1 \ne u_2.$  As before, we may assume that $v_k \in \{ u_1, u_2, u_1', u_2' \}$.   Without loss of generality, we may assume that $v_k = u_1.$  Suppose first that $v_{k1} \in \{ u_1', u_2' \}$.   We may assume that $v_{k1} = u_1'.$  Let $P_1 = u_1u_1'$.  By Lemma \ref{lem1}, there is a rainbow path $P_2$ in $G_{k-1}$ from $u_2$ to $u_2'$ of length at most $\lfloor \frac {k}2 \rfloor$.
Now $|P_1| + |P_2| \le \lfloor \frac {k}2 \rfloor + 1 \le \frac k2 + 1 \le k-1 =\nu(G_k) -2$.  The last inequality follows from the fact that $k\ge 4.$  Thus we may assume that $v_{k1} \not\in \{ u_1', u_2' \}$ and by symmetry, the same holds for $v_{k2}.$
Either $u_2 \ne v_{k1}$ or $u_2 \ne v_{k2},$ and we may assume that $u_2 \ne v_{k1}.$  By assumption, there are edge-disjoint rainbow paths $P_i',\ i = 1,2$ in $G_{k-1}$ originating at vertices in  $\{ v_{k1}, u_2 \}$ and terminating at vertices in $\{ u_1', u_2'\}$ where $|P_1'| + |P_2'| \le \nu(G_{k-1})-2 = k-2.$  We may assume that $v_{k1} \in V(P_1').$  Let $P_1 = P_1' + e_{k1} + u_1$ and let $P_2 = P_2'.$  Then $|P_1| + |P_2| \le k-1 = \nu(G_k) -2.$  This completes the proof.
\end{proof}

\begin{lemma}
Let $u,v \in V(G)$ where  $\{ u,v \} \ne \{ v_{11}, v_{22} \}.$  Then there are two edge-disjoint rainbow paths $P_i,\ i = 1,2$ from $u$ to $v$ where $|P_1| + |P_2| \le n.$
\label{newlem3}
\end{lemma}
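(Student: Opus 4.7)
The plan is to mirror the inductive structure of Lemmas \ref{lem1} and \ref{lem2} and prove, by induction on $k \in \{2, 3, \ldots, n-1\}$, that for every pair of distinct vertices $\{u,v\} \subseteq V(G_k)$ with $\{u,v\} \ne \{v_{11}, v_{22}\}$, $G_k$ contains two edge-disjoint rainbow $u,v$-paths whose total length is at most $\nu(G_k) = k+1$. Setting $k = n-1$ recovers the lemma. The base case $k = 2$ is direct: after excluding $\{v_{11}, v_{22}\}$, the only pairs are $\{v_{11}, v_2\}$ and $\{v_{12}, v_2\}$, and each is handled by taking the incident edge of color $2$ together with the length-$2$ rainbow detour through the opposite endpoint of $e_1$, giving total length $3 = \nu(G_2)$.

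For the inductive step, the key structural observation is that $v_k$ is incident only with $e_{k1}$ and $e_{k2}$, which share color $k$; hence no rainbow path in $G_k$ uses $v_k$ as an internal vertex. Consequently, if $v_k \notin \{u,v\}$, then $\{u,v\} \subseteq V(G_{k-1})$ and the induction hypothesis applied in $G_{k-1}$ yields two edge-disjoint rainbow $u,v$-paths of total length at most $k$, which remain valid in $G_k$. Otherwise, by symmetry assume $u = v_k$. If $v = v_{k1}$ (the case $v = v_{k2}$ being symmetric), put $P_1 = e_{k1}$ and let $P_2$ consist of $e_{k2}$ followed by a shortest rainbow path in $G_{k-1}$ from $v_{k2}$ to $v_{k1}$, which by Lemma \ref{lem1} has length at most $\lfloor k/2 \rfloor$; since the detour uses only colors strictly less than $k$, $P_2$ is rainbow, the paths are clearly edge-disjoint, and $|P_1| + |P_2| \le 2 + \lfloor k/2 \rfloor \le k + 1$. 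If instead $v \notin \{v_{k1}, v_{k2}\}$, then $v, v_{k1}, v_{k2}$ are pairwise distinct vertices of $G_{k-1}$, and the $u_1 = u_2$ case of Lemma \ref{lem2}, applied with $u_1 = u_2 = v$, $u_1' = v_{k1}$, $u_2' = v_{k2}$, supplies edge-disjoint rainbow paths $Q_1, Q_2$ in $G_{k-1}$ from $v$ to $v_{k1}$ and from $v$ to $v_{k2}$ with $|Q_1| + |Q_2| \le \nu(G_{k-1}) - 1 = k - 1$; extending $Q_1$ by $e_{k1}, v_k$ and $Q_2$ by $e_{k2}, v_k$ produces edge-disjoint rainbow $v, v_k$-paths of total length at most $k + 1$ (each uses color $k$ only once).

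The main obstacle I anticipate is the careful bookkeeping needed to verify the vertex-distinctness hypothesis of Lemma \ref{lem2} at each invocation and to confirm that color $k$ never appears twice across the concatenations; but once the case analysis is laid out as above, the argument is a straightforward adaptation of the inductive templates already developed for Lemmas \ref{lem1} and \ref{lem2}, and no essentially new machinery is required.
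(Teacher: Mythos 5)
Your proof is correct and follows essentially the same route as the paper's: induction along the construction sequence $G_2,\dots,G_{n-1}$, reducing to the case $u=v_k$, then invoking Lemma \ref{lem2} (with $u_1=u_2=v$) when $v\notin\{v_{k1},v_{k2}\}$ and Lemma \ref{lem1} (for the $v_{k2}$-to-$v_{k1}$ detour of length at most $\lfloor k/2\rfloor$) when $v\in\{v_{k1},v_{k2}\}$, with identical arithmetic. The only differences are cosmetic: you add the (unneeded but harmless) remark that $v_k$ cannot be an internal vertex of a rainbow path, and you spell out the base case $k=2$ that the paper dismisses as clear.
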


\begin{proof}
By induction, we shall prove that the assertion is true for all $G_i,\ i \in [n-1].$  That is, for all $i \in [n-1],$ we shall show that if $u,v \in V(G_i)$ where  $\{ u,v \} \ne \{ v_{11}, v_{22} \},$ then there are two edge-disjoint rainbow paths $P_j,\ j = 1,2$ in $G_i$ from $u$ to $v$ where $|P_1| + |P_2| \le \nu(G_i) = i+1.$
The assertion is clearly true for $G_1$ and $G_2.$
Assume that the lemma holds for all $G_i$, where $i <  k$ and $k\ge 4.$  If $\{ u,v \} \subseteq V(G_{k-1}),$ then by assumption there are edge-disjoint rainbow paths $P_i,\ i = 1,2$ in $G_{k-1}$ from $u$ to $v$ where $|P_1| + |P_2| \le \nu(G_{k-1}) = k.$  Thus we may assume that $u = v_k.$   Suppose first that $v \not\in \{ v_{k1}, v_{k2} \}$.
Given that $k \ge 4,$ it follows by Lemma \ref{lem2} that there are two edge-disjoint rainbow paths $P_j',\ j = 1,2$ from $v$ to $v_{kj}$ where $|P_1'| + |P_2'| \le \nu(G_{k-1})-1 = k-1.$  For $j=1,2,$ let $P_j = P_j' + e_{kj} + u$.  Then $P_i, \ i = 1,2$ are two edge-disjoint rainbow paths from $u$ to $v$ where $|P_1| + |P_2| \le k+1 = \nu(G_k).$  Suppose instead that $v \in \{ v_{k1}, v_{k2} \}.$ Here we may assume that $v = v_{k1}.$  Let $P_1 = uv_{k1}.$  By Lemma \ref{lem1}, there is a rainbow path $P_2'$ in $G_{k-1}$ from $v_{k2}$ to $v$ where $|P_2'| \le \lfloor \frac {k}2 \rfloor .$  Let $P_2 = P_2' + e_{k2} + u$.  Then $P_i,\ i = 1,2$ are edge-disjoint rainbow paths from $u$ to $v$ where $|P_1| + |P_2| \le \lfloor \frac {k}2 \rfloor + 2 \le \frac k2 + 2 \le k+1.$  Thus the assertion is true for $G_k$.  The proof now follows by induction. 
\end{proof}


Let $K_{23}^+$ denote the graph obtained from the complete bipartite graph $K_{23}$ by adding an edge between the two vertices of degree three.

\begin{lemma}
Assume that $G \not\simeq K_{23}^+.$ Let $N = M+T$ be a graphic extension of $M$ where $T$ is a $3$-circuit of $N.$  Then there exists an $T$-SRCT for $(M,c).$
%
\label{newLemma1}
\end{lemma}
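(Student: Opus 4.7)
The plan is as follows. Since $N = M + T$ is a graphic extension and $T$ is a $3$-circuit, $N = M(H)$ for a graph $H$ with $E(H) = E(G) \cup T$, and the three edges of $T$ form a triangle on vertices $\{x, y, z\}$. A first observation is that $\{x, y, z\} \subseteq V(G)$: if some triangle vertex, say $z$, were not in $V(G)$, then $z$'s only incident edges in $H$ would be two edges of $T$, and any cycle in $N$ containing exactly one $T$-edge at $z$ could not close, contradicting the requirement $|C_i \cap T| = 1$. Label $t_1 = yz$, $t_2 = xz$, $t_3 = xy$. A $T$-SRCT is then equivalent to three pairwise edge-disjoint rainbow paths $P_1$ (from $y$ to $z$), $P_2$ (from $x$ to $z$), $P_3$ (from $x$ to $y$) in $G$ with $|P_i| + |P_j| \le n - 1$ for all $i \ne j$; indeed, setting $C_i := P_i + t_i$ gives $|C_i| + |C_j| = |P_i| + |P_j| + 2 \le n + 1 = r(M) + 2$.

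The main step applies Lemma \ref{newlem3}. At most one of the three pairs $\{x, y\}, \{y, z\}, \{x, z\}$ can equal $\{v_{11}, v_{22}\}$, so pick (say) the pair $\{x, y\}$ with $\{x, y\} \ne \{v_{11}, v_{22}\}$ and obtain two edge-disjoint rainbow paths $Q, Q'$ from $x$ to $y$ with $|Q| + |Q'| \le n$. In the favourable case where $z$ lies on $Q'$ (or on $Q$), the path $Q'$ splits at $z$ into two rainbow subpaths, one from $x$ to $z$ and one from $z$ to $y$. Setting $P_3 := Q$ and taking $P_1, P_2$ to be these two subpaths yields three pairwise edge-disjoint rainbow paths with $|P_1| + |P_2| + |P_3| = |Q| + |Q'| \le n$; since each $|P_k| \ge 1$, one obtains $|P_i| + |P_j| = (|P_1| + |P_2| + |P_3|) - |P_k| \le n - 1$ for every pair. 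If this favourable case fails for the pair $\{x, y\}$, one repeats the argument for the pairs $\{y, z\}$ and $\{x, z\}$.

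It remains to treat the residual case where, for every pair, both paths produced by Lemma \ref{newlem3} avoid the third vertex. Here the approach is to induct on the stratified construction $G_1 \subset G_2 \subset \cdots \subset G_{n-1} = G$. If $\{x, y, z\} \subseteq V(G_{n-2})$, the inductive hypothesis applied to $G_{n-2}$ supplies paths with pairwise sums bounded by $\nu(G_{n-2}) - 1 = n - 2$. Otherwise $v_{n-1} \in \{x, y, z\}$, say $v_{n-1} = x$; the paths $P_2, P_3$ through $x$ must then begin with $e_{n-1, 1}$ and $e_{n-1, 2}$, so Lemma \ref{lem2} yields edge-disjoint rainbow paths in $G_{n-2}$ from $\{v_{n-1, 1}, v_{n-1, 2}\}$ to $\{y, z\}$ of combined length at most $\nu(G_{n-2}) - 2 = n - 3$, which extend via the two edges at $x$ to $P_2, P_3$ with $|P_2| + |P_3| \le n - 1$; a short $y$-$z$ rainbow path $P_1$ in $G_{n-2}$ is then supplied by Lemma \ref{lem1}.

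The main obstacle is to guarantee that $P_1$ is simultaneously short enough to ensure $|P_1| + |P_2| \le n - 1$ and $|P_1| + |P_3| \le n - 1$, and edge-disjoint from $P_2, P_3$. This forces a delicate balancing of the choice of $P_2, P_3$, with local reselection whenever one of them dominates in length. It is precisely here that the hypothesis $G \not\simeq K_{23}^+$ becomes essential: in that graph, the triangle on the three degree-$2$ vertices admits (up to symmetry) only a single triple of edge-disjoint rainbow paths, of lengths $2, 2, 3$, producing a pairwise sum $5 > n - 1 = 4$. Identifying $K_{23}^+$ as the sole obstruction will require a careful case analysis, driven by the stratified construction, of the relative positions of $x, y, z$ and the edges incident to them.
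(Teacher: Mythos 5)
Your framework matches the paper's: you reduce a $T$-SRCT to three pairwise edge-disjoint rainbow paths joining the triangle vertices with pairwise length-sums at most $n-1$, and you plan to obtain them by induction on the stratified construction together with Lemmas \ref{lem1}, \ref{lem2} and \ref{newlem3}. Your ``favourable case'' observation --- that if one of the two $x$--$y$ paths supplied by Lemma \ref{newlem3} passes through $z$, then splitting it at $z$ yields all three paths with total length at most $n$ and hence pairwise sums at most $n-1$ --- is correct and is a clean shortcut the paper does not use. The difficulty is that nothing forces this case to occur for any of the three pairs, so the argument cannot terminate there and must fall into your residual case.

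That residual case is where essentially all of the work in the paper's proof lives, and you explicitly defer it (``will require a careful case analysis''), so the proof is incomplete at its crux. Concretely: when $v_{n-1}$ is a triangle vertex, Lemma \ref{lem2} bounds $|P_2|+|P_3|\le n-1$, but the two paths can be badly unbalanced, so $|P_1|+|P_2|$ or $|P_1|+|P_3|$ can exceed $n-1$ even with $|P_1|\le\lfloor n/2\rfloor$; moreover Lemma \ref{lem1} gives no edge-disjointness of $P_1$ from $P_2\cup P_3$. The paper resolves this not by a balancing argument but by a structural reduction: it first shows (statements (A)--(C) of its proof) that every vertex of degree at most two, and every degree-three vertex carrying a monochromatic pair, may be assumed to lie in $\{u_1,u_2,u_3\}$, forcing $v_{n-1}=u_1$ and $v_{n-2}=u_2$ to have degree two; it then analyses how the neighbour sets $\{v_{(n-1)1},v_{(n-1)2}\}$, $\{v_{(n-2)1},v_{(n-2)2}\}$ and $\{v_{(n-3)1},v_{(n-3)2}\}$ intersect, building explicit path triples inside carefully chosen subgraphs ($G-u_1$, $G-u_1-u_2$, $G-U$, and so on) to secure both the length bounds and edge-disjointness, with $K_{23}^+$ emerging as the unique surviving configuration. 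Your first residual branch (all three triangle vertices in $V(G_{n-2})$, apply induction) also silently requires handling the case $G_{n-2}\simeq K_{23}^+$ separately. Until this case analysis is actually carried out, the lemma is not proved.
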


\begin{proof}
We first note that without the assumption that $G \not\simeq K_{23}^+$ the theorem is false.  We shall use a proof by induction on $n.$ The lemma is seen to be true for the smallest case $n=3.$  Assume that $n\ge 4$ and the assertion is true for graphs having fewer than $n$ vertices.  We shall show that the assertion holds for $G$ as well.  Let $T = \{ f_1, f_2, f_3 \}$ and $H = G + T$ where in $H$, $f_1 = u_1u_2,\ f_2 = u_1 u_3,$ and $f_3 = u_2u_3.$  Let $U = \{ u_1,u_2,u_3 \}.$   

\begin{noname}
We may assume that if $u$ is a vertex of degree two in $G$, then $u\in U.$\label{nona1}
\end{noname} 

\begin{proof}
Suppose $G$ has a vertex $u \not\in \{ u_1, u_2, u_3 \}$ of degree two where $E_G(u) = \{ e, f \}.$  If $c(e) = c(f),$ then let $G' = G/e\backslash f$ and $M' = M/e\backslash f.$ Let $c' = c\big| E(M').$   If $G' \simeq K_{23}^+,$ then it is easy to find a $T$-SRCT for $(M,c).$  Thus we may assume that $G' \not\simeq K_{23}^+$ and hence $G'$ satisfies the conditions of the lemma and hence there is a $T$-SRCT for $(M',c')$ which is also seen to be a $T$-SRCT for $G'.$  Suppose $c(e) \ne c(f).$  Then $u\in \{ v_{11}, v_{12} \}$ and $e_1 \in \{ e,f \}.$  Furthermore, assuming that $e =e_1$ we have that $f \in X_2$ and we may assume that $f = e_{21}.$ 
Now let $G' = G/e_{21} \backslash e_{22},$ $M' = M/e_{21}\backslash e_{22},$ and $c' = c \big| E(M').$  Then as before, regardless of whether $G' \simeq K_{23}^+$ or not, $(M',c')$ has a $T$-SRCT $\{ C_1', C_2', C_3' \}$. If $e_1 \not\in C_1' \cup C_2' \cup C_3'$, then $\{ C_1', C_2', C_3' \}$ is seen to be a $T$-SRCT for $(M,c).$   If $e_1 \in C_1' \cup C_2' \cup C_3'$, and say $e_1 \in C_1'$, then there is a circuit $C_1$ where $E(C_1) = E(C_1') + e_{21}$ and $\{ C_1, C_2', C_3' \}$ is seen to be a $T$-SRCT for $(M,c).$
\end{proof}

\begin{noname}
We may assume that there is no vertex $v\in V(G) - U$ where $d_G(v) =3$ and $c(e) = c(f)$ for some edges $e,f \in E_G(u).$\label{nona2!}
\end{noname}

\begin{proof}
 Suppose there exists $v \in V(G) - U$ where $E_G(v) = \{ e,f,g \}$  and $c(e) = c(f).$  Let $G' = G/e\backslash f$ and let $G'' = G'/e\backslash f.$  If $e,g$ belong to a triangle in $G$ having edges $e,g,h$, then $c(g) = c(h)$ (since $(G,c)$ has no rainbow cycles).  Thus we see that either $e,g$ or $f,g$ belong to no triangle of $G$ depending on whether $c(e) \ne c(g)$ or $c(f) \ne c(g).$  Without loss of generality, we may assume that $e,g$ belong to no triangle of $G.$   Then $G'$ is simple.  Let $M' = M/e\backslash f.$ Given that $c$ is stratified, it can be easily shown that $G' \not\simeq K_{23}^+$.  Thus by assumption there is a $T$-SRCT $\{ C_1', C_2', C_3' \}$ for $(M',c')$ where $c' = c \big| E(M').$  If $g \not\in C_1' \cup C_2' \cup C_3'$,  then $\{ C_1', C_2',C_3' \}$ is seen to be a $T$-SRCT for $(M,c).$  If one of the circuits contains $g$, say $g\in C_1'$, then for some rainbow circuit $C_1$, $E(C_1) = E(C_1') + e$ and $\{ C_1,C_2',C_3' \}$ is a $T$-SRCT for $(M,c).$ 
 \end{proof}
 
 \begin{noname}
 We may assume that $v_{n-1} = u_1,\ v_{n-2} = u_2,$ $d_G(v_{n-1}) = d_G(v_{n-2}) = 2.$ 
 \label{nona3}
 \end{noname}
  
\begin{proof}
By (\ref{nona1}) and (\ref{nona2!}), 
we have $\{ v_{n-2}, v_{n-2} \} \subset \{ u_1, u_2, u_3 \}$ 
and we may assume that $v_{n-1} = u_{1}$ and $v_{n-2} = u_2.$  
Suppose first that $d_G(u_2) = 3.$ Then $u_2 = v_{(n-1)j}$ for some $j$, and we may assume that $u_2 = v_{(n-1)1}.$   Suppose $u_3 \ne v_{(n-1)2}.$  By Lemma \ref{lem2}, there are two edge-disjoint rainbow paths $P_1'$ and $P_2'$ in $G - u_1$ where for $i=1,2,$ $P_i'$ has terminal vertices $v_{(n-1)i}$ and $u_3$ and $|P_1'| + |P_2'| \le \nu(G-u_1)-1 = n-2.$  Let $P_1 = u_1u_2,\ P_2 = P_1',$ and $P_3 = P_2' + e_{(n-1)2} + u_1.$ Then $P_i,\ i = 1,2,3$ are edge-disjoint rainbow paths where for all $i<j,$ $|P_i| + |P_j| \le n-1.$  For $i = 1,2,3,$ let $C_i = E(P_i) + f_i.$  Then $\{ C_1, C_2, C_3 \}$ is seen to be an $T$-SRCT for $(M,c).$
Suppose instead that $v_{(n-1)2} = u_3$ (and $v_{(n-1)1} = u_2$).  For $i=1,2,$ let $C_i = \{ f_i, e_{(n-1)i} \}$ and let $C_3$ be an $f_3$-rainbow circuit for $(M - e_{(n-1)1} - e_{(n-1)2}, c)$ containing $f_3$ (noting that $|C_3| \le r(M - e_{(n-1)1} - e_{(n-1)2})  = n-1$).  Then $\{ C_1, C_2, C_3 \}$ is seen to be a $T$-SRCT for $(M,c).$  Thus we may assume that $d_G(v_{n-2}) = d_G(u_2) = 2.$
\end{proof}

\begin{noname}
We may assume that $u_3 \not\in \{ v_{(n-1)1}, v_{(n-1)2} \} \cup \{ v_{(n-2)1}, v_{(n-2)2} \}.$
\label{nona3.5}
\end{noname}

\begin{proof}
By (\ref{nona3}), we may assume that $v_{n-1} = u_1,\ v_{n-2} = u_2,$ $d_G(v_{n-1}) = d_G(v_{n-2}) = 2.$ 
Suppose that $u_3 \in \{ v_{(n-1)1}, v_{(n-1)2} \}$, where we may assume that $u_3 = v_{(n-1)1}.$  Suppose first that $\{ v_{(n-1)1}, v_{(n-1)2} \} \cap \{ v_{(n-2)1}, v_{(n-2)2} \} = \emptyset.$  By Lemma \ref{lem2}, there are edge-disjoint rainbow paths $P_i,\ i = 1,2$ where for $i=1,2,$ $P_i$ originates at $v_{(n-1)i}$ and terminates at a vertex in $\{ v_{(n-1)1}, v_{(n-1)2} \},$ the terminal vertices of $P_i,\ i = 1,2$ being distinct.  Moreover, $|P_1| + |P_2| \le \nu(G - u_1 - u_2) -2 = n-4.$  Suppose first that for $i=1,2$, $P_i$ has terminal vertex $v_{(n-2)i}.$ 
Let $C_1 = E(P_2) + \{ f_1, e_{(n-1)2}, e_{(n-2)2} \},\ C_2 = E(P_1) + \{ f_2, e_{(n-2)1} \}$, and $C_3 = \{ f_3, e_{(n-1)1} \}.$  Then $|C_1| + |C_2| \le (n-4) + 5 = n+1$ and thus it is seen that $\{ C_1, C_2, C_3 \}$ is a $T$-SRCT for $(M,c).$
If instead for $i=1,2,$ $P_i$ has terminal vertex $v_{(n-2)(3-i)},$ then by a similar construction, one can find a $T$-SRCT.  Suppose instead that $\{ v_{(n-1)1}, v_{(n-1)2} \} \cap \{ v_{(n-2)1}, v_{(n-2)2} \} \ne \emptyset.$  Suppose first that 
$u_3 \in  \{ v_{(n-2)1}, v_{(n-2)2} \}.$ We may assume that $u_3 = v_{(n-1)1} = v_{(n-2)1}.$  By Lemma \ref{lem1}, there exists a rainbow path $P$ from $v_{(n-1)2}$ to $v_{(n-2)2}$ in $G - u_1 - u_2$ having length at most $\lfloor \frac {n-2}2 \rfloor \le n-4.$
Let $C_1 = E(P) + \{ f_1, e_{(n-1)2}, e_{(n-2)2} \},$ $C_2 = \{ f_2, e_{(n-1)1} \},$ and $C_3 = \{ f_3, e_{(n-2)1} \}.$  Then $|C_1| \le n-1$ and $\{ C_1, C_2, C_3 \}$ is seen to be a $T$-SRCT for $(M,c).$  Suppose instead that $u_3 \not\in \{ v_{(n-2)1}, v_{(n-2)2} \}.$ 
We may assume that $v_{(n-1)2} = v_{(n-2)2}.$  By Lemma \ref{lem1}, there exists a rainbow path $P$ from $u_3$ to $v_{(n-2)1}$ in $G - u_1 - u_2$ having length at most $\lfloor \frac {n-2}2 \rfloor \le n-4.$
Let $C_1 = \{ f_1, e_{(n-1)2}, e_{(n-2)2} \},\ C_2 = \{ f_2, e_{(n-1)1} \},$ and $C_3 = E(P) + \{ f_3, e_{(n-2)1} \}.$  Then $|C_3| \le (n-4) + 2 = n-2$ and hence $\{ C_1, C_2, C_3 \}$ is seen to be a $T$-SRCT for $(M,c).$
It follows from the above that we may assume that $u_3 \not\in \{ v_{(n-1)1}, v_{(n-1)2} \}$.  Given that $u_1$ and $u_2$ are interchangeable, we may also assume that $u_3 \not\in \{ v_{(n-2)1}, v_{(n-2)2} \}$.
\end{proof}

\begin{noname}
If $\{ v_{(n-1)1}, v_{(n-1)2} \} \cap \{ v_{(n-2)1}, v_{(n-2)2} \} = \emptyset,$ then $v_{n-3} = u_3,\ d_G(v_{n-3}) = 2,\ d_G(v_{n-4}) =4$ and we may assume that $v_{n-4} = v_{(n-1)1} = v_{(n-3)1}.$
\label{nona4}
\end{noname}

\begin{proof} Suppose that  $\{ v_{(n-1)1}, v_{(n-1)2} \} \cap \{ v_{(n-2)1}, v_{(n-2)2} \} = \emptyset.$  Then \\ $d_G(v_{n-3}) \le 3$ and hence $v_{n-3} = u_3.$  By (\ref{nona3.5}), we may assume that $u_3 \not\in \{ v_{(n-1)1}, v_{(n-1)2} \} \cup \{ v_{(n-2)1}, v_{(n-2)2} \}.$  It follows that $d_G(u_3) = 2.$
 Now $d_G(v_{n-4}) \le 4.$  If $d_G(v_{n-4}) \le 3,$ then $v_{n-4} \in \{ u_1, u_2, u_3 \},$ which is impossible.  Thus $d_G(v_{n-4}) = 4$ and hence $v_{n-4}$ is adjacent to $u_i$ and $u_3$ for some $i \in \{ 1,2 \}.$ We may assume that $v_{n-4}$ is adjacent to $u_1$ and $u_3$ and $v_{n-4} = v_{(n-1)1} = v_{(n-3)1}.$
 \end{proof}  
 
 \begin{noname}
 If $\{ v_{(n-1)1}, v_{(n-1)2} \} \cap \{ v_{(n-2)1}, v_{(n-2)2} \} = \emptyset,$ then we may assume that $v_{(n-3)2} = v_{(n-1)2}.$
 \label{nona5}
 \end{noname}
 
 \begin{proof}
Assume that $\{ v_{(n-1)1}, v_{(n-1)2} \} \cap \{ v_{(n-2)1}, v_{(n-2)2} \} = \emptyset.$  By (\ref{nona4}), we have that $v_{n-3} = u_3,\ d_G(u_3) = 2,\ d_G(v_{n-4}) =4$ and we may assume that $v_{n-4} = v_{(n-1)1} = v_{(n-3)1}.$
Suppose that $v_{(n-3)2} \ne v_{(n-1)2}.$
Let $H = G - u_1 - u_3$ and let $u_1 ' = v_{(n-1)2}, u_2' = u_2,$ and $u_3' = v_{(n-3)2}.$  By Lemma \ref{lem2}, there are edge-disjoint rainbow paths $P_1'$ and $P_3'$ in $H$, where $P_1'$ joins $u_1'$ to $u_2'$, $P_3'$ joins $u_2'$ to $u_3'$ and $|P_1'| + |P_3'| \le \nu(H) - 1 = n-3.$  Let $P_1$ be the rainbow path in $G$ where $E(P_1) = E(P_1') + e_{(n-1)2} + u_1$ and let $P_3$ be the rainbow path in $G$ where $E(P_3) = E(P_3') + e_{(n-3)2} +u_3.$  Let $P_2 = u_1e_{(n-1)1}v_{(n-1)1}e_{(n-3)1}u_3.$
 We have $|P_1| + |P_3| \le n-1$ and hence $|P_1| + |P_2| \le n-1$ and $|P_2| + |P_3| \le n-1.$  For $i=1,2,3,$ let $C_i = E(P_i) + f_i$.  Then $\{ C_1, C_2, C_3 \}$ is seen to be a $T$-SRCT for $(M,c).$ 
 Thus we can assume that $v_{(n-3)2} = v_{(n-1)2}.$
 \end{proof}
 
 \begin{noname}
 We may assume that $\{ v_{(n-1)1}, v_{(n-1)2} \} \cap \{ v_{(n-2)1}, v_{(n-2)2} \} \ne \emptyset.$
 \label{nona6!}
 \end{noname}
 
\begin{proof} Suppose $\{ v_{(n-1)1}, v_{(n-1)2} \} \cap \{ v_{(n-2)1}, v_{(n-2)2} \} = \emptyset.$  By (\ref{nona4}) and (\ref{nona5}), we may assume that $v_{n-4} = v_{(n-1)1} = v_{(n-3)1}$ and $v_{(n-3)2} = v_{(n-1)2}.$  Let $H= G - U - v_{n-4}$.   
 By Lemma \ref{lem2}, there are edge-disjoint rainbow paths $P_1'$ and $P_3'$ in $H$, where $P_1'$ joins $v_{(n-1)2}$ to $v_{(n-2)1}$, $P_3'$ joins $v_{(n-1)2}$ to $v_{(n-2)2}$ and $|P_1'| + |P_3'| \le \nu(H) - 1 = n-5.$  Let $P_1$ be the rainbow path in $G$ where $E(P_1) = E(P_1') + e_{(n-1)2} + e_{(n-2)1}$ and let $P_3$ be the rainbow path in $G$ where $E(P_3) = E(P_3') + e_{(n-2)2} + e_{(n-3)1}.$  Let $P_2 = u_1e_{(n-1)1}v_{(n-1)1}e_{(n-3)1}u_3.$ 
 Letting $C_i = E(P_i) + f_i$, we see that $\{ C_1, C_2, C_3 \}$ is a $T$-SRCT for $(M,c).$ 
 \end{proof}
   
\begin{noname}
We may assume that for $j=1,2,$ $v_{(n-1)j} = v_{(n-2)j}.$
\label{nona7!}
\end{noname}

\begin{proof}   
By (\ref{nona6!}), we may assume that $\{ v_{(n-1)1}, v_{(n-1)2} \} \cap \{ v_{(n-2)1}, v_{(n-2)2} \} \ne \emptyset.$  Without loss of generality, we may assume that 
$v_{(n-1)1} = v_{(n-2)1}.$  
Suppose that $v_{(n-1)2} \ne v_{(n-2)2}.$  Assume first that $u_3 \not\in N_G(\{ u_1, u_2 \} ).$   For $j=1,2,$ let $u_j' = v_{(n-j)2},$ and let $u_3' = u_3.$  By Lemma \ref{lem2}, there edge-disjoint rainbow paths $P_2'$ and $P_3'$ in $G - u_1 - u_2$, joining $u_3'$ to $u_1'$ and $u_2'$, respectively, where $|P_1'| + |P_2'| \le n-3.$ Let $P_1 = u_1 e_{(n-1)1}v_{(n-1)1}e_{(n-2)1}u_2$, and $P_2$ and $P_3$ be the rainbow paths where $P_2 = P_2' + e_{(n-1)2 + u_1}$ and $P_3 = P_3' + e_{(n-2)2} + u_2.$
Then $P_i,\ i = 1,2,3$ are edge-disjoint rainbow paths where for all $i<j,$ $|P_i| + |P_j| \le n-1.$  For $i= 1,2,3,$ let $C_i = E(P_i) + f_i$.  Then $\{ C_1, C_2, C_3 \}$ is seen to be a $T$-SRCT for $(M,c).$
Assume now that $u_3 \in N_G(\{ u_1, u_2 \} ).$  Note that here $n\ge 6.$  Suppose first that $u_3 = v_{(n-1)1}.$  Then $v_{n-3} = u_3;$ otherwise, $d_G(v_{n-3}) \le 3$ implying that $v_{n-3} \in U$, a contradiction.
By Lemma \ref{lem1}, there is a rainbow path $P$ in $G-U$ from $v_{(n-1)2}$ to $v_{(n-2)2}$ of length at most $\lfloor \frac {n-3}2 \rfloor \le n-4$ (since $n\ge 6$).
Let $P_1$ be the path where $E(P_1) = E(P) + \{ e_{(n-1)2}, e_{(n-2)2} \}$.  Let $P_2 = u_1e_{(n-1)1}u_3$ and $P_3 = u_2e_{(n-2)1}u_3.$  For $i = 1,2,3$ let $C_i = E(P_i) + f_i.$  Then $\{ C_1, C_2, C_3 \}$ is seen to be a $T$-SRCT for $(M,c).$
Suppose instead that $u_3 \in \{ v_{(n-1)2}, v_{(n-2)2} \}.$  We may assume that $u_3 = v_{(n-1)2}.$  By Lemma \ref{lem1}, there is a rainbow path $P$ in $G - u_1 - u_2$ from $u_3$ to $v_{(n-2)2}$ of length at most $\lfloor \frac {n-2}2 \rfloor \le n-4$ (since $n\ge 6$).
Let $P_1 = u_1 e_{(n-1)1}v_{(n-1)1}e_{(n-2)1}u_2$, $P_2 = u_1 e_{(n-1)2}u_3$, and let $P_3$ be path where $P_3 = P + e_{(n-2)2} + u_2.$  For $i = 1,2,3$ let $C_i = E(P_i) + f_i$.  Then $\{ C_1, C_2, C_3 \}$ is seen to be a $T$-SRCT for $(M,c).$ 
Thus by the above, we may assume that $v_{(n-1)2} = v_{(n-2)2}.$ 
\end{proof}

\begin{noname}
We may assume that $v_{n-3} = u_3.$
\label{nona8!}
\end{noname}

\begin{proof}
By (\ref{nona7!}), we may assume that for $j= 1,2,$ $v_{(n-1)j} = v_{(n-2)j}.$  If $u_3 \in N_G(u_1),$ then finding a $F$-SRCT is straightforward.  Thus we may assume that $u_3 \not\in N_G(u_1).$

Suppose that $v_{n-3} \ne u_3.$  Then $d_G(v_{n-3}) \ge 4,$ for otherwise, (\ref{nona1}) and (\ref{nona2!}) would imply that $v_{n-3} \in U,$ a contradiction.  It now follows that $v_{n-3} \in \{ v_{(n-1)1}, v_{(n-1)2}.$ 
By symmetry, we may assume that $v_{n-3} = v_{(n-1)2}.$  If $\{ v_{11}, v_{12} \} \ne \{ v_{(n-1)1}, u_3 \},$ then by Lemma \ref{newlem3}, there are two edge-disjoint rainbow paths $P_2'$ and $P_3'$ in $G - u_1 - u_2 - v_{n-3}$ between $u_3$ and $v_{(n-1)1}$ where $|P_2'| + |P_3'| \le n-3.$  Now let $P_1 = u_1 e_{(n-1)2}v_{(n-1)2}e_{(n-2)2}u_2$, $P_2 = P_2' + e_{(n-1)1} + u_1$ and $P_3 = P_3' + e_{(n-1)2} + u_2.$  Then $P_i,\ i = 1,2,3$ are edge-disjoint rainbow paths where for all $1\le i< j\le 3,$ $|P_i| + |P_j| \le n-1.$  We now see that
for $C_i = E(P_i) + f_i,\ i = 1,2,3,$ $\{ C_1, C_2, C_3 \}$ is a $T$-SRCT for $(M,c).$  Suppose on the other hand that $\{ v_{11}, v_{12} \} = \{ v_{(n-1)1}, u_3 \}.$  Then we may assume that $v_{11} = v_{(n-1)1}$ and $v_{12} = u_3.$
If $n\ge 6,$ then we see that $d_G(v_{n-4}) \le 3$ and hence $v_{n-4} \in U.$  However, this would imply that $v_{n-4} = u_3$, which is not possible since $u_3 = v_{12}.$  Thus $n=5$ and we see that $G \simeq K_{23}^+,$ contradicting our assumptions.  Thus we may assume that $v_{n-3} = u_3.$
\end{proof}

By (\ref{nona8!}), we may assume that $u_3 = v_{n-3}.$  Again, if $u_3 \in N_G(u_1),$ then it is straightforward to find a $F$-SRCT.  Thus we may assume that $u_3 \not\in N_G(u_1).$  Since $u_3 = v_{n-3},$ it follows that $d_G(u_3) = 2.$  It is now seen that the roles of $u_1, u_2,$ and $u_3$ are interchangeable and one could for example assume that $v_{n-2} = u_3$ and $v_{n-3} = u_2.$  Because of this, we may assume that $N_G(u_3) = N_G(u_1)$ and for $j=1,2,$ $v_{(n-3)j} = v_{(n-1)j}.$
If $n=5,$ then $G \simeq K_{23}^+,$ contradicting our assumptions. Thus $n\ge 6.$  
By Lemma \ref{lem1}, there is a rainbow path $P$ between $v_{(n-1)1}$ and $v_{(n-1)2}$ in $G-U$ of length at most $\lfloor \frac {n-3}2 \rfloor \le n-5$ (since $n\ge 6$).  Let $P_1$ be the path where $E(P_1) = E(P) + e_{(n-1)2} + e_{(n-2)1}$ and let
$P_2 = u_1 e_{(n-1)1}v_{(n-1)1}e_{(n-3)1}u_3$ and $P_3 = u_2e_{(n-2)2}v_{(n-2)2}e_{(n-3)2}u_3.$  Then $P_i,\ i = 1,2,3$ are edge-disjoint rainbow paths where for all $1\le i< j \le 3,$ $|P_i| + |P_j| \le n-1.$  For $i = 1,2,3,$ let $C_i = E(P_i) + f_i.$  Then $\{ C_1, C_2, C_3 \}$  is seen to be a $T$-SRCT for $(M,c).$ This completes the proof.
\end{proof}

\subsection{Proof of Theorem \ref{the-NoRainbowCircuit} for $M = M(G)$ and $\varepsilon(M) = 2r(M)-1$ }\label{sec-ProofNoRainbowGraphic2n-1}

Let $T = \{ f_1, f_2, f_3 \}$ and $H = G + T$ where in $H$, $f_1 = u_1u_2,\ f_2 = u_1 u_3,$ and $f_3 = u_2u_3.$  Let $M = M(G).$
Suppose first that $G \not\simeq K_{23}^+.$  Then Lemma \ref{newLemma1} implies that $(M,c)$ has a $T$-SRCT $\{ C_1, C_2, C_3 \}$ where $f_i \in E(C_i),\ i = 1,2,3.$
On the other hand, suppose $G \simeq K_{23}^+.$  Then $V(G) = \{ v_{11}, v_{12}, v_2, v_3, v_4 \}.$  For at least one $j \in [2],\ d_G(v_{1j}) = 4$ and we may assume that $d_G(v_{11}) = 4.$  Then there are two possible graphs for $G$ (up to re-colouring) as illustrated in Figure \ref{figure2}.

 \begin{center}
 \begin{figure}
\begin{tikzpicture}[scale=0.7]
  
  \node[blackvertex](v) at (5,5){};
  \node at (5,4.5) {$v_{11}$};
  \node[blackvertex](v) at (7,5){};
  \node at (7,4.5) {$v_{12}$};
  \node[blackvertex](v) at (3,8){};
  \node at (3,8.5) {$v_{2}$};
  \node[blackvertex](v) at (6,8){};
  \node at (6,8.5) {$v_{3}$};
  \node[blackvertex](v) at (9,8){};
  \node at (9,8.5) {$v_{4}$};
  
  \draw (5,5) -- (7,5) node [below, midway]{\mbox{\tiny $1$}};
  \draw (5,5) -- (3,8) node [below, midway]{\mbox{\tiny $2$}};
  \draw (5,5) -- (6,8) node [above, midway]{\mbox{\tiny $3$}};
  \draw (5,5) -- (9,8) node [above, midway]{\mbox{\tiny $4$}};
  
  \draw (7,5) -- (3,8) node [above, midway]{\mbox{\tiny $2$}};
  \draw (7,5) -- (6,8) node [above, midway]{\mbox{\tiny $3$}};
  \draw (7,5) -- (9,8) node [below, midway]{\mbox{\tiny $4$}};


\begin{scope}[xshift=9cm]
 \node[blackvertex](v) at (5,5){};
  \node at (5,4.5) {$v_{11}$};
  \node[blackvertex](v) at (7,5){};
  \node at (7,4.5) {$v_{2}$};
  \node[blackvertex](v) at (3,8){};
  \node at (3,8.5) {$v_{12}$};
  \node[blackvertex](v) at (6,8){};
  \node at (6,8.5) {$v_{3}$};
  \node[blackvertex](v) at (9,8){};
  \node at (9,8.5) {$v_{4}$};
  
  \draw (5,5) -- (7,5) node [below, midway]{\mbox{\tiny $2$}};
  \draw (5,5) -- (3,8) node [below, midway]{\mbox{\tiny $1$}};
  \draw (5,5) -- (6,8) node [above, midway]{\mbox{\tiny $3$}};
  \draw (5,5) -- (9,8) node [above, midway]{\mbox{\tiny $4$}};
  
  \draw (7,5) -- (3,8) node [above, midway]{\mbox{\tiny $2$}};
  \draw (7,5) -- (6,8) node [above, midway]{\mbox{\tiny $3$}};
  \draw (7,5) -- (9,8) node [below, midway]{\mbox{\tiny $4$}}; 

%
%
%
%
\end{scope}  
\end{tikzpicture}
\caption{The two possible graphs for $G\simeq K_{23}^+.$} \label{figure2}
\end{figure}
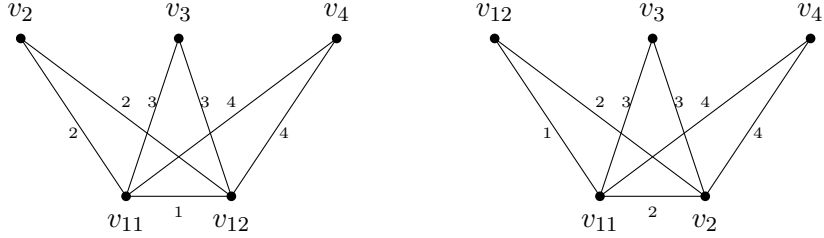
\end{center}

Suppose $G$ is the first graph in Figure \ref{figure2}.  If $\{ u_1, u_2, u_3 \} \ne \{ v_2, v_3, v_4 \},$ then it is readily seen that one can find a $T$-SRCT.  On the other hand, if $\{ u_1, u_2, u_3 \} = \{ v_2, v_3, v_4 \}$, then it is seen that for all $1 \le i < j \le 3,$ there is a $T$-SRCP $\{ C_1, C_2 \}$ where $f_i\in C_1,$ $f_j \in C_2$ and $e_1 \not\in C_1 \cup C_2.$  A similar argument can be used for the second graph in Figure \ref{figure2} and 
it follows that  
Theorem \ref{the-NoRainbowCircuit} i) holds for $M = M(G).$

To prove Theorem \ref{the-NoRainbowCircuit} ii) is true for $M= M(G),$ suppose $N = M+x$ is a graphic extension of $M$ where $x$ and $e = e_1$ are non-parallel.  Let $x = uv$ be the corresponding edge added to $G.$ By Lemma \ref{newlem3}, there exist two edge-disjoint rainbow paths $P_i,\ i = 1,2$ in $G$ from $u$ to $v$ where $|P_1| + |P_2| \le n.$  For $i= 1,2$ let $C_i = E(P_i) + x.$  Then $C_i,\ i = 1,2$ are circuits in $N$ where $E(C_1) \cap E(C_2) = \{ x \}$ and $|C_1| + |C_2| \le n+ 2 = r(M) +3.$  It follows that $C_i,\ i = 1,2$ is an $x$-semi-SRCP for $(M,c).$   This proves part ii) for $(M,c).$

\section{Circuit-achromatic cographic matroids $M$ with $2r(M)-1$ elements}\label{sec-BigTheoremi)andii)Cographic}

In this section, we prove Theorem \ref{the-NoRainbowCircuit} for cographic matroids $M$ where $\varepsilon(M) = 2r(M) -1.$   
For the remainder this section, we assume that $M = M^*(G)$ is a connected simple cographic matroid where $r(M) = n-1$ and $\varepsilon (M) = 2(n-1) -1 = 2n-3.$  We may also assume that $M$ is connected, meaning that $G$ is also connected.
  Let $c: E(G) \rightarrow [n-1]$ be an $(n-1)$ - colouring of $M$ such that $|c^{-1}(1)| = 1$ and for $i =2, \dots ,n-1$, $|c^{-1}(i)| = 2.$  Assume that $(M,c)$ is circuit-achromatic; that is, $G$ has no rainbow cocycles.   
By Theorem \ref{the1}, the colouring is stratified.   For $i = 1, \dots ,n-1$, let $X_i = c^{-1}(i)$ where $X_1 = \{ e_1\}$ and for $i\ge 2$, $X_i = \{ e_{i1}, e_{i2} \}.$

\subsection{Stratified Colourings in Co-Graphic Matroids}\label{sec-stratified}

In this section, we discuss how one constructs a stratified colouring in a cographic matroid.   First, we will describe how one constructs a cographic extension of cographic matroid $M$ by a single element.  We do this by introducing a splitting operation which is the reverse of contraction.   Let $H$ be a graph and let $v\in V(H).$  Partition $E_H(v)$ into two non-empty sets $E_1$ and $E_2.$  Let $H'$ be the graph obtained from $H$ as follows:  the graph $H'$ has vertex set $V(H') = V(H) - v + \{ v_1,v_2 \}$ and edge set $E(H') = E(H) + f_1 + \cdots + f_k.$  Here if $e \in E(H) - E_H(v)$, then $e$ has the same endvertices in $H'$ as in $H.$  For $i=1,2,$ if $e = uv \in E_i,$ then $e$ has endvertices $u$ and $v_i$ in $H'.$
 In $H'$, the edges $f_i,\ i \in [k]$ are parallel and have endvertices $v_1$ and $v_2.$   We say that $H'$ is obtained from $H$ by {\it splitting $v$ and adding $f_1, \dots ,f_k$} and we write $H  \xrightarrow[v]{f_1, \dots ,f_k}H'$.  When $v$ is not specified, we will simply write
 $H  \xrightarrow[]{f_1, \dots ,f_k}H'.$ See Figure \ref{figure1}.
 
 \begin{center}
 \begin{figure}
\begin{tikzpicture}[scale=0.7]

\node[blackvertex] (v) at (10,10){};
\node at (10, 10.5) {v};
\draw (10,10) -- +({3*cos(190)},{3*sin(190)}); 
\draw (10,10) -- +({3*cos(200)},{3*sin(200)});
\draw (10,10) -- +({3*cos(210)},{3*sin(210)});
\draw (10,10) -- +({3*cos(220)},{3*sin(220)});
\draw (10,10) -- +({3*cos(230)},{3*sin(230)});    
\draw (10,10) -- +({3*cos(-10)},{3*sin(-10)}); 
\draw (10,10) -- +({3*cos(-20)},{3*sin(-20)});
\draw (10,10) -- +({3*cos(-30)},{3*sin(-30)});
\draw (10,10) -- +({3*cos(-40)},{3*sin(-40)});
\draw (10,10) -- +({3*cos(-50)},{3*sin(-50)});
\draw[red]  (8.5, 10) arc (180:240:1.5);
\draw[red]   (11.5,10) arc (0:-55:1.5);
\node at (8.5, 10.5) {$E_1$};
\node at (11.5, 10.5) {$E_2$};

\begin{scope}[xshift=9cm]
\node[blackvertex] (v) at (10,10){};
\node[blackvertex] (v) at (12,10){};
\node at (10,10.5) {$v_1$};
\draw (10,10) -- +({3*cos(190)},{3*sin(190)}); 
\draw (10,10) -- +({3*cos(200)},{3*sin(200)});
\draw (10,10) -- +({3*cos(210)},{3*sin(210)});
\draw (10,10) -- +({3*cos(220)},{3*sin(220)});
\draw (10,10) -- +({3*cos(230)},{3*sin(230)});    
\draw[red]  (8.5, 10) arc (180:240:1.5);

\node at (12, 10.5) {$v_2$};
\draw (12,10) -- +({3*cos(-10)},{3*sin(-10)}); 
\draw (12,10) -- +({3*cos(-20)},{3*sin(-20)});
\draw (12,10) -- +({3*cos(-30)},{3*sin(-30)});
\draw (12,10) -- +({3*cos(-40)},{3*sin(-40)});
\draw (12,10) -- +({3*cos(-50)},{3*sin(-50)});
\draw[red]   (13.5,10) arc (0:-55:1.5);

\draw (10,10) .. controls (10.5,11) and (11.5,11) .. (12,10);
\draw (10,10) .. controls (10.6,10.7) and (11.4,10.7) .. (12,10);
\draw (10,10) .. controls (10.7,10.5) and (11.3,10.5) .. (12,10);
\draw (10,10) .. controls (10.8,10.3) and (11.2,10.3) .. (12,10);

\node at (11,11){$\scriptstyle{f_1}$};
\node at (11,10){$\scriptstyle{f_k}$};

\end{scope}  

\end{tikzpicture}
\caption{The operation $H  \xrightarrow[v]{f_1, \dots ,f_k}H'$} \label{figure1}
\end{figure}
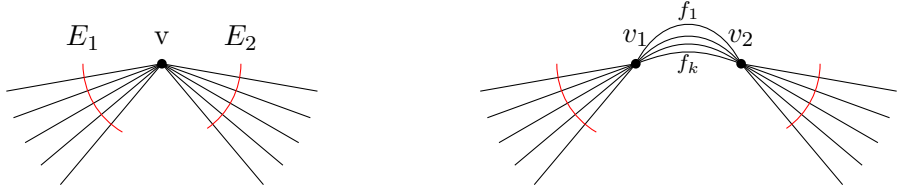

\end{center}

\subsection{The construction of $G$}\label{sec-constructG}

 Let $\nu(G) =m$.  We have $r^*(M) = r(M(G)) = m-1,$ and thus $m-1 = \varepsilon(M) - (n-1) = n-2$ and $m = n-1.$  Since $c$ is stratified, we may assume that $G$ is constructed as follows:
Let $G_1$ be the graph consisting of the single loop $e_1$ incident with a vertex $v_1.$ Next, let $G_2$ be the graph obtained from $G_1$ by changing $e_1$ to an edge $e_1 = v_{11}v_{12}$ and then adding two parallel edges $e_{21} = v_{11}v_{12}$ and $e_{22} = v_{11}v_{12}.$  Continuing,  let $G_3$ be obtained from $G_2$ by splitting one of the vertices $v_{11}$ or $v_{12}$ and adding edges $e_{31}$ and $e_{32}$; that is $G_2 \xrightarrow[]{e_{31}, e_{32}} G_3.$  Continuing, we define $G_1,G_2, \dots ,G_{n-1}$ where 
 $G_{i-1} \xrightarrow[]{e_{i1}, e_{i2}} G_i.$ 
 Then $G = G_{n-1}$. It is easy to see that for $i =1, \dots ,n-1,$ $M(G_i^*)$ has no rainbow circuits (i.e. $G_i$ has no rainbow cocycles) and $r(M(G_i^*)) = i.$  We will make frequent use of the next observation whose easy inductive proof we leave to the reader.
 
 \begin{observation}
 For all $i\ge 2$, $G_i$ has no loops, the maximum vertex degree in $G_i$ is at most $i+1$ and for all pairs of distinct vertices $u, v \in V(G_i),\ d_{G_i}(u) + d_{G_i}(v) \le i+4.$
 \label{obs-basicobs}
 \end{observation}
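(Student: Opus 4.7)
\begin{proofopen}[sketch of Observation 6.1]
The plan is a straightforward induction on $i$, exploiting the recursive construction $G_{i-1}\xrightarrow[]{e_{i1},e_{i2}}G_i$ from Section~6.2 together with the defining requirement that the partition $E_H(v)=E_1\cup E_2$ used in the splitting operation has both parts non-empty.

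For the base case $i=2$, I would simply inspect $G_2$: two vertices $v_{11},v_{12}$ joined by three parallel edges $e_1,e_{21},e_{22}$. This has no loops, maximum degree $3=i+1$, and the unique pair of distinct vertices has degree sum $6=i+4$, so all three assertions hold with equality.

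For the inductive step, assume the assertion for $G_{i-1}$ (with $i\ge 3$) and consider the splitting $G_{i-1}\xrightarrow[v]{e_{i1},e_{i2}}G_i$, where the vertex $v\in V(G_{i-1})$ is replaced by two new vertices $v_1,v_2$ with the edges of $E_{G_{i-1}}(v)$ partitioned non-trivially between them and two new parallel edges $e_{i1},e_{i2}$ added between $v_1$ and $v_2$. That $G_i$ has no loops follows because (a) by induction $G_{i-1}$ had no loops and the edges of $E_{G_{i-1}}(v)$ had other endpoints, (b) these edges retain their other endpoints in $G_i$, and (c) $e_{i1},e_{i2}$ join the distinct vertices $v_1,v_2$. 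For the maximum degree bound, a vertex $w\notin\{v_1,v_2\}$ has $d_{G_i}(w)=d_{G_{i-1}}(w)\le i<i+1$ by induction, while for $j\in\{1,2\}$ we have
\[
d_{G_i}(v_j)=|E_j|+2\le\bigl(d_{G_{i-1}}(v)-1\bigr)+2\le i+1,
\]
using $|E_{3-j}|\ge 1$ and $d_{G_{i-1}}(v)\le i$.

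For the degree-sum bound $d_{G_i}(u)+d_{G_i}(v)\le i+4$, I would split into three cases. If $\{u,v\}\cap\{v_1,v_2\}=\emptyset$ then the sum equals $d_{G_{i-1}}(u)+d_{G_{i-1}}(v)\le(i-1)+4=i+3$. If $\{u,v\}=\{v_1,v_2\}$ then $d_{G_i}(v_1)+d_{G_i}(v_2)=d_{G_{i-1}}(v)+4\le i+4$ by the inductive degree bound on $v$. Finally, if exactly one of $u,v$ equals some $v_j$ (say $u=v_1$, $v\notin\{v_1,v_2\}$), then using $|E_1|\le d_{G_{i-1}}(v)-1$ from the non-emptiness of $E_2$,
\[
d_{G_i}(u)+d_{G_i}(v)\le\bigl(d_{G_{i-1}}(v_\text{orig})-1\bigr)+2+d_{G_{i-1}}(v)\le(i+3)+1=i+4,
\]
where $v_\text{orig}$ denotes the vertex split in $G_{i-1}$ and we applied the inductive degree-sum bound to the distinct pair $v_\text{orig},v\in V(G_{i-1})$. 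This exhausts the cases, completing the induction. The only subtle point, which I would emphasize, is the strict inequality $|E_j|\le d_{G_{i-1}}(v)-1$; without the non-emptiness of both parts of the partition the maximum-degree step would fail.
\end{proofopen}
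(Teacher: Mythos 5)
Your proof is correct, and it is exactly the ``easy inductive proof'' that the paper explicitly leaves to the reader: induction on $i$ via the splitting operation, with the key point being that both parts of the partition $E_1\cup E_2$ are non-empty so that $|E_j|\le d_{G_{i-1}}(v)-1$. All three case analyses check out, so nothing further is needed.
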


We shall first establish Theorem \ref{the-NoRainbowCircuit} ii) for $M$.  Let $H$ be the graph obtained from $G$ by splitting a vertex $v$ and adding an edge $x$; that is, $G \xrightarrow[v]{x} H$ and $N = M^*(H)$ is a cographic extension of $M$ by an element $x.$
  Given that $\nu(H) = \nu(G) + 1 =  m + 1 = n$ and $\varepsilon(H) = \varepsilon(G) +1,$ we see that $r(N) = r(M) = n-1$. 

\begin{lemma}
If $r(M) \ge 2$ and $x$ and  $e=e_1$ are non-parallel, then there is an $x$-semi-SRCP for $(M,c).$
%
\label{colem2}
\end{lemma}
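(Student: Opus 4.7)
My plan is to reformulate the statement as a graph-theoretic problem on $H$ and proceed by induction on $n$. An $x$-semi-SRCP for $(M,c)$ is a pair of circuits $B_1, B_2$ of $N = M^*(H)$ --- that is, a pair of minimal edge cuts of $H$ --- with $B_1 \cap B_2 = \{x\}$, $|B_1| + |B_2| \le n + 2$, and each $B_i - x$ a rainbow subset of $E(G)$. Using the correspondence between minimal edge cuts of $H$ through $x$ and vertex bipartitions of $V(H)$ separating $v_1$ from $v_2$, it suffices to produce a partition $V(H) = A \sqcup B \sqcup C$ with $v_1 \in A$, $v_2 \in B$, $C \ne \emptyset$, in which $x$ is the only $A$-$B$ edge of $H$, each of $H[A], H[B], H[A \cup C], H[B \cup C]$ is connected, and the $A$-$C$ and $B$-$C$ edge-sets of $H$ are each rainbow with combined size at most $n$; the corresponding circuits of $N$ are then $B_1 = [A \cup C, B]$ and $B_2 = [A, B \cup C]$.

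\medskip

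The base case $n = 3$ (where $G$ consists of two vertices joined by three parallel edges with colours $1, 2, 2$) is handled by direct inspection. For $n \ge 4$, the first candidate partition is $A = \{v_1\}$, $B = \{v_2\}$, $C = V(H) - \{v_1, v_2\}$, producing $B_1 = E_H(v_1)$ and $B_2 = E_H(v_2)$. The size bound $|B_1| + |B_2| = d_G(v) + 2 \le n + 2$ follows immediately from Observation \ref{obs-basicobs}, and $B_1 \cap B_2 = \{x\}$ holds whenever $x$ is the only edge between $v_1$ and $v_2$ in $H$, which is implied by the non-parallel hypothesis on $\{x, e_1\}$ together with the stratified structure. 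This candidate succeeds whenever the two halves of $E_G(v)$ induced by the split are each rainbow.

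\medskip

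When one of these halves contains two edges of the same colour, the remedy is to enlarge either $A$ or $B$ so as to absorb an endpoint of the colour conflict, thereby removing the duplicated edge from the corresponding cut while preserving connectivity of all four relevant induced subgraphs. Theorem \ref{the1} and Corollary \ref{cor-cor1} together provide a 2-cocycle colour class $\{e_{i1}, e_{i2}\}$ of $G$ which can either be moved across by enlarging $A$ or $B$ appropriately, or contracted to yield the smaller stratified instance $G' = G/e_{i1} \setminus e_{i2}$ to which the induction hypothesis applies; an $x$-semi-SRCP for the smaller instance then lifts back to $H$ by reinstating the contracted edge into one of the two cuts. The case analysis mirrors that of Lemma \ref{newLemma1}, but is carried out in terms of edge cuts and vertex partitions rather than rainbow paths.

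\medskip

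The main obstacle will be the configuration in which both halves of $E_G(v)$ simultaneously harbour colour repetitions and no available 2-cocycle colour class is compatible with the split at $v$. Here the partition must employ a genuinely nontrivial buffer $C$, with several clashing neighbours of $v$ absorbed into $A \cup B$ while the boundaries $A$-$C$ and $B$-$C$ remain rainbow and of combined size at most $n$. The non-parallel hypothesis on $\{x, e_1\}$ is decisive in this final case, ruling out the configurations in which $e_1$ would be forced into both cuts, or in which enlarging $A$ or $B$ to remove a conflict would disconnect $H[A \cup C]$ or $H[B \cup C]$.
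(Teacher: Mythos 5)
Your opening moves match the paper's: induct on $r(M)$, contract a monochromatic digon $\{e_{i1},e_{i2}\}$ (checking that non-parallelism of $x$ and $e_1$ survives the reduction) to invoke the induction hypothesis, and otherwise aim for the pair of vertex-star cocycles $E_H(v_1)$, $E_H(v_2)$ at the two endpoints of $x$, whose sizes sum to $d_G(v)+2\le n+2=r(M)+3$ by Observation \ref{obs-basicobs}. But the proof is not complete. Your last two paragraphs \emph{describe} rather than \emph{carry out} the treatment of the case where one or both of the sets $E_H(v_i)-x$ fail to be rainbow, and you explicitly flag a ``main obstacle'' configuration requiring a nontrivial buffer $C$ without producing any construction for it or showing it cannot arise. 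That is a genuine gap, and the missing ingredient is a short counting argument that makes the entire anticipated case analysis unnecessary: once $G$ has no monochromatic digons, the two edges of each of the $n-2$ doubleton colour classes share at most one vertex of $H$, so at most $n-2$ of the $\nu(H)=n$ vertices of $H$ fail to be rainbow, i.e.\ $H$ has at least two rainbow vertices. Since $(M,c)$ is circuit-achromatic, $G=H/x$ has no rainbow vertices, so every rainbow vertex of $H$ must be an endpoint of $x$; hence both $v_1$ and $v_2$ are rainbow and your first candidate partition always succeeds. Without this observation the lemma is not proved.

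Two smaller points. First, the claim that $x$ is the unique $v_1$--$v_2$ edge of $H$ does not rest on the non-parallel hypothesis: any other such edge would become a loop of $G=H/x$, and $G$ has no loops by Observation \ref{obs-basicobs}. Second, your appeal to Corollary \ref{cor-cor1} for a ``2-cocycle colour class'' conflates the two dual objects: in the cographic setting the relevant monochromatic pair is a digon (a $2$-element cocircuit of $M=M^*(G)$, i.e.\ a $2$-cycle of $G$), not a $2$-cocycle of $G$ (which would be a parallel pair of the simple matroid $M$ and so cannot exist). The reduction step should be phrased in terms of monochromatic digons, as in the paper.
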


\begin{proof}
By induction on $r(M).$  The lemma is seen to be true when $r(M)=2.$  We may assume the lemma is true for such cographic matroids of rank less than $n-1.$
 Suppose that for some $i$, the edges $e_{i1}$ and $e_{i2}$ form a monochromatic digon (i.e. $2$-cycle) in $G.$  Let $M' = M/e_{i1}\backslash e_{i2}$ and $N' = N/e_{i1}\backslash e_{i2}.$ Let $c' = c\big| E(M').$  
 We observe that any circuit of $N'$ is also a circuit of $N$.  As such, $x$ and $e_1$ are non-parallel in $N'.$  By assumption, there exists an $x$-semi-SRCP for $(M',c')$ and such is also seen to be a $x$-semi-SRCP for $(M,c).$  Thus we may assume that $G$ contains no monochromatic digons. 
 Given that $|c(M)| = n-1$ and $e_1$ is a colour-singular edge, there are exactly $n-2 = \nu(H) -2$ colour classes with two elements.  Since by assumption no two edges in any such colour class form a monochromatic digon in $G,$ it follows that $H -x$ has
  at least two rainbow vertices, say $x_1$ and $x_2.$   Given that $G= H/x$ contains no rainbow vertices (because $(M,c)$ is circuit-achromatic), it follows that in $H$, $x = x_1x_2.$  Now by Observation \ref{obs-basicobs}, $d_G(v) = d_{G_{n-1}}(v) \le n$, it follows that $d_H(x_1) + d_H(x_2) \le n+2 = r(M) +3.$  For $i=1,2,$ let $C_i \subseteq E_H(x_i)$ be a circuit of $N$ containing $x.$  Then $\{ C_1,C_2\}$ is seen to be an $x$-semi-SRCP for $(M,c).$  
\end{proof}

\subsection{Completing the proof of Theorem \ref{the-NoRainbowCircuit} i) for $M$}\label{sec-completeNoRainbowCircuitCographic}

To complete the proof of Theorem \ref{the-NoRainbowCircuit} i) for $M$, it will suffice to prove the following:

\begin{lemma}
Suppose $G \xrightarrow[v^1]{e^1} G^1 \xrightarrow[v^2]{e^2} G^2 \xrightarrow[v^3]{e^3} G^3$ where $T=\{ e^1, e^2, e^3 \}$ is a co-independent circuit of $N = M^*(G^3).$  
Extend the colouring $c$ to $G^3$ where $c(e^1) = n,\ c(e^2) = n+1$ and $c(e^3) = n+2.$
Then there are three edge-disjoint rainbow cocycles $C_i,\ i \in [3]$ in $G^3$ where $e^i \in C_i,\ i = 1,2,3$ and for all $i < j$, $|C_i| + |C_j| \le r(M) + 2 = n+1.$  That is, $\{ C_1, C_2, C_3 \}$ is a T-SRCT for $(M,c).$
\label{colem3}
\end{lemma}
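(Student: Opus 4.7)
The plan is to prove Lemma~\ref{colem3} by induction on $n$, mirroring the strategy of Lemma~\ref{newLemma1} (the analogous statement for graphic matroids) but dualised to the vertex-split construction of $G$ described in Section~\ref{sec-constructG}. Small values of $n$ form the base cases, verified directly by examining the finite list of possibilities for $G$ and the admissible positions of the three vertex-splits.

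For the inductive step, I would first reduce $G$ to a standard form by structural simplifications. For instance, if some colour class $\{e_{i1}, e_{i2}\}$ forms a monochromatic digon in $G$, one may contract one edge and delete the other, pushing the three splits through to obtain a smaller instance to which induction applies. Vertices whose incident edges admit further contraction without creating loops or destroying stratification are handled similarly, playing the role of reductions~(\ref{nona1})--(\ref{nona2!}) in the graphic proof.

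Next, I would analyse the structure imposed by the bond $T = \{e^1, e^2, e^3\}$ in $G^3$. Because $T$ is a bond, its removal partitions $V(G^3) = A \sqcup B$ with each $e^i$ crossing this partition; the co-independence of $T$ in $N$ (equivalently, acyclicity of $T$ in $G^3$) further constrains this partition. Analysing where $v^1, v^2, v^3$ sit in the construction sequence of $G$ --- whether any coincide, whether they share neighbours, whether they are among the most recently added vertices --- leads to a case analysis parallel to reductions~(\ref{nona3})--(\ref{nona8!}).

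The central technical ingredient is a cographic analog of Lemmas~\ref{lem1}, \ref{lem2}, and \ref{newlem3}: using the iterative vertex-split construction of $G$, one establishes by a parallel induction the existence of short, pairwise edge-disjoint rainbow bonds of $G^3$ separating prescribed pairs of vertices, with explicit upper bounds on their sizes. Since a bond of $G^3$ containing $e^i$ but none of $e^j, e^k$ corresponds precisely to a bond of $G^3 / e^j / e^k$ containing $e^i$, these analogs supply, for each $i$, a short rainbow bond $C_i$ of $G^3$ containing exactly $e^i$ from $T$, and the size bounds combine to yield $|C_i| + |C_j| \leq n+1$. The main obstacle is guaranteeing that the three bonds $C_1, C_2, C_3$ can be chosen simultaneously edge-disjoint; this is the dual of the difficulty of finding three pairwise edge-disjoint rainbow paths on the pairs $(u_i, u_j)$ in the graphic proof, and resolving it will require the cographic analogs to be flexible enough to accommodate the arrangement of the three splits, together with careful treatment of a small list of exceptional configurations (analogous to the $K_{2,3}^+$ exception that appeared in Lemma~\ref{newLemma1}).
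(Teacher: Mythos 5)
Your plan defers essentially all of the content of the lemma to a ``cographic analog of Lemmas \ref{lem1}, \ref{lem2}, and \ref{newlem3}'' asserting the existence of short, pairwise edge-disjoint rainbow bonds separating prescribed pairs of vertices, and this is a genuine gap: you neither formulate this ingredient precisely nor indicate how to prove it, and it is not clear what the correct dual statement would even be. The graphic lemmas bound the length of rainbow \emph{paths} between prescribed vertices; the planar/matroid dual of such a path is not ``a bond separating two prescribed vertices'' in any usable sense, and the paper never constructs such a tool. Everything difficult about Lemma \ref{colem3} is hidden inside this black box, so as written the proposal is a research outline rather than a proof.

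The paper's actual argument is structurally different and much more direct. After the monochromatic-digon reduction (\ref{nonacolem3.part1}), which matches your first step, it counts: $c$ has only $r(M)-1 = \nu(G^3)-4$ two-element colour classes, so $G^3$ has at least four rainbow vertices, and since $G=H/T$ has no rainbow vertices (being circuit-achromatic), every rainbow vertex of $G^3$ is one of the six endpoints $u_{ij}$ of the edges of $T$. The three cocycles are then taken to be vertex stars $D_{ij}=E_{G^3}(u_{ij})$ (or stars of small vertex sets), each automatically a rainbow cocycle containing exactly one edge of $T$, with $|C_i|+|C_j|$ controlled by the degree bound $d_G(x)+d_G(y)\le r(M)+4$ of Observation \ref{obs-basicobs}. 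Edge-disjointness, which you flag as the main obstacle, comes for free once the chosen endpoints are pairwise non-adjacent; the case analysis in (\ref{nonacolem3.part2})--(\ref{nonacolem3.part9}) is devoted to showing, via counts of non-incidental colour classes and rainbow vertices, that a suitable independent selection $S_\phi$ of one endpoint per $T$-edge exists. If you want to salvage your approach, you would need to replace the undefined ``short rainbow bond'' machinery with this rainbow-vertex counting argument, or else actually state and prove the dual lemmas you are invoking.
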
 

\begin{proof}
In $G^3$ let $e^i = u_{1i}u_{2i},\ i \in [3].$   For $i=1,2$ we may assume that $u_{i1}, u_{i2}, u_{i3}$ belong to the same component of $G^3 - T.$  For $i = 1,2,$ let $U_i$ be the set of vertices in the component containing $u_{ij},\ j\in [3].$  For all elements $x\in E(N)$ which are not colour-singular, we let $x'$ denote the other element in the colour class containing $x.$  

\begin{noname}
There is no element $x\in E(G)$ and subset $T' \subseteq T$ for which $x \cup T'$ is the edge set of a cycle in $G^3.$\label{nonacolem3.part0}
\end{noname}

\begin{proof}
Suppose there is a cycle $C$ in $G^3$ for which $E(C) = T' + x$ for some subset $T' \subseteq T$ and element $x \in E(G).$  Then $C/T'$ is loop $x$ in $G$, a contradiction since $G$ has no loops by Observation \ref{obs-basicobs}.
\end{proof}

\begin{noname}
We may assume that $(G^3,c)$ has no monochromatic digons.
\label{nonacolem3.part1}
\end{noname}

\begin{proof} Suppose $(G^3,c)$ has a monochromatic digon with edges $e,f.$  Let $H = G/e\backslash f$ and $H^3 = G^3 /e\backslash f.$  We see that $H \xrightarrow[]{e^1} H^1 \xrightarrow[]{e^2} H^2 \xrightarrow[]{e^3} H^3.$
Since $T$ is a cocyle of $G^3,$ it is a cocycle of  $H^3.$ Then $H$ and $H^3$ satisfy the conditions of the lemma.  Thus the lemma holds for $H^3$ and hence it contains $3$ edge-disjoint rainbow cocycles $C_i',\ i = 1,2,3$ where $e^i \in C_i',\ i = 1,2,3$ and for all $1 \le i < j \le 3$, $|C_i| + |C_j| \le (r(M)-1) + 2 = r(M) +1.$ Since $C_i',\ i = 1,2,3$ are also cocycles of $G^3,$ the lemma holds for $G^3.$  Thus we may assume that $(G^3,c)$ has no monochromatic digons.
\end{proof}
 
 By (\ref{nonacolem3.part1}), we may assume that $G^3$ has no monochromatic digons.
It follows that for each colour class $X_i$,  there is at most one vertex $v \in V(G^3)$ for which $X_i \subseteq E_{G^3}(v).$
Given that there are only $r(M) -1 = n-2 = m-1 = \nu(G^3) - 4$ colour classes with $2$ elements, there are at least $4$ rainbow vertices in $G^3$.  Furthermore, since $G$ contains no rainbow cocycles, each rainbow vertex of $G^3$ is one the vertices $u_{ij}\ i\in [3], \ j \in [2].$  

\begin{noname}
We may assume that for $i= 1,2,$ $|U_i| \ge 2.$
\label{nonacolem3.part2}
\end{noname}

\begin{proof} Suppose that for some $i,$ $|U_i| = 1.$  We may assume that $U_1 = \{ u \}.$  Then $u = u_{11} = u_{12} = u_{13}$ and $u, u_{21}, u_{22}, u_{23}$ are rainbow vertices in $G^3.$  Since $G$ has no loops, none of the vertices $u_{21}, u_{22}, u_{23}$ are adjacent.  We also have that for $j=1,2,3,$ $d_{G^3}(u_{2j}) \ge 2$.  Furthermore, since (by Observation \ref{obs-basicobs}) the degree of any vertex in $G$ is at most $r(M)+1 = n$, it follows that for all $1\le i < j\le 3,$ $d_{G^3}(u_{2i}) + d_{G^3}(u_{2j}) \le n+1= r(M) +2.$  Letting  $C_i = E_{G^3}(u_{2i}),\ i = 1,2,3$, it is seen that $\{ C_1, C_2, C_3 \}$ is a $T$-SRCyP for $(M,c).$  Thus we may assume that for $i=1,2,$ $|U_i| \ge 2.$
\end{proof}

\begin{noname}
If for some $i\in \{ 1,2 \}$ and distinct $j_1, j_2 \in \{ 1,2,3 \}$,  $u_{ij_1} = u_{ij_2}$, then we may assume that the vertices $u_{(3-i)j}, \ j \in [3],$ are non-distinct.
\label{nonacolem3.part3}
\end{noname}

\begin{proof}
Suppose that for some $i\in \{ 1,2 \}$ and distinct $j_1, j_2 \in \{ 1,2,3 \}$, $u_{ij_1} = u_{ij_2}$ and the vertices $u_{(3-i)j}, \ j \in [3],$ are distinct.
We may assume that
$u = u_{11} = u_{12}$ and the vertices $u_{2j}, \ j \in [3]$ are distinct.  By (\ref{nonacolem3.part2}), we may assume that $u_{13} \ne u.$  We see that $v^3 \in V(G)$ and hence by Observation \ref{obs-basicobs}, $d_G(v^1) + d_G(v^3) \le n+3.$
Since $\{ e^1, e^2 \}$ is not a cocycle of $G^3$, we have that $d_{G^3}(u) \ge 3.$  Furthermore, (\ref{nonacolem3.part0}) implies that $u_{21}$ and $u_{22}$ are non-adjacent in $G^3.$ 
At most one of the vertices $u,u_{13}$ and $u_{2j},\ j \in [3]$ is not a rainbow vertex.  Suppose that $u_{2j},\ j = 1,2$ are rainbow vertices.

Suppose that $u$ is not a rainbow vertex (and $u_{13}$ and $u_{2j},\ j\in [2]$ are rainbow vertices).
We have
\begin{align*}
d_G(v^1) &= d_{G^3}(u) + d_{G^3}(u_{21})  + d_{G^3}(u_{22}) -4\\
d_{G^3}(u_{21})+ d_{G^3}(u_{22}) &= d_{G}(v^1) - d_{G^3}(u) +4\\
&\le d_G(v^1) + 1 \le n+1 =r(M) +2.
\end{align*}
Suppose that $d_{G^3}(u_{13}) = 2.$  Then $u_{13}$ is a rainbow vertex where for $j=1,2,$ $d_{G^3}(u_{2j}) + d_{G^3}(u_{13}) \le r(M) +2.$  Letting $C_j = E_{G^3}(u_{2j}),\  j \in [2],$ and $C_3 = E_{G^3}(u_{13}),$ one sees that $\{ C_1, C_2, C_3 \}$ is a $T$-SRCyT for $(M,c).$  Suppose that $d_{G^3}(u_{13}) \ge 3.$  If $d_{G^3}(u_{23})\ge 3,$ then $d_{G^3}(u_{13}) \le d_G(v^3)-1$ and hence for $j=1,2$, $$d_{G^3}(u_{2j}) + d_{G^3}(u_{13}) \le d_G(v^1) -1 + d_G(v^3) -1 \le r(M) +2.$$  We can now define $C_i,\ i \in [3]$ as before.

Suppose $u$ is a rainbow vertex in $G^3.$  Define $C_i,\ i = 1,2$ as before and define $C_3 = E_{G^3}(U_1 -u).$ Then $C_i,\ i \in [3]$ are pairwise disjoint rainbow cocycles and for $j=1,2,$ $|C_j| + |C_3| \le d_G(v^1) + 1 \le r(M) +2.$
Thus $\{ C_1, C_2, C_3 \}$ is seen to be a $T$-SRCyT for $(M,c).$

From the above, we may assume that for some $j\in [2],$ $u_{2j}$ is not a rainbow vertex.  Then $u$ and $u_{i3},\ i = 1,2$ are rainbow vertices. Since $G$ has no rainbow vertices, $v^3$ is not a rainbow vertex and $E_{G^3}(\{ u_{13}, u_{23} \})$ contains a colour class $\{ y, y' \} .$  Then $y$ and $y'$ are non-incident and hence $G^3$ must have at least $5$ rainbow vertices, implying that $u$ is a rainbow vertex.  We can now argue as before.
\end{proof}

\begin{noname}
We may assume that the vertices $u_{ij},\ i \in [2], \ j \in [3]$ are distinct.
\label{nonacolem3.part4}
\end{noname}

\begin{proof}
Suppose the vertices $u_{ij},\ i \in [2],\ j\in [3]$ are not distinct.  We may assume that
$u = u_{11} = u_{12}.$  By (\ref{nonacolem3.part2}), we may assume that $u_{13} \ne u$.  By (\ref{nonacolem3.part3}), we may assume that for some $1\le i < j\le 3,$ $u_{2i} = u_{2j}.$  Noting that $u_{21} \ne u_{22},$ we may assume that $v = u_{22} = u_{23}.$  Given that $G^3$ has at least $4$ rainbow vertices, it follows that all the vertices $u,v, u_{21}, u_{13}$ are rainbow vertices and both pairs of vertices $u, u_{13}$ and $u_{21}, v$ are nonadjacent in $G^3$ (by (\ref{nonacolem3.part0})).  
Let $C_1 = E_{G^3}(U_2 - v),\ C_2 = E_{G^3}(U_1 - u_{13} + u_{21}), \ C_3 = E_{G^3}(U_1 -u).$  Given $d_G(v^1) \le r(M)+1$, it follows that for all $1 \le i < j\le 3$, $|C_i| + |C_j| \le r(M)+2$ and $\{ C_1, C_2, C_3 \}$ is seen to be a $T$-SRCyT for $(M,c).$ 
\end{proof}

By (\ref{nonacolem3.part4}), we may assume that all the vertices $u_{ij},\ i \in [2], \ j \in [3]$ are distinct.  Then $v^j,\ j = 1,2,3$ are distinct vertices of $G.$  For all $i,j,$ let $D_{ij} = E_{G^3}(u_{ij}).$  Since there are at least $4$ rainbow vertices, we may assume that $u_{11}$ and $u_{21}$ are rainbow vertices.  

\begin{noname}
We may assume that the vertices $u_{1j},\ j \in [3]$ and $u_{2j},\ j \in [2]$ are rainbow vertices in $G^3.$ \label{nonacolem3.part4.5}
\end{noname}

\begin{proof} Since $G$ has no rainbow vertices, it follows that $E_G(v^1) = D_{11} \triangle D_{21} = E_{G^3}( \{ u_{11}, u_{21} \} )$ contains a colour class, say $\{ x, x' \}$, where we may assume that $x\in D_{11}$ and $x\in D_{21}.$  Then $x$ and $x'$ are non-incident and
as such, there must be at least $5$ rainbow
 vertices in $G^3$. Thus we may assume that $u_{1j},\ j= 1,2,3$ and $u_{2j},\ j = 1,2$ are rainbow vertices.
 \end{proof}

\begin{noname}
We may assume that for some $i\in [2],$ at least one pair of the vertices in $\{ u_{ij}\ \big| \  j \in [3]\}$ is adjacent.
\label{nonacolem3.part5}
\end{noname}

\begin{proof}
Suppose that for $i=1,2,$ the vertices in $\{ u_{ij}\ \big| \ j\in [3] \}$ are pairwise non-adjacent.  
 For $i = 1,2,3$ let
$$C_i = \left\{ \begin{array}{lr} D_{1i} & \mathrm{if}\ d_{G^3}(u_{2i}) >2\\ D_{2i} & \mathrm{if}\ d_{G^3}(u_{2i}) = 2 \end{array} \right.$$
 Given that each vertex of $G$ has degree at most $r(M)+1$, it follows that for $j = 1,2,3$, either $d_{G^3}(u_{1j}) \le r(M)$ or $d_{G^3}(u_{2j}) = 2.$  
Since it follows by Observation \ref{obs-basicobs} that for any two vertices $x,y$ in $V(G)$, $d_G(x) + d_G(y) \le r(M)+4,$ one can check that for all $1 \le i < j\le 3,$ $|C_i| + |C_j| \le r(M)+2.$  Thus $\{ C_1, C_2, C_3 \}$ is a $T$-SRCyT for $(M,c).$
\end{proof}

\begin{noname}
We may assume that all the vertices $u_{ij},\ i\in [2],\ j\in [3]$ are rainbow vertices.
\label{nonacolem3.part6}
\end{noname}
\begin{proof}
By (\ref{nonacolem3.part6}), we may assume that $u_{1j},\ j \in [3]$ and $u_{2j},\ j \in [2]$ are rainbow vertices in $G^3.$  Assume that $G^3$ only has $5$ rainbow vertices and $u_{23}$ is not a rainbow vertex.
 Then $d_{G^3}(u_{23}) \ge 3$.  Recall that $\{ x,x' \}$ is a colour class where $x\in D_{11}$ and $x' \in D_{21}.$ Using the same argument for $D_{12}\triangle D_{22}$ as we did for $D_{11} \triangle D_{21},$ it follows that there is a colour class, say $\{ y, y' \}$,where
 $y\in D_{12}$ and $y' \in D_{22}.$  If $\{ x,x' \} \ne \{ y, y' \}$, then $G^3$ must have at least $6$ rainbow vertices, a contradiction.  Thus $y =x$ and $y' = x'$ and hence $x = u_{11}u_{12}$ and $x' = u_{21}u_{22}.$
Since $T$ does not properly contain a cocycle and $M$ is simple (and hence has no $2$-cocycles),  there exists $(i,j) \in [2] \times [2]$ such that $d_{G^3}(u_{ij}) \ge 3$ and $d_{G^3}(u_{(3-i)(3-j)}) \ge 3.$  We may assume that this is true for $i= j=1.$   Suppose that $u_{13}$ is not adjacent to $u_{12}.$
Let $C_1 = D_{21},\ C_2 = D_{12}$ and $C_3 = D_{13}.$  Then for all $i\in [3],\ |C_i| \le d_G(v^i) -1$ and for all $1\le i<j \le 3,$ $|C_i| + |C_j| \le d_G(v^i) + d_G(v^j) -2 \le r(M) + 2.$  Thus $\{ C_1, C_2, C_3 \}$ is a $T$-SRCyT for $(M,c).$
Suppose that $u_{13}$ is adjacent to $u_{12}$ and let $z = u_{12}u_{13}.$  If $z \ne e_1,$ then $z$ and $z'$ are non-incident edges and it follows that $G^3$ contains at least $6$ rainbow vertices, a contradiction.  Thus $z = e_1.$  This also shows that $u_{13}$ is not adjacent to $u_{11}.$
If $d_{G^3}(u_{21}) \ge 3$, then defining $C_1 = D_{11},\ C_2 = D_{21}$ and $C_3 = D_{13},$ we see that as before $\{ C_1, C_2, C_3 \}$ is a $T$-SRCyT for $(M,c).$  Thus we may assume that $d_{G^3}(u_{21}) = 2.$  Now define $C_1 = D_{21},\ C_2 = D_{12}$ and 
$C_3 = E_{G^3}(U_2 - u_{21} - u_{22}).$  Then $\{ C_1, C_2, C_3 \}$ is seen to be a $T$-SRCyT for $(M,c).$  
\end{proof}

By (\ref{nonacolem3.part6}), $G^3$ has (exactly) $6$ rainbow vertices which are the vertices $u_{ij},\ i \in [2], \ j \in [3].$  Since $M = M^*(G)$ is simple, $G$ has no $2$-cocycles and thus for all $j\in [3],\ d_{G^3}(u_{1j}) + d_{G^3}(u_{2j}) \ge 5.$ Moreover, since $(M,c)$ is circuit-achromatic, for all $j\in [3],$ there is a colour class $\{ f_{1j}, f_{2j} \}$ where  $\{ f_{1j}, f_{2j} \} \subseteq E_{G^3}(\{ u_{1j}, u_{2j} \} )$ and for $i\in [2]$ and $j\in [3],$ $f_{ij}\in D_{ij}.$ 

For a function $\phi: [3] \rightarrow [2]$ we define  $S_{\phi} = \{ u_{\phi(i)i} \ \big| \ i = 1,2,3 \}.$  The set $S_{\phi}$ is said to be {\bf short} if it is independent and for all $1\le i < j\le 3,$ $|D_{\phi(i)i}| + |D_{\phi(j)j}| \le r(M) +2.$ For the function $\phi$, we define its counterpart  $\phi': [3] \rightarrow [2]$ where for all $i,$ $\phi'(i) = 3 - \phi(i).$ We let $S_{\phi}' = S_{\phi_i'}.$
The next observation, whose proof we omit, follows from the fact that $r(M) \ge 3$ and for all $x,y \in V(G),$ $d_G(x) + d_G(y) \le r(M)+4.$

\begin{noname}
Let $\phi: [3] \rightarrow [2]$ where $S_{\phi}$ is independent. For all $1 \le i < j \le 3$, if 
$$|D_{\phi(i)i}| + |D_{\phi(j)j}| \le 5\  \mathrm{or}\  |D_{\phi'(i)i}| + |D_{\phi'(j)j}| \ge 6 \ \ \ (\star),$$ then $|D_{\phi(i)i}| + |D_{\phi(j)j}| \le r(M) +2.$  Thus if ($\star$) holds for all $1 \le i < j \le 3,$ then $S_{\phi}$ is short.
\label{nonacolem3.part7}
\end{noname}

Since $G^3$ has $6$ rainbow vertices, only edges belonging to at most $3$ colour classes in $(G,c)$ can join pairs of vertices among $ u_{ij} \ i \in [2],\ j\in [3]$ and in the case that there are $3$ such colour classes, one of them is $X_1 = \{ e_1 \}$. Thus at least two of the colour classes $\{ f_{1j}, f_{2j} \},\ j \in [3]$ are equal.  We may assume that $f_{11} = f_{12}$ and $f_{21} = f_{22}.$
Also, for some $i \in \{ 1,2 \},$ at least two vertices of $u_{ij},\ j  \in [3]$ are non-adjacent.  We may assume that $u_{11}$ and $u_{13}$ are non-adjacent.  Let $\phi_1:[3] \rightarrow [2]$ where for $i\in \{ 1,3\},\ \phi_1(i) = 1$ and $\phi_1(2) =2.$  Let $S_1 = S_{\phi_1} = \{ u_{11}, u_{22}, u_{13} \}$ and let $S_1' = S_{\phi_1}' = \{ u_{21}, u_{12}, u_{23} \}.$   Let $\phi_2:[3] \rightarrow [2]$ where $\phi_2(1) =2$ and for $i\in \{ 2,3 \},$ $\phi_2(i) =1.$ Let $S_2 = S_{\phi_2} = \{ u_{21}, u_{12}, u_{13} \}$ and $S_2' = S_{\phi_2}'.$ Then $S_1$ is independent.  We may assume that for $i= 1,2,$ $S_i$ and $S_i'$ are not short.  Given that $S_1$ is not short, it follows by (\ref{nonacolem3.part7}) that at least one of the cocyles $D_{21}, D_{12},$ or $D_{23}$ is a $2$-cocycle.  

\begin{noname}
We may assume that $|D_{21}| \ge 3.$
\label{nonacolem3.part8}
\end{noname}

\begin{proof}
Suppose $|D_{21}| =2.$ Then $|D_{22}| \ge 3$; otherwise, $E_{G^3}(\{ u_{21}, u_{22} \} ) = \{ e^1, e^2 \} \subset T$ is a cocycle of $G^3.$  Since $|D_{22}| \ge 3,$ it follows that $|D_{12}| \le d_G(v^2) -1.$  We also have that $|D_{11}| \ge 3;$ otherwise, $E_G(v^1) = \{ f_{11}, f_{21} \}$ which is impossible since $G$ has no $2$-cocycles.  Then $S_1'$ is seen to be an independent set.
It follows by (\ref{nonacolem3.part7}) that $|D_{13}| = 2.$  If $|D_{23}| =2,$ then we have $|D_{13}| = |D_{23}| = 2$ and hence $d_g(v^3) =2$, contradicting the assumption that $G$ has no $2$-cocycles.
Thus $|D_{23}| \ge 3.$  
 If $u_{12}$ and $u_{13}$ are non-adjacent, then $S_2$ is independent and it is seen to be short by (\ref{nonacolem3.part7}).  Thus we may assume that $f_{13} = u_{12}u_{13}.$  

Suppose $f_{23}$ is not incident with $u_{22}.$  Since $E_{G^3}(\{ u_{11}, u_{12}, u_{21}, u_{22} \})$ is seen to be a cocycle of $G$, it must contain a colour class, say $\{ f, f' \}$ of $(G,c).$  Now $f$ and $f'$ must be incident and we may assume that $f\in D_{11}$ and $f' \in D_{12}.$ 
Then $|D_{12}| \ge 4.$   Since $|D_{22}| \ge 3,$ it follows that $d_G(v^2) \ge 5.$  Since $d_G(v^1) + d_G(v^2) \le r(M) +4,$ it follows that $|D_{11}| = d_G(v^1) \le r(M) -1.$ Thus $|D_{11}| + |D_{13}| \le r(M) +1.$  By (\ref{nonacolem3.part7}), we have that for all $j\in \{ 1,3 \}$, $|D_{1j}| + |D_{22}| \le r(M) +2$, implying that $S_1$ is short - a contradiction.
It follows that $f_{23} = u_{22}u_{23}.$  

If for some $i\in \{ 1,2 \},$ $|D_{i2}| \ge 4,$ then $d_G(v^2) \ge 5$ and one can show that either $S_1$ or $S_1'$ is short using similar arguments as before.  Thus for all $i\in [2],$ $|D_{i2}| = 3.$  Given that $E_G(\{ v^1, v^2, v^3 \} )$ is a cocycle in $G$, it contains a colour class $\{ f, f' \}$ where $f\in E_G(v^1)$ and $f' \in E_G(v^3).$  In $G^3,$ we have $f \in E_{G^3}(u_{11})$ and $f' \in E_{G^3}(u_{23})$ and as such, $f$ and $f'$ are non-incident.  This yields a contradiction since there are at most two such colour classes.
It follows from the above that we may assume that $|D_{21}| \ge 3.$
\end{proof}

\begin{noname}
We may assume that $|D_{12}| \ge 3.$
\label{nonacolem3.part9}
\end{noname}

\begin{proof}
By (\ref{nonacolem3.part8}), we may assume that $|D_{21}| \ge 3.$  Suppose $|D_{12}| =2$.  Then $|D_{11}| \ge 3$ and $|D_{22}| \ge 3.$ 
Furthermore,  $u_{12}$ and $u_{13}$ are non-adjacent and $S_2 = \{ u_{21}, u_{12}, u_{13} \}$ is independent.  If $|D_{23}| \ge 3,$ then it follows by (\ref{nonacolem3.part7}) that $S_2$ is short. Thus we may assume that $|D_{23}| = 2.$
Suppose $u_{21}$ and $u_{23}$ are non-adjacent.  Then $S_1'$ is independent.  Since $|D_{11}| + |D_{21}| -2 = d_G(v^1) \le r(M) +1,$ it follows that $|D_{21}| \le r(M).$  Since $|D_{12}| = |D_{23}| = 2,$ it follows that $S_1'$ is short, a contradiction.  Thus we may assume that $u_{21}$ and $u_{23}$ are adjacent and $f_{23} = u_{21}u_{23}.$  If $|D_{13}| = 2,$ then the previous argument shows that $S_2$ is short .  Thus we may assume that $|D_{13}| \ge 3.$
We see that $E_{G^3}(\{ u_{11}, u_{12}, u_{21}, u_{22} \} )$ is a cocycle of $G$ and hence it contains a colour class $\{ f,f' \}$ of $(G,c)$ where $f$ and $f'$ are incident.  We may assume that  $f\in D_{21}$ and $f' \in D_{22}.$
It follows that $|D_{21}| \ge 4$ and $d_G(v^1) \ge 5.$  This in turn implies that $d_G(v^2) \le r(M) + 4 - d_G(v^1) \le r(M) -1$ and hence $|D_{22}| \le r(M) -1.$  Now it follows by (\ref{nonacolem3.part7}) that
$S_2' = \{ u_{11}, u_{22}, u_{23} \}$ is short, yielding a contradiction.  It follows that we may assume that $|D_{12}| \ge 3.$
\end{proof}
 
By (\ref{nonacolem3.part8}) and (\ref{nonacolem3.part9}), we may assume that $|D_{21}| \ge 3$ and $|D_{12}| \ge 3.$  We may also assume that $|D_{23}| =2,$ for otherwise it follows by (\ref{nonacolem3.part7}) that $S_1$ is short.  We now have that $|D_{13}| \ge 3.$ If 
$u_{22}$ and $u_{23}$ are non-adjacent, then $S_2'$ is independent and it follows by (\ref{nonacolem3.part7}) that it is short.
Thus we may assume that $f_{23} = u_{22}u_{23}.$   Now $u_{21}$ and $u_{23}$ are non-adjacent and $S_1'$ is independent.  If $|D_{11}| \ge 3,$ then it follows by (\ref{nonacolem3.part7}) that $S_1'$ is short.  Thus we may assume that $|D_{11}| = 2.$
We have $d_G(v^2) = |D_{12}| + |D_{22}| - 2 \ge 4$ and as such $|D_{21}| = d_G(v^1) \le r(M) + 4 - d_G(v^2) \le r(M).$

If $|D_{22}| \ge 4,$ then it follows by (\ref{nonacolem3.part7}) that $S_1' = \{ u_{21}, u_{12}, u_{23} \}$.  Thus we may assume that $|D_{22}| = 3.$  
If $f_{13} \ne u_{12}u_{13},$ then the cocycle $E_{G^3}(\{ u_{11}, u_{12}, u_{21}, u_{22} \} )$ contains a colour class $\{ f,f' \}$ of $(G,c)$ where $f$ and $f'$ are incident and we may assume that $f\in D_{21}$ and $f'\in D_{22}.$
Then $|D_{22}| \ge 4,$ contradicting our assumptions.  Thus $f_{13} = u_{12}u_{13}.$  Given that we may assume that $|D_{22}|= 3,$ by symmetry we may also assume that $|D_{12}| = 3.$  Now $E_{G^3}(\{ u_{ij}\ \big| \ i \in [2],\ j \in [3] \} )$ is a cocycle of $G$ and hence it contains a colour class $\{ f,f' \}$ where $f\in D_{21}$ and $f' \in D_{13}.$  However $f$ and $f'$ are non-incident and this yields a contradiction. This completes the proof of the lemma.    
\end{proof} 

\section{Circuit-achromatic graphic matroids $M$ with\\ $2(r(M)-1)$ elements.}\label{sec-BigTheoremiii)Graphic}

In this section, we prove Theorem \ref{the-NoRainbowCircuit} iii) when $M$ is graphic.  This was the most difficult challenge in this paper.   The proof is broken into two parts.  The first part, which is the bulk of the proof,
rests on proving the next theorem.

\begin{theorem}
Let $k \in [3]$ and let $(G,c)$ be a simple coloured graph where $\nu(G) = n$ and $\varepsilon(G) = 2(n-1) - k.$   Furthermore, assume that $c: E(G) \rightarrow [n-1]$ is a $2$-bounded $(n-1)$-colouring of $G$ where $(G,c)$ is circuit-achromatic and 
$A = \{ a_1, \dots ,a_k \}$ is the set of colour-singular edges in $(G,c).$
Let $G'$ be the graph obtained from $G$ by adding three edges $e_1, e_2, e_3$ to $G$ forming a triangle $T$ and extend $c$ to a colouring $c'$ for $G'$ where for $i = 1,2,3,$ $c'(e_i) = n.$  Suppose that the edges of $T \cup A$ do not induce a subgraph isomorphic to $K_4$ in $G'.$
Then there is a pair of edge-disjoint rainbow cycles $\{ C_1, C_2 \}$ in $G'$ where $|C_1| + |C_2| \le n+2$ and for $i=1,2,$ $|C_i \cap T| =1.$  
\label{newlemma2}
\end{theorem}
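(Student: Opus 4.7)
I would proceed by induction on $n = \nu(G)$. Base cases (small $n$) are handled by direct case analysis on the structure of $G$ and the placement of $T$ and $A$ in $G'$. For the inductive step, I would exploit the stratified structure of $(G,c)$ guaranteed by Theorem \ref{the1}: the colour classes admit an order $X_{i_1}, X_{i_2}, \ldots, X_{i_{n-1}}$ such that the cumulative unions $S_j = X_{i_1} \cup \cdots \cup X_{i_j}$ are closed with $r(S_j) = j$. This means $G$ is built up from a small subgraph by successively adding colour classes, each of which reduces the number of components by $1$. Observe that each desired cycle $C_i$ has the form $\{e_i\} \cup E(P_i)$, where $e_i \in T$ and $P_i$ is a rainbow path in $G$ between two of the $u_j$'s; since the two selected edges of $T$ must share one endpoint, the two paths $P_1, P_2$ must share an endpoint as well, and must be edge-disjoint with $|P_1| + |P_2| \le n$.

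The first reduction I would attempt is to seek a vertex $v \in V(G) \setminus \{u_1, u_2, u_3\}$ that can be eliminated via the stratified build-up. The possible reductions are: (i) if some $a \in A$ has an endpoint $v$ of degree $1$ in $G$, delete $v$ and $a$, passing to $(G^-, c^-)$ with $n-1$ vertices and $k-1$ colour-singular edges; (ii) if two edges of some doubleton colour class form a pair of parallel edges at a vertex $v$ of degree $2$, contract one and delete the other, passing to $(G^-, c^-)$ with $n-1$ vertices and $k$ colour-singular edges. In both cases $(G^-, c^-)$ remains stratified, $2$-bounded, and circuit-achromatic. I would then apply induction (or Theorem \ref{the-GraphicCographicMain} in the boundary case $k = 0$) to obtain edge-disjoint rainbow cycles $C_1^-, C_2^-$ in $G^- + T$, each containing one edge of $T$, and lift these back to $G + T$; the lift increases the total length by at most $1$, keeping $|C_1| + |C_2| \le n+2$.

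When no such vertex $v$ exists, or when the reduction interacts badly with $T$ or $A$ (for example, because $v$ coincides with some $u_i$), I would construct the cycles directly, adapting the rainbow-distance and edge-disjoint rainbow-path techniques from Lemmas \ref{lem1}--\ref{newlem3} to the present setting with $k$ colour-singular edges. By tracing the stratified build-up of $G$, I expect that for any prescribed pair $u_a, u_b \in \{u_1, u_2, u_3\}$ a short rainbow path can be constructed, and that two edge-disjoint such paths sharing one endpoint $u_b$ can be produced with total length at most $n$. Attaching one edge of $T$ to each then yields $C_1, C_2$ with $|C_1| + |C_2| \le n+2$. The $K_4$ exclusion on $T \cup A$ is precisely what is required: if $k = 3$ and $T \cup A$ formed $K_4$ on vertices $\{u_1, u_2, u_3, w\}$, then every rainbow $u_i$--$u_j$ path in $G$ would be forced to traverse an $A$-edge incident to $w$, making two short edge-disjoint paths impossible; away from this configuration, the construction succeeds.

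The main obstacle I expect is the intricate case analysis required when the straightforward reduction fails, that is, when every vertex of $V(G) \setminus \{u_1, u_2, u_3\}$ has degree at least $3$, or when a natural reduction vertex coincides with some $u_i$, or when the colour-singular edges of $A$ sit awkwardly relative to $T$. In these cases, one must track the stratification carefully and identify exactly where the $A$-edges lie relative to $u_1, u_2, u_3$ in the build-up of $G$, and then explicitly synthesise the two rainbow paths from paths supplied by the stratified construction. The argument will likely mirror the vertex-by-vertex case analysis used in the proof of Lemma \ref{newLemma1} (the circuit-achromatic case with $\varepsilon(M) = 2r(M)-1$), extended to accommodate the additional freedom and obstructions introduced by the $k$ colour-singular edges.
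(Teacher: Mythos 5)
Your proposal follows essentially the same route as the paper: induction exploiting the stratified structure from Theorem \ref{the1}, reduction of low-degree vertices outside $V(T)$ (the paper's steps (A.1)--(A.2)), and, in the irreducible case, direct synthesis of the two cycles from edge-disjoint rainbow paths supplied by Lemmas \ref{lem1}--\ref{newlem3}, with the $K_4$ exclusion on $T\cup A$ playing exactly the role you identify. Be aware that the bulk of the paper's actual work lies in the part you only gesture at — proving via a vertex-weighting that the degree-deficient vertices are precisely $V(T)$ and then a lengthy analysis of how the last few colour classes attach to the components $G_{n-5}^1, G_{n-5}^2$ — and that your fallback to Theorem \ref{the-GraphicCographicMain} for the boundary case $k=0$ is both unnecessary (that case is vacuous, since Corollary \ref{cor-cor2} would force a rainbow circuit) and inapplicable, as that theorem concerns $2$-uniform colourings with $2(r(M)+1)$ elements rather than the present setting.
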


\begin{proof}
We shall refer to a pair of edge-disjoint rainbow cycles $C_1,C_2$ in $G'$ for which $|E(C_i) \cap T| = 1,\ i = 1,2$ and $|C_1| + |C_2| \le n+2$ as a {\bf near}-$\mathbf{T}${\bf-short rainbow cycle pair} (near $T$-SRCP).  It follows by Lemma \ref{lem2} that the theorem holds for $k=1.$  Thus we may assume that $1<k \le 3$ and the theorem holds when there are fewer than $k$ colour-singular edges.  Assume that the theorem is false.  Among the counterexamples, let $(G,c)$ be a counterexample having the least number of vertices and let $n = \nu(G).$    For $i = 1, \dots ,n-1,$ let $A_i = c^{-1}(i).$ We may assume that for $i = 1, \dots ,k,$ $A_i = \{ a_i\} $.  For $i= k+1, \dots ,n-1$, let $A_i = \{ a_{i1}, a_{i2} \}.$ By Theorem \ref{the1}, the colouring $c$ is stratified and we may assume that $G$ is constructed as follows:
\begin{itemize}
\item  For $i=1, \dots ,k$, $G_i$ is the spanning subgraph of $G$ where $E(G_i) = \{ a_1, \dots ,a_i \}$.
\item  For $i = k+1, \dots ,n-1,$ $G_i$ is the spanning subgraph of $G$ where  $E(G_i) = E(G_{i-1}) \cup A_i$.
\item  For $i= 2, \dots ,n-1$,  $\kappa (G_i) = \kappa (G_{i-1}) -1.$
\end{itemize}

Note that $G = G_{n-1}.$  For $i=k+1, \dots ,n-1$ and $j=1,2,$ let $a_{ij} = u_{ij}v_{ij}$ where $u_{i1}, u_{i2}$ (resp. $v_{i1}, v_{i2}$) belong to the same component of $G_{i-1}.$  Let $A' = \bigcup_{i=k+1}^{n-1} A_i.$
 We let $e_1 = x_1x_2, e_2 = x_1x_3,$ and $e_3 = x_2x_3$
 
 For $i=1, \dots ,n-1$, define a weighting $w_i: V(G_i) \rightarrow \{ 0, 1,2 \}$ as follows:
\begin{itemize}
\item[i)] $w_i(v) = 0$ if $d_{G_i}(v) \ge 4$ or $d_{G_i}(v) = 0.$
\item[ii)] $w_i(v) = 1$ if $d_{G_i}(v) =2$ and $v$ is a rainbow vertex in $G_i$, or $d_{G_i}(v) =3$ and $|c^{-1}(E_{G_i}(v))| =2.$
\item[iii)] $w_i(v) = 2$ if $d_{G_i}(v) =1,$ or $d_{G_i}(v) =2$ and $|c^{-1}(E_{G_i}(v))| =1.$ 
\end{itemize}

For $i \in [n-1]$ and $j \in \{ 0,1,2\}$ let $V_{ij} = \{ v \in V(G_i)\ \big| \ w_i(v) = j \} .$  When $i = n-1,$ we let $V_j = V_{(n-1)j}.$  For $i\in [n-1]$ and $j \in [2],$  let $w(G_i) = \sum_{v\in V(G_i)} w_i(v)$ and let $V_{i2}^j = \{ v\in V_{i2}\ \big| \ d_{G_i}(v) = j \}.$   For $j\in \{ 1,2 \},$ let $V_2^j = V_{(n-1)2}^j .$

For all $i\in [n-1],$  let $\epsilon_i(v) = \left\{ \begin{array}{lr} 0 & \mathrm{if}\ v\in V_{i0}\\ 1 & \mathrm{otherwise} \end{array} \right.$.
%

\begin{noname}
If $\epsilon_{n-1}(v) =1$ then $v \in V(T).$\label{nona2}
\end{noname}

\begin{proof}
Suppose that for some vertex $v,$ $v \not\in V(T)$ and $\epsilon_{n-1}(v) =1.$ 

\ssms
\noindent (A.1) $d_G(v) \ge 2.$
\ssms

\begin{proof}
Suppose that $d_G(v) =1.$  Let $a$ be the edge incident with $v.$  Then by construction, $a\in A.$  Let $H' = G'/a$  By assumption, the lemma holds for $H'$ and hence it contains a near $T$-SRCP
which is also seen to be a near $T$-SRCP in $G'$, a contradiction.
It follows that $d_G(v) \ge 2.$
\end{proof}

\ssms
\noindent (A.2) $d_G(v) = 3.$
\ssms

\begin{proof}
 By (A.1), $d_G(v) \ge 2.$  Suppose $d_G(v) = 2.$  Let $E_G(v) = \{ e,f \}.$  If $c(e) =c(f)$, then by assumption, the lemma holds for $H' = G' -v$ and any near-$T$-SRCP in $H'$ would also be a near-$T$-SRCP in $G'.$  Thus
$c(e) \ne c(f)$.  By construction, at least one of $e$ or $f$ is an edge in $A.$  We may assume that $e\in A.$  Suppose first that $e,f$ do not belong to a triangle.
Then $H' = G'/e$ is simple and is cycle-achromatic. 
Hence the lemma holds for $H'$ and it contains a near-$T$-SRCP $\{ C_1', C_2' \}$  (where $|C_1'| + |C_2'| \le \nu(H) + 2 = n+1$).
 If $f \not\in E(C_1') \cup E(C_2'),$ then $\{ C_1', C_2' \}$ is seen to be a near-$T$-SRCP in $G'$.  Thus we may assume that $f\in E(C_1').$  Then the circuit $C_1$ in $G'$ where $E(C_1) = E(C_1') + e$ is a rainbow cycle.  Furthermore, $C_1$ and $C_{2}'$, are rainbow cycles such that $|C_1| + |C_2'| \le n+2.$   Thus $\{ C_1, C_2' \}$ is seen to be a near-$T$-SRCP in $G'.$  We conclude that $e$ and $f$ belong to a triangle $S$ where for some edge $g$,  $E(S) = \{ e,f,g \}.$   Given that $e\in A,$ we must have that $c(f) = c(g).$
Thus $H = G/f\backslash g$ is simple and cycle-achromatic. Let $H' = G'/f\backslash g.$ If $A \cup T$ induces a $K_4$ in $H',$ then it is straightforward to find a near $T$-SRCP in $G'.$  Thus we may assume that such subgraph does not exist.
 Now the conditions of lemma are seen to hold for $H$ and $H'$ and thus $H'$ contains a near-$T$-SRCP $\{ C_1',C_2' \}$  (where $|C_1'| + |C_2'| \le \nu(H) + 2 = n+1$). 
 If $e\not\in E(C_{1}') \cup E(C_{2}'),$  then $\{ C_1', C_2' \}$ is seen to be a near-$T$-SRCT in $G'$.  Thus we may assume that $e\in C_{1}'.$  Then the cycle $C_{1}$ in $G'$ where $E(C_{1})  = E(C_{1}') + f$ is a rainbow cycle in $G'$ and $\{ C_1, C_{i_2}' \}$ is seen to be a near-$T$-SRCP in $G'$.  We conclude that $d_G(v) \ge 3$ and hence $d_G(v) =3$ (since $\epsilon_{n-1}(v) =1$).
 \end{proof}  
 
 From (A.2), we have that $d_G(v) =3$.  It follows that for some $i\in [k+1,n-1]$ and edge $g,$ 
$E_G(v) = A_i + g.$ For some $j\in \{ 1,2 \}$, $a_{ij}$ and $g$ do not belong to a triangle.
We may assume that $a_{i1}$ and $g$ do not belong to a triangle.  Let $H = G/a_{i1}\backslash a_{i2}$ and $H' = G'/a_{i1}\backslash a_{i2}.$ Then $H$ is simple, cycle - achromatic and $|c(H)| = \nu(H) - 1 = n-2.$  We may assume that $A \cup T$ does not induce a subgraph isomorphic to $K_4$ in $H';$ for if such is the case, then $g\in A$ and hence $a_{i2}$ and $g$ belong to no triangle of $G.$  In this case, one can redefine $H$ and $H'$ to be $H = G/a_{i2}\backslash a_{i1}$ and $H' = G'/a_{i2}\backslash a_{i1}$.  Then $A \cup T$ does not induce a $K_4$ in $H'.$  By the above, $H'$ has a near-$T$-SRCP $\{ C_1', C_2' \}$.
If $g \not\in E(C_1') \cup E(C_2')$, then $\{ C_1', C_2' \}$ is seen to be a near-$T$-SRCP in $G'$. Thus we may assume that $g \in E(C_1')$. Then the cycle $C_1$ in $G$ where $E(C_1)  = E(C_1') + a_{i1}$ is a rainbow cycle in $G'$ and $\{ C_1, C_2' \}$ is seen to be a near-$T$-SRCPin $G'.$  We conclude that $v\in V(T).$ 
\end{proof}

\begin{noname}
$w(G) \le 6.$\label{nona3}
\end{noname}

\begin{proof}
By (\ref{nona2}), there are at most $3$ vertices $v$ for which $\epsilon_{n-1} = 1.$  Given that for all $v$, $w(v) \le 2,$ it follows that $w(G) \le 6.$
\end{proof}

\begin{noname}
No pair of vertices in $V_{2}^2$ have a common neighbour. 
\label{newnoname4}
\end{noname}

\begin{proof}
Suppose to the contrary that two vertices in $V_2^2$ share a common neighbour $u.$  By (\ref{nona2}), we have that $V_2^2 \subset V(T) = \{ x_1, x_2, x_3 \}.$  Without loss of generality, we may assume that $\{ x_1, x_2 \} \subseteq V_2^2$ where
$x_1 = v_i = v_{i1}=v_{i2},$ $x_2 = v_j = v_{j1} = v_{j2}$ and $u = u_{i1} = u_{j1}.$  Suppose $x_3 = u.$  Then letting $C_1 = x_1a_{i1} x_3 e_2 x_1$ and $C_2 = x_2 a_{j1} x_3 e_3 x_2,$ $\{ C_1, C_2 \}$ is seen to be a near-$T$-SRCP in $G'.$
Thus $x_3 \ne u.$  Suppose $x_3 = u_{i2}.$  Then letting  $C_1 = x_1a_{i2}x_3e_2x_1$ and $C_2 = x_1 a_{i1} u a_{j1} x_2 e_1 x_1,$ $\{ C_1, C_2 \}$, is seen to be edge-disjoint pair of rainbow cycles
$|C_1| + |C_2| = 5 \le n+2.$  Thus $\{ C_1, C_2 \}$ is seen to be a near-$T$-SRCP in $G'$ and hence $x_3 \not\in \{ u_{i1}, u_{i2} \}.$  By symmetry, $x_3 \not\in \{ u_{j1}, u_{j2} \}.$  This also implies that $n\ge 5.$
 There is a rainbow path $P$ in $G - x_1 - x_2$ between $u_{i2}$ and $x_3$ having length at most $n-3$. Let $P' = P + a_{i2} + x_1$ and let $C_1 = P' + e_2$.  Let $C_2$ be the rainbow cycle $C_2 = x_1a_{i1}ua_{j1}x_2 e_2 x_1$.  
Then $C_1, C_2$ are edge-disjoint rainbow cycles in $G'$ where $|C_1| + |C_2| \le n-1 + 3 = n+2,$ implying that $\{ C_1, C_2 \}$ is a near-$T$-SRCP in $G'$, a contradiction.  It follows that no pair of vertices in $V_2^2$ have a common neighbour. 
\end{proof}

\begin{noname}
$|V_{2}^2| \ge 1.$
\label{newnoname5}
\end{noname}

\begin{proof}
Suppose to the contrary that $V_{2}^2 = \emptyset.$ Then $G - A_{n-1} = G_{n-2}$ consists of two non-trivial components $G_{n-2}^1$ and $G_{n-2}^2$.  
We may assume that for $j=1,2,$ $u_{(n-1)j} \in V(G_{n-2}^1)$ and $v_{(n-1)j} \in V(G_{n-2}^2).$   We may also assume that $\{ x_1, x_2 \} \subseteq V(G_{n-2}^1).$   
If $x_3 \in V(G_{n-2}^1),$ then by assumption, $G_{n-2}^1 + T$ contains a near-$T$-SRCP  which is also a near-$T$-SRCP in $G'$.   
Thus $x_3 \in V(G_{n-2}^2).$  By the construction of $G$, each component $G_{n-2}^j,\ j = 1,2$ must contain at least one edge of $A.$  We may assume that for $j=1,2,$ $a_j \in E(G_{n-2}^j).$

\sms
\noindent (D.1) $\varepsilon(G_{n-2}^1) \ge 2.$
\sms

\begin{proof}  
Suppose $\varepsilon(G_{n-2}^1) =1.$  Then $E(G_{n-2}^1) = \{ a_1 \}$ and 
$a_1 = x_1x_2$.  There is a rainbow path $P$ from $x_3$ to one of the vertices $v_{(n-1)1}, v_{(n-1)2}$ (which might be the trivial path) where $|P| \le \nu(G_{n-2}^2) -1 = n-3.$  We may assume that $P$ terminates at $v_{(n-1)1}$.  We can extend $P$ to a path $P'$ where $P' = P + a_{(n-1)1}+ u_{(n-1)1}.$  Here we may assume that $u_{(n-1)1} = x_1.$
Let $C_1$ be the rainbow cycle where $C_1 = x_1 a_1 x_2 e_1 x_1$ and let $C_2$ be the rainbow cycle where $C_2 =P' + e_2.$ Then $C_1$ and $C_2$ are edge-disjoint rainbow cycles and $|C_1| + |C_2| \le 2 + (n-3) +2 = n+1.$  Consequently, $\{C_1, C_2 \}$  is a near-$T$-SRCP, contradicting our assumptions.  Thus $\varepsilon(G_{n-2}^1) \ge 2.$
\end{proof}

\sms
\noindent (D.2) $\varepsilon(G_{n-2}^1) \ge 3.$
\sms

\begin{proof}
By (D.1), $\varepsilon (G_{n-2}^1) \ge 2.$  Suppose $\varepsilon (G_{n-2}^1) = 2.$  Then $E(G_{n-2}^1) \subseteq A$ and hence $E(G_{n-2}^1) = \{ a_1, a_3 \}$ (and $a_2 \in E(G_{n-2}^2)$).  Then $G_{n-2}^1$ is a path $P = w_1a_1w_2a_3w_3.$  It is seen that for $i \in \{ 1,3 \},$ $\epsilon(w_i) =1.$  By (\ref{nona2}), $\{ w_1, w_3 \} \subseteq V(T)$ and we can assume that $w_1 = x_1$ and $w_3 = x_2$.  Since $x_3 \not\in V(G_{n-2}^1),$ it follows that $\epsilon (w_2) = 0$.  
It follows that at least one of $a_{(n-1)1}$ or $a_{(n-1)2}$ is incident with $w_2.$  Without loss of generality, we may assume that $a_{(n-1)1}$ is incident with $w_2;$ that is, $u_{(n-1)1} = w_2.$  

Suppose $u_{(n-1)2} = u_{(n-1)1} = w_2.$  Then $v_{(n-1)1} \ne v_{(n-1)2}.$  By Lemma \ref{lem2}, there exist edge-disjoint rainbow paths $P_1, P_2$ in $G_{n-2}^2$ where for $i=1,2,$ $P_i$ joins $x_3$ to $v_{(n-1)i}$ and $|P_1| + |P_2| \le \nu(G_{n-2}^2) -1 = n-4.$  Now extend each of the paths $P_i,\ i = 1,2$ to path $P_i',\ i = 1,2$ where $P_1' = P_1 + a_{(n-1)1} + w_2 + a_1 + x_1$ and $P_2' = P_2 + a_{(n-1)2} + w_2 + a_3 + x_2.$  Let $C_1$ and $C_2$ be cycles where $C_1 = P_1' + e_3$ and $C_2 = P_2' + e_3$.  Then $C_1$ and $C_2$ are seen to be edge-disjoint rainbow cycles where $|C_1| + |C_2| = |P_1| + |P_2| + 6 \le (n-4) + 6 = n+2.$  Thus $\{ C_1, C_2 \}$ is a near-$T$-SRCP in $G.$ 

From the above, we have that $u_{(n-1)2} \ne u_{(n-1)1} $ and hence $u_{(n-1)2} \in \{ x_1, x_2 \}$.  Without loss of generality, we may assume that $u_{(n-1)2} = x_2.$ By Lemma \ref{lem1}, there is rainbow path $P_2'$ in $G_{n-2}^2$ from $x_3$ to $v_{(n-1)2}$ of length at most $\lfloor \frac {\nu(G_{n-2}^2)}2 \rfloor  = \lfloor \frac {n-3}2 \rfloor \le n-4.$  Let $P_1 = P$ and let $P_2 = P_2' + a_{(n-1)2} + x_2$ and let $C_1 = P_1 + e_1$ and $C_2 = P_2 + e_3.$  Then $|C_1| + |C_2| \le 3 + n-2 = n+1$ and $\{ C_1, C_2 \}$ is seen to be a near-$T$-SRCP, a contradiction.   Thus $\varepsilon (G_{n-2}^1) \ge 3.$ 
%
\end{proof}

Let $G_{n-2}^{1'} = G_{n-2}^1 + a_{(n-1)1} + v_{(n-1)1}.$  Let $x_3' = v_{(n-1)1}$ and $a_2' = a_{(n-1)1.}$   Let $T'$ be the triangle having vertices $x_1,x_2, x_3'$ and $e_1' = x_1x_2,\ e_2' = x_1x_3', e_3' = x_2x_3'.$  Suppose that that the edges $A- a_2 + a_2'$ together with the edges of $E(T')$ do not induce a $K_4.$  Let $G_{n-2}^{1''} = G_{n-2}^{1'} + E(T').$  Then by assumption, $G_{n-2}^{1''}$ has a near-$T'$-SRCP $\{ C_1, C_2 \}.$
 Exactly one of these cycles must contain $a_2',$ say $C_1.$  We may assume that $e_2' \in C_1.$  Let $P_1 = C_1 - e_2'.$  There is a rainbow path $P_2$ from $x_3$ to $x_3'$ where $|P_2| \le \nu(G_{n-2}^2) -1.$  Let $C_1'$ be the rainbow cycle where $E(C_1') = (E(P_1) \cup E(P_2)) + e_2$  Now $C_1'$ and $C_2$ are edge-disjoint rainbow cycles where $|C_1'| + |C_2| = |C_1| + |C_2| + |P_2| \le  \nu(G_{n-2}^{1'}) + 2 + \nu(G_{n-2}^2) -1 = n+2.$  Thus $\{ C_1, C_2\}$ is a near-$T$-SRCP.

By the above, the edges of $A - a_2 + a_2' \cup E(T')$ induce a $K_4$.  Likewise, the same is true if $a_{(n-1)1}$ is replaced by $a_{(n-1)2}.$   Then $u_{(n-1)1} = u_{(n-1)2} = u$ and $a_1, a_3$ are incident with $u$ and we may assume that $a_1 = x_1u$ and $a_3 = x_2 u.$  Let $G_{n-2}^{2'} = G_{n-2}^2 + u + a_{(n-1)1} + a_{(n-1)2}.$   By Lemma \ref{newlem3} there are two edge-disjoint rainbow paths $P_1$, $P_2$ from $x_3$ to $u$ where $|P_1| + |P_2| \le \nu(G_{n-2}^{2'}) = \nu(G_{n-2}^2) + 1.$  Extend $P_1$ to the path $P_1'$ where $P_1' = P_1 + a_1 + x_1,$ and extend $P_2$ to the path $P_2'$ where $P_2' = P_2 + a_3 + x_2.$  Let $C_1,C_2$ be the edge-disjoint rainbow cycles in $G'$ where $C_1 = P_1' + e_2$ and $C_2 = E(P_2') + e_3.$  Then $|C_1| + |C_2| \le \nu(G_{n-1}^2) + 1 + 4 \le n+2.$ Thus $\{ C_1, C_2 \}$ is a near-$T$-SRCP, a contradiction.
Thus $V_{2}^2 \ne \emptyset.$ 
\end{proof}

\begin{noname}
$|V_{2}^2| \ge 2.$\label{newnoname6}
\end{noname}

\begin{proof}
From (\ref{newnoname5}),  we have that $|V_{2}^2| \ge 1.$  Suppose that  $|V_{2}^2| = 1.$  By (\ref{nona2}), we may assume that $V_2^2 = \{ x_1 \}$ and $v_{(n-1)1} = v_{(n-1)2} =  x_1.$

\ssms
\noindent (E.1) $V_{(n-2)2}^2 = \emptyset.$

\begin{proof}
Suppose there exists $v \in V_{(n-2)2}^2.$  Given that $v\not\in V_{2}^2$ it follows that $v$ is adjacent to $x_1$ and we may assume that $a_{(n-1)1} = x_1v.$  We observe that $v \in V_{1}$ (and hence $w(v) > 0$).  Thus by (\ref{nona2}), $v \in V(T)$ and we may assume that $x_2 = v.$  Let $C_1$ be the rainbow cycle $C_1 = x_1 a_{(n-1)1} x_2 e_1 x_1.$  Noting that $G_{n-2}$ is connected
there is a rainbow path between $x_2$ and $x_3$ in $G_{n-2},$ say $P$, where $|P| \le \nu(G_{n-2}) -1 = n-2.$  Let $C_2 = P + e_3.$   Now $C_i, \ i = 1,2$ are edge-disjoint rainbow cycles where $|C_1| + |C_2| \le 2 + n-1 = n+1$ and thus $C_i,\ i = 1,2$ 
is a near-$T$-SRCP, a contradiction.  Thus $V_{(n-2)2}^2 = \emptyset.$
\end{proof}
   
By (E.1), $V_{(n-2)2}^2 = \emptyset$ and this in turn implies that $G_{n-2} - A_{n-2} = G_{n-3}$ has exactly two non-trivial components, say $G_{n-3}^1$ and $G_{n-3}^2.$  We may assume that $\{ u_{(n-2)1}, u_{(n-2)2} \} \subseteq V(G_{n-3}^1)$ and 
$\{ v_{(n-2)1}, v_{(n-2)2} \} \subseteq V(G_{n-3}^2).$ 
We may also assume that $x_2 \in V(G_{n-3}^1).$  We observe that each of the components $G_{n-3}^j,\ j = 1,2$ must contain at least one edge of $A.$  Without loss of generality, we may assume that for $j= 1,2,$ $a_j \in E(G_{n-3}^j).$

\ssms
\noindent (E.2) $x_3 \in V(G_{n-3}^1).$

\begin{proof}
Suppose $x_3 \in V(G_{n-3}^2).$  Assume first that $u_{(n-1)1}$ and $u_{(n-1)2}$ belong to different components of $G_{n-3}.$ Without loss of generality, we may assume that $u_{(n-1)1} \in V(G_{n-3}^1)$ and $u_{(n-1)2} \in V(G_{n-3}^2).$  Let $P_1$ be a rainbow path from $x_2$ to $u_{(n-1)1}$ in $G_{n-3}^1$ and let $P_2$ be a rainbow path from $x_3$ to $u_{(n-1)2}$ in $G_{n-3}.$  Let $P_1' = P_1 + a_{(n-1)1} + x_1$ and let $P_2' = P_2 + a_{(n-1)2} + x_1.$  Let $C_1 = P_1' + e_1$ and $C_2 = P_2' + e_2.$  Then $C_i, \ i = 1,2$ are edge-disjoint rainbow cycles where
$|C_1| + |C_2| \le \nu(G_{n-3}^1) + \nu(G_{n-3}^2) + 2 = n+1$ and hence $\{ C_1, C_2 \}$  is a near-$T$-SRCP in $G'$.  Thus $u_{(n-1)1}, u_{(n-1)2}$ belong to the same component (either $G_{n-3}^1$ or $G_{n-3}^2.$).  Without loss of generality, we may assume that 
$\{ u_{(n-1)1}, u_{(n-1)2} \} \subseteq V(G_{n-3}^1).$  Let $x_3' = v_{(n-2)1}$, $a_2' = a_{(n-2)1}$, and let $G_{n-3}^{1'} = G_{n-3}^1 + a_{(n-1)1} + a_{(n-1)2} + x_1 + a_2' + x_3'.$  Let $T'$ be the triangle with vertices $x_1,x_2,x_3'$ and let $e_1' = x_1x_2,\ e_2' = x_1x_3',\ e_2' = x_2 x_3'.$  Clearly, the edges of $A-a_2 + a_2' \cup E(T')$ do not induce a $K_4$ in $G_{n-3}^{1'}.$  Thus by assumption, there is a near-$T'$-SRCP $\{ C_1, C_2 \}$ for $G_{n-1}^{1'}.$ 
Exactly one of these cycles contains $a_2'$, and we may assume $a_2' \in E(C_2).$  Without loss of generality, we may assume that $e_2' \in E(C_2).$  Let $P$ be a rainbow path in $G_{n-3}^2$ from $x_3$ to $x_3'$.  Then $|P| \le \nu(G_{n-3}^2) -1.$   Let $C_2'$ be the rainbow cycle where $E(C_2') = E(C_2) - e_2' + E(P) + e_2.$  Then $|C_2'| = |C_2| + |P| \le |C_2| + \nu(G_{n-3}^2) -1.$  Thus we have that 
$$|C_1| + |C_2'| \le  |C_1| + |C_2| + \nu(G_{n-3}^2) -1 \le \nu(G_{n-3}^{1'}) + 2 +\nu(G_{n-3}^2) -1 = n+2.$$  Thus $\{C_1, C_2' \}$ is a near-$T$-SRCP, a contradiction.  Thus $x_3 \in V(G_{n-3}^1).$
\end{proof}

\ssms
\noindent (E.3) For some $i\in \{ 1,2 \},$ $\{ u_{(n-1)1}, u_{(n-1)2} \} \subseteq V(G_{n-3}^i).$

\begin{proof} By (E.2), $x_3 \in V(G_{n-3}^1).$  Assume that $u_{(n-1)1}$ and $u_{(n-1)2}$ belong to different components of $G_{n-3}.$   Without loss of generality, we may assume that $u_{(n-1)1} \in V(G_{n-3}^1)$ and $u_{(n-1)2} \in V(G_{n-3}^2).$  Let $G_{n-3}^{1'} = G_{n-3}^1 + x_1 + a_{(n-1)1}.$  Let $a_2' = a_{(n-1)1}.$  Suppose first that the edges of $(A - a_2 + a_2') \cup E(T)$ do not induce a $K_4$.  Then by assumption there is a near-$T$-SRCP $\{ C_1, C_2 \}$ for $G_{n-3}^{1'}$ and such a pair is seen to be a near-$T$-SRCP for  $G'.$   Thus 
 the edges of $(A - a_2 + a_2') \cup E(T)$ induce a $K_4$. We may assume that $a_1 = u_{(n-1)1}x_{2}$ and $a_3 = u_{(n-1)1}x_3.$ 
There is a rainbow path $P$ in $G$ starting at $x_1$ and containing $a_{(n-1)2}$ which terminates at one of the vertices $u_{(n-1)1}, x_2, x_3$ and we may assume that $P$ is a shortest such path.  Suppose that $P$ terminates at $u_{(n-1)1}$ (and does not contain $x_2$ or  $x_3$).  Then $|P| \le n-3.$ Let $P_1 = P + a_1 + x_2$ and let $P_2 = x_1 a_2' u_{(n-1)1} a_3 x_3$.  For $i=1,2,$ let $C_i = P_i + e_i.$ Then $\{ C_1, C_2\}$ are edge-disjoint rainbow cycles where 
$|C_1| + |C_2| \le (n-2) + 1 + 3 = n+2.$  Thus $\{ C_1, C_2 \}$ is a near-$T$-SRCP.  If $P$ terminates at $x_2$ or $x_3$, then a similar argument can be used.  Thus $u_{(n-1)1}$ and $u_{(n-1)2}$ belong to the same component of $G_{n-3}.$  
\end{proof}

\ssms
\noindent (E.4) $\{ u_{(n-1)1}, u_{(n-1)2} \} \subseteq V(G_{n-3}^2).$

\begin{proof} By (E.3), there exists $i\in \{ 1,2 \}$ such that $\{ u_{(n-1)1}, u_{(n-1)2} \} \subseteq V(G_{n-3}^i).$  
Suppose $\{ u_{(n-1)1}, u_{(n-1)2} \} \subseteq V(G_{n-3}^1).$  Let $G_{n-3}^{1'} = G_{n-3}^1 + x_1 + A_{n-1}.$  By assumption, there exists a near-$T$-SRCP for $G_{n-3}^{1'}$ and such is seen to be a near-$T$-SRCP for $G'$ as well.  
Thus $\{ u_{(n-1)1}, u_{(n-1)2} \} \subseteq V(G_{n-3}^2).$
\end{proof}

By (E.4), $\{ u_{(n-1)1}, u_{(n-1)2} \} \subseteq V(G_{n-3}^2).$
Let $x_1' = v_{(n-2)1}$ and let $a_2' = a_{(n-2)1}.$  Let $G_{n-3}^{1'} = G_{n-3}^1 + a_2' + x_1'.$  Let $T'$ be the triangle having vertices $x_1', x_2, x_3$ having edges $e_1' = x_1'x_2, e_2' = x_1'x_3$ and $e_3' = x_2x_3.$  
Assume that the edges of $(A - a_2 + a_2') \cup E(T')$ do not induce a $K_4.$  Then by assumption,
 $G_{n-3}^{1'}$ has a near-$T'$-SRCP $\{ C_1, C_2 \}$ where $|C_1| + |C_2| \le \nu(G_{n-3}^{1'}) +2) = \nu(G_{n-3}^1) +3$.
 Exactly one of these cycles contains $a_2'$, and we may assume that $a_2' \in E(C_1)$  Furthermore, we may assume $e_1' \in C_1.$ Let $P$ be a rainbow path from $x_1'$ to $x_1$ in $G - A_2.$  Then $|P| \le \nu(G_{n-3}^2).$  
 Let $C_1'$ be the cycle where $E(C_1') = E(C_1) \cup E(P) - e_1' +  e_1.$  Then $C_1'$ and $C_2$ are edge-disjoint rainbow cycles where
$|C_1'| + |C_2| \le \nu(G_{n-3}^1) + 3 + \nu(G_{n-3}^2) = n+ 2.$  Thus $\{ C_1', C_2 \}$ is a near-$T$-SRCP for $G.$
It follows that the edges of $(A - a_2 + a_2') \cup E(T')$ induce a $K_4$, and the same is true if we replace $a_{(n-1)1}$ with $a_{(n-1)2}.$
Thus $u = u_{(n-1)1} = u_{(n-1)2}$ and $a_1, a_3$ are both incident with $u$ (in $G_{n-3}^1$). We may assume $a_1 = x_2u,\ a_3 = x_3u.$  However, as in a previous case, letting $G_{n-3}^{2'} = G_{n-3}^2 + u + x_1 + A_{n-2} + A_{n-1},$ there are two edge-disjoint rainbow paths $P_1, P_2$ in $G_{n-3}^{2'}$ from $x_1$ to $u$ where $|P_1| + |P_2| \le \nu(G_{n-3}^2) + 2.$  The paths $P_1$ and $P_2$ can now be extended to paths $P_1', P_2'$ where $P_1' = P_1 + a_1 + x_2$ and $P_2' = P_2 + a_3 + x_3.$  For $i = 1,2,$ let $C_i = P_i' + e_i.$ Then $C_i,\ i = 1,2$ are edge-disjoint rainbow cycles where $$|C_1| + |C_2| = |P_1'| + |P_2'|+2 = |P_1| + |P_2| +4 \le \nu(G_{n-3}^2) + 6 \le n+2,$$ implying that $\{ C_1, C_2 \}$is a near-$T$-SRCP, a contradiction.  Thus $|V_{(n-1)2}^2| \ge 2$.
\end{proof}

\begin{noname}
$|V_{2}^2| =3.$
\label{nona5}
\end{noname}

\begin{proof}
By (\ref{newnoname6}), $|V_{2}^2| \ge 2.$ Suppose that $|V_{2}^2|=2$.  Since by (\ref{nona2}), $V_{2}^2 \subset V(T),$ we may assume that $V_{2}^2 = \{ x_1, x_2 \}$ and $x_1 = v_{(n-1)1} = v_{(n-1)2}$ and $x_2 = v_{(n-2)1} = v_{(n-2)2}.$  By (\ref{newnoname4}), $x_1$ and $x_2$ have no common neighbour.

\sms
\noindent (F.1) $V_{(n-3)2}^2 = \emptyset.$

\begin{proof}
Suppose there exists $u\in V_{(n-3)2}^2.$  Then at exactly one of $x_1$ or $x_2$ is adjacent to $u$ (because $u \not\in V_2^2$).  We may assume that $u = u_{(n-1)1}.$  We see that $d_G(u) = 3$ and hence $w_{n-1}(u) =1.$  It follows that $u\in V(T)$ and $u = x_3.$  There is a rainbow path $P$ in $G - x_1$ from $x_3$ to $x_2$ where $|P| \le n-2$.  Let 
$C_1$ be the cycle in $G'$ where $C_1 =  x_1 a_{(n-1)1} x_3 e_2 x_1$ and let $C_2$ be the cycle where $C_2 = P + e_3$.  Then $C_i,\ i=1,2$ are edge-disjoint rainbow cycles where $|C_1| + |C_2| \le 2 + (n-2) +1 = n+1.$  
Thus $\{ C_1, C_2 \}$ is a near-$T$-SRCP.  Thus $V_{(n-3)2}^2 = \emptyset.$
\end{proof}

By (F.1), $V_{(n-3)2}^2 = \emptyset$ and thus $G_{n-4}$ has two non-trivial components  $G_{n-4}^1$ and $G_{n-4}^2$ where 
we may assume that $\{ u_{(n-3)1}, u_{(n-3)2} \} \subseteq V(G_{n-4}^1)$ and $\{ v_{(n-3)1}, v_{(n-3)2} \} \subseteq V(G_{n-4}^2).$  We may also assume that $x_3 \in V(G_{n-4}^1).$  By construction, each of $G_{n-4}^j,\ j = 1,2$ contains at least one edge of $A$ and we may assume that for $j=1,2,$ $a_j \in E(G_{n-4}^j).$  

\ssms
\noindent (F.2)  There exists $i\in \{ n-2, n-1 \}$ and $j\in \{ 1,2 \}$ such that $\{ u_{i1}, u_{i2} \} \subseteq V(G_{n-4}^j).$

\begin{proof}
Suppose that for all $i \in \{ n-1, n-2 \}$ and $j\in \{ 1,2 \},$ $\{ u_{i1}, u_{i2} \} \not\subseteq V(G_{n-4}^j).$  We may assume that for all  $i \in \{ n-1, n-2 \}$ and $j\in \{ 1,2 \},$ $u_{ij} \in V(G_{n-4}^j).$ Let $P_1$ be a rainbow path in $G_{n-4}^1$ from $x_3$ to $u_{(n-1)1}$ and let $P_2$ be a rainbow path in $G_{n-4}^2$ from $u_{(n-1)2}$ to $u_{(n-2)2}.$  Now extend $P_1$ to a path $P_1' = P_1 + a_{(n-1)1} + x_1$ and let $C_1 = P_1' + e_2.$  Let $C_2$ be the cycle where $E(C_2) = E(P_2) + a_{(n-1)2} + a_{(n-2)2} + e_1.$  Then $|C_1| + |C_2| \le \nu(G_{n-4}^1) + 1 + \nu(G_{n-4}^2) + 2 = n+1.$  Thus $\{ C_1, C_2 \}$ is a near-$T$-SRCP, a contradiction.
\end{proof}

By (F.2), we may assume that for some $j\in \{ 1,2 \},$  $\{ u_{(n-1)1}, u_{(n-1)2} \} \subseteq V(G_{n-4}^j).$

\sms
\noindent (F.3) $\{ u_{(n-1)1}, u_{(n-1)2} \} \subseteq V(G_{n-4}^2).$

\begin{proof}
Suppose that $\{ u_{(n-1)1}, u_{(n-1)2} \} \subseteq V(G_{n-4}^1).$
If $\{ u_{(n-2)1}, u_{(n-2)2} \} \subseteq V(G_{n-4}^1),$ then by assumption, $G_{n-4}^{1'} = G_{n-4}^1 +  x_1 + x_2 + A_{n-1} + A_{n-2}$ has a near-$T$-SRCP which is seen to be a near-$T$-SRCP for $G'$ as well.  Thus $\{ u_{(n-2)1}, u_{(n-2)2} \} \cap V(G_{n-4}^2) \ne \emptyset$ and we may assume that $u_{(n-2)1} \in V(G_{n-4}^2).$  Let $G_{n-4}^{1'} = G_{n-4}^1 + x_1 + A_{n-1} + a_{(n-3)1} + v_{(n-3)1}.$ 
 Let $x_2' = v_{(n-3)1}$ and let $a_2' = a_{(n-3)1}.$  Let $T'$ be the triangle with vertices $x_1,x_2', x_3$ and edges $e_1' = x_1x_2'$, $e_2' = x_1x_3$ and $e_3' = x_2'x_3.$  
 Clearly the edges of $(A - a_2 + a_2') \cup E(T')$ do not induce a $K_4$ in $G_{n-4}^{1'}.$  By assumption, $G_{n-4}^{1'}$ has a near-$T'$-SRCP $C_i,\ i = 1,2.$ 
There is a rainbow path $P$ from $x_2'$ to $x_2$ in $G_{n-4}^2 + a_{(n-2)1} + x_2$ where $|P| \le \nu(G_{n-4}^2).$  
Exactly one of the cycles $C_i, \ i = 1,2$ must contain $a_2'$ and we may assume that $a_2' \in E(C_1).$  
Furthermore, we may assume that $e_1' = x_1x_2' \in E(C_1)$ (the ensuing argument being very similar if $e_3' \in E(C_1)$).  Let $C_1'$ be the cycle where $C_1' = (C_1 - e_1') + P + e_1.$  Then 
$$|C_1'| + |C_2| \le  |C_1| + |C_2| + |P| \le \nu(G_{n-4}^1) + 4 + \nu(G_{n-4}^2)  = n +2$$ and hence $\{ C_1',C_2 \}$ is a near-$T$-SRCP, a contradiction.  Thus\\ $\{ u_{(n-1)1}, u_{(n-1)2} \} \subseteq V(G_{n-4}^2).$
\end{proof}

By (F.3), $\{ u_{(n-1)1}, u_{(n-1)2} \} \subseteq V(G_{n-4}^2).$ If $\{ u_{(n-2)1}, u_{(n-2)2} \} \subseteq V(G_{n-4}^1),$ then we could argue as we did above with $x_2$ in place of $x_1.$  Thus we may assume that $u_{(n-2)1} \in V(G_{n-4}^2).$

\sms
\noindent (F.4)  $u_{(n-2)2} \in V(G_{n-4}^1).$

\begin{proof}
Suppose that $u_{(n-2)2} \in V(G_{n-4}^2).$  Let $G_{n-4}^{2'} = G_{n-4}^2 + x_1 + x_2 + A_{n-2} + A_{n-1} + u_{(n-3)1} + a_{(n-3)1}.$  Let $x_3' = u_{(n-3)1}$ and and let $a_1' = a_{(n-3)1}.$  Let $T'$ be the triangle with vertices $x_1, x_2, x_3'$ and edges $e_1' = x_1x_2,$ $e_2' = x_1x_3',\ e_3' = x_2x_3'.$  Clearly the edges of $(A - a_1 + a_1') \cup E(T')$ do not induce a $K_4$.  By assumption, $G_{n-4}^{2'} + E(T')$ has a near-$T'$-SRCP $\{ C_1, C_2 \}.$
For some $i$, $a_1'\in E(C_i)$ and we may assume this holds for $i=1$.  Furthermore, we may assume that $e_2' \in E(C_1).$  Let $P$ be a rainbow path in $G_{n-4}^1$ from $x_3$ to $x_3'$, where $|P| \le \nu(G_{n-4}^1)-1.$  Let $C_1'$ be the cycle where 
$E(C_1') = E(C_1) - e_2' +E(P) + e_2.$  Then $C_1'$ and $C_2$ are edge-disjoint rainbow cycles where 
$$|C_1'| + |C_2| \le \nu(G_{n-4}^{2'}) + 2 + \nu(G_{n-4}^1) -1 =  \nu(G_{n-4}^{2}) + 5 + \nu(G_{n-4}^1) -1 = n+2.$$  Thus $\{ C_1',C_2 \}$ is a near-$T$-SRCP, a contradiction.  It follows that $u_{(n-2)2} \in V(G_{n-4}^1).$
\end{proof}

By (F.4), $u_{(n-2)2} \in V(G_{n-4}^1).$  Let $P_1$ be a rainbow path between $x_3$ and 
$u_{(n-2)2}$ in $G_{n-4}^1$ and let $P_2$ be a rainbow path between $u_{(n-1)1}$ and $u_{(n-2)1}$ in $G_{n-4}^2.$  Let $C_1$ be the cycle where $E(C_1) = E(P_1) + a_{(n-2)2} + e_3$ and let $C_2$ be the cycle where $E(C_2) = E(P_2) + a_{(n-1)1}  + e_1  + a_{(n-2)1}.$
Then $C_1, C_2$ are edge-disjoint rainbow cycles where $|C_1| + |C_2| \le \nu(G_{n-4}^1) -1 + 2 + \nu(G_{n-4}^2) -1 + 3 = n+1.$  Thus $\{ C_1, C_2 \},$ is a near-$T$-SRCP, a contradiction.
It follows that $|V_2^2| = 3.$ 
\end{proof}

By (\ref{nona2}) and (\ref{nona5}), it follows that $V_{2}^2 = V(T).$  We may assume that for $i=1,2,3,$ $x_i = v_{(n-i)1} = v_{(n-i)2}.$  By (\ref{newnoname4}), no two vertices in $V(T)$ have a common neighbour.  Suppose there exists $u\in V_{(n-4)2}^2.$  Since $u \not\in V(T),$ it follows that $w(u) =0$ and hence $u$ is adjacent to at least two vertices in $V(T)$, a contradiction.  
Thus we have that $V_{(n-4)2}^2 = \emptyset$ and hence $G_{n-5}$ has exactly two non-trivial components $G_{n-5}^1$ and $G_{n-5}^2$ where we may assume that
$\{ u_{(n-4)1}, u_{(n-4)2} \} \subseteq V(G_{n-5}^1)$ and $\{ v_{(n-4)1}, v_{(n-4)2} \} \subseteq V(G_{n-5}^2).$  By construction, both $G_{n-5}^1$ and $G_{n-5}^2$ must contain edges of $A.$ We may assume that for $j= 1,2,$ $a_j \in E(G_{n-5}^j).$

\begin{noname}
There is at most one integer $i\in  \{ n-3, n-2, n-1 \}$ for which $u_{i1}, u_{i2}$ belong to different components either $G_{n-5}^1$ or $G_{n-5}^2$.
\label{newnoname7}
\end{noname}

\begin{proof}
Suppose there exist two distinct $i \in \{ n-3, n-2, n-1 \}$ such that $u_{i1}, u_{i2}$ belong to different components either $G_{n-5}^1$ or $G_{n-5}^2$.  Without loss of generality, we may assume that for all $i \in \{ n-1, n-2 \}$
$u_{i1} \in V(G_{n-5}^1)$ and $u_{i2} \in V(G_{n-5}^2).$  We may assume that $u_{(n-3)1} \in V(G_{n-5}^1).$  Let $P_1$ be a rainbow path from $u_{(n-1)1}$ to $u_{(n-3)1}$ in $G_{n-5}^1$ and let $P_2$ be a rainbow path from $u_{(n-1)2}$ to $u_{(n-2)2}$ in $G_{n-5}^2.$  Let $C_1$ be the rainbow cycle where $E(C_1) = E(P_1) + a_{(n-1)1} + e_2  + a_{(n-3)1}$ and let $C_2$ be the rainbow cycle where $E(C_2) = E(P_2) + a_{(n-1)2} + e_1 + a_{(n-2)2}.$  Then $C_1$ and $C_2$ are edge-disjoint rainbow cycles where $$|C_1| + |C_2| = |P_1| + |P_2| +6 \le \nu(G_{n-4}^1) -1 + \nu(G_{n-4}^1) -1 + 6 \le n-3 + 4 = n+1,$$ and hence $\{ C_1, C_2 \}$ is a near-$T$-SRCP, a contradiction.
\end{proof}

\begin{noname}
For all integers $i\in  \{ n-3, n-2, n-1 \},$ $u_{i1}$ and  $u_{i2}$ belong to the same component either $G_{n-5}^1$ or $G_{n-5}^2$.
\label{newnoname8}
\end{noname}

\begin{proof}
By (\ref{newnoname7}),  there is at most one integer $i\in  \{ n-3, n-2, n-1 \}$ for which $u_{i1}, u_{i2}$ belong to different components either $G_{n-5}^1$ or $G_{n-5}^2$.  Suppose there is exactly one such $i$.  We may assume that 
for $j=1,2,$ $u_{(n-1)j} \in V(G_{n-5}^j).$  Furthermore, we may assume that for $j=1,2,$ $u_{(n-2)j} \in V(G_{n-5}^1).$  If for some $j\in \{ 1,2 \},$ $u_{(n-3)j} \in V(G_{n-5}^2),$ then we can construct a near-$T$-SRCP as was done in the proof of (\ref{newnoname7}).  Thus for $j=1,2,$ $u_{(n-3)j} \in V(G_{n-5}^1).$  Let $a_2' = a_{(n-1)1}$ and 
let $G_{n-5}^{1'} = G_{n-5}^1 + V(T) + a_2' + A_{n-2} + A_{n-3}.$  Clearly the edges of $(A - a_2 + a_2') \cup E(T)$ do not induce a $K_4$ in $G_{n-5}^{1'}.$  By assumption,  $G_{n-5}^{1'}$ has a near-$T$-SRCP and such is seen to be a near $T$-SRCP in $G'$ as well, a contradiction.
 \end{proof}
 
By (\ref{newnoname8}), we may assume that for all $i\in \{ n-1, n-2 \}$ and $j\in \{ 1,2 \}$, $u_{ij} \in V(G_{n-5}^1).$  Suppose that $\{ u_{(n-3)1}, u_{(n-3)2} \} \subset V(G_{n-5}^1).$   
Let $G_{n-5}^{1'} = G_{n-5}^1 + V(T) + A_{n-3} + A_{n-2} + A_{n-3}.$  Clearly, $A\cup E(T)$ does not induce a $K_4$ in $G_{n-5}^{1'}.$  Thus by assumption $G_{n-5}^{1'}$
has a near-$T$-SRCP which is also seen to be a near-$T$-SRCP in $G'.$  Thus $\{ u_{(n-3)1}, u_{(n-3)2} \} \subseteq V(G_{n-5}^2).$ Let $a_2' = a_{(n-4)1}$ and let
$G_{n-5}^{1'} = G_{n-5}^1 + x_1 + x_2 + A_{n-1} + A_{n-2} + v_{(n-4)1} + a_2'.$  
Let $x_3' = v_{(n-4)1}.$   Let $T'$ be the triangle having vertices $x_1, x_2, x_3'$ and edges $e_1' = x_1x_2, \ e_2' = x_1x_3',$ and $e_3' = x_2x_3'.$  One sees that the edges of $(A - a_2 + a_2') \cup E(T')$ do not induce a $K_4$ in $G_{n-5}^{1'}$ and thus
by assumption $G_{n-5}^{1'}$ has a near-$T'$-SRCP $\{C_1, C_2 \}$.
Let $P$ be a rainbow path in $G_{n-5}^2+ x_3 + a_{(n-3)1}$ from $x_3'$ to $x_3.$  For some $i\in [2]$, $a_2' \in C_i$ and we may assume this holds for $i=1.$  Furthermore, we may assume that $e_2' \in E(C_1).$  Let $C_1'$ be the cycle where $E(C_1') = E(C_1) - e_2' + E(P) + e_2.$  Then $C_1'$ and $C_2$ are edge-disjoint rainbow cycles for which $$|C_1'| + |C_2| \le |C_1| + |C_2| + |P| \le \nu(G_{n-5}^1) + 5 + \nu(G_{n-5}^2) = n+2$$ and hence $\{ C_1',C_2\}$ is a near-$T$-SRCP, yielding final contradiction. This completes the proof of the Theorem
\ref{newlemma2}.
\end{proof}

%

\subsection{Completing the proof of Theorem \ref{the-NoRainbowCircuit} iii) for graphic matroids}\label{sec-completeNoRainbowCircuitgraphiciii)}

Let $M=M(G)$ be a simple graphic matroid where $\nu(G) = n$ and $\varepsilon(M) = \varepsilon(G) = 2(n-2)$.  Let $c$ be $2$-uniform $(n-2)$-colouring of $M$ (and $G$) where $c$ is circuit - achromatic.  Then $G$ is connected and $r(M) = n-1.$
Let $G'$ be the graph obtained from $G$ by adding the edges $e_i, \ i = 1,2,3$ which form a $3$-cycle $T$ in $G'$ where $E(T)$ is co-independent in $M(G).$
Let $V(T) = \{ x_1, x_2, x_3 \}$ where $e_1 = x_1x_2,\ e_2 = x_1 x_3,$ and $e_3 = x_2x_3.$ 
We shall use a proof by contradiction, assuming that the Theorem \ref{the-NoRainbowCircuit} iii) is false and $G$ is a minimum counterexample; that is, $(M,c)$ has no $T$-SRCP.   As before, for all $x \in E(M)$, we let $x'$ denote the other element in the colour class containing $x.$

\begin{noname}
For all $v\in V(G),$ if $d_G(v) \le 2$ or $d_G(v) = 3$ and $|c^{-1}(E_G(v))| =2,$ then $v\in V(T).$
\label{nona6}
\end{noname}

\begin{proof}
 Let $v\in V(G) \backslash V(T).$  Assume that either $d_G(v) \le 2$ or $d_G(v) = 3$ and $|c^{-1}(E_G(v))| =2.$ 
 
 \sms
 \noindent (I.1) $d_G(v) \ge 2.$

\begin{proof} 
 Suppose $d_G(v) =1.$  Let $E_G(v) = \{ e \}$ and let $H = G/e\backslash e'$ and $M' = M(H)$.  Let $c' = c \big| E(M')$.  Then $M'$ is simple, $\varepsilon(M) = 2(r(M')-1) = 2(n-3)$ and $c'$ is a $2$-uniform $(n-3)$-colouring.   By assumption,  $(M',c')$ has a $T$-SRCP and such is also seen to be a $T$-SRCP for $(M,c).$
Thus $d_G(v) \ge 2.$ 
\end{proof}

\sms
\noindent (I.2) $d_G(v) =3.$

\begin{proof}
By (I.1), $d_G(v) \ge 2.$
Suppose $d_G(v) =2$ and let $E_G(v) = \{ e,f \}.$  
Let $H = G/e\backslash e'$ and let $M' = M(H).$  Let $c' = c\big| E(M').$
Then $M'$ is simple, $\varepsilon(M') = 2(r(M')-1) = 2(n-3)$ and $c'$ is a $2$-uniform $(n-3)$-colouring for which $(M',c')$ is circuit - achromatic.
By assumption, $(M,c')$ has a $T$-SRCP $\{ C_1, C_2 \}.$
If $f \not\in C_1 \cup C_2$, then $\{ C_1, C_2\}$ is a $T$-SRCP for $(M,c).$   Thus $f\in C_1 \cup C_2$ and hence $f \ne e'.$  We may assume that $f\in C_1.$  Then the circuit $C_1' = C_1 + e$ is a rainbow circuit and furthermore, $C_1'$ and $C_2$ are edge-disjoint and $|C_1'| + |C_2| = |C_1| + |C_2| \le r(M') +2 +1 = r(M) +2.$  This implies that $\{ C_1',C_2 \}$ is a $T$-SRCP for $(M,c).$  Thus $d_G(v) =3$.
\end{proof} 

By (I.2) we have $d_G(v) =3$ and hence $|c^{-1}(E_G(v))| =2.$  Let $E_G(v) = \{ e,e',f \}.$  We note that if $e,f,g$ are the edges of a $3$-circuit in $M,$ then and $e'$ and $f$ do not belong to a $3$-circuit in $M$.
Because of this, we may assume that $e$ and $f$ do not belong to a $3$-circuit in $M$.
Let $H = H/e\backslash e'$ and let $M' = M(H).$ Let $c' = c\big| E(M').$  Then $M'$ is simple and $c'$ is a $2$-uniform $(r(M')-1)$-colouring for which $(M',c')$ is circuit - achromatic.
By assumption, there is a $T$-SRCP $\{ C_1, C_2 \}$ for $(M',c').$
 If $f \not\in C_1 \cup C_2$, then $\{ C_1, C_2 \}$ is a $T$-SRCP for $(G,c)$.  Thus  $f\in C_1 \cup C_2$ and we may assume that $f \in C_1.$  Let $C_1' = C_1 + e.$  Then $\{ C_1',C_2 \}$ is seen to be
 a $T$-SRCP for $(M,c).$  Thus no such vertex $v$ exists.
\end{proof}

The next assertion follows from (\ref{nona6}) and that fact that $\varepsilon(G') = 2n-1$ and $\sum_{v\in V(G')} d_{G'}(v) = 4n-2.$    

\begin{noname}
There are at least two vertices $v\in V(G')$ where $d_{G'}(v) \le 3.$
\label{nona6.5}
\end{noname}

%
\begin{noname}
There exists a rainbow vertex $v \in V(G) - V(T)$ where $d_G(v) =3.$
\label{nona7}
\end{noname}

\begin{proof} 
By (\ref{nona6.5}), there exist vertices $v_i,\ i = 1,2$ for which $d_{G'}(v_i) \le 3.$
Suppose that $\{ v_1, v_2 \}  \subseteq V(T).$  Given that $G$ is connected, it follows that for $i = 1,2,$ $d_{G'}(v_i) = 3$
and $d_G(v_i) =1.$  For $i=1,2,$ let $E_G(v_i) = \{ f_i \}.$ Let $H = G - v_1 - v_2$ and let $c'$ be the (circuit-achromatic) colouring of $H$ obtained by restricting $c$ to $H$.  If $c(f_1) \ne c(f_2),$ then $|c'(H)| = |c(G)| = n-2 = \nu(H),$ implying that 
$H$ contains a rainbow cycle.  Thus $c(f_1) = c(f_2),$ $|c'(H)| = n-3 = \nu(H) -1,$ and $c'$ is $2$-uniform.  However, since $H$ is simple, Corollary \ref{cor-cor2} implies that $(H,c')$ contains a rainbow cycle, a contradiction.  Thus for some $i\in \{ 1,2 \},$  
$v_i \not\in V(T).$  Now (\ref{nona6}) implies that $d_G(v_i) =3$ and $v_i$ is a rainbow vertex. 
\end{proof}

By (\ref{nona7}), there exists a rainbow vertex $v \in V(G) - V(T),$ where $d_G(v) =3.$  Let $E_G(v) = \{ f_1, f_2, f_3 \}$.  We may assume that for $i= 1,2,3,$ $c(f_i) = i.$  
Let $H = G - v$ and let $c'$ be the colouring of $H$ obtained by restricting $c$ to $H.$  Then $|c'(H)| = n-2$ and $f'_i,\ i = 1,2,3$ are the colour-singular edges in $(H,c').$ 

\begin{noname}
The edges of $\{ f_1',f_2',f_3' \} \cup E(T)$ induce a $K_4$ in $H.$
\label{nona8}
\end{noname}

\begin{proof} Suppose to the contrary that the edges of $\{ f_1', f_2', f_3' \} \cup E(T)$ do not induce a $K_4.$  Then by Theorem \ref{newlemma2}, $(H,c')$ has a near-$T$-SRCP $\{ C_1, C_2 \}$ for which $(E(C_1), E(C_2))$ is seen to be a $T$-SRCP for $(M,c),$ a contradiction.
\end{proof}

By (\ref{nona8}),  the edges of $\{ f_1', f_2', f_3' \} \cup E(T)$ induce a $K_4$ in $H.$ Then for some vertex $u$, $\{ f_1', f_2', f_3' \} \subseteq E_G(u).$  We may assume that for $i = 1,2,3,$ $f_i' = x_i u.$   

\begin{noname}
$d_{G'}(u) \ge 4.$
\label{nona9}
\end{noname}

\begin{proof}
Suppose to the contrary that $d_{G'}(u) = 3.$  Then $u$ is a rainbow vertex in $G'.$   Then we can interchange the roles of $u$ and $v.$  Arguing with $v$ in place of $u$ in (\ref{nona8}), we obtain that the edges of $\{ f_1,f_2,f_3 \} \cup E(T)$ induce a $K_4.$
Now letting $N$ be $M$ restricted to $\bigcup_i \{ f_i, f_i' \}$ and $c' = c \big| E(N),$ it is straightforward to find a $T$-SRCP for $(N,c').$  Thus $d_{G'}(u) \ge 4.$
\end{proof}

\begin{noname}
For some $i\in \{ 1,2,3 \},$ $d_{G'}(x_i) = 3.$
\label{nona10}
\end{noname} 

\begin{proof}
Suppose to the contrary that for all $i\in \{ 1,2,3 \}$ $d_{G'}(x_i) \ge 4.$  By (\ref{nona6.5}), there are at least two vertices in $G'$ of degree at most three.   Given that $d_{G'}(u) \ge 4$ (by (\ref{nona10})), there is a vertex $v' \in V(G) - V(T) - u -v$ where $d_G(v') \le 3.$  By a previous argument, $d_{G'}(v') = 3$ and $v'$ is a rainbow vertex.  Let $E_G(v') = \{ g_1, g_2, g_3 \}$
Arguing with $v'$ in place of $v$ in the proof of (\ref{nona8}), it follows that the edges $g_1', g_2', g_3'$ together with $E(T)$ induce a $K_4.$  Then for some vertex $u',$ $\{ g_1', g_2', g_3' \} \subseteq E_G(u')$ However, it is now seen that the subgraph of $G$ induced by
$V(T) \cup \{ u, u' \}$ contains a rainbow cycle, a contradiction.
%
\end{proof}

By (\ref{nona10}), for some $i\in [3],$ $d_{G'}(x_i) = 3.$  Without loss of generality, we may assume that $d_{G'}(x_1) = 3.$  Suppose that  for some $j \in \{ 2,3 \},$ $d_{G'}(x_j) =3.$  We may assume that $d_{G'}(x_2) = 3.$  Let $K = G - x_1 -x_2$ and let $c' = c\big| E(K).$
Then either $|c'(K)| = |c(G)| = n-2 = \nu(K)$ or $|c'(K)| = n-3 = \nu(K)-1$ and $c'$ is $2$-uniform.  In either case, $(K,c')$ contains a rainbow cycle, a contradiction.
Thus for all $j\in \{ 2,3 \},\  d_{G'}(x_j) \ge 4.$   Suppose $d_G(u) > 4.$  Then given that $\sum_{v\in V(G')} d_{G'}(v) = 4n-2,$ $G'$ must have at least $3$ vertices of degree $3$ and hence there exists a rainbow vertex $v' \in V(G) - V(T) - u - v$ where $d_G(v') = 3.$  Arguing as in the proof of (\ref{nona10}), there is a $T$-SRCP for $(M,c).$
Thus $d_G(u) = 4.$   Let $K = G - u - x_1$ and let $c' = c\big| E(K).$
Then $|c'(K)| \ge n-3 = \nu(K) -1.$  Given that $K$ has no rainbow cycles, there is a spanning rainbow tree in $K.$  Thus there is a rainbow path $P$ in $K$ between $x_2$ and $x_3.$  Let $C_1 = P + e_3$ and let $C_2 = x_1 f_1' u f_2' x_2 e_1 x_1.$  Then $C_1, C_2$ are edge-disjoint rainbow cycles where $|C_1| + |C_2| \le n-2 + 3 = n+1.$  Thus $\{ E(C_1), E(C_2) \}$ is a $T$-SRCP for $(M,c).$ This yields a final contradiction.

\section{Circuit-achromatic cographic matroids $M$ with $2(r(M) -1)$ elements.}\label{sec-BigTheoremiii)Cographic}

In this section, we shall prove Theorem \ref{the-NoRainbowCircuit} iii) for cographic matroids. 
Let $G$ be a graph with $n$ vertices having no $2$-cocycles.  Let $M = M^*(G)$ where $\varepsilon(M) = 2(r(M) -1).$  We may assume that $G$ is connected.  As such, $r(M^*) = n-1$ and 
$r(M) = \varepsilon(M) - n+1 = 2(r(M)-1) - n+1.$  It follows that $r(M) = n+1$ and thus $\varepsilon(M) = \varepsilon(G) = 2n.$ Let $c: E(M) \rightarrow [n]$ be a $2$-uniform $n$-colouring of $M$ such that $(M,c)$ is circuit-achromatic; that is, $G$ has no rainbow cocycles.  
Let $G^3$ be the graph where
$$G \xrightarrow[v^1]{e^1} G^1 \xrightarrow[v^2]{e^2} G^2 \xrightarrow[v^3]{e^3} G^3.$$
Furthermore, assume that the edges of $T = \{ e^1, e^2, e^3 \}$ form a co-independent $3$-cocycle in $G^3.$  Extend the colouring $c$ to $G^3$ where for $i = 1,2,3$, $c(e^i) = n+1.$
%
To prove Theorem \ref{the-NoRainbowCircuit} iii) for cographic matroids, it will suffice to prove the following:

\begin{theorem}
There are two edge-disjoint rainbow cocycles $C_i,\ i =1,2$ in $(G^3,c)$ where for $i= 1,2,$ $e^i \in C_i$ and $|C_1| + |C_2| \le r(M) + 2 = n+3.$  That is, $\{ C_1, C_2 \}$ is a T-SRCoP for $(G^3,c).$
\label{newtheorem3} 
\end{theorem}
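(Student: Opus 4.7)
The plan is to follow the strategy of Lemma \ref{colem3} (which establishes the analogous $T$-SRCT for the case $\varepsilon(M) = 2r(M)-1$), adapting it to the 2-uniform setting where we seek only a pair, not a triple, of rainbow cocycles. We proceed by induction on $n$.

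First, I reduce to the case in which $(G^3,c)$ contains no monochromatic digon. If $\{x,x'\}$ is a colour class whose two edges form a digon at some vertex of $G^3$, set $H = G/x\setminus x'$ and carry the three splits over to obtain $H^3$; the triple $T$ remains a co-independent $3$-cocycle of $H^3$, and $(M^*(H), c\big|E(H))$ satisfies the hypotheses of the theorem with $n$ replaced by $n-1$. By induction, $(M^*(H^3), c)$ has a $T$-SRCP, and since every cocycle of $H^3$ is a cocycle of $G^3$, the pair lifts.

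Next, because $(M,c)$ is circuit-achromatic, no vertex of $G$ is a rainbow vertex, so every rainbow vertex of $G^3$ lies in the set $\{u_{ji}:j\in[2],i\in[3]\}$ of endpoints of $T$. Since there are no monochromatic digons and $T$ contributes the extra colour $n+1$, a cocycle-contains-colour-class argument of the type used in assertions (\ref{nonacolem3.part4.5}) and (\ref{nonacolem3.part6}) of Lemma \ref{colem3} shows that at least five of the six endpoints $u_{ji}$ are rainbow vertices of $G^3$. My goal is then to choose distinct indices $i_1,i_2\in[3]$ and sides $j_1,j_2\in[2]$ such that $u_{j_1 i_1}$ and $u_{j_2 i_2}$ are both rainbow, are non-adjacent in $G^3$, and satisfy $d_{G^3}(u_{j_1 i_1})+d_{G^3}(u_{j_2 i_2})\le n+3$; setting $C_i = E_{G^3}(u_{j_i i})$ for $i\in\{1,2\}$ then gives a $T$-SRCP. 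The degree bound follows from the graphic analogue of Observation \ref{obs-basicobs}: in $G$ we have $d_G(x)+d_G(y)\le r(M)+4 = n+5$ for any two vertices $x,y$, and each split changes the corresponding endpoint degrees by at most one.

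The main obstacle will be the case analysis when the endpoints are tightly clustered, for example when one side $U_i$ of the cut has only one or two vertices, when several $u_{ji}$ coincide, or when all low-degree rainbow endpoints on the same side of the cut turn out to be pairwise adjacent. In these exceptional configurations I cannot use single-vertex cocycles $E_{G^3}(v)$ directly; instead I build each $C_i$ as a slightly larger cut $E_{G^3}(S_i)$ for a small set $S_i \subseteq V(G^3)$, using an auxiliary rainbow path provided by Lemma \ref{newlem3} or Lemma \ref{lem2} inside the component $G^3\setminus T$ to bridge the endpoints, and verify rainbowness by checking that no $2$-element colour class lies entirely in $E_{G^3}(S_i)$. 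This casework parallels assertions (\ref{nonacolem3.part7})--(\ref{nonacolem3.part9}) in Lemma \ref{colem3}, and is somewhat shorter because we only need two cocycles (not three) and enjoy the slightly weaker bound $r(M)+2 = n+3$ instead of $n+1$.
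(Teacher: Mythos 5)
Your overall shape (induction on $n$, eliminate monochromatic digons, observe that all rainbow vertices of $G^3$ lie among the endpoints of $T$, and try to use two vertex stars $E_{G^3}(u_{j_1i_1})$, $E_{G^3}(u_{j_2i_2})$ as the cocycle pair) matches the paper's proof in spirit, but the quantitative core of your plan fails. You claim that the colour-class counting of the type used in Lemma \ref{colem3} yields at least five rainbow endpoints. That counting does not transfer: in the setting of Lemma \ref{colem3} one has $\nu(G^3)=n+2$ against only $n-2$ two-element colour classes, giving four rainbow vertices for free, whereas here $\nu(G^3)=n+3$ against $n$ two-element classes plus the class $T$ (which can account for two incidences), so the same count gives only $|R|\ge 1+\xi$, where $\xi$ is the number of non-incidental colour classes --- and $\xi$ is not bounded below a priori. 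The paper has to work for each increment: it proves $|R|\ge 2$ and $|R|\ge 3$ by exhibiting the desired cocycle pair whenever the bound fails, and its endgame is not ``pick two of five rainbow vertices'' but a counting contradiction: if every pair of the $k\ge 3$ rainbow vertices is adjacent, at least $\binom{k}{2}-\lfloor k/2\rfloor\ge k-1$ of those edges lie outside $T$ and each forces a non-incidental colour class, whence $k\ge 2+(k-1)=k+1$. Without that mechanism your case analysis has no engine.

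A second concrete problem is the degree bound. Observation \ref{obs-basicobs} ($d_G(x)+d_G(y)\le r(M)+4$) is proved for the graphs $G_i$ built in Section 6, where $\varepsilon(G)=2n-3$; here $\varepsilon(G)=2n$ with $\nu(G)=n$ and minimum degree $3$, which only forces $d_G(x)+d_G(y)\le 4n-3(n-2)=n+6$, one more than you assume, so $d_{G^3}(u_{j_1i_1})+d_{G^3}(u_{j_2i_2})\le n+3$ is not automatic even for non-adjacent rainbow endpoints. The paper instead argues globally: if two non-adjacent rainbow vertices have degree sum at least $n+4$, then $\sum_v d_H(v)=2(2n+3)$ forces a vertex of degree $2$, whose star is itself a two-edge rainbow cocycle to be paired with a suitable rainbow vertex. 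Finally, your fallback of bridging endpoints with rainbow paths from Lemmas \ref{lem2} and \ref{newlem3} is misdirected: those lemmas concern the graphic construction of Section 5, and a rainbow path does not produce a rainbow cocycle; the correct substitute (as in Lemma \ref{colem3}) is to take cuts $E_{G^3}(S)$ for small vertex sets $S$ and control which colour classes they contain.
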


\begin{proof}
We will prove the theorem using induction on $n.$  When $n=1$, $G$ consists of two loops and one vertex. In this case, it can be shown that $G^3$ has a $T$-SRCoP.  We shall assume that $n>1$ and the theorem is true for graphs with 
fewer than $n$ vertices.   
 Let $H = G^3$ and let $m = \varepsilon(H) = 2n+3.$  For $j=1,2,3$, let $e^j = u_{1j}u_{2j}$ where for $i=1,2,$ the vertices in $U_i = \{ u_{i1}, u_{i2}, u_{i3} \}$ belong to the same component of $H - T.$  Let
 Let $U = U_1 \cup U_2.$  We say that a colour class $\{ e,e' \}$ in $H$ is {\bf non-incidental} if $e$ and $e'$ are non-incident.  Let $\xi$ be the number of non-incidental colour classes in $H.$  Let $R$ be the set of rainbow vertices in $H$ and let $\rho = |R|.$   
 Since $(M,c)$ is circuit-achromatic, $(G,c)$ has no rainbow vertices and it follows that $R \subseteq U.$  Similar to (\ref{nonacolem3.part1}) in Section \ref{sec-completeNoRainbowCircuitCographic}, we have the following:

\begin{noname}
$(H,c)$ has no monochromatic digons.
\label{newtheorem3.nona1}
\end{noname}

It follows by (\ref{newtheorem3.nona1}) that there are at most $n+2$ vertices having an incident pair of edges of the same colour in $H$ (where at most two pairs belong to $T$). Thus we have that $|R| \ge \nu(H) - (n+2) = 1.$ 
%
 
 \begin{noname}
 We may assume that there is at most one vertex $v\in V(H)$ where $d_H(v) =2.$
 \label{newtheorem3.nona2}
 \end{noname}

 \begin{proof} Suppose there is a vertex $v\in V(H)$ where $d_H(v) = 2.$ Since $M$ is simple, $G$ has no $2$-cocycles and hence $v\in U.$  It follows that $|E_H(v) \cap T| =1.$  If there exists another vertex $v'\in V(H)$ where $d_H(v') = 2,$ then again we have that $v' \in U$ and $|E_H(v') \cap T| =1.$
 If $v$ and $v'$ are non-adjacent, then letting $C_1 = E_H(v)$ and $C_2 = E_H(v'),$ $\{ C_1, C_2 \}$ is seen to be a $T$-SRCoP in $H.$  If $v$ and $v'$ are adjacent, then $C = E_H(v) \triangle E_H(v')$ is a $2$-cocycle of $H$ where either $C \subset T$ or $C \subset E(G)$.  Since by assumption $T$ is co-independent, the former can not occur.  However, the latter is also impossible since $M$ is simple. 
 \end{proof}
 
 \begin{noname}
 We may assume that every pair of vertices in $R$ is adjacent. \label{newtheorem3.nona3}
 \end{noname}
 
 \begin{proof}
 Suppose that $v_1, v_2 \in R.$  Then $\{ v_1, v_2 \} \subseteq U$ and it follows that for $i=1,2,$ $|E_H(v_i) \cap T| =1.$  Suppose first that $v_1$ and $v_2$ are non-adjacent.  If $d_H(v_1) + d_H(v_2) \le n+3,$ then letting $C_i = E_H(v_i),\ i = 1,2,$ $\{ C_1, C_2 \}$ is seen to be a $T$-SRCoP in $H.$  Suppose instead that $d_H(v_1) + d_H(v_2) \ge n+4.$
If for all $v \in V(H) - \{ v_1, v_2 \}$, $d_H(v) \ge 3,$  then
\begin{align*} 
 2(2n+3) &= 2\varepsilon(H) = \sum_{v\in V(H)}d_H(v) \ge n+4 + 3(\nu(H)- 2)\\ &= n+4 + 3(n+1) = 2(2n+3) +1,\end{align*} yielding a contradiction.  Thus there is a vertex $x \in V(H)$ where $d_H(x) =2.$  Then $x\in U$ and $|E_H(x) \cap T| =1.$  
 We may assume that $E_{H}(x)= \{ e^1, e \}$ and $x = u_{11}.$  Let $\{ e, e' \}$ be the colour class containing $e$.
 Given there is only one such vertex $x$ (by (\ref{newtheorem3.nona2})),
 the above inequality also implies that $d_H(v_1) + d_H(v_2) = n+4.$  
 
 \sms
\noindent (C.1) We may assume that $x$ is adjacent to at most one of $v_i,\ i = 1,2.$
 
 \begin{proof}
 Suppose that $x$ is adjacent to both $v_1$ and $v_2$. We may assume that $u_{21} = v_1.$  We have $e = xv_2.$  Then $\{ e, e' \}$ is a non-incidental colour class. Let $C = E_{H}(\{ x, v_1, v_2 \}).$  Then $C$ is cocycle in $H$ where $|C| = d_{H}(v_1) + d_{H}(v_2) -2 = n+2.$ 
 Given that $|c(H)| = n+1,$ at least one pair of edges in $C$, say $f$ and $f'$, have the same colour.  The colour class class $\{ f,f' \}$ is also seen to be non-incidental. It follows that $\xi \ge 2$ and we see that $\rho \ge n+3  - (n+1 -2) = 4.$
 Thus there exists $v_3 \in R - \{ x, v_1, v_2 \}$ and $d_{H}(v_3) \le n+1.$  Let $C_1 = E_{H}(x)$ and $C_2 = E_{H}(v_3).$  Then $|C_1| + |C_2| \le 2 + n+1 = 3$ and for $i=1,2,$ $|C_i\cap T| =1.$  It follows that  $\{ C_1, C_2 \}$ is a $T$-SRCoP for $(H,c).$
 Thus we may assume that $x$ is adjacent to at most one of $v_i,\ i = 1,2.$
 \end{proof}  
 
 By (C.1) we may assume that $x$ and $v_1$ are non-adjacent and hence $d_H(v_1) \le n+1.$
 Let $C_1 = E_H(v_1)$ and let $C_2 = E_H(x).$  Then $C_i,\ i = 1,2$ are disjoint rainbow cocycles where $|C_i\cap T| =1$ and $|C_1| + |C_2| \le n+1 + 2 = n+3.$  It follows that $\{ C_1, C_2 \}$ is a $T$-SRCoP for $(H,c).$
 Thus we may assume that every pair of vertices in $R$ is adjacent.
 \end{proof} 
 
 \begin{noname}
 We may assume that $|R| \ge 2.$\label{newtheorem3.nona4}
 \end{noname}
 
 \begin{proof}
 Suppose $|R| < 2.$  As noted before, there are at most $n+2$ incident pairs of edges of edges of the same colour in $H$ (where at most two pairs belong to $T$) and we have that $|R| \ge \nu(H) - (n+2) = 1.$   Then $|R| =1$ and there are exactly $n+2$ vertices having an incident pair of edges of the same colour and two of these vertices belong to $U$ and have exactly two edges of $T$ incident with them.   Thus all colour classes $\{ e,e' \}$ are incidental and the edges of $T$ form a path which we may assume
 is $u_{11} e^1 u_{21} e^2 u_{12} e^3 u_{23}.$ We may also assume that $v = u_{11}$ is the unique rainbow vertex.
 Let $H_i,\ i = 1,2$ be the components of $H-T$ where for $i=1,2,$ $U_i \subseteq V(H_i).$  Let $C_1 = E_{H}(v)$ and $C_2 = E_H(V(H_2) - u_{21} ).$  Then $C_i,\ i = 1,2$ are seen to be a pair of edge-disjoint rainbow cocycles.  If $|C_1| + |C_2| \le n+3,$ then $\{ C_1, C_2 \}$ is seen to be a $T$-SRCoP for $(H,c).$  Thus we may assume that $|C_1| + |C_2| \ge n+4.$  It follows that $d_H(v) + d_H(u_{21}) -1 = |C_1| + |C_2| \ge n+4,$ and hence there is at least one colour class $\{ e,e' \}$ where $e \in E_H(v) - e_1$ and $e'\in E_H(u_{21}) - e_1 - e_2.$  However,
 we have $e \in E(H_1)$ and $e'\in E(H_2)$ and hence $\{ e,e' \}$ is a non-incidental colour class.  This gives a contradiction.
 \end{proof}
 
 \begin{noname}
 We may assume that $|R| \ge 3.$\label{newtheorem3.nona5}
 \end{noname}
 
 \begin{proof}
 By (\ref{newtheorem3.nona4}), we may assume that $|R| \ge 2.$  Suppose $|R|=2$ and let $R = \{ v_1, v_2 \}.$  By (\ref{newtheorem3.nona3}), we may assume that $e = v_1v_2$ is an edge.  Suppose first that $e \not\in T.$  Then the colour class $\{ e, e' \}$ is non-incidental.  Thus $\xi \ge 1$ and hence $2 = |R| \ge \nu(H) - (n+2 -\xi) \ge \xi +1\ge 2.$  Thus $\xi =1$ and as in the proof of (\ref{newtheorem3.nona4}), there are exactly $n+1$ vertices having an incident pair of edges of the same colour and two of these vertices belong to $U$ and have exactly two edges of $T$ incident with them.  Thus
 the edges of $T$ form a path $P$ from $v_1$ to $v_2$.  However $C = P +e$ is seen to be a cycle where $E(C) \cap T = T$ which is not possible since $|T| = 3.$
 It follows from the above that $e \in T.$  Now $E_{H}(R)$ is seen to be a cocycle in $G$ and thus it contains two edges of the same colour.   Such edges must form a non-incidental colour class and hence $\xi \ge 1.$  Arguing as before, we see that $T$ is a path having endvertices $v_1$ and $v_2.$  However, this implies that $e = v_1v_2 \not\in T,$ a contradiction.
 Thus $|R| \ge 3.$ 
 \end{proof}
 
 By (\ref{newtheorem3.nona5}), we may assume that $|R| \ge 3.$  Since $R \subseteq U,$ at most one vertex of $H$ has two incident edges from $T.$  Let $k = |R|.$  By (\ref{newtheorem3.nona3}), each pair of vertices in $R$ is adjacent.  At most $\lfloor \frac k2 \rfloor$ pairs of vertices in $R$ are joined by an edge in $T.$  Since $k\ge 3,$ there are at least ${\binom k2} - \lfloor \frac k2 \rfloor \ge k-1$ edges between vertices of $R$ which do not belong to $T.$   Consequently, there are at least $k-1$ non-incidental colour classes and hence $\xi \ge k-1.$
 It now follows that $k = |R| \ge n+3 - (n+1 - \xi) \ge 2 + (k-1) = k+1,$ a contradiction.   This completes the proof of Theorem \ref{newtheorem3}.

\end{proof}

\section{The proof of Theorem \ref{the-NoRainbowCircuit} i)}\label{sec-ProofNoRainbowCircuitRegulari)}

Let $M$ be a simple rank-$n$ regular matroid where $\varepsilon(M) = 2n-1.$  Let $c$ be a $2$-bounded $n$-colouring of $M$ where $(M,c)$ is circuit-achromatic.  By Observation \ref{obs-onesingular}, $(M,c)$ contains a unique singular element which we shall denote by $e.$
Let $N$ be a regular extension by $T,$ a co-independent $3$-circuit in $N.$ 
%
It follows from the proofs in Sections \ref{sec-BigTheoremi)andii)Graphic}, \ref{sec-BigTheoremi)andii)Cographic}, \ref{sec-BigTheoremiii)Graphic}, and \ref{sec-BigTheoremiii)Cographic}
%
that Theorem \ref{the-NoRainbowCircuit} is true for graphic and cographic matroids.  
To prove that Theorem \ref{the-NoRainbowCircuit} i) is true for regular matroids, we shall use induction on $n$, where we note that the theorem is true for $n =3.$   We may assume that $N$ is connected and is neither graphic nor cographic.  Furthermore, $M$ is not isomorphic to $R_{10}$ since $\varepsilon(R_{10}) = 2 r(R_{10}).$  It follows by Seymour's decomposition theorem \cite{Sey}, that $N$ is a $2$- or $3$-sum of matroids which are either graphic, cographic, or isomorphic to $R_{10}.$  

\subsection{The case where $N = N_1 \oplus_2 N_2$}

Suppose first that $N = N_1 \oplus_2 N_2.$ where $E(N_1) \cap E(N_2) = \{ s \}$. Given that $T$ is a $3$-circuit of $N$, we may assume that $T \subseteq E(N_1).$  Let $M_1 = N_1\backslash T$ and $M_2 = N_2.$  For $i = 1,2,$ let $n_i = r(M_i),$ $M_i' = M_i \backslash s,$ $\varepsilon_i' = \varepsilon(M_i'),$ and let $c_i' = c \big| E(M_i').$   We have that $n =  n_1 + n_2 -1.$

   Since $M$ is simple and $(M,c)$ is circuit-achromatic, it follows that for $i=1,2,$ $(M_i',c_i')$ is simple and circuit-achromatic and thus for $i= 1,2,$ $\varepsilon_i' \le 2n_i -1.$

\begin{noname}
We may assume that $\varepsilon_1' = 2n_1 -2$ and $\varepsilon_2' = 2n_1-1.$
\label{regnona1}
\end{noname}

\begin{proof}
We have that $2n-1 = \varepsilon(M) = \varepsilon_1' + \varepsilon_2'.$  Thus $2(n_1 + n_2 -1) -1 = \varepsilon_1' + \varepsilon_2' $ and hence it follows that for some $i\in \{ 1,2 \},$ $\varepsilon_i' \ge 2n_i -1 = 2r(M_i') -1;$ that is, for some $i$, $\varepsilon_i' = 2n_i -1 .$  If $\varepsilon_1' = 2n_1-1,$ then by assumption, Theorem \ref{the-NoRainbowCircuit} i) holds for $(M_1', c_1')$ and $T$.
Then it is seen that Theorem \ref{the-NoRainbowCircuit} i) holds for $(M,c)$ and $T$ as well. 
As such, we may assume that
$\varepsilon_2' = 2n_2-1$ and $\varepsilon_1' = 2n_1 -2.$
\end{proof}

By Observation \ref{obs-onesingular}, $(M_2', c_2')$ has a unique singular element which we denote by $f.$  
 %
Since $|c(M_2')| = n_2,$ it follows by Observation \ref{obs-erainbowcircuit} that $M_2'$ has an $s$-rainbow circuit $C$ where $|C|\le n_2.$   Extend the colouring $c_1'$ to a colouring $c_1$ of $M_1$ where $c_1(s):= c(f).$ 
We claim that $(M_1,c_1)$ is circuit-achromatic.  For if it has a rainbow circuit $D$, then $s\in D$ and $C\triangle D$ would be a rainbow circuit in $(M,c)$, a contradiction.  Thus $(M_1,c_1)$ is circuit-achromatic.  By Observation \ref{obs-onesingular},  $(M_1, c_1)$ contains a unique colour-singular element which we denote by $g.$  Note that if $e \in E(M_2'),$ then $f = e$ and $g = s;$ otherwise, $g = e.$

\begin{noname}
$M_1$ is simple.\label{refnona3}
\end{noname}

\begin{proof}
Suppose that $M_1$ is not simple.  Then there is an element $x \in E(M_1')$ which is parallel with $s.$ Since $(M_1,c_1)$ is circuit-achromatic, it follows that $c_1(x) = c_1(s) = c(f).$  Thus $x = f'$.  Now $M_2' + f'$ is a rank-$n_2$ simple matroid with $2n_2$ elements where each colour class contains two elements.  It follows by Corollary \ref{cor-cor2} that $M_2' + f'$ contains a rainbow circuit, a contradiction. Thus $M_1$ is simple.
\end{proof}
 
Given that $\varepsilon(M_1) = 2n_1 -1$ and $M_1$ is simple, it follows by assumption that Theorem \ref{the-NoRainbowCircuit} i.1) or i.2) holds for $(M_1,c_1)$ and $T$.   Thus either 
  
\begin{itemize}
\item[a)] $(M_1,c_1)$ has a $T$-SRCT $\{ D_1, D_2, D_3 \}$ or 
\item[b)] for all $x\in T,$ there exists a contains a $T$-SRCP $\{ D_1, D_2 \}$ for $(M_1-g,c_1)$ where $x \not\in D_1 \cup D_2.$
\end{itemize}
Suppose a) holds.
If $s \not\in \bigcup_{i=1}^3D_i$, then $\{ D_1, D_2, D_3 \}$ is seen to be a $T$-SRCT for $(M,c).$
Suppose $s\in \bigcup_{i=1}^3D_i.$  We may assume that $s\in D_1.$ Then $D_1' = D_1 \triangle C$ is seen to be a rainbow circuit and $\{ D_1', D_2,D_3 \}$ is seen to be a $T$-SRCT for $(M,c).$ 
 Thus if a) holds, then Theorem \ref{the-NoRainbowCircuit} i.1) holds for $M.$
 
 Suppose b) holds.  Let $x\in T$ and assume $\{ D_1, D_2\}$ is a $T$-SRCP for $(M_1 - g, c_1)$ where $x \not\in D_1 \cup D_2.$  If $s \not\in D_1 \cup D_2,$ then $\{ D_1, D_2\}$ is seen to be a $T$-SRCP for $(M - e,c).$
 Suppose that $s \in D_1 \cup D_2$ where we may assume that $s \in D_1.$   Then $g \ne s$ and hence $g =e$.  Let $D_1' = C \triangle D_1.$  Then $\{ D_1', D_2\}$ is seen to be a $T$-SRCP for $(M - e,c).$
 Thus if b) holds, then Theorem \ref{the-NoRainbowCircuit} i.2) holds for $(M,c).$    
 
\subsection{The case where $N = N_1 \oplus_3 N_2$}

Suppose now that  $N = N_1 \oplus_3 N_2.$ where $E(N_1) \cap E(N_2) = S= \{ s_1, s_2, s_3 \}$.  We may assume as before that $T \subset E(N_1).$  Let $M_1 = N_1\backslash T,$ $M_2 = N_2$ and for $i = 1,2,$ let $n_i = r(M_i),$ $M_i' = M_i \backslash S,$ $\varepsilon_i' = \varepsilon(M_i'),$ and let $c_i' = c \big| E(M_i').$   We have that $r(M) = n =  n_1 + n_2 -2.$   Let $T = \{ t_1, t_2, t_3 \}.$. We shall need the following lemma:

\begin{lemma}
Suppose that for some $i\in \{ 1,2 \},$ $|c_i'(M_i')| \ge n_i -1.$  Then for some $j \in [3],$ there exists an $s_j$-rainbow circuit $C$ for $(M_i',c_i')$ where $|C| \le n_i$.
Furthermore, if $|c_i'(M_i')| = n_i,$ then for any element $f \in E(M_i'),$ one can choose $C$ so that $f \not\in C.$\label{cl1}
\end{lemma}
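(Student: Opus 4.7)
The plan is to apply Observation \ref{obs-erainbowcircuit} to the coloured matroid $(M_i', c_i')$ in conjunction with the regular extensions $M_i' + s_j$ for $j \in [3]$. Since $c_i'$ is $2$-bounded and $|c_i'(M_i')| \ge n_i - 1$, we have $\varepsilon(M_i') \ge n_i - 1$, and always $r(M_i') \le r(M_i) = n_i$. So any $s_j$-rainbow circuit delivered by the observation automatically has size at most $r(M_i') \le n_i$ as required.

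In the generic case $\varepsilon(M_i') \ge r(M_i') + 1$, Observation \ref{obs-erainbowcircuit} applied to $M_i' + s_j$ (for, say, $j = 1$) immediately yields the desired $s_j$-rainbow circuit of size at most $n_i$. The only case to handle separately is the boundary case $\varepsilon(M_i') = r(M_i')$, where $E(M_i')$ is a basis of its own closure in $M_i$. Then for each $j$ there is a \emph{unique} circuit $C_j = \{s_j\} \cup X_j$ in $M_i' + s_j$ containing $s_j$, with $X_j \subseteq E(M_i')$ the fundamental support of $s_j$. Since $M_i$ is binary and $S$ is a $3$-circuit, the relation $v_{s_1} + v_{s_2} + v_{s_3} = 0$ together with the linear independence of $E(M_i')$ yields the key parity identity
\[
X_1 \triangle X_2 \triangle X_3 = \emptyset.
\]

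From this identity, each element of $E(M_i')$ lies in exactly $0$ or $2$ of the $X_j$'s; simplicity of $M_i$ forbids any $X_j$ from being empty; and at most one $X_j$ can equal $E(M_i')$ (else a second would be empty). Hence at least two indices $j$ give $|C_j| \le n_i$. For rainbow-ness, the hypothesis $|c_i'(M_i')| \ge n_i - 1$ combined with $\varepsilon(M_i') = n_i$ permits at most one doubled colour class $\{g, g'\}$ in $M_i'$; and the parity identity forces $\{g, g'\} \subseteq X_j$ for at most one $j$. A brief case check — comparing the at most one ``size-excluded'' index against the at most one ``rainbow-excluded'' index, and using that if both $g, g' \in X_j \cap X_k$ then $X_\ell = X_j \triangle X_k$ misses $\{g, g'\}$ entirely and satisfies $|X_\ell| \le n_i - 2$ — produces an admissible $j$ with both $|C_j| \le n_i$ and $C_j$ rainbow. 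This merging of the size and rainbow constraints via the parity relation is the main technical step.

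For the \emph{furthermore} clause, assume $|c_i'(M_i')| = n_i$, so every element of $M_i'$ is colour-singular and every $C_j$ is automatically rainbow. Given $f \in E(M_i')$: in the boundary case the parity identity places $f$ into $0$ or $2$ of the $X_j$'s, and using that at most one $X_j$ equals $E(M_i')$ one can show that some index $j$ satisfies both $|C_j| \le n_i$ and $f \notin C_j$. In the non-boundary case $\varepsilon(M_i') \ge r(M_i') + 2$, I would instead delete $f$ and apply Observation \ref{obs-erainbowcircuit} to $(M_i' \setminus f, c_i'\big|E(M_i') - f)$, which remains simple, regular, and circuit-achromatic with $\varepsilon \ge r(M_i' \setminus f) + 1$; the intermediate situation $\varepsilon(M_i') = r(M_i') + 1$ reduces to the boundary argument applied to $M_i' \setminus f$ (noting that $f$ cannot be a coloop, since otherwise $f$ lies in every basis and the parity argument again selects a suitable $j$ directly).
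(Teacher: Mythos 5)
Your route is genuinely different from the paper's: the paper builds an explicit rainbow set $B$ of size $n_i$ containing $s_1$, extracts a circuit from $B$ if $B$ is dependent, and otherwise takes the fundamental circuit of $s_2$ with respect to the basis $B$, using $C\triangle S$ to convert an $\{s_1,s_2\}$-circuit into an $s_3$-circuit. Unfortunately your argument has a gap at exactly the point that makes the lemma say ``for some $j$'' rather than ``for each $j$.'' In your generic case you apply Observation \ref{obs-erainbowcircuit} to $M_i'+s_j$ for an arbitrarily chosen $j$, but this presupposes that $s_j\in \mathrm{cl}_{M_i}(E(M_i'))$; otherwise $s_j$ is a coloop of $M_i'+s_j$, no circuit of $M_i'+s_j$ contains $s_j$ at all, and the observation (whose implicit hypothesis is that such a circuit exists) cannot deliver anything. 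The hypotheses of the lemma do not force $E(M_i')$ to span $M_i$: circuit-achromaticity together with $|c_i'(M_i')|\ge n_i-1$ only yields $r(M_i')\ge n_i-1$, and when $r(M_i')=n_i-1$ the $2$-dimensional space $\mathrm{span}(S)$ meets the codimension-one space $\mathrm{span}(E(M_i'))$ in exactly one of its three nonzero vectors, so exactly one of $s_1,s_2,s_3$ lies in $\mathrm{cl}(E(M_i'))$ and two of your three choices of $j$ fail outright. You never argue that a good $j$ exists in the generic case, nor how to locate it; that selection is the entire content of the paper's $B+s_2$ and $C\triangle S$ step. The same issue recurs in your treatment of the ``furthermore'' clause when you delete $f$ and re-apply the observation to $M_i'\setminus f$.

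There are secondary problems as well. Your boundary-case parity identity $X_1\triangle X_2\triangle X_3=\emptyset$ assumes all three fundamental circuits exist, which again requires $\varepsilon(M_i')=r(M_i')=n_i$; the sub-case $\varepsilon(M_i')=r(M_i')=n_i-1$, which the hypotheses allow, is covered by neither of your two cases (it happens to be easy, since then every element of $M_i'$ gets a distinct colour and the unique available fundamental circuit works, but this needs to be said). Also, the opening claim of your ``furthermore'' paragraph --- that $|c_i'(M_i')|=n_i$ makes every element of $M_i'$ colour-singular --- is false unless $\varepsilon(M_i')=n_i$, i.e.\ only in the boundary case. The paper's single uniform argument (take a rainbow $(n_i-1)$-subset of $E(M_i')$, adjoin $s_1$ to form $B$; if $B$ is dependent its unique circuit is an $s_1$-rainbow circuit of size at most $n_i$; if $B$ is a basis of $M_i$, the fundamental circuit of $s_2$ is either an $s_2$-rainbow circuit or, after symmetric difference with $S$, an $s_3$-rainbow circuit of size at most $n_i$) needs none of these case distinctions and, in particular, never requires any $s_j$ to lie in $\mathrm{cl}(E(M_i'))$.
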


\begin{proof}
It suffices to prove the lemma when $|c_1'(M_1')| \ge n_1-1.$   There exists $B \subseteq E(M_1)$ where $|B| = n_1$, $s_1 \in B$, and $B-s_1$ is a rainbow subset of $(M_1',c_1').$  Note that if $|c_1'(M_1')| = n_1,$ then for any element $f,$ we can choose $B$ such that $f\not\in B.$  If $B$ is not a basis for $M_1$, then it contains a circuit $C$ for which $s_1 \in C$ and hence $C$ is an $s_1$-rainbow circuit for $(M_1',c_1')$ where $|C| \le n_1.$
Thus we may assume that $B$ is a basis for $M_1.$  Then $B+s_2$ contains a circuit $C$.  If $C \cap S \ne \{ s_1, s_2 \},$ then $C$ is seen to be either a $s_1$- or $s_2$- rainbow circuit for $(M_1',c_1')$ where $|C| \le n_1.$  Suppose that $C\cap S = \{ s_1, s_2 \}.$  Then
$C\triangle S$ is seen to be an $s_3$- rainbow circuit for $(M_1',c_1')$ for which $|C\triangle S| \le n_1.$  In either case we have the desired circuit $C.$  If $|c_1'(M_1')| = n_1,$ then as noted above, for any element $f$, we can choose $B$ so that $f \not\in B$ and consequently $f \not\in C.$ 
\end{proof}

 We have $\varepsilon(M) = 2(n_1 + n_2 -2) -1 = 2n_1 + 2n_2 -5.$  Thus for some $i\in \{ 1,2 \},$ $\varepsilon_i' \ge 2n_i -2.$  Given that for $i=1,2$, $(M_i',c_i')$ rainbow circuit -free, we have that for $i=1,2,$ $\varepsilon_i' \le 2n_i -1$ and  $|c(M_i')| \le n_i.$ 


\sms
\noindent{\bf Case 0}:  Suppose that for some $i\in [2],$ $\varepsilon_i' = 2n_i -1.$  
\sms
 
 Suppose $\varepsilon_1' = 2n_1-1.$  By assumption Theorem \ref{the-NoRainbowCircuit} i) holds for $(M_1',c_1')$ and $T$, and in this case,  Theorem \ref{the-NoRainbowCircuit} i) is readily seen to hold for $(M,c)$.  
 
 Suppose $\varepsilon_2' = 2n_2 -1.$  
 Let $f$ be the unique colour-singular element of $(M_2',c_2').$ We also have that $\varepsilon_1'(M_1') = 2n_1-4$ and $(M_1',c_1')$ has $0$ or $2$ colour singular elements; in the former case, $f=e$, and in the latter, $f\ne e$ and $e,f'$ are its colour singular elements.
 By assumption Theorem \ref{the-NoRainbowCircuit} i) holds for $(M_1',c_1')$ and $S.$  Thus there is a $S$-SRCP $\{ C_1', C_2' \}$ for $(M_2',c_2')$ where $f \not\in C_1' \cup C_2'.$  We may assume that for $i=1,2,$ $s_i \in C_i'.$
Furthermore, we may assume that if $(M_2',c_2')$ has a $S$-SRCT, then there is a circuit $C_3'$ such that $\{ C_1', C_2', C_3' \}$ is a $S$-SRCT for $(M_2',c_2').$
 Extend $c_1'$ to a colouring $c_1$ of $M_1$ where for $i=1,2,$ $c_1(s_i) := 0$ and $c_1(s_3):= c(f).$  Suppose $(M_1,c_1)$ has a rainbow circuit $D$.  Then $D\cap S \ne \emptyset.$  If for some $i\in [2],$ either $D\cap S = \{ s_i \}$ for $D\cap S_i = \{ s_i, s_3 \}.$  In the former, $D\triangle C_i'$ is a rainbow cycle, and in the latter $D\triangle S \triangle C_{3-i}'$ is seen to be a rainbow cycle.   If $D\cap S = \{ s_3 \},$ then for any $s_3$-rainbow circuit $C$ for $(M_2',c_2'),$ $C\triangle D$ is seen to be a rainbow cycle.  It follows from the above that $(M_1,c_1)$ is circuit-achromatic.  We also observe that $M_1$ is simple; for if there is an element $x\in E(M_1')$ which is parallel to an element of $S,$ then $M_2' + x$ would have rank $n_2$ and $2n_2$ elements and hence would have a rainbow circuit, a contradiction.  By assumption Theorem \ref{the-NoRainbowCircuit} i) holds for $(M_1,c_1).$  
 
\ssms 
\noindent{\it Case 0.1} Suppose that Theorem \ref{the-NoRainbowCircuit} i.1) holds for $(M_1,c_1)$ and $\{ D_1, D_2, D_3 \}$ is a $T$-SRCT for $(M_1,c_1).$ 
\ssms 

We may assume that $S \cap \bigcup_i D_i \ne \emptyset$; otherwise, $\{ D_1, D_2, D_3 \}$ is a $T$-SRCT for $(M,c).$  

\ssms
\noindent{\it Case 0.1.1} Suppose $e\in E(M_2').$
\ssms

We have that $f=e$.  Suppose first that Theorem \ref{the-NoRainbowCircuit} i.1) holds for $(M_2',c_2').$  Then by assumption there is a circuit $C_3'$ such that
$\{ C_1', C_2', C_3' \}$ is a $S$-SRCT for $(M_2',c_2')$ (where $s_3 \in C_3'$).
Suppose first that for all $i\in [3],$ $|D_i \cap T| \le 1.$  We may assume for all $i\in [3]$ that if $D_i \cap T \ne \emptyset,$ then $D_i \cap T = \{ s_i \}.$
For $i=1,2,3$ let $C_i = \left\{ \begin{array}{ll} D_i & \mathrm{if}\ D_i \cap T = \emptyset\\ D_i \triangle C_i' &\mathrm{if}\ D_i \cap T \ne \emptyset. \end{array}\right.$ 
Then $\{ C_1, C_2, C_3 \}$ is seen to be a $T$-SRCyT for $(M,c).$  Suppose instead that for some $i\in [3],$ $|D_i \cap T| = 2.$  We may assume that $D_1 \cap T = \{ s_1, s_3 \}.$
We may assume that $D_3 \cap T = \emptyset.$  If $D_2 \cap T = \emptyset,$ then letting $C_1 = D_1 \triangle S \triangle C_2', C_2 = D_2,$ and $C_3 = D_3,$ then it is seen that $\{ C_1, C_2, C_3 \}$ is a $T$-SRCyT for $(M,c).$
Suppose $D_2 \cap T \ne \emptyset.$ Then $D_2 \cap T = \{ s_2 \}.$  Clearly $e$ is not parallel with $s_2$, for otherwise, $C_2' - s_2 + e$ is a rainbow circuit in $(M,c).$  Since Theorem \ref{the-NoRainbowCircuit} ii) holds for $(M_2',c_2'),$
there is an $s_2$-semi-SRCP $\{ C_1'', C_2'' \}$ for $(M_2', c_2').$  Let $C_1 = D_1 \triangle S \triangle C_1'',\ C_2 = D_2\triangle C_2''$ and $C_3 = D_3.$  Then $\{ C_1, C_2, C_3 \}$ is seen to be a $T$-SRCyT for $(M,c).$
Thus Theorem \ref{the-NoRainbowCircuit} i.1) holds for $(M,c).$  

Suppose instead that Theorem \ref{the-NoRainbowCircuit} i.2) holds for $(M_2',c_2').$  We may assume that for all $i\in [3],\ |D_i \cap S| \le 1;$ otherwise, the previous arguments show that there is $S$-SRCT for $(M,c).$
We may also assume that for all $i\in [3],$ if $D_i \cap S \ne \emptyset,$ then $D_i \cap S = \{ s_i \}.$ For all $1 \le i < j \le 3$ there exists a $S$-SRCP $\{ C_i^{ij}, C_j^{ij} \}$ for $(M_2'-e, c_2')$ where $s_i \in C_i^{ij}$ and $s_j \in C_j^{ij}.$
For all $1 \le i < j \le 3,$ let $D_i^{ij} = \left\{ \begin{array}{ll} D_i \triangle C_i^{ij} & \mathrm{if}\ s_i \in D_i\\ D_i &\mathrm{if}\ s_i \not\in D_i\end{array} \right.$ and $D_j^{ij} = \left\{ \begin{array}{ll} D_j \triangle C_j^{ij} & \mathrm{if}\ s_j \in D_j\\ D_j &\mathrm{if}\ s_j \not\in D_j.\end{array} \right.$ Then $\{ D_i^{ij}, D_j^{ij} \}$ is seen to be a $S$-SRCyP for $(M-e,c)$ and hence Theorem \ref{the-NoRainbowCircuit} i.2) holds for $(M,c).$ 

\ssms
\noindent{\it Case 0.1.2}:  Suppose that $e\in E(M_1').$
\ssms

Then $e$ and $f'$ are the colour singular elements of $(M_1', c_1')$ and $|c_1'(M_1')| = n_1-1.$  By Lemma \ref{cl1}, for some $j\in [3]$, there is an $s_j$-rainbow circuit $D$ for $(M_1',c_1')$ where $|D| \le n_1.$
Then $D$ must be a $s_3$-rainbow circuit; for otherwise, $D\triangle C_1'$ or $D\triangle C_2'$ is a rainbow cycle.  If Theorem \ref{the-NoRainbowCircuit} i.2) holds for $(M_2', c_2'),$ then there is an $s_3$-rainbow circuit $C$ for $(M_2',c_2')$ where $f \not\in C.$  As such, $C\triangle D$ is seen to be
a rainbow cycle in $(M,c)$, a contradiction.  Thus Theorem \ref{the-NoRainbowCircuit} i.1) holds for $(M_2', c_2')$ and by assumption, there is a circuit $C_3'$ such that $\{ C_1', C_2', C_3' \}$ is a $S$-SRCT for $(M_2',c_2')$ (where  $s_3 \in C_3'$).
Extend the colouring $c_1'$ to a colouring $c_1$ of $M_1$ where $c_1(s_3):= c(f)$ and for $i= 1,2,$ $c_1(s_i):= 0.$
As in Case 0.1.1, $M_1$ is simple and $(M_1,c_1)$ is circuit-achromatic.  Now using the circuits $C_i', D_i,\ i = 1,2,3$ one can construct a $T$-SRCT for $(M,c)$ using a similar argument as in Case 0.1.1. 
 
 \ssms
 \noindent{\it Case 0.2}:  Suppose that Theorem \ref{the-NoRainbowCircuit} i.2) holds for $(M_1,c_1)$ and $T$.
 \ssms
 
 Let $g$ be the unique colour-singular element of $(M_1,c_1),$ noting that either $g=e$ or $g= s_3$, depending on whether $e\in E(M_1')$ or $e\in E(M_2').$  By assumption, for all $1 \le i < j \le 3,$ there is a $T$-SRCP $\{ D_{i}^{ij}, D_j^{ij} \}$ for $(M_1-g, c_1)$ where $t_i \in D_i^{ij}$ and $t_j \in D_j^{ij}.$ 
 
\ssms
\noindent{\it Case 0.2.1} Suppose $e\in E(M_2').$
\ssms
  
 We observe that $g=s_3$ and $f=e.$
 By the choice of $C_i',\ i = 1,2,$  $e \not\in C_1' \cup C_2'.$  For all $1 \le i < j \le 3,$ let $C_i^{ij} = \left\{ \begin{array}{ll} D_i^{ij} \triangle C_{i'}' &\mathrm{if}\ D_i^{ij} \cap S = \{ s_{i'} \}\\ D_i^{ij} &\mathrm{if}\ D_i^{ij} \cap S = \emptyset \end{array} \right.$ and 
 $C_j^{ij} = \left\{ \begin{array}{ll} D_j^{ij} \triangle C_{j'}' &\mathrm{if}\ D_j^{ij} \cap S = \{ s_{j'} \}\\ D_j^{ij} &\mathrm{if}\ D_j^{ij} \cap S = \emptyset. \end{array} \right.$  Then $\{ C_i^{ij}, C_j^{ij} \}$ is seen to be a $T$-SRCyP for $(M-e,c)$ where $t_i \in C_i^{ij}$ and $f_j \in C_j^{ij}.$
 
\ssms
\noindent{\it Case 0.2.2} Suppose $e\in E(M_1').$
\ssms    

We have that $g=e$.  It suffices to show that $\{ D_1^{12}, D_2^{12} \}$ can be extended to a $T$-SRCP $\{ C_1^{12}, C_2^{12} \}$ for $(M-e,c)$ where for $i= 1,2,$ $t_i \in C_i^{12}.$  We may assume that $S\cap (D_1^{12} \cup D_2^{12}) \ne \emptyset.$
Suppose first that for some $i\in \{ 1,2 \},$ $|D_i^{12} \cap S| = 2.$  We may assume that $D_1^{12} \cap S = \{ s_1, s_3 \}.$  If $D_2^{12} \cap S = \emptyset,$ then let $C$ be a $s_2$-rainbow circuit for $(M_2',c_2')$ and let
$C_1^{12} = D_1^{12} \triangle S \triangle C$ and $C_2^{12} = D_2^{12}.$  Then $\{ C_1^{12}, C_2^{12} \}$ is seen to be a $T$-SRCyP for $(M-e,c)$.  Suppose instead that $D_2^{12} \cap S = \{ s_2 \}.$  Since Theorem \ref{the-NoRainbowCircuit} ii) holds for $(M_2',c_2')$ and $f$ is not parallel with $s_2$ (otherwise, $C_2' - s_2 +f$ is a rainbow circuit), there exists a $s_2$-semi-SRCP $\{ C_1'', C_2''\}$ for $(M_2',c_2')$ where we may assume that $f\not\in C_2''.$  Letting $C_1^{12} = D_1^{12} \triangle S \triangle C_1''$ and $C_2^{12} = D_2^{12} \triangle C_2''$, we see that $\{ C_1^{12}, C_2^{12} \}$ is a $T$-SRCyP for $(M-e,c)$.

By the above, we may assume that for $i= 1,2,$ $|D_i^{12} \cap S| \le 1.$ Suppose first that $(M_2',c_2')$ has a $S$-SRCT.  Then by assumption, there is a circuit $C_3'$ where $\{ C_1',C_2',C_3' \}$ is a $T$-SRCT for $(M_2',c_2').$
For $i=1,2,$ let $$C_i^{12} = \left\{ \begin{array}{ll} D_i^{12} \triangle C_{i'}' &\mathrm{if}\ D_i^{12} \cap S = \{ s_{i'} \} \\ D_i^{12} &\mathrm{if}\ D_i^{12} \cap S = \emptyset.\end{array}\right.$$  Then $\{ C_1^{12}, C_2^{12} \}$ is seen to be a $T$-SRCyP for $(M-e,c).$
Suppose now that Theorem \ref{the-NoRainbowCircuit} i.2) holds for $(M_2', c_2').$  Then for all $1 \le i < j \le 3,$ there exists a $S$-SRCP $\{ F_i^{ij}, F_j^{ij} \}$ for $(M_2'-f,c_2')$ where $f \not\in F_i^{ij} \cup F_j^{ij}.$
Suppose that $D_1^{12} \cap S = \{ s_{i} \}$ and $D_2^{12} \cap S = \{ s_j \}.$  Letting $C_1^{12} = D_1^{12} \triangle F_{i}^{ij}$ and $C_2^{12} = D_2^{12} \triangle F_{j}^{ij},$ we see that $\{ C_1^{12}, C_2^{12} \}$ is a $T$-SRCyP for $(M-e,c).$
Suppose that for some $i\in [2],\ D_i^{12} \cap S = \emptyset.$  We may assume that $D_2^{12} \cap S = \emptyset.$  Then for some $i\in [2],\ D_1^{12} \cap S = \{ s_i \}.$  Let $C= F_i^{ij}$, where $j\ne i.$  Letting $C_1^{12} = D_1^{12} \triangle C$ and $C_2^{12} = D_2^{12},$ we seen that $\{ C_1^{12}, C_2^{12} \}$ is a $T$-SRCyP for $(M-e,c).$

From the above, we may assume that $\varepsilon_2' \le 2n_2-2$ and $\varepsilon_1' \ge 2n_1-3$. As such, there are two remaining cases where either $\varepsilon_1'= 2n_1-2$ or $\varepsilon_2' = 2n_2-2.$ 
If for some $i$, $\varepsilon_i = 2n_i -2,$ then $n_i -1 \le |c_i'(M_i')| \le n_i.$  We say that $(M_i', c_i')$ is of {\bf type 1} or {\bf type 2},  depending on whether $|c_i'(M_i')| = n_i-1$ or  $|c_i'(M_i')| = n_i.$

\sms
\noindent{\bf Case 1}: Suppose $\varepsilon_1' = 2n_1-2.$
\sms 

\ssms
\noindent{\it Case 1.1} $(M_1',c_1')$ is of type 1.
\ssms

 We have that $|c(M_1')| = n_2-1$ and hence $c_1'$ is $2$-uniform and $e \in E(M_2').$  We also have that $\varepsilon_2' = 2n_2 - 3.$  By Lemma \ref{cl1}, there exists $j \in [3]$ for which there is an $s_j$-rainbow circuit $C$ for $(M_2',c_2')$ where $|C|\le n_2.$  We may assume this is true for $j=1.$  
Extend $c_1'$ to a colouring $c_1''$ of $M_1'' = M_1' + s_1$ where $c_1''(s_1):=0.$  If $(M_1'', c_1'')$ contains a rainbow circuit, say $D$, then $s_1 \in D$ and hence $C\triangle D$ is a seen to be a rainbow cycle of $(M,c)$.  Thus $(M_1'', c_1'')$ is circuit-achromatic and hence no element of $E(M_1')$ is parallel with $s_1,$ implying that $M_1''$ is simple.  We also have that $\varepsilon(M_1'') = 2n_1-1$ and $s_1$ is the unique colour-singular element of $(M_1'', c_1'').$ 
By assumption,  Theorem \ref{the-NoRainbowCircuit} i) holds for $(M_1'',c_1'')$ and $T$ and either
\begin{itemize}
\item[a)] $(M_1'',c_1'')$ has a $T$-SRCT $\{ D_1, D_2, D_3 \}$ or 
\item[b)] For all $x\in T$, $(M_1'' - s_1, c_1'')$ has a $T$-SRCP $\{ D_1, D_2 \}$ where $x \not\in D_1 \cup D_2.$ 
\end{itemize}
Suppose a) occurs. If $s_1 \not\in \bigcup_{i\in [3]}D_i,$ then $\{ D_1, D_2, D_3 \}$ is seen to be a $T$-SRCT for $(M,c).$  Thus we may assume  that $s_1 \in \bigcup_{i\in [3]}D_i$ and more specifically, $s_1 \in D_1$.
Let $C_1 = C\triangle D_1.$  Since $|C| \le n_2,$ it is seen that $\{ C_1, D_2, D_3 \}$ is a $T$-SRCyT for $(M,c)$.  Thus if a) holds, then Theorem \ref{the-NoRainbowCircuit} i.1) holds for $(M,c).$

Suppose b) occurs.  Let $x\in T.$  Then $(M_1'' - s_1, c_1'')$ has a $T$-SRCP $\{ D_1, D_2 \}$ where $x \not\in D_1 \cup D_2.$  Then $\{ D_1, D_2 \}$ is seen to be a $T$-SRCP for $(M-e,c).$  Thus if b) holds, then
Theorem \ref{the-NoRainbowCircuit} i.2) holds for $(M,c).$

\ssms
\noindent{\it Case 1.2} $(M_1', c_1')$ is of type 2.
\ssms

We have that $(M_1',c_1')$ contains exactly two colour-singular elements which we denote by $f$ and $g.$  We shall consider two sub-cases depending on whether $e\in E(M_1')$ or $e\in E(M_2').$

\ssms
\noindent{\it Case 1.2.1} $e \in E(M_2').$ 
\ssms

We see that $(M_2',c_2')$ has exactly three singular elements which are seen to be $e, f'$ and $g'$.  Also, $|c_2'(M_2')| = n_2.$  
By Lemma \ref{cl1}, there exists $j\in [3]$ for which $(M_2',c_2')$ has an $s_j$-rainbow circuit $C$ where $|C|\le n_2.$  Moreover, $C$ can be chosen such that either $f' \not\in C$ or $g' \not\in C.$ We may assume that $C$ is an $s_1$-rainbow circuit. 
Let $M_1'' = M_1' + s_1$ and define a colouring $c_1''$ for $M_1''$ where $c_1''(s_1) := 0,$ $c_1''(g) := c(f)$ and for all $x\in E(M_1') - g,$ define $c_1''(x):= c_1'(x).$  Note that $s_1$ is the unique colour-singular element of $(M_1'', c_1'').$
We claim that $(M_1'', c_1'')$ is circuit-achromatic.  For suppose it contains a rainbow circuit, say $D$.  Then $s_1 \in D.$  If $\{ f, g \} \cap D = \emptyset,$ then $C\triangle D$ is seen to be a rainbow cycle of $(M,c).$  Thus $\{ f, g \} \cap D \ne \emptyset$.  We may assume that $f \in D.$  As mentioned above, we can choose $C$ so that $f' \not\in C.$  It now follows that $C\triangle D$ is a rainbow cycle of $(M,c).$  Thus $(M_1'', c_1'')$ is circuit-achromatic and hence no edge of $E(M_1')$ is parallel with $s_1$ and $M_1''$ is simple.
We can now argue as in Case 1.1 where it is seen that either a) or b) is true. 
Suppose that  a) is true. 
If $s_1 \not\in \bigcup_iD_i,$ then $\{ D_1, D_2, D_3 \}$ is seen to be a $T$-SRCT for $(M,c).$ 
Suppose $s_1 \in \bigcup_iD_i$  where we may assume $s_1 \in D_1.$  If $f \in D_1$ (resp. $g\in D_1$), then choose $C$ such that $f' \not\in C$ (resp. $g' \not\in C$).
Let $C_1 = C \triangle D_1.$ Then $\{ C_1, D_2, D_3 \}$ is seen to be a $T$-SRCT for $(M,c).$  Thus if a) is true, then Theorem \ref{the-NoRainbowCircuit} i.1) holds for $(M,c).$
Suppose b) is true.  Let $x\in T.$  Then $(M_1'' - s_1, c_1'')$ has a $T$-SRCP $\{ D_1, D_2 \}$ where $x \not\in D_1 \cup D_2$ and $\{ D_1, D_2 \}$ is also seen to be a $T$-SRCP in $(M-e, c).$  In this case, Theorem \ref{the-NoRainbowCircuit} i.2) holds for $(M,c).$

\ssms
\noindent{\it Case 1.2.2}  $e \in E(M_1').$
\ssms 

We have that $M_1'$ contains exactly two colour-singular elements, one which is $e$ and the other being say $f.$  Then $(M_2',c_2')$ has exactly one colour-singular element, namely $f'.$
As in Case 1.2.1, there exists $j\in [3]$ for which $(M_2',c_2')$ has an $s_j$-rainbow circuit $C$ where $|C| \le n_2.$  We may assume that this true for $j=1.$
Extend the colouring $c_1'$ to a colouring $c_1''$ of $M_1'' = M_1' + s_1$ where $c_1''(s_1) := c(f).$  If $(M_1'', c_1'')$ contains a rainbow circuit, say $D$, then $s_1 \in D$ and $C \triangle D$ is seen to be a rainbow cycle in $(M,c).$  Thus $(M_1'',c_1'')$ is circuit-achromatic.  Suppose that $M_1''$ is not simple.  Then there exists $x \in E(M_1')$ where $x$ is parallel with $s_1.$  Since $(M_1'', c_1'')$ is circuit-achromatic, $c_1''(x) = c_1''(s_1) = c(f).$  Thus $x=f.$  Now redefine $M_i\ i = 1,2$ so that $f\in E(M_2).$  In this case, $\varepsilon(M_2') = 2n_2-2$ and $|c_2'(M_2')| = n_2-1$ and we can revert back to Case 1.1 with the roles of $M_1$ and $M_2$ switched.  Thus we may assume that $M_1''$ is simple.
By assumption, Theorem \ref{the-NoRainbowCircuit} i) holds for $(M_1'',c_1'')$ and $T.$  Suppose Theorem \ref{the-NoRainbowCircuit} i.1) holds for $(M_1'', c_1'').$ Let $\{ D_1, D_2, D_3 \}$ be a $T$-SRCT for $(M_1'', c_1'').$
If $s_1 \not\in \bigcup_{i=1}^3 D_i$, then $\{ D_1, D_2, D_3 \}$ is seen to be a $T$-SRCT for $(M,c).$
Suppose $s_1 \in \bigcup_{i=1}^3D_i$ where we may assume $s_1 \in D_1.$ Letting $D_1' = D_1\triangle C$ it is seen that $\{ D_1', D_2,D_3 \}$ is a $T$-SRCyT for $(M,c).$  Thus we see that Theorem \ref{the-NoRainbowCircuit} i.1) holds for $(M,c).$
Suppose Theorem \ref{the-NoRainbowCircuit} i.2) holds for $(M_1'', c_1'').$
Note that $e$ is the unique colour-singular element in $(M_1'', c_1'')$.  Let $x\in T.$  Then there is a $T$-SRCP $\{ D_1, D_2\}$ for $(M_1''-e,c_1'').$  If $s_1 \not\in D_1 \cup D_2,$ then $\{ D_1, D_2 \}$ is seen to be a $T$-SRCP for $(M -e,c).$  If $s_1 \in D_1 \cup D_2$ and  say $s_1 \in D_1,$ then let $D_1' = D_1 \triangle C.$  Then it is seen that $\{ D_1', D_2 \}$ is a $T$-SRCyP for $(M - e,c)$ where $x\not\in D_1' \cup C_2.$  Thus Theorem \ref{the-NoRainbowCircuit} i.2) holds for $(M,c).$  

\sms
\noindent{\bf Case 2}:  Suppose $\varepsilon_2' = 2n_2 -2.$ 

\ssms
\noindent{\it Case 2.1} $(M_2', c_2')$ is type 1.
\ssms

Then $\varepsilon_1' = 2n_1-3$ and $|c(M_1)| = n_1 -1.$  Furthermore, since $|c_2'(M_2')| = n_2-1$ we have that $c_2'$ is $2$-uniform and $e \in E(M_1').$  By assumption, Theorem \ref{the-NoRainbowCircuit} iii) holds for $(M_2',c_2')$ and $S$.  Thus $(M_2',c_2')$ has an $S$-SRCP $\{ C_1,C_2\}$ where  we may assume that for $i = 1,2,$ $s_i \in C_i.$
Let $M_1'' = M_1' + s_1 + s_2$ and extend the colouring $c_1'$ to a colouring $c_1''$ of $M_1''$ where $c_1''(s_i):= 0,\ i = 1,2.$  If $(M_1'',c_1'')$ contains a rainbow circuit, say $C$, then $C' \cap \{ s_1, s_2 \} \ne \emptyset.$  Assuming $s_1 \in C,$ then $C \triangle C_1$ would be a rainbow cycle in $M$, a contradiction.  Thus $(M_1'',c_1'')$ is circuit-achromatic and hence $M_1''$ is simple.  Given that $\varepsilon(M_1'') = 2n_1 -1,$ it follows by assumption that Theorem \ref{the-NoRainbowCircuit} i) holds for $(M_1'',c_1'')$ and $T.$ 
Suppose Theorem \ref{the-NoRainbowCircuit} i.1) holds for $(M_1'',c_1'').$  Then $(M_1'',c_1'')$ has an $T$-SRCT $\{ D_1, D_2, D_3 \}$ where we may assume that for all $i\in [2],$ if $s_i \in \bigcup_{j=1}^3D_j$, then $s_i \in D_i$.  For $i = 1, 2,$ let $D_i' = \left\{ \begin{array}{lr} D_i & \mathrm{if}\ s_i \not\in D_i\\ C_i \triangle D_i & \mathrm{if}\ s_i \in D_i. \end{array} \right .$  Since $|C_1| + |C_2| \le r(M_2) + 2 = n_2 +2$ and for all $1\le i<j\le 3$, $|D_i| + |D_j| \le r(M_1) + 2 = n_1 +2$, it follows that
\begin{align*}
|D_1'| + |D_2'| &\le |D_1| + |D_2| + |C_1| + |C_2| -4\\
&\le n_1 + n_2 + 4 -4\\
&= n + 2.
\end{align*}
Furthermore, it is seen that for $i = 1,2,$ $|D_i'| + |D_3| \le n+2.$
Thus $\{ D_1', D_2', D_3 \}$ is seen to be a $T$-SRCyT for $(M,c)$ and Theorem \ref{the-NoRainbowCircuit} i.1) holds for $(M,c).$
If instead Theorem \ref{the-NoRainbowCircuit} i.2) holds for $(M_1'',c_1'')$, then one can use similar arguments to show that Theorem \ref{the-NoRainbowCircuit} i.2) holds for $(M,c)$ as well.

\ssms
\noindent{\it Case 2.2}  $(M_2', c_2')$ is type 2.
\ssms

We have $|c_2'(M_2')| = n_2$ and $(M_2',c_2')$ contains exactly two colour-singular elements, say $f$ and $g.$  We also have that $\varepsilon_1' = 2n_1-3$ and $(M_1',c_1')$ has one or three colour-singular elements.

\ssms
\noindent{\it Case 2.2.1} $e \in E(M_1').$ 
\ssms

We have that $(M_1',c_1')$ has exactly three colour-singular elements being $e, f',$ $g'$ and $|c_1'(M_1')| = n_1.$ Let $c_2''$ be a colouring of $M_2'$ where $c_2''(g) := c_2'(f)$ and $\forall x\in E(M_2') - g, \ c_2''(x) := c_2'(x).$
By assumption, Theorem \ref{the-NoRainbowCircuit} iii) holds for $(M_2',c_2'')$ and $S$ and hence there exists an $S$-SRCP $\{ C_1, C_2\}$ for $(M_2',c_2'')$ where we may assume that for $i = 1,2,$ $s_i \in C_i.$  
Extend $c_1'$ to a colouring $c_1''$ for $M_1'' = M_1' + s_1 + s_2$ where $c_1''(s_i),\ i = 1,2$ are defined as follows.  If $\{ f, g \} \cap (C_1 \cup C_2) = \emptyset,$ then define $(c_1''(s_1), c_1''(s_2)) : = (c(f), c(g)).$  If  $\{ f, g \} \cap (C_1 \cup C_2) \ne \emptyset$
then define
$$(c_1''(s_1), c_1''(s_2)) : = \left\{ \begin{array}{lr} (c(f), c(g)) & \mathrm{if}\ f\in C_1\  \mathrm{or} \ g \in C_2\\ (c(g), c(f)) & \mathrm{if}\ g \in C_1 \ \mathrm{or}\ f \in C_2\end{array} \right.$$
We claim that $(M_1'',c_1'')$ is circuit-achromatic.  For if it has a rainbow circuit, say $D$, then $D\cap \{ s_1, s_2 \} \ne \emptyset.$  If $|D\cap \{ s_1, s_2 \}| = 1,$ and say $s_1 \in D,$ then $D\triangle C_1$ is a rainbow cycle of $M$, a contradiction.  If $\{ s_1, s_2 \} \subset D,$ then let $D' = D \triangle S = D - s_1 - s_2 + s_3.$  Since $|c(M_2')| = n_2,$ $M_2' + s_3$ has an $s_3$-rainbow circuit $C$ where $s_3 \in C.$ Noting that $\{ f', g' \} \cap D' = \emptyset,$ it follows that $C\triangle D'$ is a rainbow cycle of $M$, a contradiction.   Thus $(M_1'',c_1'')$ is circuit-achromatic.

  By symmetry, we may assume that $(c_1''(s_1), c_1''(s_2)) = (c(f), c(g)).$  We may assume that $M_1''$ is simple.  For if not, there is an element $x\in E(M_1')$ which is parallel with $s_1$ or $s_2.$  In this case, redefine $M_1$ and $M_2$ so that $x\in E(M_2')$.  Now $\varepsilon_2' = 2n_2-1$ and we can revert back to Case 0.  Thus we may assume that $M_1$ is simple.
By assumption, Theorem \ref{the-NoRainbowCircuit} i) holds for $(M_1'',c_1'')$.  Suppose Theorem \ref{the-NoRainbowCircuit} i.1) holds for $(M_1'',c_1'')$ and $T$.   Then there exists a $T$-SRCT $\{ D_1, D_2, D_3 \}$ for $(M_1'', c_1'').$  Let $D = \bigcup_iD_i.$  If $\{ s_1, s_2 \} \cap D = \emptyset$, then $\{ D_1, D_2, D_3 \}$ is seen to be a $T$-SRCT for $(M,c)$.  
 Suppose $\{ s_1, s_2 \} \cap  D \ne \emptyset.$  If $|\{ s_1, s_2 \}  \cap D| =1$ and say, $s_1 \in D_1$, then let $D_1' = C_1 \triangle D_1.$  Now $\{ D_1', D_2, D_3 \}$ is seen to be a $T$-SRCyT for $(M,c).$  Suppose $| \{ s_1, s_2 \} \cap D| =2.$  Suppose two of the circuits $D_i,\ i=1,2,3$ intersect $\{ s_1, s_2 \}.$  Then we may assume that $s_i\in D_i,\ i = 1,2.$  Let $D_i' = C_i \triangle D_i,\ i = 1,2.$   Given that $\{ C_1,C_2\}$ is an $S$-SRCP for $(M_2', c_2')$, 
 we have
 \begin{align*}
 |D_1'| + |D_2'| &= |D_1| + |D_2| + |C_1| + |C_2| -4\\
&\le n_1 + 2 + n_2 + 2 - 4\\
&= n+2.
 \end{align*}
 Furthermore, it is seen that for $i= 1,2,$ $|D_i'| + |D_3| \le n+2.$
 Thus $\{ D_1', D_2', D_3 \}$ is a $T$-SRCyT for $(M,c).$  Suppose instead that for some $i,$ $\{ s_1, s_2 \} \subseteq D_i.$  We may assume that
 this is true for $i=1.$  
 Similar to before, there is an $s_3$-rainbow circuit $C$ for $(M_2', c_2'')$ where $|C| \le n_2.$  Let $D_1' = D_1 \triangle S \triangle C.$  We observe that $\{ f', g' \} \cap D_1' = \emptyset$ and thus $D_1'$ is a rainbow cycle in $(M,c).$
It is now seen that  $\{ D_1', D_2, D_3 \}$ is a $T$-SRCyT for $(M,c).$
 It follows from the above that if Theorem \ref{the-NoRainbowCircuit} i) holds for $(M_1'',c_1''),$ then Theorem \ref{the-NoRainbowCircuit} i.1) holds for $(M,c).$
 If instead Theorem \ref{the-NoRainbowCircuit} i.2) holds for $(M_1'',c_1''),$ then using the similar arguments as above, one can show that Theorem \ref{the-NoRainbowCircuit} i.2) holds for $(M,c)$ as well.  

\ssms
\noindent{\it Case 2.2.2} $e \in E(M_2').$ 
\ssms

 We have that $(M_2', c_2')$ has exactly two colour-singular elements, one which is $e$ and the other which we denote by $f.$  It follows that $|c_2'(M_2')| = n_2$ and $f'$ is the unique colour-singular element of $(M_1', c_1').$
 Let $c_2''$ be a colouring of $M_2'$ where $c_2''(f):=c(e)$ and for all $x \in E(M_2')-f,\ c_2''(x):=c(x).$
By assumption, Theorem \ref{the-NoRainbowCircuit} iii) holds for $(M_2', c_2'')$ and $S$ hence there is an $S$-SRCP $\{C_1, C_2\}$ for $(M_2', c_2'')$ where for $i= 1,2,$ $s_i \in C_i.$  
Extend the colouring $c_1'$ to a colouring $c_1''$ of $M_1'' = M_1 + s_1 + s_2$ where $(c_1''(s_1), c_1''(s_2))= (c(e), c(f)),$ if $\{ e,f \} \cap (C_1 \cup C_2) = \emptyset$; otherwise, $$(c_1''(s_1), c_1''(s_2)):= \left\{\begin{array}{ll} (c(e),c(f)) &\mathrm{if}\ e\in C_1\ \mathrm{or}\ f \in C_2\\ (c(f),c(e)) &\mathrm{if}\ f\in C_1\ \mathrm{or}\ e \in C_2 \end{array}\right.$$
%
%
Suppose $(M_1'', c_1'')$ has a rainbow circuit, say $D$. Then $D\cap \{ s_1, s_2 \} \ne \emptyset.$  If for some $i\in [2],$ $D\cap \{ s_1, s_2 \} = \{ s_i \},$ then $D\triangle C_i$ is seen to be a rainbow cycle in $M$.  If $\{ s_1, s_2 \} \subseteq D,$ then $D' = D \triangle S = D -  s_1 - s_2  + s_3$ is an $s_3$-rainbow circuit for $(M_1',c_1')$ where $f' \not\in D'.$  
Since $|c_2'(M_2')| = n_2,$ it follows that there is an $s_3$-rainbow circuit $C'$ for $(M_2',c_2')$ and one sees that $C' \triangle D'$ is a rainbow cycle in $M$, a contradiction.  We conclude that $(M_1'', c_1'')$ is circuit-achromatic.   
%
%
As before, we may assume that $M_1''$ is simple; stronger still, we may assume that no element of $E(M_1')$ is parallel with any element of $S.$ 
By assumption,Theorem \ref{the-NoRainbowCircuit} i) holds for $(M_1'',c_1'').$  
Suppose that Theorem \ref{the-NoRainbowCircuit} i.1) holds for $(M_1'', c_1'')$ and let $\{ D_1, D_2, D_3 \}$ be a $T$-SRCT for $(M_1'', c_1'').$ We may assume that $\{ s_1, s_2 \} \cap \bigcup_i D_i \ne \emptyset.$
Suppose that for some $i\in [3],\ \{ s_1, s_2 \} \subset D_i.$  We may assume that $\{ s_1, s_2 \} \subset D_1.$ 
Let $C$ be a $s_3$-rainbow circuit in $M_2' + f'$ where $|C| \le n_2.$
Let $D_1' = D_1 \triangle S \triangle C$.  Then $\{ D_1', D_2, D_3 \}$ is seen to be a $T$-SRCyT for $(M,c).$
Suppose that for all $i\in [3],$ $|D_i \cap \{ s_1, s_2 \}| \le 1.$ We may assume that for all $i\in [2],$ if $s_i \in \bigcup_iD_i,$ then $s_i \in D_i.$  For $i= 1,2$, define $D_i' = \left\{ \begin{array}{ll} D_i \triangle C_i &\mathrm{if} \ s_i \in D_i\\ D_i &\mathrm{if} s_i \not\in D_i.\end{array} \right.$  Then $\{ D_1', D_2', D_3 \}$ is seen to be a $T$-SRCyT for $(M,c).$

Suppose that Theorem \ref{the-NoRainbowCircuit} i.2) holds for $(M_1'', c_1'')$ and $T.$  For convenience, we may assume that $c_1''(s_1) = c(e)$ and $c_1''(s_2) = c(f).$  Then $s_1$ is the unique colour-singular element in $(M_1'',c_1'').$
 Let $x \in T$ and let $\{ D_1, D_2 \}$ be a $T$-SRCP in $(M_1''-s_1, c_1'')$ where $x \not\in D_1 \cup D_2.$  Now using the circuits $C_2$ and $D_i,\ i \in [2],$ it is straightforward to construct a $T$-SRCP $\{ D_1', D_2' \}$ for $(M-e,c)$ where $x\not\in D_1' \cup D_2'.$  Thus Theorem \ref{the-NoRainbowCircuit} i.2) holds for $(M,c).$

%

\section{The proof of Theorem \ref{the-NoRainbowCircuit} ii)}
As in the previous section, let $M$ be a simple regular matroid where $\varepsilon(M) = 2n-1$ and let $c$ be a $2$-bounded $n$-colouring of $M$ where $(M,c)$ is rainbow circuit-free.  
Let $e$ be the unique singular element of $(M,c).$  Let $N$ be a regular extension of $M$ by and element $x$ where $e$ and $x$ are non-parallel.   
As before, we may assume that $N$ is connected and is neither graphic nor cographic.  Note that $M \not\simeq M(K_{3,3})$ since any $2$-bounded $5$-colouring of $M(K_{3,3})$ must contain a rainbow circuit.  As such $N \not\simeq R_{10}$ since deleting an element from $R_{10}$ results in a matroid isomorphic to $M(K_{3,3}).$ Thus $N$ is either a $2$-sum or $3$-sum of regular matroids.

%
\subsection{The case where $N = N_1 \oplus_2 N_2$}

Assume that $N$ is a $2$-sum of regular matroids, $N = N_1 \oplus_2 N_2$ where $x \in E(N_1)$ and $E(N_1) \cap E(N_2) = \{ s \}.$
Let $M_1 = N_1\backslash x$ and $M_2 = N_2.$  For $i = 1,2,$ let $n_i = r(M_i),$ $M_i' = M_i \backslash s,$ $\varepsilon_i' = \varepsilon(M_i')$ and let $c_i' = c \big| E(M_i').$   We have that $n =  n_1 + n_2 -1.$  and $\varepsilon_1' + \varepsilon_2' = 2(n_1 + n_2 -1) -1.$  Thus for some $i,$ $\varepsilon_i' \ge 2n_i-1.$  On the other hand, since for $i=1,2,$ $M_i'$ is circuit-achromatic, it follows that $\varepsilon_i' \le 2n_i-1.$  Thus for some $i,$ $\varepsilon_i' = 2n_i-1$ and $\varepsilon_{3-i}' = 2n_{3-i} -2.$  

\sms
\noindent{\bf Case 1} $\varepsilon_1' = 2n_1 -1.$
\sms

We have that $\varepsilon_2' = n_2-2$ and $(M_1',c_1')$ contains a unique colour-singular element say $f.$
Suppose $x$ and $f$ are non-parallel.   By assumption, Theorem \ref{the-NoRainbowCircuit} ii) holds for $(M_1', c_1')$ and as such there exists an $x$-semi-SRCP $\{ D_1, D_2\}$ for $(M_1', c_1').$  Now $\{ D_1, D_2\}$ is also seen to be a $x$-semi-SRCP for $(M,c).$  Thus we may assume that $x$ and $f$ are parallel.  Then $f \ne e$ and hence $e \in E(M_2'),$
$|c_2'(M_2')| = n_2,$ and $e$ and $f'$ are the colour-singular elements of $(M_2', c_2').$  Let $C_1 = \{ x, f \}.$
There is a $s$-rainbow circuit, say $C,$ for $(M_2',c_2')$ where $|C| \le n_2.$  
Since $|c_1'(M_1')| = n_1,$ there is a rainbow-subset $B\subset E(M_1') - f$ where $|B| = n_1-1.$  Now $B + x + s$ contains a circuit $D$.  If $x\not\in D,$ then $C\triangle D$ is seen to be a rainbow cycle of $(M,c).$  Thus $x\in D.$  If $s\not\in D,$ then 
$\{ C_1, D \}$ is seen to be a  $x$-semi-SRCP for $(M,c).$   If instead $s\in D,$ then $\{ C_1, C\triangle D \}$ is seen to be a $x$-semi-SRCyP for $(M,c).$

\sms
\noindent{\bf Case 2} $\varepsilon_2' = 2n_2-1$
\sms

We may assume that $x$ is not parallel to $s$;  otherwise, we may assume that $x\in E(M_2'),$ in which case we can apply the previous case with the roles of $M_1'$ and $M_2'$ reversed.
We observe that $M_1$ is simple; for otherwise, if some element $y \in E(M_1')$ is parallel to $s$, then $M_2'+y$ would be a simple matroid with $2n_2$ elements, and hence it would contain a rainbow circuit, a contradiction.
We may also assume that $M_2$ is simple.  For if there is an element $y\in E(M_2')$ which is parallel with $s$, then we can assume that $y\in E(M_1')$ and revert back to Case 1.
Extend the colouring $c_1'$ to a colouring $c_1$ of $M_1$ where $c_1(s):=0.$  

\ssms
\noindent{\it Case 2.1} $e \in E(M_2').$
\ssms

We have that $e$ is the unique singular element of $(M_2', c_2')$ and thus $s$ is the unique colour-singular element of $(M_1,c_1)$ (which is not parallel with $x$).  Since $|c_2'(M_2')| = n_2,$ there is an $s$-rainbow circuit $C$ for $(M_2',c_2').$
Now $(M_1, c_1)$ is seen to be circuit-achromatic.  For if it contained a rainbow circuit, say $D$, then $s\in D$ and $C\triangle D$ is a rainbow circuit in $(M,c)$, a contradiction.  Thus $(M_1, c_1)$ is circuit-achromatic and Theorem \ref{the-NoRainbowCircuit} ii) holds for $(M_1, c_1).$  As such
 there exists an $x$-semi-SRCP $\{ D_1,D_2 \}$ for $(M_1, c_1).$  If $s \not\in D_1 \cup D_2,$ then $\{ D_1,D_2 \}$ is an $x$-semi-SRCP for $(M,c).$  Suppose $s\in D_1\cup D_2$, where we may assume that $s\in D_1.$  Let $D_1' = C\triangle D_1.$  Then it is seen that $\{ D_1',D_2\}$ is an $x$-semi-SRCP for $(M,c).$

\ssms
\noindent{\it Case 2.2} $e \in E(M_1').$
\ssms

Let $f$ be the unique colour-singular element of $(M_2',c_2')$.  Then $e$ and $f'$ are the colour-singular elements of $(M_1',c_1').$  Extend the colouring $c_1'$ to a colouring $c_1$ of $M_1$ where $c_1(s):= c(f).$  There is an $s$-rainbow circuit $C$ for $(M_2',c_2')$ where $|C| \le n_2.$  If $(M_1,c_1)$ contains a rainbow circuit, say $D$, then $s\in D$ (and hence $f' \not\in D$) and $C\triangle D$ is seen to be a rainbow circuit of $(M,c).$  Thus $(M_1, c_1)$ is circuit-achromatic.  By assumption, Theorem \ref{the-NoRainbowCircuit} ii) holds for $(M_1, c_1)$ and hence there exists an $x$-semi-SRCP $\{ D_1,D_2 \}$ for $(M_1,c_1)$ where we may assume that $s\in D_1 \cup D_2$ (otherwise $\{ D_1,D_2 \}$ is an $x$-semi-SRCP for $(M,c)$).  In particular, we may assume that $s\in D_1.$  Let $D_1' = C \triangle D_1.$  Then $\{ D_1',D_2 \}$ is seen to be an $x$-semi-SRCP for $(M,c).$   

\subsection{The case where $N = N_1 \oplus_3 N_2$} 

Assume that $N$ is a $3$-sum of regular matroids, $N = N_1 \oplus_3 N_2$ where $E(N_1) \cap E(N_2) = S = \{ s_1, s_2, s_3 \}$ and $x\in E(N_1).$   Let $M_1 = N_1\backslash x$ and $M_2 = N_2.$  For $i = 1,2,$ let $n_i = r(M_i),$ $M_i' = M_i \backslash S,$ $\varepsilon_i' = \varepsilon(M_i'),$ and let $c_i' = c \big| E(M_i').$   We have that $n =  n_1 + n_2 -2$ and $\varepsilon(M) = 2n -1 = 2(n_1 + n_2 -2) -1$ and thus for some $i\in \{ 1,2 \},$ $\varepsilon_i' \ge 2n_i -2.$  As before, we have that for $i=1,2,$ $\varepsilon_i' \le 2n_i-1.$
We shall examine a number of cases.

\sms
\noindent{\bf Case 1}:  For some $i\in [2],$ $\varepsilon_i' = 2n_i -1.$
\sms

\ssms
\noindent{\it Case 1.1} $\varepsilon_1' = 2n_1 -1$
\ssms

We have $\varepsilon_2' = 2n_2 -4.$ 
Let $f$ be the unique singular element of $(M_1',c_1').$  Suppose $x$ and $f$ are parallel.  Then $f \ne e$ and hence $e$ and $f'$ are the colour-singular elements of $(M_2',c_2').$
Then $|c_2'(M_2')| = n_2-1$ and it follows by Lemma \ref{cl1} that for some $j\in [3]$, there exists an $s_j$-rainbow circuit $C$ for $(M_2', c_2')$ where $|C| \le n_2.$  We may assume this is true for $j=1.$  Let $C_1 = \{ x,f \}.$
Since $|c_1'(M_1')| = n_1,$ there is a rainbow subset $B \subset E(M_1')-f$ where $|B| = n_1-1.$  Then $B+ x + s_1$ contains a circuit $D.$  Clearly $x\in D$ for otherwise, $s_1 \in D$ and $C\triangle D$ is a rainbow cycle.  If $s_1 \not\in D,$ then $\{ C_1, D \}$ is seen to be a
$x$-semi-SRCP for $(M,c).$  Otherwise, if $s_1 \in D,$ then $\{ C_1, C\triangle D \}$ is a $x$-semi-SRCyP for $(M,c).$
%

Suppose that $x$ and $f$ are non-parallel.  Then by assumption, Theorem \ref{the-NoRainbowCircuit} ii) holds for $(M_1',c_1')$ and hence there is an $x$-semi-SRCP for
$(M_1',c_1')$ which is also seen to be an $x$-semi-SRCP for $(M,c).$  

\ssms
\noindent{\it Case 1.2}: $\varepsilon_2' = 2n_2 -1.$
\ssms

We have $\varepsilon_1' = 2n_1-4.$  We may assume that $x$ is not parallel with any element of $S$, for otherwise, we may just assume that $x \in E(N_2)$ and revert back to Case 1.1, with the roles of $M_1$ and $M_2$ interchanged. Let $f$ be the unique colour-singular element of $(M_2',c_2').$  By assumption, Theorem \ref{the-NoRainbowCircuit} i) holds for $(M_2',c_2')$ and thus for all $1\le i < j \le 3$, there exists an $S$-SRCP $\{D_1, D_2\}$ for $(M_2', c_2')$ where
$s_i \in D_1$ and $s_j \in D_2.$  We also note that $M_1$ must be simple.  For if not, then there is an element $y \in E(M_1')$ which is parallel to an element of $S$.  As such, $M_2' + y$ is a simple matroid having $2n_2$ elements and thus has a rainbow circuit,
a contradiction.  Thus $M_1$ is simple.  

\ssms
\noindent{\it Case 1.2.1}:  $f = e.$
\ssms 

Assume that if $e$ is parallel with an element of $S$, then it is parallel with $s_3.$  Extend the colouring $c_1'$ to a colouring $c_1$ of $M_1$ where for $i=1,2,$ $c_1(s_i):= a$ and $c_1(s_3):= b$ (and $a,b \not\in c(M)$).  Suppose that $(M_1,c_1)$  contains a rainbow circuit $C$.  Then 
$C\cap S \ne \emptyset.$  Suppose for some $i$, $C\cap S = \{ s_i \}.$  Then given that there is an $s_i$-rainbow circuit $D$ for $(M_2',c_2'),$ $C\triangle D$ is a rainbow cycle of $(M,c)$, a contradiction.  Suppose $|C \cap S| =2.$ We may assume that
$C\cap S = \{ s_1, s_3 \}.$  Given that there is an $s_2$-rainbow circuit $D$ for $(M_2',c_2'),$ it is seen that $C \triangle S \triangle D$ is a rainbow cycle for $(M,c).$  It follows that $(M_1,c_1)$ is circuit-achromatic.

We observe that $s_3$ is the unique colour-singular element of $(M_1,c_1).$  By assumption, Theorem \ref{the-NoRainbowCircuit} ii) holds for $(M_1, c_1)$.  Since $x$ and $s_3$ are non-parallel, there is an $x$-semi-SRCP $\{ C_1',C_2' \}$ for $(M_1,c_1).$  If $S \cap (C_1' \cup C_2') = \emptyset,$ then $\{C_1',C_2'\}$ is an $x$-semi-SRCP for
$(M,c).$  Suppose that for some $i$, $C_i' \cap S \ne \emptyset$ and $C_{3-i}' \cap S = \emptyset.$  We may assume this is true for $i=1$ and $s_1 \in C_1'.$  Suppose $C_1' \cap S = \{ s_1 \}.$  Let $D$ be an $s_1$-rainbow circuit for $(M_2',c_2')$ where $|D| \le n_2.$
Letting $C_1 = C_1' \triangle D,$ we see that $\{C_1, C_2'\}$ is an $x$ - semi-SRCyP for $(M,c).$   Suppose instead that $|C_1' \cap S| =2.$  Then $C_1'\cap S = \{ s_1, s_3 \}.$    Let $D$ be an $s_2$-rainbow circuit for $(M_2',c_2')$ where $|D| \le n_2.$  Letting $C_1 = C_1' \triangle S \triangle D$ we see that $\{C_1, C_2'\}$ is an $x$ - semi-SRCyP for $(M,c).$ 

Finally, suppose that for $i= 1,2$, $C_i' \cap S \ne \emptyset.$  Assume first that for $i=1,2,$ $|C_i' \cap S| =1$.  We may assume that $C_i' \cap S = \{ s_i \}, \ i = 1,2.$   Let $\{ D_1, D_2\}$ be a $S$-SRCP for $(M_2',c_2')$ where $s_i \in D_i,\ i = 1,2.$
For $i=1,2,$ let $C_i = C_i' \triangle D_i.$  Then $\{ C_1,C_2\}$ is seen to be an $x$-semi-SRCP for $(M,c).$  Assume now that for some $i$, $|C_i'\cap S| = 2.$  We may assume that $\{ s_1, s_3 \} \subset C_1'$ and $s_2 \in C_2'.$
By assumption, Theorem \ref{the-NoRainbowCircuit} ii) holds for $(M_2', c_2')$ and (noting that $e$ and $s_2$ are non-parallel) there is an $s_2$-semi-SRCP $\{ D_1,D_2\}$ for $(M_2',c_2').$  Letting $C_1 = C_1' \triangle S \triangle D_1$ and $C_2 = C_2' \triangle D_2$, we see that $\{ C_1,C_2 \}$ is an $x$-semi-SRCP for $(M,c).$

\ssms
\noindent{\it Case 1.2.2}:  $f \ne e.$
\ssms 

We have that $e$ and $f'$ are the colour-singular elements of $(M_1',c_1').$  We largely repeat our arguments from the previous case, with some modifications.  As noted above, Theorem \ref{the-NoRainbowCircuit} i.1) or i.2)  holds for $(M_2',c_2')$ and $S$.
If i.1) holds, then there is an $S$-SRCT $\{ D_1,D_2,D_3\}$ for $(M_2',c_2').$  In this case, we may assume that if for some $i$, $f \in D_i$, then $s_1 \in D_i.$  Notably, regardless of whether Theorem \ref{the-NoRainbowCircuit} i.1) or i.2)  holds for $(M_2',c_2')$, for $i=2,3$, there is a $S$-SRCP $\{ D_1,D_2 \}$ for $(M_2',c_2')$ where $s_2 \in D_1,\ s_3\in D_2$ and $f \not\in D_1 \cup D_2.$  In particular, this implies that for all $i\in \{ 2,3 \},$ there is an
$s_i$-rainbow circuit $D$ for $(M_2',c_2')$ where $f \not\in D$ and $|D| \le n_2.$

Extend the colouring $c_1'$ to a colouring $c_1$ of $M_1$ where $c_1(s_1):= c(f)$ and $c_1(s_i):= 0,\ i = 2,3.$  Then $e$ is the unique colour-singular element of $(M_1,c_1)$.  Using similar arguments as in the previous case, one can show that $(M_1,c_1)$ is circuit-achromatic.  Given that $M_1$ is simple, it follows
by assumption that Theorem \ref{the-NoRainbowCircuit} ii) holds for $(M_1,c_1)$ and consequently there is an $x$-semi-SRCP $\{C_1',C_2'\}$ for $(M_1,c_1).$  We may assume that $S\cap (C_1' \cup C_2') \ne \emptyset$, for otherwise, $\{ C_1',C_2'\}$ is an $x$-semi-SRCP for $(M,c).$

\ssms
\noindent{\it Case 1.2.2.1} For some $i$, $C_i' \cap S \ne \emptyset$ and $C_{3-i}' \cap S = \emptyset.$  
\ssms

We may assume that this is true for $i=1.$  Suppose first that $|C_1' \cap S| =1.$  If $C_1' \cap S = \{ s_1 \},$ the let $D$ be an $s_1$-rainbow circuit for $(M_2', c_2')$ where $|D| \le n_2.$
Then one sees that for $C_1 = C_1' \triangle D,$ $\{ C_1,C_2' \}$ is an $x$-semi-SRCP for $(M,c).$  Suppose instead that for some $j\in \{ 2,3 \}, \ C_1' \cap S = \{ s_j \}.$  We may assume $s_2 \in C_1'.$ 
Let $D$ be an $s_2$-rainbow circuit for $(M_2', c_2')$ where $f \not\in D$ and $|D| \le n_2.$  Letting $C_1 = C_1'\triangle D$ we see that $\{C_1,C_2' \}$ is an $x$-semi-SRCyP for $(M,c).$
Finally, suppose $|C_1' \cap S| = 2.$  We may assume that $C_1' \cap S = \{ s_1, s_2 \}.$  Let $D$ be an $s_3$-rainbow circuit for $(M_2',c_2')$ for which $f \not\in D$ and $|D| \le n_2.$  Letting $C_1 = C_1' \triangle S \triangle D$, we see that
$\{ C_1,C_2' \}$ is an $x$-semi-SRCP for $(M,c).$

\ssms
\noindent{\it Case 1.2.2.2} For $i= 1,2,$ $C_i' \cap S \ne \emptyset.$  
\ssms

Suppose first that $s_1 \not\in C_1'\cup C_2'.$  We may assume that $s_2 \in C_1'$ and $s_3\in C_2'.$  As previously noted, there is an $S$-SRCP $\{ D_1, D_2 \}$ for $(M_2',c_2')$ where $s_2\in D_1, s_3\in D_2,$ and $f \not\in D_1 \cup D_2.$
Letting $C_i = C_i' \triangle D_{i+1},\ i = 1,2,$ we see that $\{ C_1,C_2\}$ is an $x$-semi-SRCyP for $(M,c).$ 
Thus we may assume that $s_1 \in C_1'$ and $s_2 \in C_2'.$  Suppose that $s_3 \not\in C_1'.$  By assumption, there is an $S$-SRCP $\{D_1,D_2\}$ for $(M_2',c_2')$ where $s_i\in D_i,\ i = 1,2$ and if $f \in D_1 \cup D_2$, then $f \in D_1.$
Letting $C_i = C_i' \triangle D_i,\ i = 1,2$, we see that $\{ C_1,C_2 \}$ is an $x$-semi-SRCyP for $(M,c).$  Suppose instead that $s_3\in C_1'.$   If $f$ is parallel with $s_2$, then given that there is an $s_2$-rainbow circuit $D$ for $(M_2',c_2')$ where $f \not\in D,$ it would follow that $D-s_2+f$ is a rainbow circuit.  Thus $f$ and $s_2$ are non-parallel. 
By assumption, there exists an $s_2$-semi-SRCP $\{ D_1, D_2 \}$ for $(M_2',c_2')$ where we may assume that $f \not\in D_2.$  Letting $C_1 = C_1' \triangle S \triangle D_1$ and $C_2 = C_2' \triangle D_2$, we see that $\{ C_1,C_2 \}$ is an $x$-semi-SRCyP in $(M,c).$  
 
\sms
\noindent{\bf Case 2}:  For some $i\in [2],$ $\varepsilon_i' = 2n_i -2.$
\sms

\ssms
\noindent{\it Case 2.1} $\varepsilon_1' = 2n_1 -2$
\ssms

We have that $\varepsilon_2' = 2n_2 -3.$  By Claim \ref{cl1}, there is a $j\in [3]$ for which there is an $s_j$-rainbow circuit $C$ for $(M_2',c_2')$ where $|C| \le n_2.$  We may assume this is true for $j=1$; that is, $s_1 \in C.$

\ssms
\noindent{\it Case 2.1.1} $(M_1',c_1')$ is type 1.
\ssms

We have that $(M_1',c_1')$ has no colour-singular elements.  Extend the colouring $c_1'$ to a colouring $c_1''$ of $M_1'' = M_1' + s_1$ where $c_1''(s_1):= 0.$  If $(M_1'', c_1'')$ contains a rainbow circuit, say $D$,
then $s_1 \in D$ and hence $C\triangle D$ is a rainbow circuit of $(M,c)$.  Thus $(M_1'',c_1'')$ is circuit-achromatic and hence $M_1''$ is also simple.  We see that $s_1$ is the unique colour-singular element of $(M_1'', c_1'').$  By assumption, Theorem \ref{the-NoRainbowCircuit} ii) holds for $(M_1'', c_1'')$.  Suppose first that $x$ and $s_1$ are non-parallel.   Then there exists an $x$-semi-SRCP $\{ D_1,D_2 \}$ for $(M_1'', c_1'').$  If $s_1 \not\in D_1 \cup D_2,$ then $\{ D_1,D_2\}$ is seen to be an $x$-semi-SRCP  for $(M,c).$  Thus we may assume that $s_1 \in D_1 \cup D_2,$ and in particular, $s_1\in D_1.$  Let $D_1' = C\triangle D_1.$  Then $\{ D_1',D_2\}$ is seen to be an $x$-semi-SRCyP for $(M,c)$.
Suppose now that $x$ and $s_1$ are parallel. 
We have that $e\in E(M_2')$ and $e$ and $s_1$ are non-parallel.  If for some $j\in \{ 2,3 \}$ there is an $s_j$-rainbow circuit, then we could use this in place of $C$ in the previous argument, noting that $x$ is not parallel to $s_j.$
Thus we may assume that for $j\in \{ 2,3 \},$ there is no $s_j$-rainbow circuit in $(M_2', c_2').$  Because of this, $M_2'' = M_2' + s_2 + s_3$ is simple.  Extend the colouring $c_2'$ to a colouring $c_2''$ of $M_2''$ where for $j=2,3,$ $c_2''(s_j) := 0.$
Now $(M_2'', c_2'')$ is a simple, coloured matroid with $2n_2-1$ elements which is also circuit-achromatic. 
 Given that $x$ is parallel with $s_1$, $M_2'' + x$ is a regular extension of $M_2''$ and hence Theorem \ref{the-NoRainbowCircuit} ii) holds for $(M_2'', c_2'').$  Thus there
exists a $x$-semi-SRCP $\{ C_1', C_2' \}$ for $(M_2'', c_2'').$  If $\{ s_2, s_3 \} \cap (C_1' \cup C_2') = \emptyset,$ then $\{ C_1', C_2' \}$ is seen to be a $x$-semi-SRCP for $(M,c).$  Suppose $s_2 \in C_1' \cup C_2'$, where we may assume that $s_2 \in C_1'.$  Then
$C_1'' = C_1' \triangle \{ x , s_1 \} \triangle S = C_1'' - x - s_2 + s_3$ is seen to be a $s_3$-rainbow circuit in $(M_2'', c_2'')$, contradicting our assumptions.  A similar argument can be used if $s_3 \in C_1' \cup C_2'.$
It follows from the above that $(M,c)$ has an $x$-semi-SRCP. 

\ssms
\noindent{\it Case 2.1.2} $(M_1',c_1')$ is type 2.
\ssms

We have that $(M_1',c_1')$ contains two singular elements, say $f$ and $g.$

\ssms
\noindent{\it Case 2.1.2.1}: $e \in E(M_1').$
\ssms
 
 We may assume that $g = e.$   Extend the colouring $c_1'$ to a colouring $c_1''$ of $M_1'' = M_1' + s_1$ where $c_1''(s_1) := c(f).$  As in Case 1.2.1, $(M_1'', c_1'')$ is circuit-achromatic.  Note that $e$ is the unique colour-singular element of $(M_1'',c_1'').$
 Suppose first that $M_1''$ is simple.  Then by assumption, Theorem \ref{the-NoRainbowCircuit} ii) holds for $(M_1'', c_1'')$ and thus there is an $x$-semi-SRCP $\{ D_1,D_2 \}$ for $(M_1'', c_1'').$  We may assume $s_1 \in D_1 \cup D_2,$ (for otherwise, $\{ D_1,D_2\}$ is an $x$-semi-SRCP for $(M,c)$) and in particular, $s_1 \in D_1.$  Let $D_1' = C\triangle D_1.$ Then it is seen that $\{ D_1',D_2 \}$ is an $x$-semi-SRCyP for $(M,c)$.  
 
 Suppose instead that $M_1''$ is not simple.  Then there is an element $y \in E(M_1')$ which is parallel with $s_1$.  If $y \ne f,$ then $C' = C - s_1 +y$ is seen to be a rainbow circuit of $(M,c)$, a contradiction.  Thus $y = f$ (and $f' \in C$).  Let $M_2'' = M_2' + f$ and extend the colouring $c_2'$ to a colouring $c_2''$ for 
 $M_2''$ where $c_2''(f) := c(f).$  Then $M_2''$ is simple, $\varepsilon(M_2'') = 2n_2 -2$, and $(M_2'',c_2'')$ is circuit-achromatic.  By assumption, Theorem \ref{the-NoRainbowCircuit} iii) holds for $(M_2'', c_2'')$ and thus there exists an $S$-SRCP $\{ C',C''\}.$
 We may assume that $s_2 \in C'$  If $f \not\in C'$, then we can use $C'$ in place of $C$ in the previous arguments (noting that $f$ is not parallel with $s_2$).  Thus we may assume $f\in C'$ and hence $f' \not\in C'.$ 
 However, $D' = C' \triangle \{ f, s_1 \} \triangle S = C' - f - s_2+ s_3$ is seen to be an $s_3$-rainbow circuit for $(M_2',c_2')$ and we can use $D'$ in place of $C$ in the previous arguments.  It follows from the above that $(M,c)$ has an $x$-semi-SRCP.
 

\ssms
\noindent{\it Case 2.1.2.2}: $e \in E(M_2').$
\ssms

We see that $e$, $f'$ and $g'$ are the colour-singular elements of $(M_2',c_2')$ and $|c_2'(M_2')| = n_2.$  At least one of $f$ or $g$ is not parallel with $x$, and we may assume that $g$ is such.
By Lemma \ref{cl1}, there exists $j \in [3]$ such that there is an $s_j$-rainbow circuit $C$ for $(M_2',c_2')$ where 
$g' \not\in C$ and $|C| \le n_2.$  We may assume that this is true $j=1;$ that is, $s_1 \in C.$  Let $M_1'' = M_1' + s_1$ and extend $c_1'$ to a colouring $c_1''$ of $M_1''$ where $c_1''(s_1):=c(f).$
If $(M_1'',c_1'')$ has a rainbow circuit, say $D$, then $s_1 \in D$ and $C\triangle D$ is a rainbow circuit of $(M,c),$ a contradiction (here we use the fact that $g' \not\in C$).  Thus $(M_1'',c_1'')$ is circuit-achromatic.  Suppose $M_1''$ is simple.  Then by assumption, Theorem \ref{the-NoRainbowCircuit} ii) holds for $(M_1'',c_1'')$ where we note that $g$ is the unique
colour-singular element of $(M_1'', c_1'')$ (and $x$ and $g$ are non-parallel).  Thus there an $x$-semi-SRCP $\{ D_1,D_2\}$ for $(M_1'',c_1'')$.  We may assume $s_1 \in D_1;$ for if $s_1 \not\in D_1\cup D_2$, then $\{ D_1,D_2 \}$ is an $x$-semi-SRCP for $(M,c).$  Let $D_1' = C\triangle D_1.$  Then $\{ D_1',D_2 \}$ is seen to be an $x$-semi-SRCP for $(M,c).$  

Suppose instead that $M_1''$ is not simple.  Then there is an element $y\in E(M_1')$ which is parallel with $s_1.$
Since $(M_1'',c_1'')$ is circuit-achromatic,  it must be that $y= f.$ 
Suppose $x$ and $f$ are parallel.  Let $C_1 = \{ x, f \}$ and let\\ $C_2 = C \triangle \{ f, s_1\} \triangle \{ x, f \} = C - s_1 + x.$  Then $\{ C_1,C_2 \}$ is seen to be a $x$-semi-SRCP for $(M,c).$ 
Suppose that $x$ is not parallel with $f.$ Then one could argue with the roles of $f$ and $g$ interchanged, in which case we can assume $g$ is parallel with an element of $S$ as well.  In this case, we can assume that $\{ f, g \} \subset E(M_2')$ and $\varepsilon_2' = 2n_2-1.$
We can now revert back to the proof in Case 1.

\ssms
\noindent{\it Case 2.2}: $\varepsilon_2' = 2n_2 -2.$
\ssms 

We have that $\varepsilon_1' = 2n_1 - 3.$  We may assume that $M_1$ is simple, for if not, there is an element $y\in E(M_1')$ which is parallel to an element of $S.$  In this case, we may just assume that $y\in E(M_2'),$ in which case $\varepsilon_2' = 2n_2-1$ and we may revert back to the proof of Case 1.  

\ssms
\noindent{\it Case 2.2.1}: $(M_2',c_2')$ is type 1.
\ssms

We have that $(M_2',c_2')$ has no colour-singular elements and thus $e$ is the unique colour-singular element for $(M_1',c_1').$  By assumption, Theorem \ref{the-NoRainbowCircuit} iii) holds for $(M_2',c_2')$ and $S.$  Thus there an $S$-SRCP $\{ D_1,D_2 \}$ for
$(M_2',c_2')$.  We may assume that for $i=1,2,$ $s_i \in D_i.$  Extend the colouring $c_1'$ to a colouring $c_1''$ of $M_1'' = M_1' + s_1 + s_2$ where $c_1''(s_i):=0,\ i = 1,2.$  
Seeing that $(M_1'',c_1'')$ is circuit-achromatic, Theorem \ref{the-NoRainbowCircuit} ii)
holds for $(M_1'',c_1'')$ and $x.$  Thus there is an $x$-semi-SRCP $\{ C_1',C_2' \}$ for $(M_1'', c_1'').$   Following similar arguments as before using the circuits $C_i',D_i, \ i =1,2$, one can construct an $x$-semi-SRCP for $(M,c).$

\ssms
\noindent{\it Case 2.2.2}: $(M_2',c_2')$ is type 2.
\ssms 

We have that $(M_2',c_2')$ has two colour-singular elements which we denote by $f$ and $g.$

\ssms
\noindent{\it Case 2.2.2.1}: $e \in E(M_2').$
\ssms 

We may assume that $g = e.$  Then $f'$ is the unique colour-singular element of $(M_1',c_1').$  Let $c_2''$ be the colouring obtained from $c_2'$ by recolouring $f$ so that $c_2''(f) := c(e).$   Then Theorem \ref{the-NoRainbowCircuit} iii) holds for $(M_2',c_2'')$ and $S$ and there is an $S$-SRCP $\{ D_1,D_2\}$ for $(M_2',c_2'')$.
We may assume that for $i=1,2,$ $s_i \in D_i$.  Furthermore, we may assume that if $f\in D_1 \cup D_2,$ then $f\in D_1.$  Extend the colouring $c_1'$ to a colouring $c_1''$ of  $M_1'' = M_1' + s_1 + s_2$ where $c_1''(s_1) := c(f),$ $c_1''(s_2):= 0,$ noting that $s_2$ is the unique colour-singular element of $(M_1'', c_1'').$
It is straightforward to show that $(M_1'',c_1'')$ is circuit-achromatic.  Suppose that $x$ and $s_2$ are non-parallel.  Then applying Theorem \ref{the-NoRainbowCircuit} ii) to $(M_1'', c_1''),$ there is an $x$-semi-SRCP $\{ C_1',C_2' \}$ for $(M_1'', c_1'').$  Following similar arguments as before using
the circuits $C_i',D_i,\ i = 1,2$, one can construct an $x$-semi-SRCP for $(M,c).$  If $x$ and $s_2$ are parallel, then we may assume $x\in E(M_2')$ and we can argue as in Case 1.2.2.1 with the roles of $M_1'$ and $M_2'$ switched.

\ssms
\noindent{\it Case 2.2.2.2}: $e \in E(M_1').$
\ssms 

We have that $e, f', g'$ are the colour-singular elements of $(M_1', c_1').$  Let $c_2''$ be the colouring obtained from $c_2'$ by recolouring $g$ so that $c_2''(g) := c(f).$
As in Case 2.2.2.1, there is an $S$-SRCP $\{ D_1,D_2 \}$ for $(M_2',c_2'')$ where we may assume that for $i=1,2,$ $s_i \in D_i.$
Extend the colouring $c_1'$ to a colouring $c_1''$ of $M_1'' = M_1' + s_1 + s_2$ where 
$$(c_1''(s_1), c_1''(s_2)) = \left\{ \begin{array}{ll} (c(f), c(g)) & \mathrm{if} \ f \in D_1\ \mathrm{or}\ g \in D_2\\ (c(g), c(f)) & \mathrm{otherwise} \end{array} \right.$$
Then $e$ is the unique colour-singular element of $(M_1'', c_1'')$ and it is straightforward to show that $(M_1'', c_1'')$ is circuit-achromatic.  
 By assumption, there is an $x$-semi-SRCP $\{ C_1', C_2'\}$ for $(M_1'', c_1'')$.  Following similar arguments as before using the circuits $C_i', D_i,\ i = 1,2,$ one can construct 
an $x$-semi-SRCP for $(M,c).$

\section{The proof of Theorem \ref{the-NoRainbowCircuit} iii)}\label{sec-ProofNoRainbowCircuitRegulariii)}

To begin with, we shall need the following lemma:

\begin{lemma}
Let $N$ be a simple, regular matroid where $\varepsilon(N) = 2r(N) -3$ and let $N'$ be a regular extension of $N$ by a set $S = \{ s_1, s_2, s_3 \}$ which is a $3$-circuit of $N'.$ Let $c$ be a $2$-bounded colouring of $N$ where $(N,c)$ is circuit-achromatic.  Then $(N,c)$ has one or three colour-singular elements and 
\begin{itemize}
\item[i)] for some $j\in [3]$ there exists an $s_j$-rainbow circuit for $(N,c),$ and  
\item[ii)]  if $(N,c)$ has three colour-singular elements
$f_i,\ i=1,2,3$ and $N' \big| S \cup \{ f_1, f_2, f_3 \} \not\simeq M(K_4),$  
then for some $j\in [3]$ there is an $s_j$- rainbow circuit $C$ for $(N,c)$
 where $|C \cap \{ f_1, f_2 , f_3 \}| \le 1.$  Moreover, $C$ can be chosen so that $|C| \le r(N) -1.$
\end{itemize}
\label{lem-helplemma} 
\end{lemma}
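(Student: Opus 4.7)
The plan is to establish the three claims in turn, beginning with the count of colour-singular elements. Let $s$ and $t$ be the numbers of colour classes of size $1$ and $2$, respectively, so $s + 2t = 2r(N)-3$ and $s + t = |c(N)|$. Since $(N,c)$ is circuit-achromatic, any rainbow subset of $E(N)$ has cardinality at most $r(N)$ (a rainbow set of larger size would be dependent and so contain a rainbow circuit). Hence $|c(N)| \le r(N)$, and combining this with the first relation forces $s \le 3$; the parity of $2r(N)-3$ then forces $s$ odd, so $s \in \{1,3\}$.

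For part (i), Theorem \ref{the1} tells us that $c$ is stratified, so I can select a rainbow independent set $B \subseteq E(N)$ of size $|c(N)|$ by taking one element from each colour class in stratification order. When $s=3$ the set $B$ is a rainbow basis of $N$; for any $s_j \in \mathrm{cl}_{N'}(B)$ (available because $S \subseteq \mathrm{cl}_{N'}(E(N))$ in the intended regular-extension setting), the fundamental circuit $C(s_j,B) \subseteq B+s_j$ is an $s_j$-rainbow circuit of size at most $r(N)$. When $s=1$ I will instead invoke Observation \ref{obs-erainbowcircuit} on the single-element regular extension $N+s_j$ for a suitable $j$; this is valid whenever $\varepsilon(N) \ge r(N)+1$, i.e.\ $r(N) \ge 4$, and the small cases $r(N) \le 3$ will be settled by direct inspection.

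For part (ii), assume now that $(N,c)$ has three singular elements $F = \{f_1, f_2, f_3\}$; then every rainbow basis of $N$ must contain $F$. Fix such a basis $B$ and let $C_j := C(s_j,B)$ be the fundamental circuit of $s_j$ in $B + s_j$. The identity $C_1 \triangle C_2 \triangle C_3 = S$ (valid because $N'$ is binary and $S$ is a circuit) partitions $B$ as $B_0 \sqcup B_{12} \sqcup B_{13} \sqcup B_{23}$, where $B_{jk}$ consists of the elements of $B$ lying in exactly $C_j$ and $C_k$; this gives $C_j = \{s_j\} \cup B_{ij} \cup B_{jk}$. It therefore suffices to produce some $j$ with $|C_j \cap F| \le 1$, for then the two $f_i$'s outside $C_j$ lie in $B_0 \cup B_{ik}$, so $|C_j| = 1 + r(N) - |B_0| - |B_{ik}| \le r(N)-1$, as required.

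The main obstacle will be to rule out the ``stuck'' case in which $|C_j \cap F| = 2$ for every $j$ and every rainbow basis $B \supseteq F$. My plan is to exploit colour-swaps: every non-singular $b \in B$ has a colour-mate $b'$, and $B - b + b'$ is another rainbow basis whenever it is independent. These swaps transform the partition $(B_0, B_{12}, B_{13}, B_{23})$ of $F$ in a controlled $GF(2)$-linear way, so either a sequence of swaps delivers a basis with $|C_j \cap F| \le 1$ for some $j$, or the persistence of the stuck configuration forces each triple $\{f_i, s_j, s_k\}$ (with $\{i,j,k\}=[3]$) to be a circuit of $N'\vert (S \cup F)$. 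Combined with the circuit $S$, these four triples are exactly the triangles of $M(K_4)$, and a rank-$3$ check identifies $N'\vert (S \cup F)$ with $M(K_4)$, contradicting the hypothesis.
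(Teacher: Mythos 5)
Your count of colour-singular elements and your part (i) are fine and essentially coincide with the paper's (the paper routes part (i) through Lemma \ref{cl1}, which is the same basis-plus-fundamental-circuit idea you use; your case split on $s=1$ via Observation \ref{obs-erainbowcircuit} is an unimportant variation). Your setup for part (ii) is also sound: $C_1\triangle C_2\triangle C_3=S$ does hold (the symmetric difference is a cycle equal to $S$ union a subset of the independent set $B$, hence equal to $S$), the resulting partition $B_0\sqcup B_{12}\sqcup B_{13}\sqcup B_{23}$ is correct, the length bound $|C_j|\le r(N)-1$ follows from $|B_0|+|B_{ik}|\ge 2$, and a short incidence count shows the only ``stuck'' pattern is $f_i\in B_{jk}$ for $\{i,j,k\}=[3]$.

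The gap is the entire final step. You assert that if no sequence of colour-swaps escapes the stuck configuration, then each $\{f_i,s_j,s_k\}$ is a circuit and these together with $S$ give $M(K_4)$. Neither half is established. First, the structural conclusion is wrong as stated: in a binary matroid with $S$ a circuit, $\{f_i,s_j,s_k\}$ being a circuit forces $f_i$ to be parallel to $s_i$ (since $s_j+s_k=s_i$), so your four triples are the triangles of a doubled triangle, not of $M(K_4)$; the triangles of $M(K_4)$ other than $S$ have the form $\{s_i,f_j,f_k\}$, i.e.\ one element of $S$ and two of $F$. Second, and more seriously, no argument is given that persistence of the stuck pattern under all available swaps forces any such rigid structure. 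A swap $B\mapsto B-b+b'$ replaces $C(s_j,B)$ by $C(s_j,B)\triangle C(b',B)$ exactly when $b\in C(s_j,B)$, so whether a given $f_i$ enters or leaves $C_j$ depends on membership of $f_i$ in $C(b',B)$; ``controlled $GF(2)$-linear'' does not by itself show that the $F$-incidence pattern can always be repaired when $N'\big|(S\cup F)\not\simeq M(K_4)$, nor that failure to repair it localises to the six-element restriction. This is precisely the hard core of the lemma, and it is left as a sketch. For contrast, the paper avoids this entirely by inducting on $r(N)$: it takes the top colour class $X=\{e,f\}$ of the stratification (a $2$-element cocircuit), passes to $N/e\backslash f$ and $N/f\backslash e$, applies the induction hypothesis unless both minors restrict to $M(K_4)$ on $F\cup S$, and in that residual case shows the eight-element graphic restriction $N'\big|(F\cup S\cup X)$ forces $e$ and $f$ parallel, contradicting simplicity. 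You would either need to supply a complete exchange argument (including the corrected target structure) or adopt some reduction of this kind.
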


\begin{proof}
Since $(N,c)$ is circuit-achromatic, it follows that $|c(N)| \le r(N).$  Thus if $(N,c)$ has $\xi$ colour-singular elements then $\xi$ is odd (since $\varepsilon(N) = 2r(N) -3$ is odd) and $|c(N)| = \frac {2r(N) - 3 - \xi}2 + \xi \le r(N)$ and hence $\xi \le 3.$
The proof of i) follows from Lemma \ref{cl1}. 
%
%
We shall prove ii) by induction on $r(N).$  When $r(N) =3,$ ii) is seen to be true.  Suppose $r(N) = n > 3,$ and ii) is true for all regular matroids of rank at most $n-1.$  Assume that $(N,c)$ has three colour-singular elements $f_i,\ i = 1,2,3$ and let 
$F = \{ f_1, f_2, f_3 \}.$   Assume that $N' \big| F \cup S \not\simeq M(K_4).$  We may also assume that for all $j\in [3]$ and for all $e \in E(N)$ there is no parallel class $\{ e, s_j \}.$  For if such existed, then $C = \{ e, s_j \}$ would be the desired circuit.
%
By Theorem \ref{the1}, $c$ is stratified and we let $X_1, \dots ,X_{n}$ be a stratification.  We may assume that for $i=1,2,3,$ $X_i = \{ f_i \}.$  Observe that $X = X_n$ is a cocircuit of $N$ and $|X| =2.$
Let $X = \{ e, f \}.$   Let $P = N/e\backslash f,$ $P' = N'/e\backslash f,$ and let $c' = c\big| P.$  Then $(P,c')$ is simple, circuit-achromatic and $S$ is a $3$-circuit of $P'$ and $P'$ is a regular extension of $P.$  If $P' \big| F \cup S \not\simeq M(K_4),$ then by assumption, for some $j\in [3],$
there is an $s_j$-rainbow circuit $C'$ for $(P,c')$ for which 
$|C' \cap F| \le 1$ and $|C'| \le r(P) -1 = r(N)-2.$  Now it seen that either $C'$ or $C = C' + e$ is a circuit in $M'.$  If the former holds, then $C = C'$ is the desired circuit;  otherwise, $C = C'+e$ is the desired circuit.  As such, we may assume that  $P' \big| F \cup S \simeq M(K_4)$.  The same applies if we switch the roles of $e$ and $f$; that is, we may also assume that for $Q' = N'/f\backslash e$, $Q' \big| F \cup S \simeq M(K_4).$  Let $N'' = M' \big| F \cup S \cup X.$  Since $N''$ is regular and $\varepsilon(N'') = 8$,  $N''$ is graphic.  Let $G$ be a graph where $N'' = M(G).$  By assumption $G/e\backslash f \simeq K_4 \simeq G/f\backslash e.$  Given that $S$ corresponds to a triangle and the subgraph of $G$ induced by the edges of $F \cup S$ is not isomorphic to $K_4$, it is straightforward to show that the edges $e$ and $f$ must be parallel in $G$.  This contradicts our assumption that $M$ is simple.  Thus for some $s_j \in S,$ there is a $s_j$-rainbow circuit $C$ where $|C\cap F| \le 1$ and $|C| \le r(N) -1.$
%
\end{proof}

To prove Theorem \ref{the-NoRainbowCircuit} iii), we shall use induction on $r(M)$ where we assume Theorem \ref{the-NoRainbowCircuit} is true when $r(M) < n.$  
Let $M$ be a simple regular matroid where $r(M) = n$ and $\varepsilon(M) = 2n-2.$  Let $c$ be a $2$-bounded colouring of $M$ for which $(M,c)$ is circuit-achromatic.   We may assume that $|c(M)| = n-1$ (i.e. $c$ is $2$-uniform).  For if there are two colour-singular elements in $(M,c),$ then these can be lumped into one colour class.  Let $N = M + T$ be a regular extension of $M$ where $T$ is a $3$-circuit of $N$ which is co-simple. 
By the results of Sections \ref{sec-BigTheoremiii)Graphic} and \ref{sec-BigTheoremiii)Cographic}, Theorem \ref{the-NoRainbowCircuit} iii) is true for both graphic and cographic matroids. Because of this
we may assume (as before) that $N$ is a $2$- or $3$- sum of two regular matroids.  We shall only deal with the case where $N$ is a $3$-sum as the $2$-sum case is simpler and uses similar arguments.

Let $N = N_1 \oplus_3 N_2$ where $E(N_1) \cap E(N_2) = S = \{ s_1, s_2, s_3 \}.$  For $i = 1,2,$ let $(M_i', c_i'),\ n_i$ and  $\varepsilon_i'$ be defined as before.  We may assume that $T \subseteq E(N_1).$  We have $$2r(M) -2 = 2(n_1+n_2 -2) -2 = 2n-6 = \varepsilon(M) = \varepsilon_1' + \varepsilon_2'.$$   
We observe that if for some $i\in [2],\ \varepsilon_i' \ge 2n_i -3,$ then $n_i-1 \le |c_i'(M_i')| \le n_i.$  In this case, we say (as before) that $M_i'$ has type $1$ if $|c_i'(M_i')| = n_i -1$ and has type $2$ if $|c_i'(M_i)| = n_1.$
Lastly, we may assume that $\varepsilon_1' \le 2n_1-3$, for otherwise it follows by our assumptions that $(M_1',c_1')$ has a $T$-SRCP.   

\sms
\noindent{\bf Case 1} $\varepsilon_1' = 2n_1-3.$
\sms

We have that $\varepsilon_2' = 2n_2 -3$.

\ssms
\noindent{\it Case 1.1} $M_1'$ is type 1.
\ssms
 
We have that $(M_1',c_1')$ has a unique colour-singular element which is denoted by $e.$  Consequently, $e'$ is the unique colour-singular element of $(M_2',c_2')$ and hence $|c_2'(M_2')| = n_2 -1.$
By Lemma \ref{cl1}, there exists $j\in [3]$ for which there is an $s_j$-rainbow circuit $C$ for
$(M_2',c_2')$ where $|C| \le n_2.$  We claim that we may choose $C$ so that $M_1' + s_j$ is simple.  For suppose $C$ is an $s_1$-rainbow circuit where $M_1' + s_1$ is not simple.  Then there is an element $x\in E(M_1')$ which is parallel to $s_1.$  Clearly $x = e$, for otherwise, $C - s_1 +x$ is a rainbow circuit of $(M,c).$  We have for $i=2,3,$ $e$ and $s_i$ are non-parallel and hence $M_1' + s_i$ is simple.  Let $M_2'' = M_2' + e$ and extend the colouring $c_2'$ to a colouring
$c_2''$ of $M_2''$ where $c_2''(e) := c(e).$ Then $(M_2'',c_2'')$ is simple, circuit-achromatic and $\varepsilon(M_2'') = 2n_2-2.$  By assumption, Theorem \ref{the-NoRainbowCircuit} iii) holds for $(M_2'', c_2'')$ and $S$.
Thus there is an $S$-SRCP $\{ D_1,D_2 \}$ for $(M_2'', c_2'').$  For some $i$, $s_1 \not\in D_i$, and we may assume that $s_1\not\in D_1$ and $s_2\in D_1.$  If $e \not\in D_1,$ then $D_1$ is the desired circuit, since $M_1'+s_2$ is simple.
If $e \in D_1$, then $D_1 \triangle \{ e, s_1 \} \triangle S = D_1-e - s_2 +s_3$ is seen to be an $s_3$-rainbow circuit for $(M_2',c_2')$ (where $|D_1| \le n_2$) and $M_1' + s_3$ is simple.  This proves our claim.  Thus we may assume that $C$ is an $s_1$-rainbow circuit for $(M_2',c_2')$ where  $M_1' + s_1$ is simple. 

Extend the colouring $c_1'$ to a colouring $c_1''$ for $M_1'' = M_1' + s_1$ where $c_1''(s_1):= c(e).$  Now $M_1''$ is simple and $(M_1'',c_1'')$ is seen to be circuit-achromatic.
By assumption, Theorem \ref{the-NoRainbowCircuit} iii) holds for $(M_1'',c_1'')$ and $T$.  Thus there is a $T$-SRCP $\{ C_1', C_2' \}$ for $(M_1'',c_1'').$  We may assume that $s_1 \in C_1'$, for if $s_1 \not\in C_1' \cup C_2',$ then 
$\{ C_1',C_2'\}$ is a $T$-SRCP for $(M,c).$  Letting $C_1 = C_1' \triangle C$ we see that $\{ C_1,C_2' \}$ is a $T$-SRCyP for $(M,c).$

\ssms
\noindent{\it Case 1.2} $M_1'$ has type 2.
\ssms

We have that $(M_1',c_1')$ has three colour-singular elements which we denote by $f_i,\  i =1,2,3,$  Correspondingly, $|c_2'(M_2')| = n_2$ and $f_i',\ i=1,2,3$ are the colour-singular elements of $(M_2',c_2').$  Let $F = \{ f_1, f_2, f_3 \}$ and $F' = \{ f_1', f_2', f_3' \}.$
We observe that for all $i \in [3]$ and $j\in [2],$ there exists a $s_i$-rainbow circuit for $(M_j',c_j').$
Thus if for some $i\in [3],$ $C$ is an $s_i$-rainbow circuit for $(M_1', c_1')$ and $D$ is an $s_i$-rainbow circuit for $(M_2',c_2'),$ then $C\cap F \ne \emptyset$ and $D\cap F' \ne \emptyset$.  For otherwise, $C\triangle D$ is a rainbow cycle in $(M,c).$ 

\ssms
\noindent{\it Case 1.2.1}  Suppose $M_2 \big| F' \cup S \not\simeq M(K_4).$ 
\ssms

 By Lemma \ref{lem-helplemma}, there exists $j\in [3]$ for which there is an $s_j$-rainbow circuit $C$ for $(M_2',c_2')$ where $|C\cap F'| \le 1$ and $|C| \le n_2-1.$  We may assume that $s_1 \in C.$  Since $C\cap F' \ne \emptyset,$ we may assume that $C\cap F' = \{ f_1 \}.$  Suppose first that
$M_1'' = M_1' + s_1$ is simple.  Extend the colouring $c_1'$ to a colouring $c_1''$ of $M_1''$ where $c_1''(s_1) := c(f_1).$   Then $(M_1'',c_1'')$ is seen to be circuit-achromatic and by assumption Theorem \ref{the-NoRainbowCircuit} iii)
holds for $(M_1'',c_1'').$  Thus there is a $T$-SRCP $\{ C_1',C_2'\}$ for $(M_1'', c_1'')$.  We may assume that $s_1 \in C_1'$, for if $s_1 \not\in C_1'\cup C_2',$ then $\{ C_1',C_2' \}$ is a $T$-SRCP for $(M,c).$
Letting $C_1 = C_1' \triangle C$ we see that $\{ C_1,C_2'\}$ is a $T$-SRCyP for $(M,c).$  Suppose instead that $M_1' + s_1$ is not simple.  Then $f_1$ and $s_1$ are parallel.  Let $M_2'' = M_2' + f_1$ and let $c_2''$ be a colouring
of $M_2''$ where $c_2''(f_1) := c(f_1),$ $c_2''(f_3') := c(f_2'),$ and $c_2'' \big| E(M_2')- f_3 = c_2'.$   Then $M_2''$ is simple, $(M_2'',c_2'')$ is circuit-achromatic and $\varepsilon(M_2'') = 2n_2 -2.$  Thus by assumption, Theorem \ref{the-NoRainbowCircuit} iii) holds for $(M_2'',c_2'')$ and $S$ and
there is an $S$-SRCP $\{ D_1,D_2\}$ for $(M_2'', c_2'').$  Note that for $i=1,2,$ $D_i \cap \{ f_1, f_1',f_2', f_3' \} \ne \emptyset.$  We may assume that $s_2 \in D_2.$

\ssms
\noindent{\it Case 1.2.1.1}  $f_1$ is the only element of $F$ which is parallel with an element of $S.$
\ssms

Suppose $f_1 \in D_2.$  Then $D_2' = D_2 \triangle S \triangle \{ s_1, f_1 \} = D_2 -f_1 - s_2 + s_3$ is an $s_3$-rainbow circuit of $(M_2',c_2').$ Thus $D_2'$ contains exactly one of $f_2'$ or $f_3'$ and we may assume $f_2' \in D_2'.$   We can use $C = D_2'$ in the previous argument, noting that by assumption,
$f_2$ is not parallel with $s_3.$  Thus we may assume that $f_1 \not\in D_2.$  If $D_2$ contains exactly one of the elements $f_1', f_2', f_3',$ then we could use $C = D_2$ in the previous argument (as no element of $F$ is parallel with $s_2$).
Thus we may assume that $\{ f_1', f_2' \} \subset D_2.$  Let $M_1'' = M_1' + s_2$ and let $c_1''$ be a colouring of $M_1''$ where $c_1''(s_2):= c(f_2)$, $c_1''(f_3):= c(f_1)$ and $c_1'' \big| E(M_1')- f_3 = c_1'.$  Then $M_1''$ is simple. Suppose that  $(M_1'',c_1'')$ contains a rainbow circuit $D$.  Then $s_2 \in D$.  If $f_1 \not\in D,$ then $D\triangle D_2$ is a rainbow cycle of $(M,c).$  Thus $f_1 \in D$ and hence $f_3 \not\in D.$  Now $D' = D\triangle S \triangle \{ s_1, f_1 \} = D - f_1 - s_1 +s_3$ is
an $s_3$-rainbow circuit for $(M_1', c_1')$.  However, $D' \cap F = \emptyset,$ contradicting a previous observation. Thus $(M_1'',c_1'')$ is circuit-achromatic. By assumption, there is a $T$-SRCP $\{ C_1',C_2'\}$ for $(M_1'', c_1'').$  We may assume that $s_2 \in C_1',$ for 
otherwise, $\{ C_1', C_2'\}$ is a $T$-SRCP for $(M,c).$  If $f_1 \not\in C_1',$ then letting $C_1 = C_1' \triangle D_2$ we see that $\{ C_1,C_2'\}$ is a $T$-SRCyP for $(M,c).$  On the other hand, if $f_1 \in C_1',$
then $C_1'' = C_1' \triangle S \triangle \{ s_1, f_1 \} = C_1' - f_1 - s_2 + s_3$ is a circuit where $C_1'' \cap F = \emptyset.$  Let $D$ be an $s_3$-rainbow circuit for $(M_2',c_2').$  Then letting $C_1 = C_1'' \triangle D$ we see that $(C_1,C_2')$ is a $T$-SRCyP
for $(M,c).$

\ssms
\noindent{\it Case 1.2.1.2} $f_2$ or $f_3$ is parallel with an element of $S.$
\ssms

We may assume that $f_2$ and $s_2$ are parallel.  If $f_3$ is parallel with $s_3$, then $M_2 + F$ is simple, has $2n_2$ elements and hence has a rainbow circuit.  Thus $f_3$ is not parallel with $s_3.$
Consequently, $M_1'' = M_1' + s_3$ is simple.  Define a colouring $c_1''$ for $M_1''$ where
$c_1''(s_3) = c(f_3),$ $c_1''(f_2):=c(f_1)$ and $c_1'' \big| E(M_1') - f_2 = c_1'.$  Suppose $(M_1'', c_1'')$ contains a rainbow circuit $D$.  Then $s_3 \in D$ and hence $f_3 \not\in D.$
Since $D$ is an $s_3$-rainbow circuit for $(M_1',c_1')$ we have that $D \cap F \ne \emptyset.$  We may assume $f_1 \in D$ (and $f_2 \not\in D$).  Then $D' = D \triangle S \triangle \{ s_1, f_1 \} = D - s_3 - f_1 + s_2$ is seen to be an $s_2$-rainbow circuit for $(M_1',c_1')$ for which 
$D' \cap F = \emptyset,$ a contradiction.  Thus $(M_1'',c_1'')$ is circuit-achromatic.  By assumption, there is a $T$-SRCP $\{ C_1',C_2' \}$ for $(M_1'', c_1'')$.  We may assume that $s_3 \in C_1'$; otherwise,  $\{ C_1', C_2' \}$ is a $T$-SRCP for $(M,c).$    Suppose that $C_1' \cap F \ne \emptyset.$  Let $C_1= C_1' \triangle D$ where $D$ be an $s_3$-rainbow circuit for $(M_2',c_2')$ for which $|D| \le n_2.$ Then we see that $\{ C_1, C_2'\}$ is a $T$-SRCyP for $(M,c).$  Suppose instead that $C_1' \cap F \ne \emptyset.$  We may assume that $C_1' \cap F = \{ f_1 \}.$  Let $C_1'' = C_1' \triangle S \triangle \{ s_1,f_1 \} = C_1' - s_3 -f_1 + s_2$ and let $D$ be an $s_2$-rainbow circuit for $(M_2',c_2')$ where $|D| \le n_2.$  Letting $C_1 = C_1'' \triangle D$ we seen that $\{ C_1, C_2' \}$ is a $T$-SRCyP for $(M,c).$

%

\ssms
\noindent{\it Case 1.2.2}  Suppose $M_2 \big| F' \cup S \simeq M(K_4.).$ 
\ssms

In this case, let $M_1'' = M_1' + F'.$  Then $r(M_1'') = n_1 +1$ and $\varepsilon(M_1'') = \varepsilon_1' + 3 = 2n_1 = 2r(M_1'') -2.$  We note that $\varepsilon(M_1'') < \varepsilon(M)$ since $\varepsilon_2' > 3.$  Extend the colouring $c_1'$ to a colouring $c_1''$ of $M_1''$ where for $i= 1,2,3$ $c_1''(f_i') := c(f_i).$  Then $M_1''$ is simple and $(M_1'', c_1'')$ is circuit-achromatic.  By assumption there is a $T$-SRCP for $(M_1'', c_1'')$ and such is seen to be a $T$-SRCP for $(M,c).$

\sms
\noindent{\bf Case 2} $\varepsilon_1' = 2n_1-4.$
\sms

We have that $\varepsilon_2' = 2n_2-2$ and $|c_2'(M_2')| \ge n_2-1.$
By assumption, Theorem \ref{the-NoRainbowCircuit} iii) holds for $(M_2', c_2')$ and $S$.  Thus there is an $S$-SRCP $\{ D_1, D_2\}$ for $(M_2', c_2')$ where we may assume that for $i=1,2,$ $s_i \in D_i.$
Let $M_1'' = M_1' + s_1 + s_2.$

\ssms
\noindent{\it Case 2.1} $|c_2'(M_2')| = n_2 -1.$
\ssms

We have that $(M_2',c_2')$ has no colour-singular elements and the same holds for $(M_1', c_1').$  Extend the colouring $c_1'$ to a colouring $c_1''$ of $M_1'' $ where for $i= 1,2,$ $c_1''(s_i):=0.$
It is straightforward to show that $(M_1'',c_1'')$ is circuit-achromatic and hence $M_1''$ is also simple.   Since $\varepsilon(M_1'') = 2n_1-2,$ it follows by assumption that there is a $T$-SRCP $\{ C_1',C_2' \}$ for $(M_1'', c_1'').$  Now using similar arguments as before, one can construct a $T$-SRCP $\{ C_1,C_2 \}$ for $(M,c)$ using the circuits $C_i', D_i,\ i = 1,2.$

\ssms
\noindent{\it Case 2.2} $|c_2'(M_2')| = n_2 .$
\ssms

We have that $(M_2',c_2')$ contains two colour-singular elements which we denote by $f_i,\ i = 1,2$.  Then $f_i',\ i = 1,2$ are the colour-singular elements of $(M_1', c_1').$  We extend the colouring $c_1'$ to a colouring $c_1''$ to $M_1''$ where 
$$(c_1''(s_1), c_1''(s_2)):= \left\{ \begin{array}{ll} (c(f_1), c(f_2)) & \mathrm{if}\ f_1 \in D_1 \ \mathrm{or} \ f_2 \in D_2\\  (c(f_2), c(f_1)) & \mathrm{otherwise} \end{array}. \right.$$ 
Suppose first that $M_1''$ is simple.  Then by assumption, there is a $T$-SRCP $\{ C_1',C_2'\}$ for $(M_1'', c_1'').$  If for $i=1,2,$ $\{ s_1, s_2 \} \not\subset C_i',$ then similar to before, one can construct a $T$-SRCP for $(M,c)$ using the circuits $C_i',D_i,\ i = 1,2.$   We may assume that $\{ s_1, s_2 \} \subset C_1'.$  Let $D$ be an $s_3$-rainbow circuit for $(M_2',c_2')$ where $|D| \le n_2.$  Letting $C_1 = C_1' \triangle S \triangle D$ we see that $\{ C_1,C_2' \}$ is a $T$-SRCyP for $(M,c).$ 

Suppose instead that $M_1''$ is not simple.  Then for some $x \in E(M_1')$ and some $i\in \{ 1,2 \},$ $x$ is parallel with $s_i.$  We may assume that $x$ is parallel with $s_1$.  Clearly $x \in \{ f_1', f_2' \}$, for otherwise, $D_1-s_1+x$ is a rainbow circuit of $(M,c).$
We may assume that $x = f_1'.$  Now $f_2'$ is not parallel with $s_2$ or $s_3$, for otherwise $M_2' + f_1' +f_2'$ would be a simple matroid with $2n_2$ elements and hence would contain a rainbow circuit.  Let $M_2'' = M_2' + f_1'$ and extend the colouring $c_2'$ to a colouring $c_2''$ of $M_2''$ where $c_2''(f_1') := c(f_1').$  Then $\varepsilon(M_2'') = 2n_2-1$ and $f_2$ is the unique colour-singular element for $(M_2'', c_2'').$
By assumption, Theorem \ref{the-NoRainbowCircuit} i.1) or i.2) holds for $(M_2'', c_2'').$  Thus there is an $S$-SRCP $\{ D_1',D_2'\}$ for $(M_2'',c_2'')$ where $s_2 \in D_1'$ and $s_3 \in D_2'.$  Redefine $(M_1'',c_1'')$ where $M_1'' = M_1' + s_2 + s_3$ and extend the colouring $c_1'$ to a colouring $c_1''$ of $M_1''$ as follows:  
Suppose that some $i$, $f_2 \in D_i'.$  Then define $c_1''(s_{i+1}) := c(f_2)$ and $c_1''(s_{4-i}) = c(f_1).$ If $f_2 \not\in D_1'\cup D_2'$ and for some $i$, $f_1\in D_i'$, then define
$c_1''(s_{i+1}) = c(f_1)$ and $c_1''(s_{4-i}) = c(f_2).$ Otherwise, if for $i = 1,2,$ $f_i \not\in D_1'\cup D_2',$ then define $c_1''(s_2) := c(f_1)$ and $c_1''(s_3):= c(f_2).$  We claim that $(M_1'', c_1'')$ is circuit-achromatic.
For suppose $(M_1'', c_1'')$ has a rainbow circuit $C$.  Then $C\cap \{ s_2, s_3 \} \ne \emptyset$ and we may assume that $s_2 \in C.$  If $s_3 \in C$, then $C' = C \triangle S = C - s_2 - s_3 + s_1$ is seen to be an $s_1$-rainbow circuit where $C' \cap \{ f_1', f_2' \} = \emptyset.$  Thus for an $s_1$-rainbow circuit $D$ for $(M_2',c_2'),$ it is seen that $C' \triangle D$ is a rainbow cycle of $(M,c).$  Thus $s_3 \not\in C.$  We also have that $C \cap \{ f_1', f_2' \} \ne \emptyset$ for otherwise $(M,c)$ has a rainbow circuit.  Suppose $f_2' \in C.$  Then $c_1''(s_2) = c(f_1)$
and thus $f_1' \not\in C.$   By the definition of $c_1'',$ we have that $f_2 \not\in D_1'$ and thus it is seen that $C \triangle D_1'$ is a rainbow cycle of $(M,c).$  Thus $f_2' \not\in C$ and hence $f_1' \in C.$  Then $C'= C \triangle \{ f_1', s_1 \} \triangle S = C - s_2 - f_1' + s_3$ is an $s_3$-rainbow circuit.  Let $D$ be an $s_3$-rainbow circuit for $(M_2',c_2').$  Then $C' \triangle D$ is seen to be rainbow circuit of $(M,c).$  This proves our claim.  Thus $(M_1'', c_1'')$ is simple and circuit-achromatic and $\varepsilon(M_1'') = 2n_1 -2.$  By assumption, there is a $T$-SRCP $\{ C_1',C_2'\}$ for $(M_1'', c_1'').$  We may assume that $\{s_2, s_3 \} \cap (C_1' \cup C_2') \ne \emptyset$
for otherwise $\{ C_1',C_2'\}$ is a $T$-SRCP for $(M,c).$  We may assume that $s_2 \in C_1'.$

\ssms
\noindent{\it Case 2.1.1.1} $s_3 \not\in C_2'.$
\ssms

Suppose first that $s_3 \not\in C_1'.$  If $f_1' \not\in C_1'$,  then letting $C_1 = C_1' \triangle D_1'$ it is seen that $\{C_1,C_2' \}$ is a $T$-SRCyP for $(M,c).$  Suppose $f_1' \in  C_1'.$  Then $c_1''(s_2) = c(f_2)$ and $f_2' \not\in C_1'.$
We see that $C_1'' = C_1' \triangle \{ f_1', s_1 \} \triangle S = C_1' - s_2 - f_1' +s_3$ is a circuit.  Let $D$ be an $s_3$-rainbow circuit for $(M_2',c_2')$ and let $C_1 = C_1'' \triangle D.$  Then $\{ C_1,C_2' \}$ is seen to be a $T$-SRCyP for $(M,c).$
Suppose $s_3 \in C_1'.$  Then for $i= 1,2,$ $f_i' \not\in C_1'.$  Let $C_1'' = C_1' \triangle S = C_1' - s_2 - s_3 + s_1$ and let $D$ be an $s_1$-rainbow circuit for $(M_2',c_2').$  Letting $C_1 = C_1'' \triangle D$, we see that $\{ C_1,C_2' \}$ is a
$T$-SRCyP for $(M,c).$

\ssms
\noindent{\it Case 2.1.1.2} $s_3 \in C_2'.$
\ssms

We may assume that $f_1' \in C_1'$, for if $f_1' \not\in C_1' \cup C_2',$ then one can construct (using similar arguments as before) a $T$-SRCyP for $(M,c)$ using the circuits $C_i',D_i',\ i = 1,2.$  
Thus we have that $c_1''(s_2) = c(f_2)$ and $f_2' \not\in C_1'.$  Furthermore, by the definition of $c_1'',$ it follows that $f_1 \not\in D_1'$ and $f_2 \not\in D_2'.$  Letting $C_i = C_i' \triangle D_i',\ i = 1,2$, we see that $\{ C_1, C_2 \}$ is a $T$-SRCyP for $(M,c).$

%

\sms
\noindent{\bf Case 3} $\varepsilon_1' = 2n_1-5.$
\sms

We have that $\varepsilon_2' = 2n_2-1$ and $(M_2', c_2')$ contains a unique colour-singular element which we denote by $f.$  We also have that $f'$ is the unique colour-singular element for $(M_1', c_1').$  We observe that there is no element of $M_1'$ which is parallel to an element of $S$, for if there was such an element $x$, then $M_2' + x$ would have $2n_2$ elements and hence would contain a rainbow circuit. Thus  $M_1$ is simple.  We shall extend the colouring $c_1'$ to a colouring $c_1$ of $M_1$ in the following manner. By assumption, Theorem \ref{the-NoRainbowCircuit} i.1) or i.2) holds
for $(M_2', c_2')$ and $S$.  If i.1) holds and there exists an $S$-SRCT $\{ D_1,D_2,D_3 \}$ for $(M_2',c_2')$ where for $i= 1,2,3,$ $s_i \in D_i,$  then we may assume that $f \not\in D_2\cup D_3.$
Define $c_1''(s_1):= c(f)$ and for $i=2,3,$ define $c_1''(s_i):=0.$    

We claim that $(M_1,c_1)$ is circuit-achromatic.  For suppose it contains a rainbow circuit $C$.  Then $C\cap S \ne \emptyset.$  Suppose $|C\cap S| =2.$  We may assume that
$\{ s_1, s_2 \} \subset C.$  We have that $f' \not\in C$ and $C' = C \triangle S = C-s_1-s_2+s_3$ is an $s_3$-rainbow circuit.  Let $D$ be an $s_3$-rainbow circuit for $(M_2',c_2').$  Then $C'\triangle D$ is seen to be a rainbow cycle of $(M,c),$ a contradiction.  Suppose $|C\cap S| =1.$  Suppose $s_1\in C.$  Then $f'\not\in C$ and for an $s_1$-rainbow circuit $D$ of $(M_2',c_2')$, we have that $C\triangle D$ is a rainbow cycle in $(M,c).$  Thus we may assume that $s_2\in C.$  Now regardless of whether Theorem \ref{the-NoRainbowCircuit} i.1) or i.2) holds for $(M_2',c_2')$ and $S$, there is an $S$-SRCP $\{ D_2,D_3 \}$ for which
$s_i \in D_i,\ i = 2,3$ and $f \not\in D_2 \cup D_3.$  Now it is seen that $C \triangle D_2$ is a rainbow cycle in $(M,c),$ a contradiction. Thus $(M_1,c_1)$ is circuit-achromatic.

Since $\varepsilon(M_1) = 2n_1-2,$ it follows by assumption that Theorem \ref{the-NoRainbowCircuit} iii) holds for $(M_1,c_1)$ and $T.$  Thus there is a $T$-SRCP $\{C_1',C_2' \}$ for $(M_1,c_1).$  Again, we may assume that for some $i$, $C_i' \cap S \ne \emptyset,$ and this we may assume is true for $i=1.$
If $C_2' \cap S = \emptyset,$ then one can use similar arguments as in the previous cases to construct a $T$-SRCP for $(M,c).$  Thus we may assume that $C_2' \cap S \ne \emptyset.$ 

\ssms
\noindent{\it Case 3.1} $s_1 \not\in C_1' \cup C_2'.$
\ssms

We may assume that $s_2\in C_1'$ and $s_3 \in C_2'.$  As before, there is an $S$-SRCP $\{D_2,D_3\}$ for $(M_2', c_2')$ where $s_i \in D_i',\ i = 2,3$ and $f \not\in D_2 \cup D_3.$  Letting $C_1 = C_1' \triangle D_2$ and $C_2 = C_2' \triangle D_3,$ we see that
$\{ C_1,C_2 \}$ is a $T$-SRCyP for $(M,c).$

\ssms
\noindent{\it Case 3.2} $s_1 \in C_1' \cup C_2'.$
\ssms

We may assume that $s_1 \in C_1'.$  Suppose first that $|C_1' \cap S| =2.$  We may assume that $s_2 \in C_1'$ and $s_3 \in C_2'.$  Then $f' \not\in C_1'.$  Let $C_1'' = C_1' \triangle S = C_1' - s_1 -s_2 +s_3.$  By assumption, Theorem \ref{the-NoRainbowCircuit} ii) holds
for $(M_2', c_2')$. We may assume that $f$ and $s_3$ are not parallel;  if they are, then we can assume that $f\in E(M_1')$ and revert back to Case 2.  It follows that there is an $s_3$-semi-SRCP $\{ D_1', D_2' \}$ for $(M_2',c_2')$ where
we may assume that $f \not\in D_2'.$  Let $C_1 = C_1'' \triangle D_1'$ and let $C_2 = C_2' \triangle D_2'.$  Then $C_i,\ i = 1,2$ are rainbow cycles in $(M,c)$ and 
\begin{align*}
|C_1| + |C_2| &= |C_1''| + |C_2'| + |D_1'| + |D_2'| -4\\
&=  |C_1'| -1 +|C_2'| + |D_1'| + |D_2'| -4\\
&\le n_1 + 2 -1 + n_2+3 -4 = n+2.\\
\end{align*}
Thus $\{ C_1, C_2 \}$ is a $T$-SRCyP for $(M,c).$  We may assume that $C_1' \cap S = \{ s_1 \}$ and $C_2' \cap S = \{ s_2 \}.$  Regardless of whether Theorem \ref{the-NoRainbowCircuit} i.1) or i.2) holds for $(M_2',c_2'),$ there is a $S$-SRCP $\{ D_1, D_2 \}$ where
for $i=1,2,$ $s_i \in D_i$ and $f\not\in D_2.$  Letting $C_i = C_i' \triangle D_i,\ i = 1,2$ we see that $\{ C_1, C_2\}$ is a $T$-SRCyP for $(M,c).$ 

\section{Proof of main theorem}\label{sec-Proofofmaintheorem}
 
 In this section, we shall complete the proof of Theorem \ref{the-main}.  
To begin with, we shall use the following simple observations:
%

\begin{observation}
Let $(M,c)$ be a coloured binary matroid where $\varepsilon(M) = 2(r(M)+1)$, $r(M) \ge 3$ and $c$ is $2$-uniform.  Suppose there are at least three colour classes where the elements are non-parallel.
If $M$ contains a rainbow $2$-circuit, then it contains a SRCP.\label{obs-rainbowdigon}
\end{observation}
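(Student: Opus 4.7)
The plan is to split on whether $M$ contains a second rainbow digon disjoint from $\{e,f\}$. If there exists a rainbow $2$-circuit $\{g,h\}$ with $\{g,h\}\cap\{e,f\}=\emptyset$, then $\{\{e,f\},\{g,h\}\}$ is a disjoint pair of rainbow circuits of total size $4 \le r(M)+2$ (using $r(M)\ge 2$), which is already an SRCP. So the substantive case is when no such disjoint rainbow digon exists; then every parallel pair of $M\setminus\{e,f\}$ must be monochromatic, i.e., is itself a ``bad'' colour class. This immediately restricts every parallel class of $M\setminus\{e,f\}$ to have at most two elements and forces the parallel class $P_1$ of $\{e,f\}$ in $M$ to satisfy $|P_1|\in\{2,3,4\}$.

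In this second case I would introduce $\mathcal{B}$, the set of bad colour classes of $M$ lying entirely inside $E(M)\setminus\{e,f\}$, set $b'=|\mathcal{B}|$, and form
$$M'' := M\setminus\Bigl(\{e,f\}\cup\bigcup_{B\in\mathcal{B}}B\Bigr).$$
A case analysis on $|P_1|$ shows that every parallel class of $M\setminus\{e,f\}$ not already collapsed by $\mathcal{B}$ is a singleton, so $M''$ is simple; it also gives $\varepsilon(M'')=2(r(M)-b')$ together with
$$r(M'')=\begin{cases} r(M)-1-b' & \text{if } |P_1|=2, \\ r(M)-b' & \text{if } |P_1|\in\{3,4\}.\end{cases}$$
The restriction of $c$ to $M''$ leaves $r(M)-1-b'$ good classes as pairs plus the two singletons $\{e'\},\{f'\}$. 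Merging $\{e'\}$ and $\{f'\}$ into a single class $\{e',f'\}$ produces a $2$-uniform colouring $\tilde c$ of $M''$ with exactly $r(M)-b'$ classes.

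If $|P_1|\in\{3,4\}$ then $\tilde c$ is an $r(M'')$-colouring of the simple matroid $M''$ with no colour-singular elements, and Corollary \ref{cor-cor2} delivers a rainbow circuit $C_2$ in $(M'',\tilde c)$ with $|C_2|\le r(M'')=r(M)-b'\le r(M)$. If $|P_1|=2$ then $\tilde c$ has $r(M'')+1$ colours; here I would run the standard greedy argument, extending a maximum rainbow independent set $I$ of $M''$ (necessarily of size $\le r(M'')$, and necessarily missing at least one colour) by an element of a missing colour and taking the resulting fundamental circuit, which is rainbow of size at most $r(M'')+1=r(M)-b'\le r(M)$. In either subcase $C_2$ contains at most one element of $\{e',f'\}$, so $C_2$ is rainbow under the original colouring $c$, and by construction $C_2\subseteq E(M'')\subseteq E(M)\setminus\{e,f\}$. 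Thus $\{\{e,f\},C_2\}$ is the desired SRCP. The principal obstacle is the structural bookkeeping in the second case: verifying the simplicity of $M''$ and the exact rank/cardinality formulas across the subcases $|P_1|\in\{2,3,4\}$, and then handling the cocircuit subcase $|P_1|=2$ where Corollary \ref{cor-cor2} does not apply directly and one must rely on the greedy fundamental-circuit bound.
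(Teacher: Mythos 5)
Your reduction to $M''$ has a fatal gap: deleting \emph{both} elements of every monochromatic parallel pair (together with $e$ and $f$) can destroy every circuit of the matroid, so the rainbow circuit $C_2$ you need inside $M''$ simply may not exist. Concretely, take $r(M)=4$ and represent $M$ over $GF(2)$ by vectors $v_1,\dots,v_7$ in $GF(2)^4$ with $e=f=v_1$ (a parallel pair), $e'=v_2$, $f'=v_3$, $g_1=h_1=v_4$, $g_2=h_2=v_5$, $a=v_6$, $b=v_7$, where $v_2,v_3,v_6,v_7$ are linearly independent, and colour classes $\{e,e'\},\{f,f'\},\{g_1,h_1\},\{g_2,h_2\},\{a,b\}$. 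Every hypothesis of the observation holds (ten elements, five classes, $\{e,f\}$ is a rainbow digon, and $\{e,e'\},\{f,f'\},\{a,b\}$ are three non-parallel classes), there is no rainbow digon disjoint from $\{e,f\}$, and yet your $M''=\{e',f',a,b\}$ is independent and has no circuit at all. The underlying error is that a rainbow circuit is perfectly entitled to use \emph{one} element of a monochromatic parallel pair, so discarding both is too destructive. Relatedly, your rank formulas $r(M'')=r(M)-b'$ and $r(M)-1-b'$ are unjustified: deleting a full parallel class need not lower the rank, since the class may lie in the closure of the remaining elements without being parallel to any single one of them (in the example $r(M'')=4$, not the predicted $1$), and without those formulas the appeal to Corollary \ref{cor-cor2} and the greedy bound both collapse.

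For contrast, the paper's proof needs none of this structure. It works directly in $M'=M\setminus\{e,f\}$: all $r(M)+1$ colours survive there while $r(M')\le r(M)$, so a rainbow transversal of $r(M')+1$ colours is dependent and contains a rainbow circuit $D$; if $|D|\le r(M)$ we are done, and if $|D|=r(M)+1$ then $D$ is spanning and meets every colour class, so the hypothesis of three non-parallel classes supplies some $x\in D$ whose mate $x'$ lies outside $\{e,f\}$ and is a chord of $D$, and the $D,x'$-chordal circuit avoiding $x$ is a strictly shorter rainbow circuit. Your first case (a disjoint rainbow digon) is correct but unnecessary under that argument; if you want to salvage your structural route, you would have to delete only one element from each monochromatic parallel pair, at which point you are essentially back to the paper's counting argument.
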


\begin{proof}
Suppose $M$ contains a rainbow $2$-circuit $C = \{ e, f \}.$  Let $M' = M - C.$  We shall show that $M'$ has a rainbow circuit of size at most $r(M).$  We have $|c(M')| = |c(M)| = r(M) + 1 \ge r(M') +1$ and $\varepsilon(M') = 2r(M) \ge 2r(M') \ge r(M') + 2.$  Thus $M'$ contains a rainbow circuit $D$. If $|D| \le r(M),$ then $D$ is the desired circuit.  Suppose instead that $|D| >r(M).$  Then $|D| = r(M) +1$ and $c(D) = c(M).$  
Since $M$ contains at least $3$ colour classes which are non-parallel, there exists $x\in D$ which belongs to a colour class $X= \{ x,x'\}$ for which $x$ and $x'$ are non-parallel.  Then $x'$ is a chord of $D$ and $D+x'$ contains a $D,x'$-chordal circuit $D'$ where $x \not\in D',$ $D'$ is a rainbow circuit, and $|D'| < |D| = r(M) +1.$ Thus $D'$ is the desired circuit.  

From the above, there is a rainbow circuit $D$ in $M'$ for which $|D'| \le r(M).$  Now $\{ C,D \}$ is seen to be a SRCP.  
\end{proof}

\begin{observation}
Let $(M,c)$ be a coloured binary matroid where $\varepsilon(M) = 2r(M)$ and $c$ is $2$-bounded, with the exception of one colour class which is a $3$-circuit.  Then $M$ has a rainbow circuit.
\label{obs-find1rainbowc}
\end{observation}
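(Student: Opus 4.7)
The plan is to argue by contradiction: supposing $(M,c)$ has no rainbow circuit, reduce to the case where $M$ is simple, then use Observation \ref{obs-onesingular} to pin down $|c(M)|=r(M)$, and apply Theorem \ref{the1} together with a structural obstruction coming from binarity to rule out any stratification of $c$.

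First I would dispose of degenerate cases. A loop is a rainbow $1$-circuit and a parallel pair of differently coloured elements is a rainbow $2$-circuit. Two elements of the $3$-circuit $T_0=\{t_1,t_2,t_3\}$ cannot be parallel (which would contradict minimality of $T_0$ as a circuit), so every monochromatic parallel pair $\{x,x'\}$ lies in a $2$-element colour class disjoint from $T_0$. Setting $M'=M/x\backslash x'$ gives $\varepsilon(M')=2r(M')$ with $T_0$ still a $3$-circuit colour class of $(M',c|M')$, and any rainbow circuit $C$ of $M'$ lifts (as itself or by adjoining $x$) to a rainbow circuit of $M$, since the colour $c(x)=c(x')$ appears on no element of $E(M')$. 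Induction on $\varepsilon(M)$, whose base case $r(M)=2$ is immediate (the fourth element must be parallel to some $t_i$ and is coloured differently, giving a rainbow $2$-circuit), reduces us to the case where $M$ is simple.

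Now assume $M$ is simple and suppose for contradiction that $(M,c)$ is circuit-achromatic. Let $N=M\backslash t_3$. Since $t_3$ lies on the circuit $T_0$ it is not a coloop, so $r(N)=r(M)$ and $\varepsilon(N)=2r(N)-1$; the class $T_0$ becomes $\{t_1,t_2\}$ in $N$, so $c|N$ is $2$-bounded, and circuits of $N$ are circuits of $M$ so $(N,c|N)$ is also circuit-achromatic. Observation \ref{obs-onesingular} then yields a unique colour-singular element in $(N,c|N)$, and since the singletons of $(N,c|N)$ are precisely those of $(M,c)$ (elements outside $T_0$ retain their class size, and $\{t_1,t_2\}$ is not a singleton), $(M,c)$ has exactly one singular element. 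The count $1+2t+3=2r(M)$, with $t$ the number of $2$-classes, then gives $|c(M)|=r(M)$, so by Theorem \ref{the1} circuit-achromaticity forces $c$ to be stratified.

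The crux of the argument, which I expect to be the main obstacle, is to rule out stratification using binarity of $M$ together with the fact that $T_0$ is itself a circuit. Suppose $T_0=X_{i_k}$ in some stratification. Then $S_{k-1}$ is closed with $T_0\cap S_{k-1}=\emptyset$ and $r(S_{k-1}\cup T_0)=r(S_{k-1})+1$, so $t_1$ and $t_2$ are parallel in $M/S_{k-1}$; hence some circuit $\{t_1,t_2\}\cup A$ of $M$ has $A\subseteq S_{k-1}$. Taking the symmetric difference with the circuit $T_0=\{t_1,t_2,t_3\}$ (noting that in binary matroids the symmetric difference of circuits is a disjoint union of circuits) gives the set $\{t_3\}\cup A$, which therefore contains a circuit through $t_3$ supported on $\{t_3\}\cup S_{k-1}$. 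Consequently $t_3\in \mathrm{cl}_M(S_{k-1})=S_{k-1}$, contradicting $T_0\cap S_{k-1}=\emptyset$. This contradiction shows $c$ cannot be stratified, and hence $(M,c)$ must contain a rainbow circuit.
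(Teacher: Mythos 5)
Your argument is correct, but it takes a genuinely different and considerably heavier route than the paper. The paper's proof is direct and constructive: parity of the class sizes forces $|c(M)|\ge r(M)$; if $|c(M)|>r(M)$ a rainbow transversal is dependent and we are done; otherwise a rainbow transversal $B$ of size $r(M)$ is either dependent (done) or a basis, and then for the triangle class $\{e,f,g\}$ with $e\in B$ one takes the fundamental circuit $C$ of $f$ with respect to $B$ and, if $e\in C$, passes to the nonempty rainbow cycle $C\triangle\{e,f,g\}$. That argument needs no simplicity assumption, no induction, and no structure theorem. You instead argue by contradiction: you reduce to the simple case by contracting monochromatic parallel pairs, invoke Observation \ref{obs-onesingular} on $M\backslash t_3$ to pin down $|c(M)|=r(M)$, apply Theorem \ref{the1} to obtain a stratification, and then show that a colour class which is itself a circuit cannot occupy any layer of a stratification (since each layer raises the rank by exactly one, $t_1,t_2$ become parallel in $M/S_{k-1}$, and the symmetric difference of the resulting circuit with $T_0$ forces $t_3\in\mathrm{cl}(S_{k-1})=S_{k-1}$). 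This is valid and conceptually illuminating — it isolates exactly why a circuit-shaped colour class obstructs stratification — but it buys that insight at the cost of the Bérczi--Schwarcz theorem and an induction the paper avoids. Two small points you assert without justification, both easily repaired: that $T_0$ remains a $3$-circuit after $M/x\backslash x'$ (this needs that no circuit $D$ of $M$ satisfies $\emptyset\ne D-x\subsetneq T_0$, which follows because any such $D$ would yield a loop or a bichromatic parallel pair, already excluded), and that the circuit inside $\{t_3\}\cup A$ must pass through $t_3$ (which holds because $A$, being a proper subset of the circuit $\{t_1,t_2\}\cup A$, is independent).
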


\begin{proof}
We have that $|c(M)| \ge r(M).$  If $|c(M)| \ge r(M)+1,$ then clearly $(M,c)$ has a rainbow circuit.  As such, we may assume that $|c(M)| = r(M).$   Let $B$ be a rainbow subset where $|B| = r(M).$  If $B$ is not a basis, then it contains a rainbow circuit.  Thus we may assume that $B$ is a basis.  Let $\{ e,f,g \}$ be the colour class which is a $3$-circuit where
we may assume that $e \in B.$ Now $B+f$ contains a circuit $C$.  If $e\not\in C,$ then $C$ is a rainbow circuit.  If $e\in C,$ then $C' = C \triangle \{ e,f,g \}  = C - e -f +g$ is rainbow circuit.  In either case, $M$ contains a rainbow circuit.
\end{proof}

\begin{observation}
Suppose $(M,c)$ is a simple, regular coloured matroid where $M = M_1 \oplus_3 M_2$ and $c$ is a $2$-uniform $(r(M)+1)$-colouring.  Then either $M$ has a SRCP or for exactly one $i\in [2]$, 
$M_i \backslash E(M_{3-i})$ contains a rainbow circuit.
\label{obs-1rainbowcircorpair}
\end{observation}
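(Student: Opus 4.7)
The plan combines Lemma \ref{lem-shortcircuitin23sum}(ii) with a colour-counting argument on the two sides of the $3$-sum. Write $M_i' := M_i \setminus E(M_{3-i})$, and let $a$, $b$, $d$ denote the number of colour classes of $c$ lying entirely in $E(M_1')$, entirely in $E(M_2')$, or split across the two sides, respectively. Then $a + b + d = r(M) + 1$, $\varepsilon(M_i')$ equals $2a + d$ or $2b + d$, and $|c(E(M_i'))|$ equals $a + d$ or $b + d$. I would prove the contrapositive: if $(M, c)$ has no SRCP, then exactly one of $M_1', M_2'$ contains a rainbow circuit.

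When both $M_i'$ contain rainbow circuits, I would appeal to Lemma \ref{lem-shortcircuitin23sum}(ii) whenever its hypothesis $\varepsilon(M_i') > r(M_i) + 1$ holds on both sides. The identity $\varepsilon(M_1') + \varepsilon(M_2') = 2r(M) + 2 = 2(r(M_1) + r(M_2)) - 2$ together with the standard lower bound $r(M_i) \ge 3$ for a proper Seymour $3$-sum forces the hypothesis to hold on at least one side. On a side where it fails, any rainbow circuit has size at most $\varepsilon(M_i') \le r(M_i) + 1$; I would combine this with the chord-shortened rainbow circuit of size at most $r(M_{3-i})$ produced on the other side, using a rerouting through the shared triangle $S$ (swapping an element of the tight-side rainbow circuit for its colour-partner in $M_{3-i}'$) when needed to bring the total within the SRCP budget $r(M) + 2$.

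When neither $M_i'$ contains a rainbow circuit, the usual trick of picking one element per colour class present in $E(M_i')$ gives a rainbow, hence independent, subset of $M_i'$, so $|c(E(M_i'))| \le r(M_i')$. Summing yields $(r(M) + 1) + d \le r(M_1') + r(M_2') \le r(M_1) + r(M_2) = r(M) + 2$, so $d \le 1$. In the sub-case $d = 0$, the sum inequality forces $|c(E(M_i'))| = r(M_i')$ on at least one side; Theorem \ref{the1} then makes $c|E(M_i')$ stratified, and Corollary \ref{cor-cor1}(i) produces a $2$-element colour class that is a parallel class of the simple matroid $M_i'$, an immediate contradiction. In the sub-case $d = 1$, both inequalities must be tight, so $r(M_i') = r(M_i)$ and the colouring on each $M_i'$ is stratified with the unique split element as the first stratum; I would then use the last-stratum $2$-cocircuits provided by Corollary \ref{cor-cor1}(ii) on each side, together with a straddling circuit of $M$ through the common triangle $S$, to construct two disjoint rainbow circuits of $M$ of total size at most $r(M) + 2$.

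The main obstacle is the $d = 1$ sub-case of the second step, together with the ``tight side'' part of the first step: in both situations the colour-counting is sharp and at least one side behaves like a single spanning rainbow circuit, so ordinary chord-shortening is unavailable on that side. One has to exploit both the stratified structure forced by Theorem \ref{the1} and the explicit circuit structure of Seymour's $3$-sum through $S$ in order to build the required SRCP by rerouting through the shared triangle; everything else is routine rank-and-colour bookkeeping.
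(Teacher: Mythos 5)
Your overall skeleton matches the paper's: argue the contrapositive, split on whether both, one, or neither of the $M_i'=M_i\backslash E(M_{3-i})$ contains a rainbow circuit, and produce a SRCP in the two extreme cases. The ``both sides have a rainbow circuit'' case is handled in essentially the same way (short circuits on each side via chord-shortening, summing to at most $r(M_1)+r(M_2)=r(M)+2$), and your colour-counting in the achromatic case correctly pins down the sharp structure: the paper's version of the same count shows $\varepsilon(M_i')=2r(M_i)-1$ for both $i$, so each $(M_i',c_i')$ has a unique colour-singular element and these two elements are partners in a single split colour class (your $d=1$).

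The genuine gap is in how you propose to finish that achromatic case. Producing two disjoint rainbow circuits of $M$ when neither side has any rainbow circuit forces both circuits to straddle the $3$-sum, i.e.\ each is $C\triangle D$ with $C,D$ circuits of $M_1,M_2$ agreeing on $S$; to keep them disjoint and within the budget $r(M)+2$ you need, on \emph{each} side, a pair of disjoint rainbow ``half-circuits'' through distinct elements of $S$ with controlled total size and avoiding the singular element --- exactly a $T$-SRCP in the paper's terminology. Your plan to extract this from the stratification, the last-stratum $2$-cocircuit of Corollary \ref{cor-cor1} ii), and ``a straddling circuit through $S$'' is not a proof: a single straddling circuit gives one rainbow circuit, not two disjoint ones, and the $2$-cocircuit gives no control on the lengths of two disjoint rainbow fragments meeting $S$. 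This existence statement is precisely Theorem \ref{the-NoRainbowCircuit} i), whose proof occupies Sections 5--9 of the paper (rainbow-distance and disjoint-rainbow-path lemmas for the stratified graphic case, dual arguments for the cographic case, then Seymour decomposition for the general regular case). The paper's proof of the observation simply invokes that theorem on both sides and glues the two $T$-SRCPs by symmetric difference across $T$; you should do the same rather than attempt to rebuild a weak form of it from Corollary \ref{cor-cor1} alone.
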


\begin{proof}
Assume that $(M,c)$ has no SRCP.  Let $E(M_1) \cap E(M_2) = T = \{ t_1, t_2, t_3 \}.$  For $i=1,2,$ let $n_i = r(M_i),$ $M_i' = M_i -T$ and $\varepsilon_i' = \varepsilon(M_i').$  For $i=1,2,$ let $c_i' = c \big| E(M_i').$
We have $\varepsilon(M) = 2(r(M) +1) = 2(n_1 + n_2 -1) = 2n_1 + 2n_2 -2 = \varepsilon_1' + \varepsilon_2'.$
Suppose that for $i=1,2,$ $(M_i',c_i')$ is circuit-achromatic.  Then for $i=1,2,$ $\varepsilon_i' \le 2n_i -1$ and hence $\varepsilon_i' = 2n_i-1.$
Let $e$ be the unique colour-singular element for $(M_1',c_1').$  Then $e'$ is the unique colour-singular element for $(M_2',c_2').$  By Theorem \ref{the-NoRainbowCircuit} i), there is a $T$-SRCP $\{ C_1',C_2' \}$ for $(M_1',c_1')$ where $e \not\in C_1' \cup C_2'.$
We may assume that for $i=1,2,$ $t_i \in C_i'.$  By Theorem \ref{the-NoRainbowCircuit} i) $(M_2',c_2')$ has a $T$-SRCP $\{ D_1,D_2 \}$ where for $i=1,2,$ $t_i \in D_i.$  Letting $C_i = C_i' \triangle D_i$ is seen that $\{ C_1,C_2\} $ is a SRCyP
for $(M,c)$ (implying that $(M,c)$ has a SRCP), contradicting our assumption. We may assume that $(M_1',c_1')$ contains a rainbow circuit, say $C_1$  where we may choose $C_1$ so that $|C_1| \le n_1.$  Suppose $(M_2',c_2')$ also contains a rainbow circuit, say $C_2$.  We may choose $C_2$ 
so that $|C_2| \ne n_2.$  Then $|C_1| + |C_2| \le n_1 + n_2 \le n+2$ and hence $\{ C_1,C_2 \}$ is a SRCP for $(M,c),$ contradicting our assumption.  Thus only $(M_1',c_1')$ has a rainbow circuit.  
\end{proof}

Suppose $(M,c)$ is a simple, regular coloured matroid where $c$ is a $2$-uniform $(r(M)+1)$-colouring.  We shall assume that Theorem \ref{the-main} is true for all regular matroids with rank less than $r(M).$  By Theorem \ref{the-GraphicCographicMain}, we may assume that $M$ is neither graphic nor cographic. 
It follows that $M$ is a either a $2$-sum or $3$-sum of regular matroids.

\begin{nonamenoname}
We may assume that there is no proper subset $A \subset E(M)$ where $M' = M \big| A$ is such that $\varepsilon(M') = 2(r(M') +1).$
\label{finishnonanona0}
\end{nonamenoname}

\begin{proof}
Suppose there is a subset $A \subset E(M)$ where $M' = M \big| A$ is such that $\varepsilon(M') = 2(r(M') +1).$  By assumption, $M'$ has a SRC $4$-tuple and such is seen to be a SRC $4$-tuple for $M.$
Thus we may assume that there is no such subset $A.$
\end{proof}

\begin{nonamenoname}
We may assume that $M$ is a $3$-sum of regular matroids.
\label{finishnonanona1}
\end{nonamenoname}

\begin{proof}
Suppose that $M$ is a $2$-sum of regular matroids $M = M_1 \oplus_2 M_2$ where $E(M_1) \cap E(M_2) = \{ s \}.$  For $i = 1,2,$ let $M_i' = M_i \backslash E(M_{3-i}),$ $c_i' =c \big| E(M_i'),$ $n_i = r(M_i)$ and $\varepsilon_i' = \varepsilon(M_i').$  
We have that $\varepsilon(M) = 2(r(M)+1) = 2(n_1 + n_2) = \varepsilon_1' + \varepsilon_2'.$
It follows by Lemma \ref{lem-shortcircuitin23sum} that we may assume that $(M_1', c_1')$ is circuit-achromatic. Thus $\varepsilon_1' \le 2n_1 -1$ and $\varepsilon_2' \ge 2n_2 +1.$  By ({\bf \ref{finishnonanona0}}), $\varepsilon_2' \le 2n_2+1$ and hence it follows that
$\varepsilon_2' = 2n_2+1$ and $\varepsilon_1' = 2n_1-1.$  By Observation \ref{obs-onesingular}, $(M_1',c_1')$ has a unique colour-singular element, say $f$ and $f'$ is seen to be the unique colour-singular element of $(M_2', c_2').$
If there is an element $x \in E(M_2')$ which is parallel with $s$, then we can add $x$ to $M_1'$, in which case $M_1'$ has $2n_1$ elements and $M_2'$ has $2n_2$ elements.  Then for $i=1,2,$ $M_i'$ contains a rainbow circuit
$C_i$ where $|C_i| \le n_i$ and $\{ C_1, C_2 \}$ is seen to be a SRCP for $(M,c).$  Thus we may assume that no element of $E(M_2')$ is parallel with $s$ and hence $M_2$ is simple.  Extend the colouring $c_2'$ to a colouring $c_2$ of $M_2$ where $c_2(s) = c(f).$  Then $c_2$ is a $2$-uniform $(n_2+1)$-colouring of $M_2$.
By assumption, there is a SRC $4$-tuple $\{ C_1', C_2', C_3', C_4' \}$ for $(M_2,c_2).$  By Observation \ref{obs-erainbowcircuit}, there is an $s$-rainbow circuit $D$ for $(M_1',c_1')$ where $|D|\le n_1.$  For all $i\in [4],$ let 
$C_i = \left\{ \begin{array}{lr} C_i' \triangle D & \mathrm{if}\  s \in C_i'\\ C_i' &\mathrm{if} \ s \not\in C_i'. \end{array} \right.$  Then it is seen that $\sum_{i=1}^4 |C_i| \le 2n_2+ 4 + 2(|D| -2) \le 2(n_1+n_2) = 2r(M) +2.$  Thus $\{ C_1,C_2,C_3,C_4 \}$ is a SRC $4$-tuple for $(M,c).$   
It follows from the above that we may assume that $M$ is a $3$-sum of two regular matroids.
\end{proof}

By ({\bf \ref{finishnonanona1}}), we may assume that $M$ is a $3$-sum $M = M_1 \oplus_3 M_2$ of regular matroids where $S = \{ s_1, s_2, s_3 \} = E(M_1) \cap E(M_2)$ is co-simple.  Furthermore, we may assume that for $i=1,2,$ $r(M_i) \ge 4.$   For $i = 1,2,$ let $M_i' = M_i \backslash E(M_{3-i}),$ $\varepsilon_i' = |E(M_i')|,$ and let $c_i' = c \big| E(M_i').$
By Lemma \ref{lem-shortcircuitin23sum}, we may assume that $(M_1', c_1')$ is circuit-achromatic.

 Let $F$ (resp. $F'$)  be the set of singular elements of $(M_1',c_1')$ (resp. $(M_2', c_2')$)  If $F \ne \emptyset$ (resp. $F' \ne \emptyset$), let $F = \{ f_1, \dots, f_\alpha \}$ (resp. $F' = \{ f_1', \dots ,f_{\alpha}' \}$).
 
\begin{nonamenoname}
$2r(M_1) -3 \le \varepsilon_1' \le 2r(M_1) -1.$
\label{finishnonanona2}
\end{nonamenoname}

\begin{proof}
Since $M_1'$ is simple and $(M_1', c_1')$ is circuit-achromatic, it follows that $\varepsilon_1' \le 2r(M_1) -1.$  Also, by our assumptions, $\varepsilon_2' \le 2r(M_2) +1.$ 
Since $$\varepsilon_1' + \varepsilon_2' = \varepsilon(M) = 2(r(M) +1) = 2(r(M_1) + r(M_2) -1),$$ it follows that
\begin{align*} 
\varepsilon_1' &= 2(r(M_1) + r(M_2) -1) - \varepsilon_2'\\ 
&\ge 2(r(M_1) + r(M_2) -1) - 2r(M_2) -1 = 2r(M_1)  -3.
\end{align*}
\end{proof}

\begin{nonamenoname}
Suppose $\varepsilon_1' = 2r(M_1) -1.$  Then $F= \{ f_1 \}$ and there is an $S$-SRCP $\{ D_{1}', D_{2}' \}$ for $(M_1', c_1')$ where $f_1 \not\in D_{1}' \cup D_{2}'.$  Furthermore, we may assume that if $(M_1',c_1')$ has a $S$-SRCT, then there is circuit $D_3'$ such that $\{ D_1', D_2', D_3' \}$ is an $S$-SRCT for $(M_1',c_1').$   

\label{finishnonanona3}
\end{nonamenoname} 

\begin{proof}
By Observation \ref{obs-onesingular}, we have that $|F| =1$ and hence $F = \{ f_1 \}.$
We have that Theorem \ref{the-NoRainbowCircuit} i.1) or i.2) holds for $(M_1',c_1').$  If i.1) holds, then there is an $S$-SRCT $\{ D_{1}', D_{2}', D_{3}' \}$ and hence for some $i <j,$ $f_1 \not\in D_{i}' \cup D_{j}'.$  If Theorem \ref{the-NoRainbowCircuit} i.2) holds for $(M_1',c_1'),$ then there is a $S$-SRCP $\{ D_1', D_2' \}$ for $(M_1', c_1')$ where $f_1 \not\in D_1' \cup D_2'.$  Thus regardless of whether i.1) or i.2) holds, there is an $S$-SRCP $\{ D_{1}', D_{2}'\}$ for $(M_1', c_1')$ where $f_1 \not\in D_{1}' \cup D_{2}'.$  Furthermore, in the case where $(M_1',c_1')$ has a $S$-SRCT, we may assume that there is a circuit $D_3'$ such that $\{ D_1', D_2', D_3' \}$ is an $S$-SRCT for $(M_1',c_1').$ 
\end{proof}

\begin{nonamenoname}
Suppose that $\varepsilon_1' = 2r(M_1) -1.$ Then $(M,c)$ has a SRC $4$-tuple.
\label{finishnonanona4}
\end{nonamenoname}

\begin{proof}
By (\ref{finishnonanona3}), $F = \{ f_1 \}$ 
and there is a $S$-SRCP $\{ D_1', D_2' \}$ for $(M_1',c_1')$ where $f_1 \not\in D_1' \cup D_2'.$  Furthermore, we may assume that if $(M_1',c_1')$ has a $S$-SRCT, then there is circuit $D_3'$ such that $\{ D_1', D_2', D_3' \}$ is an $S$-SRCT for $(M_1',c_1').$ 
Without loss of generality, we may assume that for $i= 1,2$ $s_i \in D_i'.$  Note that $F' = \{ f_1' \}$ and $f_1$ is not parallel with $s_1$ or $s_2.$  
Extend the colouring $c_2'$ to a colouring $c_2$ of $M_2$ where for $i = 1,2$, $c_2(s_i) := 0$ and $c_2(s_3):= c(f_1).$ 
Then $c_2$ is a $2$-uniform $(r(M_2) +1)$-colouring of $M_2.$   Suppose $M_2$ is not simple.  Then there exists an element $x \in E(M_2')$ and $j\in [3]$ for which $x$ is parallel with $s_j.$ If $\{ x, s_j \}$ is a rainbow $2$-circuit, then it follows by Observation \ref{obs-rainbowdigon} that $M_2' + s_j$ has a SRCP (and hence also a SRC $4$-tuple).  Suppose $c_2(x) = c_2(s_j)$.  Then $j=3,$ $c_2(x) = c_2(s_3) = c(f_1)$ and hence $x = f_1'.$  Now we may assume that $f_1' \in E(M_1')$ and $\varepsilon_1' = 2n_1,$ $\varepsilon_2' = 2n_2-2$ and $F=\emptyset.$  We may now argue as in the proof of (\ref{finishnonanona5}), where $F = \emptyset,$ with the roles of $M_1$ and $M_2$ reversed.  Thus we may assume that $M_2$ is simple.

By assumption, $(M_2,c_2)$ has a SRC $4$-tuple $\{C_1',C_2',C_3', C_4'\}$.  We may assume that $\left(\bigcup_i C_i'\right)\cap S \ne \emptyset,$ for otherwise $\{ C_1',C_2',C_3',C_4'\}$ a SRC $4$-tuple for $(M,c).$  

\sms
\noindent{\bf Case 1} For exactly one $i\in [4],$ $C_i' \cap S \ne \emptyset.$ 
\sms

We may assume that $C_1' \cap S \ne \emptyset$ and for all $i= 2,3,4,$ $C_i'\cap S =\emptyset.$   Furthermore, we may assume that $|C_1' \cap S|=1,$ since if $|C_1' \cap S|=2,$ then $C_1'' = C_1' \triangle S$ is a rainbow circuit where $|C_1'' \cap S| =1.$
Let $C_1' \cap S = \{ s_i \}$.  If $i\in \{ 1,2 \},$ then $C_1 = C_1' \triangle D_i'$ is seen to be a rainbow cycle of $M$ and $|C_1| = |C_1'| + |D_i'| - 2.$ Then 
\begin{align*} |C_1| + |C_2'| + |C_3'| + |C_4'| &= |C_1'| + |D_i'| - 2 + |C_2'| + |C_3'| + |C_4'|\\ &\le 2r(M_2) + 4 + r(M_1) -2\\ &= r(M_2) + (r(M_1) + r(M_2)) + 2\\ &= r(M_2) + r(M) + 4 \le 2r(M) + 4.\end{align*}
Thus $\{ C_1, C_2', C_3', C_4' \}$ is a SRCy $4$-tuple in $(M,c).$  Suppose instead that $C_1' \cap S = \{ s_3 \}.$  Since $|c_1'(M_1')| = r(M_1)$, there is an $s_3$-rainbow circuit $D$ for $(M_1',c_1')$ where $|D| \le r(M_1).$
Given that $c_2(s_3) = c(f_1),$ it follows that $C_1 = C_1' \triangle D$ is a rainbow cycle. Arguing as above $D$ in place of $D_i'$, one sees that $\{ C_1, C_2', C_3',C_4' \}$ is a SRCy $4$-tuple for $(M,c).$

 \sms
\noindent{\bf Case 2}  For exactly two integers $i\in [4],$ $C_i' \cap S \ne \emptyset.$ 
\sms

We may assume that for $i=1,2,$ $C_i' \cap S \ne \emptyset$ and for $i=3,4,$ $C_i' \cap S = \emptyset.$  We may also assume that for $i= 1,2,$ $|C_i' \cap S| =1$, for if for some $i$, $|C_i' \cap S| = 2,$ then we can use $C_i' \triangle S$ in place of $C_i'.$
Suppose first that $C_1' \cap S \ne C_2'\cap S.$  Then we may assume that $C_1' \cap S = \{ s_1 \}.$  Suppose that $C_2' \cap S = \{ s_2 \}.$  For $i = 1,2,$ let $C_i = C_i' \triangle D_i'$.  Then 
$C_i',\ i = 1,2$ are seen to be rainbow cycles of $(M,c)$ and $|C_1| + |C_2| = |C_1'| + |C_2'| + |D_1'| + |D_2'| - 4$.  Thus we have
\begin{align*}
|C_1| + |C_2| + |C_3'| + |C_4'| &= |C_1'| + |C_2'| + |D_1'| + |D_2'| - 4 + |C_3'| + |C_4'|\\ &\le 2r(M_2) + 4 - 4 +  r(M_1) + 2 \le 2r(M) + 4. 
\end{align*} 
Suppose that $C_2' \cap S = \{ s_3 \}.$  If $(M_1',c_1')$ has an $S$-SRCT, then by assumption there is an $s_3$-rainbow circuit $D_3'$ for $(M_1',c_1'),$ such that $\{D_1',D_2',D_3'\}$ is an $S$-SRCT for $(M_1',c_1').$
Letting $C_1 = C_1' \triangle D_1'$ and $C_2 = C_2' \triangle D_3',$ we see that $\{ C_1,C_2,C_3',C_4'\}$ is a SRCy $4$-tuple for $(M,c).$  Suppose that $(M_1',c_1')$ has no $S$-SRCT.  Then there exists an $S$-SRCP
$\{D_1'', D_2'' \}$ for $(M_1',c_1')$ where $s_1\in D_1'',$ $s_3 \in D_2'',$ and $f_1 \not\in D_1'' \cup D_2''.$  Letting $C_1 = C_1' \triangle D_1''$ and $C_2 = C_2' \triangle D_2'',$ we see that $\{ C_1,C_2,C_3',C_4'\}$ is a SRCy $4$-tuple for $(M,c).$

Suppose now that $C_1' \cap S = C_2' \cap S.$  Assume first that $C_1' \cap C_2' = \{ s_1 \}.$  For $i =1,2,$ let $C_i = C_i' \triangle D_1'.$  Then $|C_1| + |C_2| = |C_1'| +|C_2'| + 2|D_1'| - 4$ and thus
\begin{align*} |C_1| + |C_2| + |C_3'| + |C_4'| &=  |C_1'| +|C_2'| + 2|D_1'| - 4 + |C_3'| + |C_4'|\\ &\le 2r(M_2) + 4 - 4 + 2r(M_1)\\ &= 2(r(M_1) + r(M_2)) = 2r(M) +4.\end{align*} 
Thus $\{ C_1, C_2, C_3', C_4'\}$ is a SRCy $4$-tuple for $(M,c).$  If $C_1' \cap C_2'  = \{ s_2 \},$ then similar reasoning can be used with $D_2'$ in place of $D_1'.$
Assume that $C_1' \cap C_2' = \{ s_3 \}.$  Since $|c_1'(M_1')| = r(M_1),$ there is an $s_3$-rainbow circuit $D$ for $(M_1',c_1')$ where $|D| \le r(M_1).$  Noting that $C_i = C_i' \triangle D,\ i = 1,2$ are rainbow cycles, we see that as before that
$\{ C_1, C_2, C_3', C_4'\}$ is a SRCy $4$-tuple for $(M,c).$

\sms
\noindent{\bf Case 3} For exactly three integers $i\in [4],$ $C_i' \cap S \ne \emptyset.$ 
\sms

We may assume that for all $i \in [3],$ $C_i' \cap S \ne \emptyset$ and $C_4' \cap S = \emptyset.$
We shall look at two sub-cases, depending on whether $(M_1', c_1')$ has a $S$-SRCT or not.

\sms
\noindent{\it Case 3.1} Suppose that $(M_1',c_1')$ has an $S$-SRCT.
\sms

By assumption, there is a rainbow circuit $D_3'$ where $\{ D_1', D_2', D_3'\}$ is a $S$-SRCT for $(M_1', c_1').$
There are two sub-cases, depending on whether the sets $C_i' \cap S, i\in [3]$ are distinct or not.

\sms
\noindent{\it Case 3.1.1} Suppose that the sets $C_i' \cap S, i\in [3]$ are not distinct.
\sms

We may assume that $C_1' \cap S = C_2' \cap S.$  Suppose first that $|C_1' \cap S| =1$ and assume that for some $i\in [3],$ $C_1' \cap C_2' = \{ s_i \}.$  Suppose $i\in \{ 1, 2 \}.$  Then given that $f_1$ is not parallel with $s_1$ or $s_2,$ there is a $s_i$-semi-SRCP $\{ D_1'', D_2''\}$ for $(M_1', c_1')$ where $f_1 \not\in D_1''.$
Define $C_i,\ i = 1,2,3$ as follows:
$$(C_1,C_2, C_3) = \left\{ \begin{array}{ll} (C_1' \triangle D_i', C_2' \triangle D_i', C_3' \triangle D_j') &\mathrm{if}\ C_3' \cap S = \{ s_j \}\\
 (C_1' \triangle D_i', C_2' \triangle D_1'', C_3' \triangle S \triangle D_2'') &\mathrm{if}\ C_3' \cap S = S-s_i \end{array} \right.$$   
Then $\{ C_1, C_2, C_3, C_4'\}$ is seen to be a SRCy $4$-tuple for $(M,c).$  Lastly, if $C_1' \cap C_2' = \{ s_3 \},$  then $f_1$ is not parallel with $s_3$ (otherwise, $C_1' - s_3 + f_1$ is a rainbow circuit) and we can argue as before.

Suppose that $|C_1' \cap S| =2.$ Then for some $i\in [2],$ $C_1' \cap S = \{ s_i, s_3 \}.$  Let $S - \{ s_i, s_3 \} = \{ s_j \}.$   Then $C_3' \cap S = \{ s_j \}.$ Given that $f_1$ is not parallel with $s_j,$ there is a $s_j$-semi-SRCP $\{ D_1'', D_2''\}$ for $(M_1',c_1')$ where $f_1 \not\in D_1''.$  Now define $(C_1, C_2, C_3) = (C_1' \triangle S\triangle  D_2'', C_2' \triangle S \triangle  D_2'', C_3' \triangle D_1'').$  Then $\{ C_1, C_2, C_3, C_4' \}$ is seen to be a SRCy $4$-tuple.

\sms
\noindent{\it Case 3.1.2} Suppose that the sets $C_i' \cap S,\ i \in [3]$ are distinct.
\sms

For $i= 1,2,3$ let $C_i = \left\{ \begin{array}{ll} C_i' \triangle D_j' &\mathrm{if}\ C_i' \cap S = \{ s_j \}\\ C_i' \triangle S \triangle D_j' &\mathrm{if}\ S - C_i' = \{ s_j \}  .\end{array} \right.$  We see that $\{ C_1,C_2,C_3,C_4'\}$ is a SRCy $4$-tuple for $(M,c).$

\sms
\noindent{\it Case 3.2} Suppose that $(M_1',c_1')$ has no $S$-SRCT.
\sms

We have that Theorem \ref{the-NoRainbowCircuit} i.2) holds for $(M_1', c_1')$ and $S.$  Thus for all $i,j,\ i < j,$ there exists an $S$-SRCP $\{D_i^{ij}, D_j^{ij}\}$ for $(M_1',c_1')$ where $s_i\in D_i^{ij},$ $s_j \in D_j^{ij},$ and $f_1 \not\in D_i^{ij} \cup D_j^{ij}.$
Again, we have two sub-cases, depending on whether the sets $C_i \cap S,\ i\in [3]$ are distinct or not. 

\sms
\noindent{\it Case 3.2.1} Suppose that the sets $C_i \cap S,\ i\in [3]$ are not distinct.
\sms

  We may assume that $C_1' \cap S = C_2' \cap S.$  Suppose first that for some $i\in [3],$ $C_1' \cap C_2' = \{ s_i \}.$  If $i\in \{ 1, 2 \},$ then $f_1$ is not parallel with $s_i$ and hence there is a $s_i$- semi-SRCP $\{ D_1'', D_2'' \}$ for $(M_1',c_1')$ where $f_1 \not\in D_1''.$  Define $C_i,\ i = 1,2,3$ as follows:
$$(C_1,C_2, C_3) = \left\{ \begin{array}{ll} (C_1' \triangle D_i^{ij}, C_2' \triangle D_i^{ij}, C_3' \triangle D_j^{ij}) &\mathrm{if}\ C_3' \cap S = \{ s_j \}\\
(C_1' \triangle D_i', C_2' \triangle D_1'', C_3' \triangle S \triangle D_2'') &\mathrm{if}\ C_3' \cap S = S-s_i. \end{array} \right.$$
We see that $\{ C_1,C_2,C_3,C_4'\}$ is a SRCy $4$-tuple. 

Suppose instead that for some $i\in [2],$ $C_1' \cap S = C_2'\cap S = S - s_i.$  Then\\  $C_3' \cap S = \{ s_i \}.$  As before, there exists a $s_i$-semi-SRCP $\{ D_1'', D_2'' \}$ for $(M_1',c_1').$  Define $C_i,\ i = 1,2,3$ such that
$(C_1,C_2,C_3) = (C_1' \triangle S \triangle D_1'', C_2 \triangle S \triangle D_2'', C_3 \triangle D_1'')$.  Then $\{ C_1, C_2, C_3, C_4' \}$ is seen to be a SRCy $4$-tuple for $(M,c).$

\sms
\noindent{\it Case 3.2.2} Suppose that the sets $C_i' \cap S, \ i \in [3]$ are distinct.
\sms

Assume first that at least two of the circuits $C_i',\ i \in [3]$ contain only one element of $S.$  We may assume that $C_1' \cap S = \{ s_i \}$ and $C_2' \cap S = \{ s_j \}.$  Let $\{ k \} = [3] - \{ i,j \}.$  Define
$C_1 = C_1' \triangle D_i^{i,j}$ and $C_2 = C_2' \triangle D_j^{ij}$ and let $$C_3 = \left\{ \begin{array}{ll} C_3' \triangle D_k^{kj} &\mathrm{if}\ C_3' \cap S = \{ s_k \}\\ C_3' \triangle S \triangle D_k^{kj}&\mathrm{if}\ S - C_3' = \{ s_k \} \\
 C_3' \triangle S \triangle D_i^{ij}&\mathrm{if}\ S - C_3' = \{ s_i \}\\  C_3' \triangle S \triangle D_j^{ij}&\mathrm{if}\ S - C_3' = \{ s_j \}\
 \end{array} \right.$$  We see that $\{ C_1,C_2,C_3,C_4'\}$ is a SRCy $4$-tuple for $(M,c).$
 
 Assume that at least two (and hence exactly two) of the circuits $C_i',\ i \in [3]$ contain two elements of $S.$  We may assume that for $i = 1,2,$ $C_i' \cap S = \{ s_i, s_3 \}$ and $C_3' \cap S = \{ s_k \}.$  For $i=1,2,$ define $C_i = C_i' \triangle S \triangle D_{3-i}^{12}$ and define $$C_3 = \left\{ \begin{array}{ll} C_3' \triangle D_k^{12} &\mathrm{if} \ k\in [2]\\ C_3' \triangle D_{3}^{13} &\mathrm{if}\ k=3. \end{array} \right.$$  Then $\{ C_1,C_2,C_3,C_4'\}$ is seen to be a SRCy $4$-tuple for $(M,c).$

\sms
\noindent{\bf Case 4} Suppose that for all $i\in [4],$ $C_i' \cap S \ne \emptyset.$
\sms

We shall examine two sub-cases depending whether $(M_1',c_1')$ has a $S$-SRCT or not.

\ssms
\noindent{\it Case 4.1} Suppose that $(M_1',c_1')$ has an $S$-SRCT.
\ssms
 
By assumption, there is a rainbow circuit $D_3'$ where $\{D_1', D_2', D_3'\}$ is an $S$-SRCT for $(M_1', c_1').$  

\ssms
\noindent{\it Case 4.1.1}  Suppose that for all $i\in [4]$, $|C_i' \cap S| =1.$  
\ssms

For all $i\in [4]$, let $C_i' \cap S = \{ s_{i'} \}$ and let $C_i = C_i' \triangle D_{i'}'$.
Then 
\begin{align*}
\sum_i|C_i| &= \sum_i |C_i'| + \sum_i |D_{i'}| - 8\\
&\le 2(r(M_2)) + 4 + 2(r(M_1) +2) - 8\\
&= 2(r(M_1) + r(M_2)) = 2r(M) + 4.
\end{align*}
Thus $\{ C_1,C_2,C_3,C_4 \}$ is seen to be an SRCy $4$-tuple for $(M,c).$

\sms
\noindent{\it Case 4.1.2}  Suppose that for some $i\in [4]$, $|C_i' \cap S| =2.$  
\sms

We may assume that $|C_1' \cap S| =2$ and $C_1' \cap S = \{ s_1, s_3 \}.$  Since each element $s_i$ belongs to at most two of the circuits $C_j',\ j\in [4],$ it follows that for some $j\in \{ 2,3,4 \},$ $s_2 \in C_j'.$  We may assume that $s_2 \in C_2'.$  Since $f_1$ is not parallel with $s_2,$ there is a $s_2$-semi-SRCP $\{ D_1'', D_2'' \}$ for $(M_1', c_1')$ where $f_1 \not\in D_2''.$

Suppose that for $i = 2,3,4$, $|C_i'\ \cap S| = 1$ and for $i = 2,3,4,$ let $C_i' \cap S = \{ s_{i'} \}.$  Since $S \subset C_1' \cup C_2',$ we have $3' \ne 4'.$  Let $C_1 = C_1'  \triangle S \triangle D_1'',$ $C_2 = C_2' \triangle D_2''$, and for $i= 3,4,$ let $C_i = C_i' \triangle D_{i'}'.$  Then we see that
$\{ C_1, C_2, C_3, C_4 \}$ is a SRCy $4$-tuple for $(M,c).$  By the above, we may assume that for some $i\in \{ 2,3,4 \}$, $|C_i' \cap S| =2.$  Suppose first that $C_i' \cap S = C_1' \cap S = \{ s_1, s_3 \}.$  We may assume this is true for $i= 3.$
Then for $i= 2,4,$ $C_i' \cap S = \{ s_2 \}.$  For $i= 1,3,$ let $C_i = C_i' \triangle S \triangle D_1''$ and for $i = 2,4,$ let $C_i = C_i' \triangle D_2''.$  Then
\begin{align*}
\sum_i |C_i| &= \sum_i |C_i'| + 2(|D_1''| + |D_2''|) - 10\\
&\le 2r(M_2) + 4 + 2(r(M_1) +3) -10\\
&= 2(r(M_1) + r(M_2)) = 2r(M) + 4.
\end{align*}
Thus $\{ C_1,C_2,C_3,C_4\}$ is seen to be a SRCy $4$-tuple for $(M,c).$  Suppose instead that there is a circuit $C_i'$ where $|C_i' \cap S| =2$ and $C_i' \cap S \ne C_1'\cap S.$  We may assume this is true for $i=2$ and moreover, $C_2' \cap S = \{ s_2, s_3 \} ,$ $C_3'\cap S = \{ s_1 \}$ and $C_4' \cap S = \{ s_2 \}.$
Let $C_1 = C_1' \triangle S \triangle D_2',\ C_2 = C_2' \triangle S \triangle D_1'$ and $C_3 = C_1' \triangle D_1'\ C_4 = C_4' \triangle D_2'.$  Then $\{ C_1,C_2,C_3,C_4\}$ is seen to be a SRCy $4$-tuple for $(M,c).$

\ssms
\noindent{\it Case 4.2} Suppose that $(M_1',c_1')$ has no $S$-SRCT.
\ssms

For all $i,j,\ i < j,$ there exists an $S$-SRCP $\{ D_i^{ij}, D_j^{ij} \}$ for $(M_1', c_1')$ where $s_i\in D_i^{ij},$ $s_j \in D_j^{ij},$ and $f_1 \not\in D_i^{ij} \cup D_j^{ij}.$  

\sms
\noindent{\it Case 4.2.1} Suppose that for all $i\in [4]$, $|C_i' \cap S| =1.$ 
\sms

For all $i\in [4]$, let $C_i' \cap S = \{ s_{i'} \}.$
Then for some $i<j$, we have that $s_{i'} = s_{j'}.$  We may assume that $s_{1'} = s_{2'} = s_1.$  If $s_{3'} = s_{4'} = s_j,$ then define $(C_1,C_2,C_3,C_4) = (C_1' \triangle D_1^{1j}, C_2' \triangle D_1^{1j}, C_3' \triangle D_j^{1j}, C_4'\triangle D_j^{1j}).$
Otherwise, if $s_{3'} \ne s_{4'},$ then define   $(C_1,C_2,C_3,C_4) = (C_1' \triangle D_1^{13'}, C_2' \triangle D_1^{14'}, C_3' \triangle D_{3'}^{13'}, C_4' \triangle D_{4'}^{14'}).$  In either case, we see that $\{ C_1,C_2,C_3,C_4 \}$ is a SRCy $4$-tuple for $(M,c).$

\sms
\noindent{\it Case 4.2.2}  Suppose that for some $i\in [4]$, $|C_i' \cap S| = 2.$ 
\sms

 We may assume that $C_1' \cap S = \{ s_1, s_3 \}.$   Then for some $i\in \{ 2,3,4 \}$  $s_2 \in C_i'$ and we may assume this is true for $i=2.$   Suppose that for $i= 2,3,4,$ $|C_i' \cap S| = 1.$
Then $C_2' \cap S = \{ s_2 \}$ and for some $i\in \{ 3,4 \}$, $C_i' \cap S = \{ s_1\}$ or $\{ s_3 \}.$  We may assume that $C_3' \cap S = \{ s_1 \}.$  Then $C_4' \cap S = \{ s_2 \}$ or $\{ s_3 \}.$  Given that $f_1$ and $s_2$ are non-parallel, there is a $s_2$-semi-SRCP $\{ D_1'', D_2''\}$ for $(M_1',c_1')$ where $f_1 \not\in D_2''.$
Define $(C_1,C_2,C_3,C_4)$ as follows:
If $C_4' \cap S = \{ s_2 \},$ then let $$(C_1, C_2,C_3,C_4) = (C_1' \triangle S \triangle D_1'', C_2' \triangle D_2'', C_3' \triangle D_1^{12}, C_4' \triangle D_2^{12}).$$  Otherwise, if $C_4' \cap S = \{ s_3 \},$ then define
$$(C_1, C_2,C_3,C_4) = (C_1' \triangle S \triangle D_2^{12}, C_2' \triangle D_2^{23}, C_3' \triangle D_1^{12}, C_4' \triangle D_3^{23}).$$  In either case, $\{ C_1,C_2,C_3,C_4\}$ is seen to be a SRCy $4$-tuple for $(M,c).$

Lastly, suppose that for some $i\in \{ 2,3,4 \},$ $|C_i' \cap S| =2.$   Suppose that $C_i' \cap S = C_1' \cap S = \{ s_1, s_3 \}.$  We may assume that $C_3' \cap S = \{ s_1, s_3 \}$ and $C_2' \cap S = C_4' \cap S = \{ s_2 \}.$
As before, there is a $s_2$-semi-SRCP $\{ D_1'', D_2''\}$ for $(M_1',c_1')$ where $f_1 \not\in D_2''.$  Let $$(C_1, C_2, C_3, C_4) = (C_1' \triangle S \triangle D_1'', C_2' \triangle D_2'', C_3'\triangle S \triangle D_1'', C_4' \triangle D_2'').$$
Then $\{ C_1, C_2, C_3, C_4 \}$ is seen to be a SRCy $4$-tuple for $(M,c).$  Suppose now that $C_i' \cap S \ne C_1' \cap S.$  Then we may assume that $C_2' \cap S = \{ s_2, s_3 \}$ and $C_3' \cap S = \{ s_1 \}$ and $C_4' \cap S = \{ s_2 \}.$
Let $$(C_1,C_2,C_3,C_4) = (C_1' \triangle S \triangle D_2^{12}, C_2' \triangle S \triangle D_1^{12}, C_3' \triangle D_{1}^{12}, C_4' \triangle D_2^{12}).$$  Then $\{ C_1, C_2, C_3, C_4 \}$ is seen to be a SRCy $4$-tuple for $(M,c).$
\end{proof}

\begin{nonamenoname}
Suppose that $\varepsilon(M_1) = 2r(M_1) -2.$  Then $(M,c)$ has a SRC $4$-tuple.
\label{finishnonanona5}
\end{nonamenoname}


\begin{proof}
Since $\varepsilon(M_1') = 2r(M_1) -2,$ we have $\varepsilon(M_2') = 2r(M_2)$ and $r(M_1') -1 \le |c_1'(M_1')| \le r(M_1').$  If $|c_1'(M_1')| = r(M_1') -1$, then $|F| =0;$ otherwise if $|c_1'(M_1')| = r(M_1'),$ then $|F| =2.$
By Theorem \ref{the-NoRainbowCircuit} ii) there is a $S$-SRCP $\{ D_1', D_2' \}$ for $(M_1', c_1').$   We may assume that for $i= 1,2$, let $D_i' \cap S =  \{ s_{i} \}.$  Observe that if $|F| =2,$ then $F = \{ f_1, f_2 \}$ and for $i= 1,2,$  $F \cap D_i' \ne \emptyset.$  In this case, we may assume that for $i= 1,2,$
$f_i \in D_{i}'.$
Extend $c_2'$ to a colouring $c_2''$ of $M_2'' = M_2' + s_{1} + s_{2}$ where for $i=1,2$, 
$$c_2''(s_{i}) = \left\{ \begin{array}{lr}  0 &\mathrm{if} \ F = \emptyset \\ c(f_i) &\mathrm{if} \ |F| =2. \end{array}\right.$$
Then $c_2''$ is a $2$-uniform $(r(M) +1)$-colouring of $M_2''.$ 
Suppose that $(M_2'', c_2'')$ contains a rainbow circuit $C.$  Then $C\cap \{ s_{1}, s_{2} \} \ne \emptyset.$  If for some $i\in [2],$ $s_{i} \in C$ and $s_{3-i} \not\in C,$ then $C\triangle D_i'$ is a rainbow cycle in $(M,c).$  Suppose $\{ s_{1}, s_{2} \} \subset C.$  Then $|F| =2,$ $|c_1'(M_1')| = r(M_1)$ and hence there is a $s_{3}$-rainbow circuit $D$ for $(M_1',c_1').$  Now $C \triangle S \triangle D$ is seen to be a rainbow cycle in $(M,c)$, a contradiction. It follows from the above that $(M_2'', c_2'')$ is circuit-achromatic.

\begin{noname}
We may assume that $(M_2'',c_2'')$ has a SRC $4$-tuple.
\label{finishnonanona5claim1}
\end{noname}

\begin{proof}
If $M_2''$ is simple, then it follows by our assumptions that $(M_2'', c_2'')$ has a SRC $4$-tuple.  Suppose that $M_2''$ is not simple.  Then there exists $x\in E(M_2')$ and $i\in \{ 1,2 \}$ such that $x$ is parallel with $s_{i}.$  Without loss of generality, we may assume that $x$ is parallel with $s_{1}.$  Suppose that $c_2''(x) \ne c_2''(s_{1}).$  Then $\{ x,s_{1} \}$ is a rainbow 2-circuit and hence it follows by Observation \ref{obs-rainbowdigon} that $(M_2'',c_2'')$ has a SRCP (which implies that $(M_2'',c_2'')$ has a SRC $4$-tuple).
Thus we may assume that $c_2''(x) = c_2''(s_{1})$ and thus $x = f_1'.$  Now $F \ne \emptyset,$ and hence $|F| = |F'| =2.$  If $f_2'$ is parallel with $s_{2}$, then we can assume that for $i=1,2,\ f_i' \in E(M_1')$, in which case $\varepsilon_1' = 2r(M_1)$ and $\varepsilon_2' = 2r(M_1) -2.$  Now we can switch the roles of $M_1$ and $M_2$ and argue as before where $F = \emptyset.$ Thus we may assume that $f_2'$ and $s_{2}$ are non-parallel and hence $M_2' - f_1'$ is simple.

Suppose that $M_1' + f_1'$ has a rainbow circuit $D$. If $M_2' - f_1'$ has a rainbow circuit, then it follows by Observation \ref{obs-1rainbowcircorpair}, that $(M,c)$ has a SRCP.   Thus we may assume that $(M_2'-f_1',c_2')$ is circuit-achromatic.  Seeing as $M_2' - f_1'$ is simple and $\varepsilon(M_2' - f_1') = 2n_2-1,$ Theorem \ref{the-NoRainbowCircuit} i) holds for $(M_2' - f_1',c_2').$ Suppose that $(M_2' - f_1', c_2')$ has an $S$-SRCT $(C_1', C_2', C_3')$ where for $i = 1,2,$ $s_{i} \in C_i'.$ 
If $f_2' \not\in C_2',$ then letting $C_i = C_i' \triangle D_i',\ i = 1,2$, one sees that $\{ C_1,C_2\}$ is a SRCyP for $(M,c)$.  Suppose $f_2' \in C_2'.$   Since $|c_1'(M_1')| = r(M_1),$ there is an $s_{3}$-rainbow circuit $D_3'$ for $(M_1', c_1')$ where $|D_3'| \le r(M_1).$  
 Let $C_1 = C_1' - s_{1} + f_1'$ and $C_2 = C_3' \triangle D_3'.$  Then $\{ C_1,C_2\}$ is seen to be a SRCyP for $(M,c)$.  Suppose that $(M_2' - f_1', c_2')$ has no $S$-SRCT.  Then there is an $S$-SRCP $\{C_1', C_2'\}$ where $s_{i} \in C_i',\ i = 1,2$ and $f_2' \not\in C_1' \cup C_2'.$  Letting $C_i = C_i' \triangle D_i',\ i = 1,2,$ one sees that $\{ C_1,C_2\}$ is SRCyP for $(M,c).$  
 
 From the above, we may assume that  that $M_1' + f_1'$ is circuit-achromatic.  However, we can redefine $M_i,\ i = 1,2$ so that $f_1' \in E(M_1)$ and $f_1' \not\in E(M_2)$, in which case we have that $\varepsilon(M_1') = 2r(M_1) -1.$  It now follows by ({\bf \ref{finishnonanona4}}) that $(M,c)$ has a SRC $4$-tuple. 
 \end{proof}
By (\ref{finishnonanona5claim1}), we may assume that $\{ C_1', C_2',C_3',C_4'\}$ is SRC $4$-tuple for $(M_2'',c_2'').$  We may also assume that $\left(\bigcup_iC_i' \right) \cap \{ s_1, s_2 \} \ne \emptyset.$ 

\sms
\noindent{\bf Case 1} Suppose there exist distinct $i,j\in [4]$ for which $C_i' \cap \{ s_1, s_2 \} = C_j' \cap \{ s_1, s_2 \} \ne \emptyset.$  
\sms

We may assume that $C_1' \cap \{ s_1, s_2 \} = C_2' \cap \{ s_1, s_2 \} \ne \emptyset.$  Suppose that $\{ s_1, s_2 \} \subset C_1'.$  Then for $i=1,2,$ $c_2'' (s_{i}) = c(f_i)$ and $F = \{ f_1, f_2 \}.$  As before, there exists an $s_{3}$-rainbow circuit $D$ for $(M_1', c_1').$
Letting $C_i = C_i' \triangle S \triangle D,\ i = 1,2,$ we see that
$\{ C_1,C_2,C_3',C_4'\} $ is a SRCy $4$-tuple for $(M,c).$  Thus we may assume that $C_1' \cap \{ s_{1}, s_{2} \} = \{ s_1 \}.$  For $i = 1,2,$ let $C_i = C_i' \triangle D_1'$ and for $i= 3,4$ let $C_i = \left\{ \begin{array}{ll} C_i' \triangle D_2' &\mathrm{if} \ s_{2} \in C_i'\\ C_i' &\mathrm{if}\ s_{2} \not\in C_i'. \end{array} \right.$
Then $\{C_1,C_2,C_3,C_4\}$ is seen to be a SRCy $4$-tuple for $(M,c).$

\sms
\noindent{\bf Case 2}  Suppose that the nonempty sets among $C_i' \cap \{ s_1, s_2 \},\ i \in [4],$ are distinct. 
\sms

 Suppose that for some $i\in [4],$ $\{ s_{1}, s_{2} \} \subset C_i'.$  We may assume that $\{ s_{1}, s_{2} \} \subset C_1'.$  Then for $i= 2,3,4$, $|C_i' \cap \{ s_{1}, s_{2} \} | \le 1$ and it follows that for some $i \in [4],$
$C_i' \cap \{ s_1, s_2 \} = \emptyset.$  We may assume that $C_4' \cap \{ s_1, s_2 \} = \emptyset.$  Furthermore, we may assume that for $i= 2,3$, if $C_i' \cap \{ s_{1}, s_{2} \} \ne \emptyset,$ then $s_{i-1} \in C_i'.$
As in Case 1, there exists an $s_{3}$-rainbow circuit $D$ for $(M_1', c_1').$
Letting $C_1 = C_1' \triangle S \triangle D$ and for $i= 2,3,$ $$C_i = \left\{ \begin{array}{ll} C_i' \triangle D_{i-1}' &\mathrm{if} \ C_i' \cap \{ s_1, s_2 \} \ne \emptyset\\ C_i' &\mathrm{if} \ C_i' \cap \{ s_1, s_2 \} = \emptyset \end{array}\right.$$
one sees that $\{ C_1,C_2,C_3,C_4' \}$ is a SRCy $4$-tuple for $(M,c).$
Suppose instead that for all $i\in [4],$ $|C_i' \cap \{ s_{1}, s_{2} \}| \le 1.$  For all $i\in [4],$ if $C_i' \cap \{ s_{1}, s_{2} \} \ne \emptyset$, then let $C_i' \cap \{ s_{1}, s_{2} \} = \{ s_{i'} \}.$ For all $i\in [4],$ let
$$C_i = \left\{ \begin{array}{ll} C_i' \triangle D_{i'}' &\mathrm{if}\ C_i' \cap \{ s_{1}, s_{2} \} \ne \emptyset\\ C_i' &\mathrm{if}\ C_i' \cap \{ s_{1}, s_{2} \} = \emptyset .\end{array} \right.$$
Then $\{ C_1, C_2, C_3, C_4 \}$ is a SRCy $4$-tuple for $(M,c).$
\end{proof}

\begin{nonamenoname}
Suppose that $\varepsilon(M_1) = 2r(M_1) -3.$  Then $(M,c)$ has a SRC $4$-tuple.
\label{finishnonanona6}
\end{nonamenoname}

\begin{proof}
Since $(M_1',c_1')$ is circuit-achromatic, we have $|c_1'(M_1')| \le r(M_1)$ and thus $|F| = 1$ or $|F| = 3.$  We also have that $\varepsilon_2' = 2r(M_2) +1.$

\begin{noname}
For some $j\in [3],$ there is an $s_j$-rainbow circuit $D$ for $(M_1', c_1')$ where $|D\cap F| \le 1$ and $|D| \le r(M_1) -1.$  
\label{finishnonanona6claim1}
\end{noname}

\begin{proof}
 Suppose that $M_1 \big| F \cup S \simeq M(K_4).$  Let $M_2'' = M_2' + F.$  We see that $r(M_2'') = r(M_2) + 1$ and $$\varepsilon(M_2'') = \varepsilon_2' + 3 = 2r(M_2) + 4 = 2(r(M_2'') +1).$$ This violates ({\bf \ref{finishnonanona0}}).
Thus $M_1 \big| F \cup S\not\simeq M(K_4)$ and it follows by Lemma \ref{lem-helplemma} that for some $j\in [3]$, $(M_1', c_1')$ has an $s_j$-rainbow circuit $D$ for which $|D \cap F| \le 1$ and $|D| \le r(M_1) -1.$  
\end{proof}

By (\ref{finishnonanona6claim1}), for some $i\in [3]$ there is an $s_i$-rainbow circuit $D$ for $(M_1',c_1')$ where $|D\cap F| \le 1$ and $|D| \le r(M_1) -1.$  Here we may assume this is true for $i=1.$
Extend the colouring $c_2'$ to a colouring $c_2''$ of $M_2'' = M_2' + s_1,$ where $$c_2''(s_1) = \left\{ \begin{array}{ll} c(f_i) &\mathrm{if}\ D\cap F = \{ f_i\}\\ c(f_1) &\mathrm{if} \ D\cap F = \emptyset \end{array} \right.$$
We see that $c_2''$ is a $2$-uniform $(r(M_2) +1)$-colouring of $M_2''.$  Moreover, $(M_2'',c_2'')$ is seen to be circuit-achromatic.

\begin{noname}
We may assume that there is a SRC $4$-tuple for $(M_2'', c_2'').$
\label{finishnonanona6claim2}
\end{noname}

\begin{proof}
If $M_2''$ is simple, then it follows by assumption that $(M_2'',c_2'')$ has SRC $4$-tuple.  Suppose $M_2''$ is not simple.  Then there is an element $x \in E(M_2')$ which is parallel with $s_1.$
If $c_2''(x) \ne c_2''(s_1),$ then $\{ x, s_1 \}$ is a rainbow $2$-circuit of $(M_2'', c_2'').$  It follows by Observation \ref{obs-rainbowdigon} that $(M_2'', c_2'')$ has a SRCP.  Thus we may assume that $c_2''(x) = c_2''(s_1)$ and thus $x \in F'.$
Suppose that $M_1' + x$ contains a rainbow circuit $D'.$  Since $\varepsilon(M_2' - x) = 2r(M_2)$ and $M_2' - x$ is simple, it follows by Corollary \ref{cor-cor2} that $(M_2'-x,c_2')$ has a rainbow circuit.  It now follows by
Observation \ref{obs-1rainbowcircorpair} that $(M,c)$ has a SRCP.  Thus we may assume that $M_1' + x$ is circuit-achromatic. However, replacing $M_1$ by $M_1 + x$ and $M_2$ by $M_2 -x,$ it follows by ({\bf \ref{finishnonanona5}}) that $(M,c)$ has a SRC $4$-tuple. 
\end{proof}

By (\ref{finishnonanona6claim2}), we may assume that $(M_2'', c_2'')$ has a SRC $4$-tuple $\{ C_1',C_2',C_3',C_4' \}$ and we may also assume that $s_1 \in C_1'.$ 
If $s_1 \in C_2' \cup C_3' \cup C_4',$ then we may assume that $s_1 \in C_2'.$  Let $C_1 = C_1' \triangle D$ and let $C_2 = \left\{ \begin{array}{ll} C_2' \triangle D&\mathrm{if}\ s_1 \in C_2'\\ C_2' &\mathrm{otherwise.} \end{array} \right.$
Then $(C_1,C_2,C_3',C_4')$ is seen to be a SRCy $4$-tuple for $(M,c).$
\end{proof}

\end{document}